\documentclass[12pt, a4paper]{article}

\usepackage{authblk}
\usepackage[english]{babel}
\usepackage[OT1]{fontenc}
\usepackage{graphicx}
\usepackage{indentfirst}
\usepackage{amsmath,amssymb,amsthm,empheq}
\usepackage{ dsfont }
\usepackage[italian]{varioref}
\usepackage{booktabs}
\usepackage{siunitx}
\usepackage{microtype}
\usepackage{epstopdf}
\usepackage[margin=2cm]{geometry} 
\usepackage[hidelinks]{hyperref}
\usepackage{fancyhdr}
\usepackage{version}
\usepackage[utf8]{inputenc}
\usepackage{tikz}
\usepackage{tikz-cd}
\usepackage[mathscr]{eucal}
\usepackage{enumerate}
\usepackage{addfont}
\usepackage{aurical}
\usepackage[titletoc]{appendix}

\usepackage{tocloft}
\setlength{\cftsecindent}{0pt}
\setlength{\cftbeforesecskip}{0.5pt}

\usepackage[autostyle,italian=guillemets]{csquotes}
\usepackage[backend=biber, style=alphabetic 
]{biblatex}
\addbibresource{References.bib}


\newcommand{\R}{\mathbb{R}}

\newcommand{\N}{\mathbb{N}}

\renewcommand{\S}{\mathbb{S}}
\newcommand{\norm}[1]{\|#1\|}
\newcommand{\comp}{\Subset}
\newcommand{\cont}{\mathcal{C}}
\renewcommand{\epsilon}{\varepsilon}
\renewcommand{\phi}{\varphi}
\newcommand{\haus}[1]{\mathcal{H}^{#1}}
\newcommand{\loc}{\rm{loc}}

\newcommand{\Per}{\operatorname{Per}}
\newcommand{\graph}{\operatorname{graph}}
\newcommand{\subgr}{\operatorname{subgraph}}

\newcommand{\divergence}{\operatorname{div}}
\newcommand{\trace}{\operatorname{tr}}
\newcommand{\supp}{\operatorname{supp}}
\newcommand{\diam}{\operatorname{diam}}
\newcommand{\dist}{\operatorname{dist}}

\theoremstyle{plain}
\newtheorem{theorem}{Theorem}[section]
\newtheorem{lemma}[theorem]{Lemma}
\newtheorem{prop}[theorem]{Proposition}
\newtheorem*{prop*}{Proposition}
\newtheorem{cor}[theorem]{Corollary}
\theoremstyle{definition}
\newtheorem{definition}[theorem]{Definition}
\newtheorem*{definition*}{Definition}
\theoremstyle{remark}
\newtheorem{rem}[theorem]{Remark}
\newtheorem{ex}[theorem]{Example}

\newtheorem*{recall*}{Recall}
\numberwithin{equation}{section}

\title{
	On non-local almost minimal sets \\
	and 
	an application to the non-local Massari's Problem}

\author[1]{\href{https://research-repository.uwa.edu.au/en/persons/serena-dipierro}{Serena Dipierro}}

\author[1]{\href{https://research-repository.uwa.edu.au/en/persons/enrico-valdinoci}{Enrico Valdinoci}}

\author[1,2]{{Riccardo Villa}}

\affil[ ]{{\footnotesize\tt serena.dipierro@uwa.edu.au},
	{\footnotesize\tt enrico.valdinoci@uwa.edu.au},
	{\footnotesize\tt riccardo.villa@research.uwa.edu.au}}

\affil[1]{ {\footnotesize Department of Mathematics and Statistics}\\
	{\footnotesize The University of Western Australia,} 
	{\footnotesize 35 Stirling Highway,
		Perth, WA 6009, Australia}}

\affil[2]{ {\footnotesize Dipartimento di Matematica e Applicazioni}\\
	{\footnotesize Universit\`a degli Studi di Milano--Bicocca,} 
	{\footnotesize Via Cozzi 55, 20125 Milano, Italy}}

\date{ }

\begin{document}
	
	\clearpage\maketitle\thispagestyle{empty}
	\begin{abstract}
		We consider a fractional Plateau's problem
		dealing with sets with prescribed non-local mean curvature. This problem can be seen
		as a non-local counterpart of the classical Massari's Problem.
		
		We obtain
		existence and regularity results, relying on a suitable version of the non-local theory for almost minimal sets. 
		In this framework, the fractional curvature term in the energy functional can be interpreted as a perturbation of the fractional perimeter.
		
		In addition, we also discuss stickiness phenomena for
		non-local almost minimal sets.
	\end{abstract}
	\setcounter{tocdepth}{1}
	\tableofcontents
	
	\pagenumbering{arabic}
	\setcounter{page}{1}
	
	
	\section{Introduction and main results} \label{sec::intro}
	In this paper we revisit the regularity theory for non-local almost minimal sets, i.e. for sets minimizing the~$s$-perimeter functional up to a remainder depending on their volume, and we apply this
	theory to a new type of prescribed non-local mean curvature problem, which we refer to as non-local Massari's Problem.
	\medskip
	
	Almost minimizers are a classical topic in the calculus of variations for several reasons.
	First of all, in practical applications, exact minimizers may be too difficult to find, and the solutions of optimization problems may only be approximate. Real-world systems are also often influenced by measurement errors and modeling approximations, making the notion of an exact minimizer too strict. Similarly, many physical systems are subject to small fluctuations that prevent them from reaching perfect equilibrium, thus exhibiting behaviors that are close, but not precisely equal, to the theoretical minimum energy state.
	
	In addition, from a theoretical point of view, exact minimizers might not exist in suitable functional spaces due to lack of compactness; in these situations, almost minimizers often exist and provide a valuable alternative.
	
	Moreover, almost minimizers serve as useful building blocks for many results in the regularity theory of minimizers. Often, the detailed study of minimizers of ``complicated''
	energy functionals is out of reach; however, one can show that these minimizers are, in fact, almost minimizers of another, ``simpler'' functional, for which a well-developed regularity theory exists. In this way, regularity results for the ``difficult'' setting become a byproduct of the regularity theory for almost minimizers in the ``easier'' setting (i.e., one
	trades the complexity of the system for a broader class of objects).
	
	In this spirit, from the viewpoint of regularity theory, almost minimizers capture the ``worst'' allowable energy errors that still preserve favorable scaling properties at small scales.
	\medskip
	
	Non-local almost minimizers theory also finds application in many problems such as obstacle and free boundary problems, capillarity problems, and regularity of sets with prescribed non-local mean curvature. See for instance~\cite{MR3532394, MR3707346, MR3717439}.
	
	In the setting of classical minimal surfaces, a regularity theory for almost minimizers was developed by Tamanini in the early `80s as a generalization of the De Giorgi's theory concerning regularity of minimizers of the variational perimeter (see~\cite{tamanini_almost_min}). 
	
	In~\cite{caputo_guillen}, Caputo and Guillen presented a notion of non-local almost minimal boundaries similar to the one by Tamanini, extending\footnote{ In its current form, the paper~\cite{caputo_guillen} contains certain technical gaps that prevent its direct application to the regularity theory of almost minimizers. Consequently, the results of the paper~\cite{MR3322379}, when relying on~\cite{caputo_guillen}, cannot be directly employed for this purpose either. We will provide further details on these issues in the forthcoming Remark~\ref{ILREDEL3440}.	}
	methods developed more recently for non-local minimal surfaces. In~\cite{MR3322379}, the authors gave a stronger definition of almost-minimality for bounded Borel sets of~$\R^n$ and used the regularity theory to prove a uniform isoperimetric inequality for the~$s$-perimeter. Motivated by the problem of sets with prescribed non-local mean curvature, we adopt here the definition introduced in~\cite{MR3322379}.
	\medskip
	
	To present the fractional analog of Tamanini's theory, we start by recalling the definition of non-local interaction between disjoint sets introduced in~\cite{caffarelli_roquejoffre_savin_nonlocal}. Whenever needed, all subsets of~$\R^n$ will be implicitly assumed to be Lebesgue-measurable.
	
	Given~$s\in(0,1)$, for every disjoint sets~$A$ and~$B$, we define
	$$ \mathcal{L}(A,B) := \int_{\R^n}\int_{\R^n} \frac{\chi_A(x)\chi_B(y)}{|x-y|^{n+s}}\,dx\,dy = \int_B\int_A \frac{dx\,dy}{|x-y|^{n+s}}.$$
	In the wake of~\cite{caffarelli_roquejoffre_savin_nonlocal},
	we consider the~$s$-perimeter of a set~$E$ localized in an open Lipschitz domain~$\Omega$, defined as
	$$ \Per_s(E,\Omega) := \mathcal{L}(E\cap\Omega,E^c) + \mathcal{L}(E\cap\Omega^c,E^c\cap\Omega).$$
	
	\begin{definition}[Non-local almost minimizers] \label{def::almost_min}
		Given an open bounded Lipschitz domain~$\Omega\subseteq\R^n$, 
		and a parameter~$\Lambda\geq0$, we say that a set~$E$ is a~$\Lambda$-minimal set (or is an almost minimal set with respect to~$\Lambda$) for the~$s$-perimeter in~$\Omega$ if~$\Per_s(E,\Omega)<+\infty$ and
		\begin{equation} \label{eq::almost_min} 
			\Per_s(E,\Omega) \leq \Per_s(F,\Omega) + \Lambda|E\Delta F|,
		\end{equation}
		for every set~$F$ such that~$E\Delta F\subseteq \Omega$.
		
		If~$\Omega$ is unbounded, we say that~$E$ is~$\Lambda$-minimal in~$\Omega$ if it is~$\Lambda$-minimal in every bounded Lipschitz set~$K \subseteq\Omega$.
	\end{definition}
	
	Observe that, in this notation, local minimizers of the~$s$-perimeter are~$0$-minimal sets.
	
	Also, Definition~\ref{def::almost_min} is slightly stronger than a direct generalization of Tamanini's definition of almost minimality
	(see~\cite[Definition~1.5]{tamanini_almost_min}).
	
	Inspired by~\cite[Definition~2.3]{caffarelli_roquejoffre_savin_nonlocal}, we also provide a characterization of almost minimality through the notions of~	$\Lambda$-super-solution and~$\Lambda$-sub-solution properties.
	
	\begin{definition} \label{def::super_sub_sol}
		We say that a set~$E$ satisfies the~$\Lambda$-super-solution property if, for every~$A\subseteq E^c\cap \Omega$,
		\begin{equation}  \label{eq::super_sol_prop}
			\mathcal{L}(A,E)-\mathcal{L}(A,E^c\setminus A)\leq \Lambda|A|.
		\end{equation}
		Similarly, we say that~$E$ satisfies the~$\Lambda$-sub-solution property if, for every~$A\subseteq E\cap \Omega$,
		\begin{equation} \label{eq::sub_sol_prop}
			\mathcal{L}(A,E\setminus A)-\mathcal{L}(A,E^c)\geq -\Lambda|A|.
		\end{equation}
	\end{definition}
	
	\begin{lemma} \label{lemma::almost_minimal_subsupersol}
		$E$ is an almost minimal set with respect to~$\Lambda$ if and only if it satisfies both the~$\Lambda$-super-solution property and the~$\Lambda$-sub-solution property.
	\end{lemma}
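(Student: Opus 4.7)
The plan is to connect the almost-minimality inequality~\eqref{eq::almost_min} with the one-sided integral conditions~\eqref{eq::super_sol_prop} and~\eqref{eq::sub_sol_prop} through two elementary perturbations of $E$: \emph{adding} a set $A\subset E^c\cap\Omega$ and \emph{removing} a set $A\subset E\cap\Omega$. The key algebraic ingredients are the identities, valid respectively for $A\subset E^c\cap\Omega$ and for $A\subset E\cap\Omega$,
\begin{align*}
\Per_s(E\cup A,\Omega)-\Per_s(E,\Omega) &= \mathcal{L}(A,E^c\setminus A)-\mathcal{L}(A,E),\\
\Per_s(E\setminus A,\Omega)-\Per_s(E,\Omega) &= \mathcal{L}(A,E\setminus A)-\mathcal{L}(A,E^c),
\end{align*}
both obtained by decomposing the two arguments in the definition of $\Per_s$ according to the disjoint splittings $(E\cup A)\cap\Omega=(E\cap\Omega)\sqcup A$, $(E\cup A)^c=E^c\setminus A$ (respectively the analogous ones for $E\setminus A$) and using additivity of $\mathcal{L}$ on disjoint unions; alternatively, the second identity follows from the first by passing to complements, since $\Per_s$ is complement-invariant and the kernel is symmetric.

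For the forward implication, I would test~\eqref{eq::almost_min} against the competitors $F_+=E\cup A$ and $F_-=E\setminus A$: both are admissible since $E\Delta F_\pm=A\subset\Omega$, and $|E\Delta F_\pm|=|A|$. Substituting the two identities and cancelling $\Per_s(E,\Omega)$ yields precisely~\eqref{eq::super_sol_prop} and~\eqref{eq::sub_sol_prop} after rearrangement.

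For the reverse implication, given any admissible competitor $F$ with $E\Delta F\subset\Omega$, I would set $A_+:=F\setminus E\subset E^c\cap\Omega$ and $A_-:=E\setminus F\subset E\cap\Omega$ and interpolate through the intermediate set $E':=E\cup A_+$, which satisfies $F=E'\setminus A_-$ because $A_+$ and $A_-$ are disjoint. The first step $E\to E'$ is handled by the super-solution property at $E$ applied with $A=A_+$, combined with the first identity, giving
\[
\Per_s(E,\Omega)\leq \Per_s(E',\Omega)+\Lambda|A_+|.
\]
The second step $E'\to F$ is the main obstacle: one would like a sub-solution property at $E'$, but the hypothesis provides it only at $E$. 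I would therefore apply the second identity at $E'$ in place of $E$ and then re-express the resulting bilinear quantities in terms of $E$, obtaining
\[
\Per_s(F,\Omega)-\Per_s(E',\Omega)=\bigl[\mathcal{L}(A_-,E\setminus A_-)-\mathcal{L}(A_-,E^c)\bigr]+2\,\mathcal{L}(A_-,A_+).
\]
The bracketed term is bounded below by $-\Lambda|A_-|$ via the sub-solution property at $E$, and the remainder $2\,\mathcal{L}(A_-,A_+)$ is non-negative and may simply be discarded. Chaining the two estimates and using $|E\Delta F|=|A_+|+|A_-|$ recovers~\eqref{eq::almost_min}. The delicate bookkeeping here is recognising that replacing $E'$ by $E$ in the bilinear expressions produces exactly the cross-term $2\,\mathcal{L}(A_-,A_+)$, and that its sign is favourable; once this is isolated the rest of the argument is purely algebraic manipulation of disjoint-set decompositions for $\mathcal{L}$.
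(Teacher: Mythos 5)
Your proposal is correct and follows essentially the same route as the paper: the forward direction uses the identical competitors $E\cup A$ and $E\setminus A$, and the reverse direction boils down to the same algebraic identity for $\Per_s(F,\Omega)-\Per_s(E,\Omega)$ with the same favourable cross-term $2\mathcal{L}(A_-,A_+)$, which the paper records in one shot (its equation~\eqref{eq::PerF_PerE}) whereas you derive it by interpolating through $E'=E\cup A_+$. The two-step decomposition is a cosmetic reorganization rather than a different argument, and the conclusion matches the paper's proof.
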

	
	We point out that, in light of Definition~\ref{def::super_sub_sol} and Lemma~\ref{lemma::almost_minimal_subsupersol},
	if~$E$ is an almost minimal set then so is~$E^c$.	
	
	We will prove Lemma~\ref{lemma::almost_minimal_subsupersol} in Section~\ref{sec::holder_reg} and now we present the main results of this paper.
	
	\subsection{Regularity of almost minimal sets}
	In terms of regularity theory, we will show that the boundary of any almost minimal set	is continuously differentiable. 
	The precise statement goes as follows:
	
	\begin{theorem} \label{th::holder_reg_almost_min}
		Let~$s\in(0,1)$. Given~$\Lambda\geq0$, let~$E$ be a~$\Lambda$-minimizer for~$\Per_s(\cdot,B_1)$.
		
		Then, there exists~$\epsilon_0>0$, depending only on~$n$, $s$ and~$\Lambda$, such that if
		$$\partial E\cap B_1 \subseteq \{|x_n|\leq \epsilon_0\},$$
		then~$\partial E\cap B_{1/2}$ is a~$\cont^{1,\alpha}$-surface, for any~$\alpha\in(0,s)$.
	\end{theorem}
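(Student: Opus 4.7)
The plan is to follow the classical De Giorgi-type improvement-of-flatness scheme, in the non-local formulation pioneered by Caffarelli, Roquejoffre, and Savin for $s$-minimal surfaces, transferring it to the almost-minimal setting by exploiting that the $\Lambda$-perturbation is of lower order under rescaling.

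\textbf{Step 1 (Scaling of the $\Lambda$-minimality).} I would first record how Definition~\ref{def::almost_min} transforms under a rescaling $E\mapsto r^{-1}E$. Since $\Per_s(r^{-1}E,B_1)=r^{s-n}\Per_s(E,B_r)$ while $|r^{-1}E\Delta r^{-1}F|=r^{-n}|E\Delta F|$, the rescaled set is $(\Lambda r^s)$-minimal in $B_1$. Hence, zooming in sends the effective almost-minimality parameter to $0$, so the limit problem on small balls is governed by the pure $s$-minimizer regularity theory.

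\textbf{Step 2 (One-step improvement of flatness).} The core inductive claim is: for every $\alpha\in(0,s)$ there exist $\eta\in(0,1/2)$ and $\epsilon_1>0$ such that, whenever $E$ is $\Lambda$-minimal for $\Per_s(\cdot,B_1)$ with $\Lambda\leq\epsilon_1$ and
$$\partial E\cap B_1\subset\{|x\cdot\nu|\leq\epsilon\}$$
for some unit vector $\nu$ and $\epsilon\leq\epsilon_1$, there is a unit vector $\nu'$ with $|\nu-\nu'|\leq C\epsilon$ such that
$$\partial E\cap B_\eta\subset\{|x\cdot\nu'|\leq\eta^{1+\alpha}\epsilon\}.$$
I would argue by contradiction and compactness: suppose the claim fails along a sequence of $\Lambda_k$-minimizers $E_k$ with $\epsilon_k\to 0$ and $\Lambda_k\to 0$. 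Using density and clean-ball estimates (derived from the sub/super-solution property of Lemma~\ref{lemma::almost_minimal_subsupersol}), parametrize $\partial E_k\cap B_1$ as an $\epsilon_k$-graph; vertically rescale by $\epsilon_k^{-1}$ to obtain $\|u_k\|_\infty\leq 1$. A uniform H\"older estimate (Harnack-type, available from the sub/super-solution characterization) provides equicontinuity, so $u_k\to u_\infty$ locally uniformly. The term $\Lambda_k|E_k\Delta F|$ linearizes to an $O(\epsilon_k)$ contribution and vanishes in the limit, while the non-local Euler--Lagrange condition linearizes to a linear equation of order $1+s$ for $u_\infty$; the $C^{1,\alpha}$ regularity available for this limit equation yields the improvement of flatness at the origin, contradicting the failure along the sequence.

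\textbf{Step 3 (Iteration and $\cont^{1,\alpha}$ conclusion).} Once Step 2 is in hand, I would iterate dyadically at each boundary point $x_0\in\partial E\cap B_{1/2}$. Applying the one-step improvement to the rescalings $\eta^{-k}(E-x_0)$, which by Step 1 are $(\Lambda\eta^{ks})$-minimal in $B_1$, produces a sequence of approximating directions $\nu_k$ with $|\nu_{k+1}-\nu_k|\leq C\eta^{k\alpha}\epsilon_0$, hence a Cauchy sequence converging to a limit $\nu_\infty(x_0)$ at geometric rate. A covering argument across $x_0\in\partial E\cap B_{1/2}$ shows that the assignment $x_0\mapsto\nu_\infty(x_0)$ is $\alpha$-H\"older continuous, and via a Campanato-type characterization this promotes $\partial E\cap B_{1/2}$ to a $\cont^{1,\alpha}$-hypersurface.

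\textbf{Main obstacle.} The delicate step is the compactness/linearization in Step 2. The non-local nature of $\Per_s$ forces careful control of the tails of $\chi_{E_k}$ outside $B_1$, which cannot be read off directly from the flatness hypothesis; one must combine the sub/super-solution property with ``flatness-in, flatness-out'' propagation to bound the tail contributions uniformly in $k$. Equally nontrivial is the identification of the limiting linear equation: one must expand the Euler--Lagrange inequality derived from \eqref{eq::super_sol_prop}--\eqref{eq::sub_sol_prop} to first order around the flat configuration, tracking cancellations in the kernel $|x-y|^{-n-s}$ and verifying that the $\Lambda_k$-correction is subleading. The remainder of the argument then fits the well-developed template of the CRS-type regularity theory, of which the present result is the almost-minimal counterpart.
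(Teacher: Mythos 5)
Your overall architecture---density estimates, viscosity Euler--Lagrange inequalities, Harnack, improvement of flatness, dyadic iteration---is the one the paper follows, and Steps 1 and 3 are essentially right. But Step 2 as formulated conceals a genuine gap that your ``main obstacle'' remark correctly identifies without resolving, and this gap is precisely what the paper's argument is built around.

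Your one-step improvement of flatness is stated with a single-ball hypothesis $\partial E\cap B_1\subset\{|x\cdot\nu|\leq\epsilon\}$ and no information about $E$ on $B_1^c$. In the nonlocal problem this hypothesis is too weak: the far-field contribution $\int_{B_1^c}\frac{\chi_E-\chi_{E^c}}{|x-y|^{n+s}}\,dx$ to the Euler--Lagrange inequality is not $O(\epsilon)$ --- a priori it is only $O(1)$ --- and therefore it can swamp the curvature at the tangent point of your barrier. The compactness argument then fails to close: in the limit the equation one identifies is not $(-\Delta)^{(1+s)/2}u_\infty=0$ but contains an uncontrolled tail term, and no contradiction is obtained. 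The paper's resolution (inherited from Caffarelli--Roquejoffre--Savin) is to \emph{change the hypothesis}: the improvement-of-flatness statement (Theorem~\ref{th::improv_flat}, rescaled in Theorem~\ref{th::improv_flat_rescaled}) assumes cumulative dyadic flatness $\partial E\cap B_{2^{-j}}\subset\{|x\cdot\nu_j|\leq 2^{-j(1+\alpha)}\}$ for \emph{all} $j=0,\dots,k$. Because $\alpha<s$, the tail of the Euler--Lagrange integral is then dominated by $\sum_{j\ge0}2^{-j(s-\alpha)}<\infty$ (this is exactly the computation in~\eqref{eq::harnack_proof3}), and this multi-scale flatness is both what makes the Harnack-type estimate (Theorem~\ref{th::harnack_ineq}) and the H\"older compactness lemma (Lemma~\ref{lemma::holder_est}) go through and what the one-step improvement must preserve in order to iterate (Corollary~\ref{cor::iterative_arg}). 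The single-ball hypothesis of Theorem~\ref{th::holder_reg_almost_min} feeds this machinery only because $\epsilon_0$ is chosen so small ($\epsilon_0<2^{-k_0(1+\alpha)}$) that the coarse scales $j\le k_0$ are automatically populated. So the intermediate claim you iterate in Step~3 cannot be proved as you state it; reformulating it with the dyadic flatness hypothesis is not a cosmetic change but the crux of Sections~2.2--2.4.
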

	
	We point out that
	Theorem~\ref{th::holder_reg_almost_min} is a generalization to~$\Lambda$-minimal sets of~\cite[Theorem~6.1]{caffarelli_roquejoffre_savin_nonlocal}.
	
	The details of the regularity theory of almost minimal sets
	will be presented in Section~\ref{sec::holder_reg} by suitably adapting some
	ideas that have been developed in~\cite{caffarelli_roquejoffre_savin_nonlocal} for~$s$-minimal surfaces. In particular, we will articulate our strategy in three steps:
	\renewcommand{\theenumi}{\roman{enumi}}
	\begin{enumerate} 
		\item we prove some volume density estimates which ensure that almost minimal sets have no high density cusps (see Theorem~\ref{th::unif_dens_estimates});
		\item we prove that almost minimal sets satisfy some Euler-Lagrange Inequalities in the viscosity sense (see Theorem~\ref{th::ELeq});
		\item we exploit the improvement of flatness technique (see Theorem~\ref{th::improv_flat}) to deduce~$\cont^{1,\alpha}$-regularity.
	\end{enumerate}
	
	\begin{rem}\label{ILREDEL3440}
		We mention that, in our setting, the previous results in the literature about almost minimizers in the fractional setting are not directly applicable. Specifically, concerning the Euler-Lagrange inequality for almost-minimizers, the argument given in Step~3 of the proof of~\cite[Theorem~5.3]{caputo_guillen} would not work as it is (see in particular line~11 from below on page~18 there, where both integrands vanish at~$r=2\epsilon^*$). This technical point has a significant impact also on the corresponding monotonicity formula and ultimately on the regularity theory, making~\cite[Theorem~1.1]{caputo_guillen} also not directly applicable to our case. The use of~\cite[Theorem~3.4]{MR3322379} would not solve this technical issue, since Step~1 of its proof also explicitly relies on~\cite[Theorem~1.1]{caputo_guillen}, and therefore a careful revision of all these arguments and suitable adaptations are necessary in our framework.\end{rem}
	
	\subsection{Monotonicity formula and size of the singular set}
	As a next step, we show that the size of the singular set of an almost minimizer~$E$, that we denote by~$\Sigma_E$, is somewhat negligible,
	according to the following statement:
	
	\begin{theorem} \label{th::haus_dim_singular_almost}
		Let~$E$ be a~$\Lambda$-minimal set in a open bounded Lipschitz domain~$\Omega\subseteq\R^n$. 
		
		Then, $\dim_\haus\ (\Sigma_E)\leq n-3$.
	\end{theorem}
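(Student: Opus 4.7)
My plan is to follow the classical Federer-type dimension reduction scheme, adapted to the non-local setting. Three ingredients will be needed: a quasi-monotonicity formula for the scaled $s$-perimeter, a blow-up analysis showing that tangent sets of $\Lambda$-minimizers are $s$-minimal cones, and the known classification of two-dimensional $s$-minimal cones.

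First, I would exploit the subcritical scaling of $\Lambda$-minimality. If $E$ is $\Lambda$-minimal in $\Omega$ and $x_0\in\Omega$, the rescaling $E_r:=r^{-1}(E-x_0)$ should turn out to be $(\Lambda r^s)$-minimal in $r^{-1}(\Omega-x_0)$: this is checked by a change of variables in~\eqref{eq::almost_min}, using that $|E\Delta F|$ scales as $r^n$ while $\Per_s$ scales as $r^{n-s}$, so the $\Lambda$-correction picks up a factor $r^s$. Combined with the uniform density estimates of Theorem~\ref{th::unif_dens_estimates}, this gives $L^1_\loc$-precompactness of the family $\{E_r\}_{r\in(0,1)}$, so that any $L^1_\loc$-cluster point $E_0$ along a sequence $r_k\to 0$ is an $s$-minimal set (a genuine $0$-minimizer) in every bounded Lipschitz open subset of~$\R^n$. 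Next I would derive a quasi-monotonicity formula of the form
\[
\frac{d}{dr}\left(\frac{\Per_s(E,B_r(x_0))}{r^{n-s}}\right)\geq -C\,\Lambda\,r^{2s-1},
\]
taking as competitor the cone generated by $E\cap\partial B_r$ and keeping track of the $\Lambda$-error by the natural scaling. In the blow-up limit this forces $E_0$ to saturate the corresponding monotonicity identity, thereby identifying $E_0$ as a minimizing $s$-minimal cone with vertex at the origin.

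Once the blow-up structure is available, I would argue as in the classical Federer stratification. Let $k^\ast$ denote the smallest integer such that a non-trivial $s$-minimal cone exists in $\R^{k^\ast}$ (i.e.\ one which is not a half-space); then iterated blow-ups at singular points of cones, together with compactness and the lower semicontinuity of $\Per_s$, reduce the analysis to cones whose singular set is contained in a linear subspace. A standard dimension count then yields
\[
\dim_{\mathcal{H}}(\Sigma_E)\leq n-1-k^\ast.
\]
Since planar $s$-minimal cones are known to be half-planes, one has $k^\ast\geq 3$, and the bound $n-3$ follows.

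The main obstacle will be to establish the quasi-monotonicity formula with a controlled $\Lambda$-error. On the one hand, one has to check that the $\Lambda|E\Delta F|$ perturbation is compatible with the natural scaling of the $s$-perimeter, so that the error term vanishes in the blow-up limit and does not destroy the rigidity statement ``tangent $=$ cone''. On the other hand, the conical competitor employed in the monotonicity argument must be admissible in the sense of Definition~\ref{def::almost_min}, which may require a careful truncation near $\partial B_r$ and a sharp estimate on the non-local interaction of the cone with $E$ across $\partial B_r$. Once these points are secured, the remaining compactness and dimension-reduction arguments proceed along lines now well established for $0$-minimizers of the $s$-perimeter.
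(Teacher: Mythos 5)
Your overall architecture (scaling of almost-minimality, blow-up to $s$-minimal cones, Federer reduction, classification of planar cones via Savin--Valdinoci) matches the paper's. The paper, like you, shows that $\lambda E$ is $(\lambda^{-s}\Lambda)$-minimal, extracts $L^1_{\loc}$-convergent blow-up sequences using density estimates, proves that tangent sets are $s$-minimal cones (Theorems~\ref{th::blowup_cone} and~\ref{th::blowup_existence}), and iterates the dimension reduction down to $\R^2$ where Theorem~\ref{th::cones_R2} gives the contradiction.

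The gap lies exactly where you flag ``the main obstacle''. You propose a quasi-monotonicity formula for the rescaled $s$-perimeter
$r^{s-n}\Per_s(E,B_r(x_0))$, obtained by comparing $E$ with the cone generated by $E\cap\partial B_r$. This does not work even for exact $s$-minimizers: no direct monotonicity (or quasi-monotonicity with a signed, integrable error) for $r^{s-n}\Per_s(E,B_r)$ is known, precisely because the conical competitor changes the non-local interaction between $E\cap B_r$ and $E\setminus B_r$ in a way that cannot be absorbed into a one-sided derivative bound. This is the reason Caffarelli--Roquejoffre--Savin introduce the Caffarelli--Silvestre extension in the first place: the monotone quantity is not the scaled $s$-perimeter, but the scaled weighted Dirichlet energy $\Xi_E(r)=r^{s-n}\int_{B_r^+} z^{1-s}|\nabla\tilde u|^2$ (see~\eqref{eq::def_Xi}), and the competitor is the homogeneous extension of $\tilde u$ along $\partial B_r^+$, which is a \emph{local} construction in $\R^{n+1}_+$ and so avoids the non-local interaction problem. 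The paper adapts this to almost-minimizers by first characterizing almost-minimality via the extension (Proposition~\ref{prop::char_almost_min_ext}) and then proving that $\Phi_E(r)=\Xi_E(r)+\tfrac{n-s}{s}\omega_n\hat\Lambda r^s$ is monotone (Theorem~\ref{th::Phi_monotone}). Without the extension, your monotonicity step cannot be carried out, and so the identification of blow-ups as cones (as opposed to merely $s$-minimal sets) fails.

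Two smaller points. First, the compactness you invoke to identify blow-up limits as $0$-minimizers must also propagate the \emph{singularity} to the limit, which requires the uniform density estimates to upgrade $L^1_{\loc}$-convergence to Hausdorff convergence of boundaries and singular sets (Corollary~\ref{cor::boundary_conv} and Proposition~\ref{prop::haus_dim_singular_preliminary}); lower semicontinuity of $\Per_s$ alone does not ensure the singular set survives. Second, your dimension count $\dim_{\mathcal H}(\Sigma_E)\leq n-1-k^\ast$ is off by one: with $k^\ast$ the smallest dimension hosting a non-trivial $s$-minimal cone, the Federer argument yields $\dim_{\mathcal H}(\Sigma_E)\leq n-k^\ast$, and $k^\ast\geq 3$ then gives the stated bound $n-3$.
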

	
	It would be interesting to investigate if Theorem~\ref{th::haus_dim_singular_almost} is sharp.
	In this respect, we recall that the sharpness of the analogous result for~$s$-minimal surfaces (see~\cite[Corollary~2]{savin_valdinoci_regularity}) is still an open problem.	
	
	The strategy to establish Theorem~\ref{th::haus_dim_singular_almost}
	relies on a blow-up argument and a monotonicity formula, that
	lead us to the classification of the tangent cones and the desired 
	estimate of the Hausdorff dimension of~$\Sigma_E$.
	
	The analysis of the singular set will be carried through in Section~\ref{sec::haus_dim}. For this,
	in Section~\ref{sec::monotonicity} we use the extension technique presented in~\cite{caffarelli_silvestre_extension} to characterize almost minimality (in turn, this characterization
	plays a crucial role in proving the monotonicity formula in the forthcoming Theorem~\ref{th::Phi_monotone}).
	
	To deal with such an extension problem, we consider the~$(n+1)$-dimensional half-space
	$$\R^{n+1}_+:=\big\{(x,z)\in\R^{n+1}\;{\mbox{ s.t. }}\; x\in\R^n,\ z\in[0,+\infty)\big\} .$$ 
	Given a bounded Lipschitz open domain~$\Omega\subseteq\R^{n+1}$, we define the sets
	$$ \Omega_0:=\Omega\cap\{z=0\} \qquad{\mbox{and}}\qquad \Omega_+:=\Omega\cap\{z>0\} .$$
	
	Moreover, if~$E\subseteq\R^n\times\{0\}$, we define the~$(n+1)$-dimensional extension of~$u:=\chi_E-\chi_{E^c}$ as the convolution
	\begin{equation}\label{TILDEUDEF}\widetilde{u}(x,z):=u*P(x,z),\end{equation}
	where
	\begin{align*}
		&P(x,z):=C_P \frac{z^s}{(x^2+z^2)^{\frac{n+s}{2}}},\\
		\text{and}\quad&C_P:=\left(\int_{\R^n}\frac{dx}{(1+x^2)^{(n+2)/2}}\right)^{-1}.
	\end{align*}
	For more details about this~$(n+1)$-dimensional extension, we refer to~\cite{caffarelli_silvestre_extension} and~\cite[Definition~7.1]{caffarelli_roquejoffre_savin_nonlocal}.
	
	Then, we have the following:
	
	\begin{prop} \label{prop::char_almost_min_ext}
		$E$ is a~$\Lambda$-minimal set in~$B_1$ if and only if there exists a positive constant~$\widetilde{c}_{n,s}$ such that the extension~$\widetilde{u}$ of~$u=\chi_E-\chi_{E^c}$ to~$\R^{n+1}_+$ satisfies
		\begin{equation} \label{eq::char_almos_min_ext}
			\int_{\Omega_+} z^{1-s}|\nabla\widetilde{u}|^2\,dx\,dz \leq \int_{\Omega_+} z^{1-s}|\nabla\overline{v}|^2\,dx\,dz + 8\widetilde{c}_{n,s}\Lambda\left|E\Delta \{\overline{v}(x,0)=1\}\right|,
		\end{equation}
		for every bounded Lipschitz open set~$\Omega\subseteq\R^{n+1}$ such that~$\Omega_0\comp B_1\times\{0\}$, and for every function~$\overline{v}:\R^{n+1}_+\to\R$ such that $\supp(\overline{v}-\widetilde{u})\comp\Omega$ and~$|\overline{v}|=1$ on~$\Omega_0$.
	\end{prop}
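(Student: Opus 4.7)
The proposition is a $\Lambda$-perturbation of the characterization of $s$-minimal sets through the Caffarelli--Silvestre extension (see~\cite[Theorem~7.2]{caffarelli_roquejoffre_savin_nonlocal} for the case $\Lambda=0$). The central ingredient of my approach is the identity
\[
\int_{\R^{n+1}_+} z^{1-s}\,|\nabla\tilde w|^2\,dx\,dz=8\,\tilde c_{n,s}\,\Per_s(W,\R^n),
\]
valid for the Poisson extension $\tilde w$ of $\chi_W-\chi_{W^c}$ defined in~\eqref{TILDEUDEF}, together with the fact that $\tilde w$ minimizes the weighted Dirichlet energy globally among functions whose trace on $\R^n\times\{0\}$ equals $\chi_W-\chi_{W^c}$; the factor $8$ comes from the elementary identity $|(\chi_W-\chi_{W^c})(x)-(\chi_W-\chi_{W^c})(y)|^2=4|\chi_W(x)-\chi_W(y)|$ combined with the symmetry of the integrand in the definition of $\mathcal{L}$. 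When the global perimeters are infinite the identity is read on the level of energy differences, using that for sets with $E\Delta F\comp B_1$ the function $\tilde f-\tilde u=(f-u)*P$ arises from an $L^1$-function compactly supported in $B_1$.

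\textbf{Forward direction.} Assume $E$ is $\Lambda$-minimal and let $\overline v$ be an admissible competitor; set $F:=\{\overline v(\cdot,0)=1\}$. The conditions $\supp(\overline v-\tilde u)\comp\Omega$ and $|\overline v|=1$ on $\Omega_0$ force the trace of $\overline v$ on $\R^n\times\{0\}$ to coincide with $\chi_F-\chi_{F^c}$ and yield $E\Delta F\subseteq\Omega_0\comp B_1$. The global energy-minimality of $\tilde f$ at this trace gives $\int_{\R^{n+1}_+} z^{1-s}|\nabla\overline v|^2\ge 8\tilde c_{n,s}\Per_s(F,\R^n)$, and subtracting the common contribution outside $\Omega_+$ (where $\overline v=\tilde u$) leads to
\[
\int_{\Omega_+}z^{1-s}|\nabla\overline v|^2-\int_{\Omega_+}z^{1-s}|\nabla\tilde u|^2\;\ge\;8\tilde c_{n,s}\bigl(\Per_s(F,\R^n)-\Per_s(E,\R^n)\bigr).
\]
A direct decomposition of $\Per_s$ based on whether integration points lie inside or outside $B_1$, combined with $E\Delta F\subseteq B_1$, reduces the right-hand side to $8\tilde c_{n,s}\bigl(\Per_s(F,B_1)-\Per_s(E,B_1)\bigr)$, which by the $\Lambda$-minimality~\eqref{eq::almost_min} of $E$ is at least $-8\tilde c_{n,s}\Lambda|E\Delta F|$, yielding~\eqref{eq::char_almos_min_ext}.

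\textbf{Converse and main obstacle.} For the reverse implication, given $F$ with $E\Delta F\comp B_1$, I would construct a sequence of admissible competitors $\overline v_R$ by fixing some open $\Omega'\comp B_1$ containing $E\Delta F$, taking $\Omega_R:=\Omega'\times(-1,R)$ (so that $\Omega_{R,0}=\Omega'\times\{0\}\comp B_1\times\{0\}$), and defining $\overline v_R$ to be the weighted harmonic extension in $\Omega_{R,+}$ with trace $\chi_F-\chi_{F^c}$ on $\Omega_{R,0}$ and boundary datum $\tilde u$ on the remainder of $\partial\Omega_{R,+}$, extended by $\tilde u$ outside $\Omega_R$. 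Each $\overline v_R$ is admissible, and inserting it into~\eqref{eq::char_almos_min_ext} together with the forward-direction identities yields $\Per_s(E,B_1)\le\Per_s(F,B_1)+\Lambda|E\Delta F|$ up to a remainder that vanishes as $R\to\infty$ thanks to the algebraic decay of the Poisson kernel $P$. This converse step is the principal technical hurdle: the non-compact support of $\tilde f-\tilde u$ demands a careful approximation procedure and quantitative tail estimates for the weighted Dirichlet energy, whereas the $\Lambda$-term itself enters linearly and transparently in every identity and introduces no new analytic difficulty beyond the $\Lambda=0$ case.
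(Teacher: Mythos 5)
Your proposal is correct in outline and in particular the observation that the $\Lambda$-term enters linearly and creates no new difficulty is exactly right; but you are taking a substantially longer route than the paper. The paper's proof is essentially a one-liner built on Lemma~\ref{lemma::local_energy_diff} (i.e.\ \cite[Lemma~7.2]{caffarelli_roquejoffre_savin_nonlocal}), which already asserts the identity
\[
\tilde{c}_{n,s}\bigl(\mathrm{J}_1(v)-\mathrm{J}_1(u)\bigr)=\inf_{\Omega,\overline{v}}\int_{\Omega_+}z^{1-s}\bigl(|\nabla\overline{v}|^2-|\nabla\tilde{u}|^2\bigr)\,dx\,dz
\]
over the same class of $(\Omega,\overline{v})$ appearing in the statement, combined with $\mathrm{J}_1(\chi_E-\chi_{E^c})=8\Per_s(E,B_1)$. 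Both directions of Proposition~\ref{prop::char_almost_min_ext} then fall out immediately: the forward direction by the ``$\geq\inf$'' side of the lemma applied to each fixed competitor, the converse by taking the infimum and invoking the hypothesis. What you are doing instead is, in effect, re-deriving the content of that lemma from scratch: your forward direction uses global Dirichlet minimality of the Poisson extension plus a localization of the (formally infinite) energy and perimeter differences, and your converse builds an explicit minimizing sequence $\overline{v}_R$ on tall cylinders $\Omega'\times(-1,R)$ — which is precisely the technical work carried out in the proof of CRS Lemma~7.2, where one must justify that the tail of the weighted energy vanishes as $R\to\infty$. This is all doable and you correctly identify it as the hard part, but it duplicates an existing result. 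One minor point worth flagging in your converse construction: with boundary datum exactly $\tilde{u}$ on $\partial\Omega_{R,+}$ and extension by $\tilde u$ outside, the support of $\overline{v}_R-\tilde{u}$ is only contained in $\overline{\Omega_R}$, not compactly contained in $\Omega_R$ as the admissibility condition $\supp(\overline{v}-\tilde{u})\comp\Omega$ requires; you would need to enlarge the cylinder slightly or interpolate near $\partial\Omega_R$ to make the competitor admissible (a harmless modification, but necessary for rigor).
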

	
	\begin{rem} \label{rem::hat_Lambda}
		We point out that the constant~$\widetilde{c}_{n,s}$ in Proposition~\ref{prop::char_almost_min_ext} here is the same as in~\cite[Lemma~7.2]{caffarelli_roquejoffre_savin_nonlocal}. 
		Since it will appear several times in the forthcoming discussion, given~$\Lambda\geq0$, we define the constant
		$$ \hat{\Lambda} := 8\widetilde{c}_{n,s}\Lambda.$$
	\end{rem}
	
	Now we consider a set~$E\subseteq\R^n$ and~$R>0$. For every~$r\in(0,R)$, we define
	\begin{equation}\label{defphimono}
		\Phi_E(r) := r^{s-n}\int_{B_r^+} z^{1-s}|\nabla\widetilde{u}|^2\,dx\,dz + \frac{n-s}{s}\omega_n\,\hat{\Lambda}r^s,
	\end{equation}
	where~$\widetilde{u}$ is the extension to~$\R^{n+1}_+$ of~$u=\chi_E-\chi_{E^c}$ (defined as in~\eqref{TILDEUDEF}), $ \omega_n$
	is the Lebesgue measure of the~$n$-dimensional unit ball, and~$\hat{\Lambda}$ is as in Remark~\ref{rem::hat_Lambda}.
	
	We show that whenever~$E$ is almost minimal in a domain~$\Omega\supset B_R$, the function~$\Phi_E(r)$ is monotone in~$r$,
	according to the following result:
	
	\begin{theorem}[Monotonicity formula] \label{th::Phi_monotone}
		If~$E$ is a~$\Lambda$-minimal set in a open bounded Lipschitz domain~$\Omega\subseteq\R^n$ such that~$0\in\partial E$, then~$\Phi_E(r)$ is increasing for any~$r$ small enough. 
	\end{theorem}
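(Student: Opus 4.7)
The plan is to show that $\Phi_E'(r)\ge 0$ for a.e.\ small $r$ by testing the extension-based characterization of $\Lambda$-minimality (Proposition~\ref{prop::char_almost_min_ext}) against the $0$-homogeneous radial extension of~$\tilde u|_{\partial B_r^+}$. Up to restricting to a ball~$B_R\comp\Omega$ and rescaling, we may assume that~$E$ is~$\Lambda$-minimal in~$B_1$ and work with~$r\in(0,1)$; the hypothesis~$0\in\partial E$ is not strictly needed for the monotonicity argument itself, but it fixes the geometric center that will be used in the subsequent blow-up analysis.

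\textbf{Main computation.} Set~$e(r):=\int_{B_r^+} z^{1-s}|\nabla\tilde u|^2\,dx\,dz$, so that the coarea formula applied to the radial function~$(x,z)\mapsto|(x,z)|$ gives
$$e'(r)=\int_{\partial B_r\cap\{z>0\}}z^{1-s}|\nabla\tilde u|^2\,d\mathcal{H}^{n}\qquad\text{for a.e.\ }r.$$
Define the radial competitor~$\overline v(\rho\theta):=\tilde u(r\theta)$ for~$\rho\in[0,r]$ and~$\theta\in\overline{S^n_+}$, extended by~$\overline v:=\tilde u$ outside~$B_r^+$. Since~$u(r\theta)\in\{-1,+1\}$ on~$\{z=0\}$, the trace~$\overline v(\cdot,0)$ takes values in~$\{-1,+1\}$, as required. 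In polar coordinates,~$\partial_\rho\overline v\equiv 0$ and~$|\nabla\overline v|^2=\rho^{-2}|\nabla_{S^n}\tilde u(r\cdot)|^2$, hence, writing~$z=\rho\,\theta_{n+1}$,
$$\int_{B_r^+}z^{1-s}|\nabla\overline v|^2\,dx\,dz\ =\ \frac{r^{n-s}}{n-s}\int_{S^n_+}\theta_{n+1}^{1-s}|\nabla_{S^n}\tilde u(r\cdot)(\theta)|^2\,d\theta\ \leq\ \frac{r}{n-s}\,e'(r),$$
where the last inequality recognizes the angular integral as a surface integral on~$\partial B_r^+$ and dominates the tangential part of the gradient by the full one.

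\textbf{Differential inequality and conclusion.} Since~$\overline v=\tilde u$ outside~$B_r^+$, the symmetric difference~$E\Delta\{\overline v(\cdot,0)=1\}$ is contained in~$B_r$, so its measure is at most~$\omega_n r^n$. Feeding~$\overline v$ into Proposition~\ref{prop::char_almost_min_ext} on a slight Lipschitz fattening of~$B_r^+$ yields
$$e(r)\ \leq\ \frac{r}{n-s}\,e'(r)+\hat\Lambda\,\omega_n\,r^n,$$
which, after multiplication by~$(n-s)r^{s-n-1}$, rearranges to~$\frac{d}{dr}\bigl(r^{s-n}e(r)\bigr)\ge-(n-s)\hat\Lambda\omega_n\,r^{s-1}$. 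Since the right-hand side is exactly the negative of~$\frac{d}{dr}\bigl(\tfrac{n-s}{s}\omega_n\hat\Lambda r^s\bigr)$, we conclude~$\Phi_E'(r)\ge 0$ a.e., and absolute continuity in~$r$ then delivers the claimed monotonicity.

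\textbf{Main obstacle.} The delicate step is justifying the radial competitor: one must check that the $0$-homogeneous extension lies in the weighted Sobolev space associated with the weight~$z^{1-s}$ on~$B_r^+$ (the factor~$\rho^{n-1-s}$ is integrable at the origin since~$n>s$), that it agrees with~$\tilde u$ across~$\partial B_r^+$ so that~$\overline v-\tilde u$ has compact support in the chosen domain, and that the non-Lipschitz conical tip at the origin can be removed by a cut-off/limiting argument without spoiling the variational inequality. A secondary technical point is to restrict to a full-measure set of ``good radii''~$r$ for which the coarea identity for~$e'(r)$ holds as an equality and for which the boundary trace of~$\nabla\tilde u$ makes sense.
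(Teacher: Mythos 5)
Your proof is correct and follows essentially the same strategy as the paper's: both build the $0$-homogeneous radial competitor on $B_r^+$, feed it into Proposition~\ref{prop::char_almost_min_ext}, bound $|E\Delta F|\le\omega_n r^n$, and dominate the tangential gradient on $\partial B_r^+$ by the full gradient to obtain the differential inequality $\Phi_E'(r)\ge0$. The paper organizes the computation by writing out $\Phi_E'(r)$ directly rather than through your auxiliary function $e(r)$, but the key identity $\int_{B_r^+}z^{1-s}|\nabla\overline v|^2=\frac{r}{n-s}\int_{\partial B_r^+}z^{1-s}|\nabla^T\tilde u|^2$ and the conclusion are identical; your closing remarks about admissibility of the conical competitor and good radii are reasonable technical points that the paper treats implicitly.
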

	
	\subsection{Stickiness for almost minimal sets}
	
	The notion of stickiness was firstly introduced in~\cite{dipierro_savin_valdinoci_boundary_behavior} as an unexpected boundary behavior typical of~$s$-minimal surfaces. This behavior is easier to define rigorously when we consider~$s$-minimal graphs in dimension~$2$, since the stickiness phenomenon can be read as a discontinuity at the boundary.
	
	\begin{definition} \label{def::stickiness}
		Let~$E$ be a~$2$-dimensional~$s$-minimal subgraph in~$\Omega=\Omega'\times\R$, with~$\Omega'\subseteq\R$ a bounded open interval, i.e.~$E$ is a local minimizer of the~$s$-perimeter such that 
		$$E=\subgr(u):=\big\{(x_1,x_2)\in\R^2\;{\mbox{ s.t. }}\; x_2<u(x_1)\big\},$$ 
		for some function~$u:\R\to\R$. 
		
		We say that~$E$ sticks to the boundary of~$\Omega$, or that~$E$ shows a sticking phenomenon, if~$u$ is discontinuous at~$\partial\Omega'$.
	\end{definition}
	
	For further details about stickiness, we refer to~\cite{dipierro_savin_valdinoci_graphs_properties, MR3926519, dipierro_savin_valdinoci_generality_stickiness, MR4548844}.
	\smallskip
	
	In this paper, we show that stickiness does not characterize almost-minimal surfaces. Namely, we show that every time we have a non-sticking~$s$-minimal surface~$A$ in~$\Omega$, we can find a sticking almost minimizer for~$\Per_s(\cdot,\Omega)$ with external datum~$A$, and, vice versa, every time we have a sticking~$s$-minimal surface, we can find a non-sticking almost minimizer. 
	
	To do this, we first prove that the property of being almost minimal is preserved under suitable external perturbations, in the following sense:
	
	\begin{theorem}[Almost minimality does not see external perturbations] \label{th::perturbed_almost_min}
		
		Consider the cylindrical domain~$\Omega= B^{n-1}_1\times\R\subseteq\R^{n-1}\times\R$. Let~$E$ be a subset of~$\R^n$, and let~$\Sigma\subseteq\Omega^c\cap E^c$ be a bounded set.
		
		Assume that there exist a small~$\delta>0$, a function~$\psi\in\cont^{1,\alpha}(\R^{n-1})$, and a vertical translation~$\tau_n$ such that
		\begin{equation} \label{eq::E_assumption}
			E=\subgr(\psi), \quad\mbox{ in }\Omega_\delta\setminus\Omega, 
		\end{equation}
		with~$ \alpha\in(0,1)$, and
		\begin{equation}\label{eq::Sigma_assumption}
			\tau_n(\Sigma)\cap \left[\left(B^{n-1}_{1+\delta}\setminus\overline{B^{ n-1}_1}\right)\times\R\right] \subseteq G_{C,\xi} ,
		\end{equation}
		where 
		$$G_{C,\xi}:=\big\{(x',x_n)\in\R^n\;{\mbox{ s.t. }}\; 1<|x'|<1+\delta,\; \psi(x')<x_n<\psi(x')+C(|x'|-1)^{n+s-1+\xi}\big\},$$ 
		for some~$C>0$ and~$\xi>0$.
		
		Then, $E$ is a~$\Lambda$-minimizer for~$\Per_s(\cdot,\Omega)$ with external datum~$E\setminus\Omega$, for some~$\Lambda\geq0$, if and only if~$\widetilde{E}:=E\cup\Sigma$ is a~$\widetilde{\Lambda}$-minimizer for~$\Per_s(\cdot,\Omega)$ with external datum~$\widetilde{E}\setminus\Omega$, for some~$\widetilde{\Lambda}\geq0$.
	\end{theorem}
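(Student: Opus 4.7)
My plan is to exploit the symmetry of the statement and the fact that $\Sigma\subset\Omega^c$, so that $E$ and $\tilde E=E\cup\Sigma$ agree inside $\Omega$ while any admissible competitor $F$ with $\tilde E\Delta F\subset\Omega$ must contain $\Sigma$. I would set $F_0:=F\setminus\Sigma$ and check that $F_0$ agrees with $E$ outside $\Omega$, that $E\Delta F_0\subset\Omega$, and that $|E\Delta F_0|=|\tilde E\Delta F|$, so $F_0$ is an admissible competitor for $E$. The correspondence $F\leftrightarrow F_0$ is a bijection between the two classes of competitors, and both implications will follow from the same algebraic identity comparing $s$-perimeters.

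Expanding the bilinear form $\mathcal{L}$ according to the decomposition $\tilde E=E\sqcup\Sigma$ and $\tilde E^c=E^c\setminus\Sigma$ (using $\Sigma\cap\Omega=\emptyset$), a direct computation yields
\begin{align*}
\Per_s(\tilde E,\Omega)-\Per_s(E,\Omega)&=\mathcal{L}(\Sigma,E^c\cap\Omega)-\mathcal{L}(\Sigma,E\cap\Omega),\\
\Per_s(F,\Omega)-\Per_s(F_0,\Omega)&=\mathcal{L}(\Sigma,F_0^c\cap\Omega)-\mathcal{L}(\Sigma,F_0\cap\Omega).
\end{align*}
Subtracting and invoking the $\Lambda$-minimality of $E$ against $F_0$, the cross-terms collapse into
$$\Per_s(\tilde E,\Omega)\leq\Per_s(F,\Omega)+\Lambda|\tilde E\Delta F|+2\int_\Sigma\int_\Omega\frac{\chi_{F_0}(y)-\chi_E(y)}{|x-y|^{n+s}}\,dy\,dx,$$
and the last integral will be controlled by $2\,\|\phi_\Sigma\|_{L^\infty(\Omega)}\,|\tilde E\Delta F|$, where $\phi_\Sigma(y):=\int_\Sigma|x-y|^{-n-s}\,dx$. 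This will give $\tilde\Lambda:=\Lambda+2\|\phi_\Sigma\|_\infty$, and the converse direction proceeds identically with the roles of $E,\tilde E$ and of $F_0,F$ swapped.

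The theorem is thus reduced to showing $\phi_\Sigma\in L^\infty(\Omega)$, and this is the step where I expect the main technical work. I would decompose $\Sigma=\Sigma_1\cup\Sigma_2$ with $\Sigma_1:=\Sigma\cap\{1<|x'|<1+\delta\}$. For $\Sigma_2$, boundedness together with $\dist(\Sigma_2,\Omega)\geq\delta$ gives the trivial bound $\phi_{\Sigma_2}(y)\leq|\Sigma|/\delta^{n+s}$. For $\Sigma_1$, the structural hypothesis $\tau_n(\Sigma_1)\subset G_{C,\xi}$ together with measure-invariance of vertical translations bounds the vertical thickness of $\Sigma_1$ at a horizontal point $x'$ by $C(|x'|-1)^{n+s-1+\xi}$; using $|x-y|^{n+s}\geq|x'-y'|^{n+s}\geq(|x'|-1)^{n+s}$ (valid since $y'\in B^{n-1}_1$ and $|x'|>1$, via the reverse triangle inequality) and passing to polar coordinates around $\partial B^{n-1}_1$, Fubini would give
$$\phi_{\Sigma_1}(y)\leq C\int_0^\delta r^{n+s-1+\xi}\,\frac{(1+r)^{n-2}}{r^{n+s}}\,dr\leq C'\int_0^\delta r^{\xi-1}\,dr=\frac{C'\delta^\xi}{\xi}.$$

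The hardest part---really the only delicate point---is exactly this last integrability estimate: the exponent $n+s-1+\xi$ appearing in the definition of $G_{C,\xi}$ is precisely calibrated so that the remaining integrand in $r$ reduces to $r^{\xi-1}$, with the extra polynomial margin $\xi>0$ providing just enough convergence at $r=0$ to cancel the borderline singularity of the kernel $|x-y|^{-(n+s)}$ against a ``thin'' set accumulating at $\partial\Omega$. Once $\|\phi_\Sigma\|_{L^\infty(\Omega)}<\infty$ is in hand, both directions of the equivalence follow directly from the perimeter identity above; the passage to the unbounded cylindrical domain $\Omega=B^{n-1}_1\times\R$ is handled via the second clause of Definition~\ref{def::almost_min}, by running the argument in each bounded Lipschitz $K\subset\Omega$.
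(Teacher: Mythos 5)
Your proof is correct and follows essentially the same route as the paper: you set up the same bijective competitor correspondence (your $F\leftrightarrow F_0=F\setminus\Sigma$ is the paper's $\tilde F\leftrightarrow F$, resp. $F\leftrightarrow\tilde F=F\dot\cup\Sigma$), derive the same algebraic identity for $\Per_s(\tilde E,\cdot)-\Per_s(E,\cdot)$, and control the error term by exactly the paper's double-integral estimate, decomposing $\Sigma=\Sigma_1\cup\Sigma_2$ and using $|x-y|\geq|x'|-1$ together with the $r^{\xi-1}$ integrability coming from the $G_{C,\xi}$ hypothesis. The only cosmetic difference is that you package the bound as $\|\phi_\Sigma\|_{L^\infty(\Omega)}$ with $\phi_\Sigma(y)=\int_\Sigma|x-y|^{-n-s}\,dx$, whereas the paper estimates $\int_\Sigma\int_\Omega\chi_{E\Delta F}(x)|x-y|^{-n-s}\,dx\,dy$ directly; by Fubini these are the same computation.
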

	
	The proof of Theorem~\ref{th::perturbed_almost_min} is contained in Section~\ref{sec::perturbed_almost_min}.
	This result will be recalled frequently when discussing the stickiness phenomenon in Section~\ref{sec::stickiness_almost_min}. 
	In particular, as a byproduct of Theorem~\ref{th::perturbed_almost_min}, we infer both the existence of sticking almost minimal sets and the existence of non-sticking almost minimal sets (see Propositions~\ref{prop::sticking_almost_min} and~\ref{prop::non_sticking_almost_min} in Section~\ref{sec::stickiness_almost_min}).
	
	Besides, we point out that in~\cite{dipierro_savin_valdinoci_generality_stickiness} the authors establish that~$2$-dimensional~$s$-minimal graphs can only either stick to the boundary or be globally~$\cont^1$. In contrast, we will show in Section~\ref{sec::stickiness_almost_min} that almost minimal surfaces can manifest intermediate behaviors. In particular, we exhibit an example of a non-sticking continuous, but not continuously differentiable almost minimal set (see Example~\ref{ex::nonstiking_C0_nonC1}). In order to do this, we will exploit Theorem~\ref{th::perturbed_almost_min} to suitably perturb the half-space.
	
	\subsection{The non-local Massari's Problem}
	As already mentioned at the beginning of this paper, one of the natural settings in which the regularity theory of
	almost minimizers finds a convenient application is 
	the non-local version of the Massari's Problem about sets with prescribed mean curvature.
	
	To this end, let us consider an open bounded Lipschitz domain~$\Omega\subseteq\R^n$. For an assigned function~$H\in L^1(\Omega)$, and for every set~$F\subseteq\R^n$, we define the non-local Massari's functional
	$$ \mathscr{J}^H_{\Omega,s}(F) := \Per_s(F,\Omega) + \int_{F\cap\Omega} H(x)\,dx .$$ 
	
	\begin{definition} \label{def::var_frac_min_curv}
		Let~$E$ be a set of finite fractional perimeter in~$\Omega$. We say that~$H$ is a variational non-local mean curvature for~$E$ if~$E$ is a minimizer of the functional~$\mathscr{J}^H_{\Omega,s}$ with a given datum outside~$\Omega$, i.e.
		\begin{equation*} 
			\mathscr{J}^H_{\Omega,s}(E) \leq \mathscr{J}^H_{\Omega,s}(F)
			\quad {\mbox{ for all~$F$ such that }} F\setminus\Omega=E\setminus\Omega .
		\end{equation*}
	\end{definition}
	
	Whenever the assigned mean curvature~$H$ is an essentially bounded function in~$\Omega$, as a direct consequence of the regularity theory for non-local almost minimal sets, we establish the existence and regularity of minimizers of~$\mathscr{J}^H_{\Omega,s}$ in the family of sets
	\begin{equation*} \label{eq::family_F}
		\mathscr{F} := \big\{F \;{\mbox{ s.t. }}\; \Per_s(F,\Omega)<+\infty {\mbox{ and }} F\setminus\Omega=M\setminus\Omega\big\} ,
	\end{equation*} 
	where~$M\subseteq\R^n$ is a set of finite~$s$-perimeter that plays the role of an external datum.
	
	More precisely, we prove that:
	
	\begin{theorem}[Existence of a minimizer] \label{th::existence}
		The family~$\mathscr{F}$ admits an element of variational non-local mean curvature~$H$, i.e. there exists~$E\in\mathscr{F}$ such that
		$$ \mathscr{J}^H_{\Omega,s}(E)=\inf\big\{\mathscr{J}^H_{\Omega,s}(F)\;{\mbox{ s.t. }}\; F\in\mathscr{F}\big\} .$$
	\end{theorem}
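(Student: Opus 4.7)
The plan is to apply the direct method of the calculus of variations to the functional $\mathscr{J}^H_{\Omega,s}$. First I would observe that $\mathscr{F}$ is non-empty, since $M\in\mathscr{F}$, and that $\mathscr{J}^H_{\Omega,s}$ is bounded below on $\mathscr{F}$: indeed, for every $F\in\mathscr{F}$,
$$\mathscr{J}^H_{\Omega,s}(F)\geq \Per_s(F,\Omega)-\|H\|_{L^\infty(\Omega)}|\Omega|\geq -\|H\|_{L^\infty(\Omega)}|\Omega|,$$
so that $\mu:=\inf_{\mathscr{F}}\mathscr{J}^H_{\Omega,s}$ is a finite real number.

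Next I would pick a minimizing sequence $\{F_k\}\subset\mathscr{F}$ with $\mathscr{J}^H_{\Omega,s}(F_k)\to\mu$. From the previous estimate, for $k$ large one gets
$$\Per_s(F_k,\Omega)\leq \mathscr{J}^H_{\Omega,s}(F_k)+\|H\|_{L^\infty(\Omega)}|\Omega|\leq C,$$
uniformly in $k$. Since all $F_k$ share the same external datum $M\setminus\Omega$, the sets $F_k\cap\Omega$ are uniformly bounded in the fractional $BV$-sense inside $\Omega$, and $|F_k\cap\Omega|\leq|\Omega|<+\infty$. I would then invoke the standard compactness result for sets of finite $s$-perimeter (as in Caffarelli--Roquejoffre--Savin, based on the compact embedding of fractional Sobolev spaces into $L^1$), extracting a (non-relabelled) subsequence converging in $L^1_{\loc}(\R^n)$ to some set $E$. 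Since $F_k\setminus\Omega=M\setminus\Omega$ for every $k$, the limit set automatically satisfies $E\setminus\Omega=M\setminus\Omega$, so that $E\in\mathscr{F}$, provided $\Per_s(E,\Omega)<+\infty$.

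Finally I would pass to the limit. The fractional perimeter $\Per_s(\cdot,\Omega)$ is lower semicontinuous under $L^1_{\loc}$-convergence (a standard application of Fatou's lemma to its double-integral representation), hence
$$\Per_s(E,\Omega)\leq \liminf_{k\to+\infty}\Per_s(F_k,\Omega)<+\infty,$$
which in particular confirms that $E\in\mathscr{F}$. On the other hand, $\chi_{F_k\cap\Omega}\to\chi_{E\cap\Omega}$ in $L^1(\Omega)$, so, since $H\in L^\infty(\Omega)\subset L^1(\Omega)$, the linear term passes to the limit by dominated convergence:
$$\int_{F_k\cap\Omega}H(x)\,dx\;\longrightarrow\;\int_{E\cap\Omega}H(x)\,dx.$$
Summing these two pieces yields $\mathscr{J}^H_{\Omega,s}(E)\leq\liminf_k\mathscr{J}^H_{\Omega,s}(F_k)=\mu$, and the reverse inequality is trivial, proving that $E$ is a minimizer.

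The only genuinely delicate step is the compactness one: one must ensure that the uniform bound on $\Per_s(F_k,\Omega)$, combined with the fixed external datum of finite $s$-perimeter, yields enough control to extract an $L^1_{\loc}$-convergent subsequence. This is where the boundedness and Lipschitz regularity of $\Omega$, together with the finiteness of $\Per_s(M,\Omega)$, are essential, and it relies on the fractional BV compactness theory rather than on anything specific to almost minimizers.
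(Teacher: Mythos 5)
Your proof is correct and follows essentially the same route as the paper: direct method, uniform bound on $\Per_s(F_k,\Omega)$ along a minimizing sequence, compactness via the fractional Sobolev embedding, lower semicontinuity of $\Per_s$, and dominated convergence for the bulk term. The paper packages the compactness step as a stand-alone result (Theorem~\ref{th::compact_finite_Pers} and its Corollary~\ref{cor::C_Omega_comp}), whereas you invoke it as a known fact, but the underlying argument is the same.

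One small point: for Theorem~\ref{th::existence} the paper only assumes $H\in L^1(\Omega)$, while your lower bound $\mathscr{J}^H_{\Omega,s}(F)\geq \Per_s(F,\Omega)-\|H\|_{L^\infty(\Omega)}|\Omega|$ implicitly upgrades to $H\in L^\infty(\Omega)$. The estimate you actually need is simply $\bigl|\int_{F\cap\Omega}H\,dx\bigr|\leq\|H\|_{L^1(\Omega)}$, which holds under the stated $L^1$ hypothesis; replacing $\|H\|_{L^\infty(\Omega)}|\Omega|$ by $\|H\|_{L^1(\Omega)}$ throughout (including the dominated-convergence step, where $|(\chi_{F_k}-\chi_E)H|\leq|H|\in L^1(\Omega)$ is the correct majorant) recovers the paper's level of generality without changing the structure of the argument.
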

	
	Moreover, we show that every minimizer of~$\mathscr{J}^H_{\Omega,s}$ is a~$\norm{H}_{L^\infty(\Omega)}$-minimal set in the sense of Definition~\ref{def::almost_min}, see Section~\ref{gfyeui65748123456wfgh007686}. Accordingly, thanks to the regularity theory for almost minimizers, we deduce the following:
	
	\begin{theorem} \label{th::regularity}
		Let~$H\in L^\infty(B_1)$.
		Let~$E$ be a set with variational non-local mean curvature~$H$ in~$B_1$, that is 
		$$ \mathscr{J}^H_{B_1,s}(E)\leq \mathscr{J}^H_{B_1,s}(F)\quad {\mbox{ for all~$ F$ such that }} F\setminus B_1 = E\setminus B_1 .$$
		
		Then, there exists some~$\epsilon_0>0$, depending only on~$n$, $s$, and~$\|H\|_{L^\infty(B_1)}$, such that if
		$$\partial E\cap B_1 \subseteq \{|x_n|\leq \epsilon_0\},$$  
		then~$\partial E\cap B_{1/2}$ is a~$\cont^{1,\alpha}$-surface, for every~$\alpha\in(0,s)$.
	\end{theorem}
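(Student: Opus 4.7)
The plan is to reduce the statement directly to the regularity theorem for almost minimal sets, namely Theorem~\ref{th::holder_reg_almost_min}. The reduction amounts to showing that any minimizer~$E$ of~$\mathscr{J}^H_{B_1,s}$ is a~$\Lambda$-minimal set in the sense of Definition~\ref{def::almost_min}, with the explicit choice~$\Lambda:=\|H\|_{L^\infty(B_1)}$. Once this is done, the conclusion follows by invoking Theorem~\ref{th::holder_reg_almost_min} verbatim, with the very same flatness threshold~$\epsilon_0$, whose dependence on~$n$, $s$, and~$\Lambda$ translates into dependence on~$n$, $s$, and~$\|H\|_{L^\infty(B_1)}$, as claimed.

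First, I would take any set~$F$ with~$E\Delta F\subset B_1$, so in particular~$F\setminus B_1=E\setminus B_1$. From the minimality property stated in the theorem,
\[
\Per_s(E,B_1)+\int_{E\cap B_1}H(x)\,dx\;\leq\;\Per_s(F,B_1)+\int_{F\cap B_1}H(x)\,dx.
\]
Rearranging and estimating the curvature term via the trivial bound~$|\chi_F(x)-\chi_E(x)|\leq \chi_{E\Delta F}(x)$ yields
\[
\Per_s(E,B_1)-\Per_s(F,B_1)\;\leq\;\int_{B_1}H(x)\bigl(\chi_F(x)-\chi_E(x)\bigr)\,dx\;\leq\;\|H\|_{L^\infty(B_1)}\,|E\Delta F|.
\]
This is exactly the inequality~\eqref{eq::almost_min} in Definition~\ref{def::almost_min}, so~$E$ is a~$\|H\|_{L^\infty(B_1)}$-minimizer for~$\Per_s(\cdot,B_1)$; note in particular that the finiteness~$\Per_s(E,B_1)<+\infty$ is inherited from the fact that~$E\in\mathscr{F}$ or, more directly, from~$\mathscr{J}^H_{B_1,s}(E)<+\infty$ together with the~$L^1$ bound on~$H$ in~$B_1$.

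Second, having identified~$E$ as a~$\Lambda$-minimal set, I would apply Theorem~\ref{th::holder_reg_almost_min} with this~$\Lambda$. The flatness hypothesis~$\partial E\cap B_1\subset\{|x_n|\leq\epsilon_0\}$ is precisely the one required by Theorem~\ref{th::holder_reg_almost_min}, and the threshold~$\epsilon_0$ it produces depends only on~$n$, $s$, and~$\Lambda=\|H\|_{L^\infty(B_1)}$. The conclusion is that~$\partial E\cap B_{1/2}$ is~$\cont^{1,\alpha}$ for every~$\alpha\in(0,s)$, matching the statement of Theorem~\ref{th::regularity}.

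There is no genuine obstacle here beyond the regularity machinery already developed earlier in the paper: the essential content has been absorbed into Theorem~\ref{th::holder_reg_almost_min}, and the argument above is the bookkeeping needed to cast a prescribed non-local curvature problem as an almost minimality problem. The only point worth double-checking is the~$L^\infty$ estimate of the curvature contribution; using~$H\in L^\infty(B_1)$ (and not merely~$L^1(\Omega)$ as in Theorem~\ref{th::existence}) is crucial, since otherwise one could not absorb~$\int_{B_1}H(\chi_F-\chi_E)\,dx$ into a term proportional to~$|E\Delta F|$, and the passage to almost minimality would break down.
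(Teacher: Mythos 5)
Your proposal is correct and coincides with the paper's own argument: both reduce Theorem~\ref{th::regularity} to Theorem~\ref{th::holder_reg_almost_min} by the same one-line computation showing that any minimizer of~$\mathscr{J}^H_{B_1,s}$ is a~$\|H\|_{L^\infty(B_1)}$-minimal set for~$\Per_s$ in~$B_1$. The remark on the necessity of~$H\in L^\infty(B_1)$ (rather than merely~$L^1$) is exactly the observation made in Section~\ref{gfyeui65748123456wfgh007686}.
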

	
	Theorems~\ref{th::existence} and~\ref{th::regularity} are proved
	in Section~\ref{sec::massari}.
	
	
	\section{H\"older regularity for almost minimal sets and proofs of
		Lemma~\ref{lemma::almost_minimal_subsupersol} and Theorem~\ref{th::holder_reg_almost_min}} \label{sec::holder_reg}
	
	This section is devoted to the proof of Theorem~\ref{th::holder_reg_almost_min},
	via three main steps: uniform density estimates,
	Euler-Lagrange Inequalities in the viscosity sense and the improvement of flatness technique.
	
	\subsection{Preliminary facts}
	Let~$E$ be a set in~$\R^n$. Since the fractional perimeter is invariant under zero-measure modifications of the set under consideration, we replace~$E$ with one of its normalizations. Namely, we assume that~$\partial E$ coincides with the boundary of the measure theoretic interior of~$E$, i.e.
	$$ \partial E=\partial E(1) := \big\{x\in\R^n\;
	{\mbox{ s.t. }}
	\; 0<|E\cap B_r(x)|<\omega_n r^n \;{\mbox{ for all~$r>0$}} \big\}.$$
	For full details about the convenient choice of the representative of an~$s$-minimal set, see~\cite[Appendix~A]{dipierro_savin_valdinoci_graphs_properties}.
	\smallskip
	
	To start with, we provide the proof of Lemma~\ref{lemma::almost_minimal_subsupersol}.
	
	\begin{proof}[Proof of Lemma~\ref{lemma::almost_minimal_subsupersol}]
		Given sets~$E$ and~$F$ such that~$E\setminus\Omega=F\setminus\Omega$, let us define
		$$ A^-:=E\setminus F\subseteq E\cap\Omega\qquad
		\text{and}\qquad  A^+:=F\setminus E\subseteq E^c\cap\Omega . $$
		Notice that, by definition of localized~$s$-perimeter, we have that
		\begin{equation} \label{eq::PerF_PerE}
			\begin{split}
				&\Per_s(F,\Omega)-\Per_s(E,\Omega) \\
				=&\left[\mathcal{L}(A^-,E\setminus A^-)-\mathcal{L}(A^-,E^c)\right]
				-\left[\mathcal{L}(A^+,E)-\mathcal{L}(A^+,E^c\setminus A^+)\right]
				+2\mathcal{L}(A^-,A^+).
			\end{split}
		\end{equation}
		
		If~$E$ satisfies both the~$\Lambda$-sub-solution and the~$\Lambda$-super-solution properties, we also have that
		\begin{align}
			&\mathcal{L}(A^-,E\setminus A^-)-\mathcal{L}(A^-,E^c)\geq -\Lambda|A^-|\label{eq::A_subsol}\\
			{\mbox{and }} \quad &	\mathcal{L}(A^+,E)-\mathcal{L}(A^+,E^c\setminus A^+) \leq \Lambda|A^+|. \label{eq::A_supersol}
		\end{align}
		Therefore, subtracting~\eqref{eq::A_supersol} to~\eqref{eq::A_subsol}, we obtain that
		$$ \Per_s(F,\Omega)-\Per_s(E,\Omega) \geq -\Lambda|E\setminus F|-\Lambda|F\setminus E| = -\Lambda|E\Delta F|,$$
		which proves that~$E$ is an almost minimal set with respect to~$\Lambda$.
		
		Conversely, suppose that~$E$ is a~$\Lambda$-minimal set. Then, for every sets~$A^-\subseteq E\cap\Omega$ and~$A^+\subseteq E^c\cap\Omega$, we use~\eqref{eq::PerF_PerE} and the almost minimality of~$E$ with competitors 
		$$ F^+=E\cup A^+\qquad
		\text{and}\qquad F^-=E\setminus A^-$$
		to deduce that
		\begin{eqnarray*}
			&&		-\Lambda|A^-|=-\Lambda|E\Delta F^-|\leq \Per_s(F^-,\Omega)-\Per_s(E,\Omega)
			= \mathcal{L}(A^-,E\setminus A^-)-\mathcal{L}(A^-,E^c)
		\end{eqnarray*}
		and
		\begin{eqnarray*}
			&&		\Lambda|A^+|=\Lambda|E\Delta F^+|\leq \Per_s(E,\Omega)-\Per_s(F^+,\Omega)
			= \mathcal{L}(A^+,E\setminus A^+)-\mathcal{L}(A^+,E^c),
		\end{eqnarray*}
		which show
		that~$E$ satisfies both the~$\Lambda$-sub-solution and the~$\Lambda$-super-solution properties, as desired.
	\end{proof}
	\smallskip 
	
	We also establish a general result which guarantees that the limit of a convergent sequence of almost minimal sets is almost minimal, as well. Moreover, we show that the~$s$-perimeters is sequentially continuous in the family of almost minimal sets with respect to the~$L^1_{\loc}$-convergence.
	
	\begin{prop}  \label{prop::conv_almost_min}
		Let~$\{E_k\}_k$ be a sequence of~$\Lambda_k$-minimal sets such that~$E_k\to E$ in~$L^1_{\loc}(\R^n)$, for some set~$E$ of finite fractional perimeter, and~$\Lambda_k\to\Lambda\in[0,+\infty)$, as~$k\to+\infty$. 
		
		Then, the limit set~$E$ is~$\Lambda$-minimal.
		
		Moreover,
		\begin{equation}\label{lapers} 
			\Per_s(E,\Omega) = \lim_{k\to+\infty} \Per_s(E_k,\Omega).
		\end{equation}
	\end{prop}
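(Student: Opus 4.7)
The plan is to derive both assertions of Proposition~\ref{prop::conv_almost_min} from two ingredients: the standard lower semicontinuity of $\Per_s(\cdot,\Omega)$ under $L^1_{\loc}$-convergence, and a ``gluing'' competitor construction used to pass to the limit in the almost-minimality inequality. Lower semicontinuity, namely $\Per_s(E,\Omega)\leq\liminf_k\Per_s(E_k,\Omega)$, follows from Fatou's lemma applied to the non-negative integrand $|\chi_{E_k}(x)-\chi_{E_k}(y)|/|x-y|^{n+s}$, after extracting a subsequence along which $\chi_{E_k}\to\chi_E$ almost everywhere.

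For the $\Lambda$-minimality of $E$, I fix an arbitrary competitor $F$ with $E\Delta F\subset\subset\Omega$, pick an auxiliary bounded Lipschitz open set $\Omega'$ with $E\Delta F\subset\subset\Omega'\subset\subset\Omega$, and form the glued sets
\[F_k:=(F\cap\Omega')\cup(E_k\setminus\Omega'),\]
so that $E_k\Delta F_k=(E_k\Delta F)\cap\Omega'\subset\subset\Omega$. The $\Lambda_k$-minimality of $E_k$ tested against $F_k$ then yields
\[\Per_s(E_k,\Omega)\leq\Per_s(F_k,\Omega)+\Lambda_k\,|E_k\Delta F_k|.\]
Since $E_k\to E$ in $L^1_{\loc}$ and $E\Delta F\subset\Omega'$, one has $|E_k\Delta F_k|\to|E\Delta F|$. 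Granted the crucial continuity statement $\Per_s(F_k,\Omega)\to\Per_s(F,\Omega)$ (discussed below), combining with lower semicontinuity and $\Lambda_k\to\Lambda$ produces $\Per_s(E,\Omega)\leq\Per_s(F,\Omega)+\Lambda\,|E\Delta F|$, proving the $\Lambda$-minimality of $E$. The convergence~\eqref{lapers} is obtained by the same machinery, now with $F$ replaced by $E$: for any $\Omega''\subset\subset\Omega$ I define $F_k^\ast:=(E\cap\Omega'')\cup(E_k\setminus\Omega'')$, apply $\Lambda_k$-minimality of $E_k$, observe that $|E_k\Delta F_k^\ast|=|(E_k\Delta E)\cap\Omega''|\to 0$, invoke the continuity claim in the form $\Per_s(F_k^\ast,\Omega)\to\Per_s(E,\Omega)$, and deduce $\limsup_k\Per_s(E_k,\Omega)\leq\Per_s(E,\Omega)$, which together with lower semicontinuity gives~\eqref{lapers}.

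The main obstacle is precisely the continuity statement $\Per_s(F_k,\Omega)\to\Per_s(F,\Omega)$ (and its analogue for $F_k^\ast$), which is \emph{not} a free consequence of the bare $L^1_{\loc}$-convergence $F_k\to F$, since $\Per_s$ is only lower semicontinuous in that topology. The key is to exploit that the gluing forces $F_k=F$ on $\Omega'$, so the entire discrepancy lives in $(\Omega')^c$: explicitly $F_k\Delta F=(E_k\Delta E)\setminus\Omega'$. Correspondingly, the difference $\Per_s(F_k,\Omega)-\Per_s(F,\Omega)$ reduces to non-local cross-interactions of the form $\mathcal{L}(X,(E_k\Delta E)\cap Y)$ with $X$ a bounded subset of $\Omega$. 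At positive distance $\eta>0$ from $\partial\Omega'$ the kernel $|x-y|^{-n-s}$ is uniformly bounded, so dominated convergence (using $|(E_k\Delta E)\cap K|\to 0$ for every bounded $K$) disposes of that part. The residual thin tubular strip $\{\dist(\cdot,\partial\Omega')<\eta\}$ is controlled uniformly in $k$ via the Lipschitz regularity of $\partial\Omega'$ and the vanishing of $\int_{\{\dist(x,\partial\Omega')<\eta\}}\dist(x,\partial\Omega')^{-s}\,dx$ as $\eta\to 0^+$; sending first $k\to\infty$ and then $\eta\to 0^+$ closes the argument. Tail contributions from $\Omega^c$ far away are harmless, because $|y|^{-n-s}$ is integrable at infinity, so the mass of $E_k\Delta E$ outside a large ball contributes an amount uniform in $k$ that vanishes upon truncation.
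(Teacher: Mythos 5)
Your gluing at a strictly interior set $\Omega'\comp\Omega$ invalidates the continuity claim $\Per_s(F_k,\Omega)\to\Per_s(F,\Omega)$ on which the whole argument hinges. Write $B:=\Omega\setminus\Omega'$. Since $F_k$ coincides with $E_k$, and $F$ with $E$, on all of $(\Omega')^c\supset B$, the portion of $\Per_s(F_k,\Omega)-\Per_s(F,\Omega)$ coming from the double integral over $B\times B$ equals
\begin{equation*}
\mathcal{L}(E_k\cap B,\,E_k^c\cap B)-\mathcal{L}(E\cap B,\,E^c\cap B),
\end{equation*}
a difference of \emph{local} $s$-perimeter contributions over the open annulus $B\subset\Omega$. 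This is not a cross-interaction of the advertised form $\mathcal{L}(X,(E_k\Delta E)\cap Y)$. The naive pointwise bound
\begin{equation*}
\big|\,|\chi_{F_k}(x)-\chi_{F_k}(y)|-|\chi_F(x)-\chi_F(y)|\,\big|\le \chi_{F_k\Delta F}(x)+\chi_{F_k\Delta F}(y)
\end{equation*}
only controls this term by $2\int_{(E_k\Delta E)\cap B}\int_B |x-y|^{-n-s}\,dx\,dy$, which is $+\infty$ whenever $|(E_k\Delta E)\cap B|>0$, because the kernel is not locally integrable in $x$ near $y\in B$. And the actual difference need not tend to zero: $\Per_s$ is only lower semicontinuous under $L^1_{\loc}$-convergence, so in general mass is lost. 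Proving that this specific term vanishes would require the continuity of $s$-perimeters for almost minimal sets in $B$, which is precisely \eqref{lapers} restricted to $B$ --- circular.

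The remedy, and what the paper's proof does, is to glue at $\partial\Omega$ itself: take $F_k:=(F\cap\Omega)\cup(E_k\setminus\Omega)$, so that $F_k\cap\Omega=F\cap\Omega$ \emph{exactly}. Then every interaction with both variables in $\Omega$ coincides for $F_k$ and $F$, and the whole discrepancy $F_k\Delta F=(E_k\Delta E)\cap\Omega^c$ lies outside $\Omega$. The difference $|\Per_s(F_k,\Omega)-\Per_s(F,\Omega)|$ is then bounded by the single cross-interaction $\mathcal{L}\big(\Omega,(E\Delta E_k)\cap\Omega^c\big)$, and this is driven to zero exactly in the spirit you describe: a co-area argument near $\partial\Omega$, exploiting the integrability of $\rho^{-s}$, combined with a tail estimate at infinity. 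The same gluing with $F:=E$ gives $\limsup_k\Per_s(E_k,\Omega)\le\Per_s(E,\Omega)$, hence \eqref{lapers} with lower semicontinuity. A minor additional point is that the co-area argument uses the regularity of $\partial\Omega$; the paper runs it for smooth $\Omega$ and then handles general Lipschitz $\Omega$ by interior smooth approximation, which your sketch does not address.
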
 
	
	This result is a readjustment to the almost minimizers case of~\cite[Theorem~3.3]{caffarelli_roquejoffre_savin_nonlocal}. For the sake of completeness, a proof of Proposition~\ref{prop::conv_almost_min} can be found in Appendix~\ref{sec::prop_conv_almost_min}.
	
	\subsection{Uniform Density Estimates}
	We now discuss a version for almost minimal boundaries of the uniform density estimates introduced in~\cite{caffarelli_roquejoffre_savin_nonlocal}. Intuitively, the result claims that if~$x\in\partial E$, every ball centered at~$x$ determines two regions of the space of comparable measure.
	
	\begin{theorem}[Uniform density estimates] \label{th::unif_dens_estimates}
		Let~$E$ be a~$\Lambda$-almost minimizer for~$\Per_s$ in~$\Omega$.
		Then, there exist constants~$r_0>0$ and~$c_0\in(0,1)$, depending only on~$n$, $s$, and~$\Lambda$, such that, for any~$x_0\in(\partial E)\cap\Omega$ and~$r\in(0,\min\{r_0,\dist(x_0,\partial\Omega)\})$,
		\begin{equation}\label{eq::unif_dens_estimates}
			c_0r^n\leq |E\cap B_r(x_0)|\leq (1-c_0)r^n.
		\end{equation}
	\end{theorem}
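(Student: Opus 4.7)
The plan is to prove the lower bound $|E\cap B_r(x_0)|\geq c_0 r^n$; the upper bound will follow by applying the same argument to $E^c$, which is itself $\Lambda$-minimal in view of the symmetry between \eqref{eq::super_sol_prop} and \eqref{eq::sub_sol_prop} observed after Lemma~\ref{lemma::almost_minimal_subsupersol}. Up to a translation we assume $x_0=0$ and set $V(r):=|E\cap B_r|$. The normalization $\partial E=\partial E(1)$ fixed at the beginning of this section guarantees that $V(r)>0$ for every $r>0$, so the task is to upgrade this strict positivity into the $n$-dimensional bound $V(r)\geq c_0 r^n$ for $r\leq r_0$.

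The first step is to test the almost-minimality inequality against the competitor $F:=E\setminus B_r$, which is admissible whenever $\overline{B_r}\subset\Omega$. Plugging $A^-=E\cap B_r$ and $A^+=\emptyset$ into formula \eqref{eq::PerF_PerE} gives $\mathcal{L}(E\cap B_r,E^c)-\mathcal{L}(E\cap B_r,E\setminus B_r)\leq\Lambda V(r)$; adding $\mathcal{L}(E\cap B_r,E\setminus B_r)$ to both sides turns the left-hand side into the full (non-localized) fractional perimeter of $E\cap B_r$, so that
\[ \Per_s(E\cap B_r,\R^n)\leq 2\,\mathcal{L}(E\cap B_r,\R^n\setminus B_r)+\Lambda V(r). \]

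Next I would invoke the fractional isoperimetric inequality $c_{n,s}V(r)^{(n-s)/n}\leq\Per_s(E\cap B_r,\R^n)$ and control the interaction term via a Fubini-type averaging over the radius: integrating $\rho\mapsto\mathcal{L}(E\cap B_\rho,\R^n\setminus B_\rho)$ over $\rho\in(r,2r)$ and swapping the order of integration replaces the non-integrable kernel $|x-y|^{-n-s}$ with the integrable $|x-y|^{1-n-s}$, since a pair $(x,y)$ contributes to the $\rho$-integral on an interval of length at most $\min(|x-y|,r)$. This yields $\int_r^{2r}\mathcal{L}(E\cap B_\rho,\R^n\setminus B_\rho)\,d\rho\leq C_{n,s}\,r^{1-s}V(2r)$; selecting a good radius $\rho^{\ast}\in(r,2r)$ via the mean value theorem and plugging back produces the scaling inequality
\[ V(r)^{(n-s)/n}\leq V(\rho^{\ast})^{(n-s)/n}\leq C_{n,s}\bigl(r^{-s}+\Lambda\bigr)V(2r). \]
For $r\leq r_0(n,s,\Lambda)$ with $\Lambda r_0^{s}\leq 1$, the factor $(r^{-s}+\Lambda)$ is comparable to $r^{-s}$ and the inequality reduces to the one available for purely $s$-minimal sets. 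A dyadic iteration $r\mapsto r/2$ on the density $W(r):=V(r)/r^n$ then gives $W(r/2)\leq A\,W(r)^{n/(n-s)}$ with $n/(n-s)>1$, and combining this with $V(r)>0$ together with a clean-cut argument in the spirit of \cite{caffarelli_roquejoffre_savin_nonlocal} forces $W(r)\geq c_0$ for every $r\leq r_0$.

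The main technical obstacle is the Fubini averaging lemma: a pointwise bound on $\int_{\R^n\setminus B_\rho}|x-y|^{-n-s}\,dy$ for $x\in B_\rho$ blows up as $x\to\partial B_\rho$, and the $r^{1-s}$-integrability only emerges after the $\rho$-integration is performed and the order of integration is exchanged, so the calculation is more delicate than a naive pointwise estimate would suggest. By contrast, the extra $\Lambda V(r)$ term relative to the $s$-minimal argument of \cite{caffarelli_roquejoffre_savin_nonlocal} is a harmless lower-order perturbation at small scales, since $V(r)\leq\omega_n r^n$ makes $\Lambda V(r)=O(r^n)$ while $V(r)^{(n-s)/n}$ scales like $r^{n-s}$ on a density-saturating set; accordingly $\Lambda$ affects only the choice of the thresholds $r_0$ and $c_0$, not the structure of the argument.
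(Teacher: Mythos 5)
The opening portion of your argument is sound and parallels the paper's own reasoning: using the competitor $F=E\setminus B_r$ together with formula~\eqref{eq::PerF_PerE} is precisely how the paper arrives at the estimate $\mathcal{L}(A_r,A_r^c)\leq 2\mathcal{L}(A_r,B_r^c)+\Lambda\mu(r)$ in~\eqref{eq::ude2}, and your fractional isoperimetric inequality is the same thing as the Sobolev embedding $\|\chi_{A_r}\|_{L^{2n/(n-s)}}\leq C\|\chi_{A_r}\|_{H^{s/2}}$ used there. Your Fubini averaging of $\mathcal{L}(E\cap B_\rho,B_\rho^c)$ over $\rho\in(r,2r)$ is also correct and is a legitimate alternative to the paper's pointwise co-area bound $\mathcal{L}(A_r,B_r^c)\leq C\int_0^r\mu'(\rho)(r-\rho)^{-s}\,d\rho$; both deliver a usable relation between interaction energy and volume, and the absorption of the $\Lambda$-term at small scales is handled the same way.

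The genuine gap is in the last step. From $W(r/2)\leq A\,W(r)^{n/(n-s)}$ with $n/(n-s)>1$, starting from a small value $W(r_1)<\epsilon$, the dyadic iteration drives $W(r_1/2^k)\to 0$ super-geometrically as $k\to+\infty$, i.e.\ $V(r_1/2^k)$ decays faster than any power of the radius. This is perfectly compatible with the qualitative fact $V(\rho)>0$ for all $\rho>0$: that condition carries no quantitative content, and $W$ need not be monotone, so nothing prevents the density from oscillating and vanishing along the dyadic subsequence you generate. In other words, the contradiction you need simply is not there, and the phrase ``a clean-cut argument in the spirit of \cite{caffarelli_roquejoffre_savin_nonlocal}'' is where the essential content would have to go. The paper avoids this trap by using Lemma~\ref{lemma::iteration} (a Giusti--Stampacchia iteration) applied to the sequence $t_k:=t_0/2+t_0/2^{k+1}$, which decreases to $t_0/2>0$, not to $0$. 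The conclusion $\mu(t_k)\to0$ then contradicts $\mu(t_k)\to\mu(t_0/2)=|E\cap B_{t_0/2}|>0$ by continuity of $\mu$; the limit is taken at a \emph{fixed positive radius}, where positivity of the measure is a finite piece of data. Your Fubini averaging \emph{can} be made to yield an estimate of the form $V(t_{k+1})^{1-s/n}\leq C(t_k-t_{k+1})^{-s}V(t_k)$ by integrating over $(t_{k+1},t_k)$ instead of $(r,2r)$, which then slots into the hypothesis $x_{k+1}^{1-\beta}\leq N^kMx_k$ of Lemma~\ref{lemma::iteration} with $N=2^s$, and that would close the argument; but the straight dyadic chain $r\mapsto r/2$, as you have written it, does not.
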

	The proof of this result will be presented here below.
	\smallskip
	
	As an interesting byproduct of the uniform density estimates
	in Theorem~\ref{th::unif_dens_estimates}, we improve Proposition~\ref{prop::conv_almost_min}. In particular, we show that uniform density estimates and local convergence are sufficient to prove uniform convergence, i.e. with respect to Hausdorff distance.

	We recall that a sequence of sets~$\{E_k\}_k$ converges to some set~$E$ with respect to the Hausdorff distance, and we write
	$$ \lim_{k\to+\infty} \mbox{d}_\haus{}(\partial E_k,\partial E)=0 ,$$		
	if, for every compact set~$K$ of~$\Omega$ and for all~$\epsilon>0$, there exists~$k_0\in\N$ such that, for all~$k\ge k_0$,
	\begin{equation*}
		\begin{split}
			&\partial E_k\cap K \subseteq \mathscr{U}_\epsilon(\partial E)\cap K\\
			\mbox{and}\quad& \partial E\cap K \subseteq \mathscr{U}_\epsilon(\partial E_k)\cap K
		\end{split}
	\end{equation*}
	where, for any set~$A$, 
	\begin{equation*} \label{eq::fat_boundary}
		\mathscr{U}_\epsilon(\partial A):=\{x\in\R^n\,{\mbox{ s.t. }}\, \dist(x,\partial A)<\epsilon\}.
	\end{equation*}
	
	Then, we have the following:
	\begin{cor}[Uniform convergence for almost minimal boundaries] \label{cor::boundary_conv}
		Let~$\{E_k\}_k$ be a sequence of~$\Lambda_k$-minimal sets such that~$E_k\to E$ in~$L^1_{\loc}(\R^n)$, for some set~$E$ of finite~$s$-perimeter, and~$\Lambda_k\to\Lambda\in[0,+\infty)$, as~$k\to+\infty$.
		
		Then, 
		$$ \lim_{k\to+\infty} \mbox{d}_\haus{}(\partial E_k,\partial E)=0 .$$		
	\end{cor}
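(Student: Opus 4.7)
The plan is to argue by contradiction, reducing Hausdorff convergence of $\partial E_k$ to $\partial E$ (on compact subsets of the ambient domain $\Omega$) to the $L^1_{\loc}$-convergence that is already assumed, via the uniform density estimates of Theorem~\ref{th::unif_dens_estimates}. By Proposition~\ref{prop::conv_almost_min} the limit $E$ is itself $\Lambda$-minimal, so Theorem~\ref{th::unif_dens_estimates} applies to $E$ as well as to every $E_k$; moreover, since $\Lambda_k\to\Lambda<+\infty$, the sequence is eventually bounded by $\Lambda+1$ and, because weakening $\Lambda$ only weakens the estimates, the density radius $r_0$ and constant $c_0$ can be chosen uniformly in $k$. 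I would work with the canonical representatives $\partial E=\partial E(1)$ and $\partial E_k=\partial E_k(1)$, so that the complement of $\partial E$ in $\R^n$ splits as the disjoint union of the two open sets $E^{(0)}$ and $E^{(1)}$ of density~$0$ and density~$1$ points of $E$, and analogously for $E_k$.

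Suppose now that the Hausdorff convergence fails on some compact set $K\subset\Omega$. Then, up to extracting a subsequence and fixing some $\epsilon>0$, one of the two alternatives must occur for all $k$ large enough: either (A) there exist $x_k\in\partial E_k\cap K$ with $\dist(x_k,\partial E)\geq\epsilon$, or (B) there exist $x_k\in\partial E\cap K$ with $\dist(x_k,\partial E_k)\geq\epsilon$. By compactness of $K$, in both cases one has $x_k\to x_\infty\in K$.

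In Case (A), I would fix $r<\min\{\epsilon/2,\,r_0,\,\dist(K,\partial\Omega)\}$, so that $B_r(x_\infty)$ is disjoint from $\partial E$ and $B_{r/2}(x_k)\subset B_r(x_\infty)$ for all $k$ large enough. Connectedness of $B_r(x_\infty)$ forces either $|E\cap B_r(x_\infty)|=0$ or $|E^c\cap B_r(x_\infty)|=0$, and $L^1_{\loc}$-convergence transfers the corresponding vanishing, in the limit, to $|E_k\cap B_{r/2}(x_k)|$ or $|E_k^c\cap B_{r/2}(x_k)|$ respectively. This contradicts the two-sided bound
\[ c_0(r/2)^n\leq|E_k\cap B_{r/2}(x_k)|\leq (1-c_0)(r/2)^n \]
provided by Theorem~\ref{th::unif_dens_estimates} at $x_k\in\partial E_k$. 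Case~(B) is entirely analogous with the roles of $E$ and $E_k$ swapped: for $k$ large, $B_{\epsilon/2}(x_\infty)$ avoids $\partial E_k$, so the same connectedness argument yields that one of $E_k,\,E_k^c$ has vanishing measure inside $B_{\epsilon/2}(x_\infty)$; passing to a further subsequence to fix the alternative and then to the $L^1_{\loc}$-limit propagates the vanishing to $E$, contradicting $x_\infty\in\partial E(1)$.

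The whole argument is essentially a clean reduction from $L^1_{\loc}$- to Hausdorff-convergence of the boundaries; the only points requiring some care are the uniform choice of the density constants along the sequence $\Lambda_k$ (handled by the boundedness $\Lambda_k\leq\Lambda+1$ and the monotone dependence of the constants on $\Lambda$) and the systematic use of the canonical representative, which is what allows one to translate the geometric condition $\dist(x,\partial E)\geq\epsilon$ into the measure-theoretic dichotomy that triggers the contradiction with the density estimates.
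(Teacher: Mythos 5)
Your argument is correct and is essentially the same as the paper's: bound $\Lambda_k$ uniformly, invoke the uniform density estimates of Theorem~\ref{th::unif_dens_estimates}, and derive a contradiction with the $L^1_{\loc}$-convergence at a putative contradiction point (the paper routes this through the general Corollary~\ref{cor::boundary_conv_general} in Appendix~B). If anything, your version is slightly tighter, since you explicitly invoke Proposition~\ref{prop::conv_almost_min} to ensure that $E$ itself satisfies density estimates (needed for the second half of the Hausdorff-distance dichotomy) and you spell out the measure-theoretic case split on whether the small ball lies in $E^{(0)}$ or $E^{(1)}$, both of which the paper leaves somewhat implicit.
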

	
	Corollary~\ref{cor::boundary_conv} is a special case of a more general result in which we assume that the
	sets~$E_k$ and~$E$ satisfy the uniform density estimate instead of the almost minimality condition. We refer to Appendix~\ref{sec::unif_density_estimates_cor}
	for the general statement with proof. 
	We provide the proof of Corollary~\ref{cor::boundary_conv} here below.
	\smallskip
	
	Another consequence of the uniform density estimates in Theorem~\ref{th::unif_dens_estimates} is the so-called ``clean ball condition'': 
	
	\begin{cor}[Clean ball condition for almost minimal sets] \label{cor::clean_ball}
		Let~$E$ be an almost minimizer for~$\Per_s$ in~$\Omega$.
		Let~$x\in\partial E$ and~$r>0$ such that~$B_r(x)\subseteq\Omega$.
		
		Then, there exist a constant~$c>0$, depending only on~$n$, $s$, and~$\Lambda$, and points~$y_1\in E$ and~$y_2\in\Omega\setminus E$ such that
		$$ B_{cr}(y_1) \subseteq E\cap B_r(x)\qquad {\mbox{and}} \qquad B_{cr}(y_2) \subseteq E^c\cap B_r(x).$$
	\end{cor}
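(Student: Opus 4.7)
The plan is to prove the existence of $y_1$; the symmetric claim for $y_2$ follows by applying the same argument to $E^c$, which is almost minimal in view of Lemma~\ref{lemma::almost_minimal_subsupersol} and the subsequent observation.

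First I would restrict to $r<r_0$, with $r_0$ the constant of Theorem~\ref{th::unif_dens_estimates}, so that the density estimates apply on $B_r(x)$. That theorem at $x\in\partial E$ and radius $r/2$ gives the lower bound $|E\cap B_{r/2}(x)|\geq c_0(r/2)^n$. Arguing by contradiction, suppose that no $y_1$ satisfies $B_{cr}(y_1)\subset E\cap B_r(x)$, for some $c<1/2$ to be fixed later. Since each $y\in E\cap B_{r/2}(x)$ has $B_{cr}(y)\subset B_r(x)$, the contradictory hypothesis forces $\dist(y,\partial E)<cr$ for every such $y$, yielding
$$c_0(r/2)^n \;\leq\; |E\cap B_{r/2}(x)| \;\leq\; |\mathscr{U}_{cr}(\partial E)\cap B_{r/2}(x)|.$$

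The core of the argument is then to estimate the tubular neighborhood on the right from above by a quantity of the form $\tilde{C}c^s r^n$, which produces the desired contradiction as soon as $c$ is chosen small enough, depending only on $n$, $s$ and $\Lambda$. I would pick a Vitali family $\{B_{cr/5}(p_i)\}_{i=1}^N$ with $p_i\in\partial E\cap B_r(x)$ and pairwise disjoint interiors, such that $\{B_{cr}(p_i)\}$ covers $\partial E\cap B_r(x)$; then $\mathscr{U}_{cr}(\partial E\cap B_r(x))\subset\bigcup_i B_{2cr}(p_i)$, so the tubular volume is controlled by $N\omega_n(2cr)^n$. To bound $N$, Theorem~\ref{th::unif_dens_estimates} at each $p_i$ gives both $|E\cap B_{cr/5}(p_i)|$ and $|E^c\cap B_{cr/5}(p_i)|\geq c_0(cr/5)^n$, which combined with the fractional isoperimetric inequality on $B_{cr/5}(p_i)$ provides the local lower bound $\Per_s(E,B_{cr/5}(p_i))\geq c_1(cr)^{n-s}$. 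On the other hand, testing the almost-minimality inequality with the competitor $E\cup B_{3r/2}(x)$ yields the global upper bound $\Per_s(E,B_{2r}(x))\leq Cr^{n-s}$, since the contribution of this competitor is dominated by $\mathcal{L}(B_{3r/2}(x),B_{3r/2}(x)^c)\leq Cr^{n-s}$. A direct expansion of the definition of $\Per_s$ shows the super-additivity bound $\sum_i\Per_s(E,B_{cr/5}(p_i))\leq 2\Per_s(E,B_{2r}(x))$, whence $N\leq C'c^{-(n-s)}$, which gives the claimed bound $\tilde{C}c^s r^n$ on the tubular neighborhood.

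The main obstacle I anticipate is the non-local bookkeeping needed to relate the localized $s$-perimeters on the disjoint balls $B_{cr/5}(p_i)$ to the global one on $B_{2r}(x)$: because of the long-range kernel, one has to track carefully the cross interactions $\mathcal{L}(E\cap B_{cr/5}(p_i),E^c\cap B_{cr/5}(p_j))$ for $i\neq j$, which a naive localization would either miss or double-count. The key feature that ultimately makes the argument succeed is the positive exponent $s$, which produces the small factor $c^s$ in the tubular estimate and thereby defeats the contradictory hypothesis as soon as $c$ is taken sufficiently small.
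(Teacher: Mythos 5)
Your proposal follows essentially the same strategy as the paper's proof: a covering argument near $\partial E$, density estimates to produce a local lower bound on $\Per_s$ around each covering piece, almost-minimality to produce a global upper bound, and a comparison of the two to limit the number of pieces. The paper organizes this with a grid of $\delta$-cubes and counts the cubes touching $\partial E$; you use a Vitali family of balls to control a tubular neighborhood of $\partial E$. That is a cosmetic variation rather than a genuinely different route, and your back-of-the-envelope super-additivity claim $\sum_i\Per_s(E,B_{cr/5}(p_i))\leq 2\Per_s(E,B_{2r}(x))$ is in fact correct: each pair $(y,z)\in E\times E^c$ contributing to the left side is counted at most twice (once through a ball containing $y$ and once through a ball containing $z$), so the feared double-counting is already absorbed into the factor $2$.

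One detail you should fix: the hypothesis only guarantees $B_r(x)\subset\Omega$, so the competitor $E\cup B_{3r/2}(x)$ and the bound on $\Per_s(E,B_{2r}(x))$ are not directly licensed. The paper sidesteps this by comparing against $B_r(x)\cup(E\setminus B_r(x))$ and bounding $\mathcal{L}(E\cap B_r(x),E^c\cap B_r(x))\leq\Per_s(E,B_r(x))\leq\Per_s(B_r(x),\R^n)+\Lambda\omega_n r^n$. Restricting your Vitali balls to radius $\le cr/5$ centered at points of $\partial E\cap B_{3r/4}(x)$ (enough to cover $\mathscr{U}_{cr}(\partial E)\cap B_{r/2}(x)$ once $c<1/4$), keeping them inside $B_r(x)$, and replacing $B_{2r}(x)$ by $B_r(x)$ throughout removes the issue without changing any exponents.
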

	
	The proof of Corollary~\ref{cor::clean_ball} can be found below.		
	Now, we give a proof of Theorem~\ref{th::unif_dens_estimates}, for which we need the following technical lemma. 
	
	\begin{lemma}[Lemma 7.1, \cite{MR1707291}] \label{lemma::iteration}
		Let~$\beta\in(0,1)$, $N>1$, and~$M>0$. Let~$\{x_k\}_k$ be a decreasing sequence in~$\R$ such that
		$$ x_{k+1}^{1-\beta}\leq N^kMx_k.$$
		If~$x_0\leq N^{\frac{1}{\beta}-\frac{1}{\beta^2}}M^{-\frac{1}{\beta}}$, then~$x_k\to0$ as~$k\to+\infty$.
	\end{lemma}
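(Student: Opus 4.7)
The plan is to set up an inductive geometric bound of the form $x_k \leq \lambda^k x_0$ for a suitable rate $\lambda\in(0,1)$ and then to calibrate $\lambda$ so that the recursion propagates the bound. This is the standard strategy for iteration inequalities of this shape: the exponential prefactor $N^k$ must be balanced against a geometric decay of $x_k$ itself, and the hypothesis on $x_0$ should be exactly what is needed to launch the induction at the base step.

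The first step is to substitute the ansatz $x_k \leq \lambda^k x_0$ into the hypothesis $x_{k+1}^{1-\beta} \leq N^k M x_k$. This gives
\[
x_{k+1}^{1-\beta} \leq N^k M \lambda^k x_0,
\]
and the goal is to conclude $x_{k+1}^{1-\beta} \leq \lambda^{(k+1)(1-\beta)} x_0^{1-\beta}$. Dividing the desired inequality through, the closing condition becomes
\[
(N\lambda^\beta)^k \, \lambda^{\beta-1} M\, x_0^{\beta} \leq 1 \qquad\text{for every } k\geq 0.
\]

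Since this has to hold uniformly in $k$, the factor $(N\lambda^\beta)^k$ forces $N\lambda^\beta \leq 1$, and the natural (and optimal) choice is $\lambda := N^{-1/\beta}$, which lies in $(0,1)$ because $N>1$. With this choice the $k$-dependent factor is identically $1$, and the remaining inequality reduces to $\lambda^{\beta-1} M x_0^\beta \leq 1$, that is
\[
x_0 \leq \lambda^{(1-\beta)/\beta} M^{-1/\beta} = N^{-(1-\beta)/\beta^2} M^{-1/\beta} = N^{1/\beta - 1/\beta^2} M^{-1/\beta},
\]
which is precisely the standing hypothesis on $x_0$. Hence induction on $k$ yields $x_k \leq N^{-k/\beta} x_0$ for all $k\geq 0$, and in particular $x_k\to 0$ (in fact geometrically).

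The only real point to watch is the algebra of exponents: one has to verify that the explicit threshold $N^{1/\beta - 1/\beta^2} M^{-1/\beta}$ is exactly what the calibration $\lambda=N^{-1/\beta}$ produces, and this identification pins down the statement. Beyond that there is no substantial obstacle. The monotonicity assumption on $\{x_k\}$ is not used in the induction itself, but it implicitly guarantees that $x_k\geq 0$, so that the fractional power $x_{k+1}^{1-\beta}$ is unambiguously defined and the hypothesis is non-vacuous along the whole sequence.
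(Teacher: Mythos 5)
Your proof is correct and is the standard argument for De Giorgi-type iteration lemmas, which is surely what the cited reference does: propagate the geometric bound $x_k\leq\lambda^k x_0$ with $\lambda=N^{-1/\beta}$ calibrated to cancel the $N^k$ prefactor, and observe that the closing condition at $k=0$ is exactly the stated threshold for $x_0$. The only minor imprecision is the remark that monotonicity guarantees $x_k\geq0$: monotonicity alone does not, but the appearance of $x_{k+1}^{1-\beta}$ in the hypothesis forces $x_k\geq0$ for all $k$ anyway, which is the point you actually need.
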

	
	\begin{proof}[Proof of Theorem~\ref{th::unif_dens_estimates}]
		If~$E$ is a~$\Lambda$-minimal set, for some~$\Lambda\geq0$, we have that
		\begin{equation} \label{eq::almost_min2}
			\Per_s(E,\Omega)\leq\Per_s(F,\Omega)+\Lambda|E\Delta F|,
		\end{equation}
		for every~$F $ such that~$ F\setminus\Omega=E\setminus\Omega$.
		
		Let~$x_0\in( \partial E)\cap\Omega$. Up to a translation,
		we can suppose that~$x_0$ coincides with the origin. Moreover, up a dilation,
		we can also assume that~$B_1\subseteq\Omega$.
		
		Define~$A_r:=E\cap B_r$, with~$r\in(0,1)$ and observe
		that~$A_r\subseteq\Omega$.
		Also, let~$\mu(r):=|A_r|$ and notice that,
		by the co-area formula, $\mu'(r)=\haus{n-1}(E\cap\partial B_r)$.
		
		Our strategy now is to provide an estimate of~$\mu^{1-\frac{s}{n}}(r)$ in terms of~$\mu(r)$.
		For this, set~$q:=\frac{2n}{n-s}$ and observe that, thanks to the Sobolev embeddings,  
		\begin{equation} \label{eq::ude1}
			\norm{\chi_{A_r}}_{L^q(\R^n)}\leq C\norm{\chi_{A_r}}_{H^{\frac{s}{2}}(\R^n)} = C\left(\mathcal{L}(A_r,A_r^c)\right)^{\frac12},
		\end{equation}
		up to renaming~$C$, that depends only on~$n$ and~$s$.
		
		Now, since
		\begin{equation*}
			\mathcal{L}(A_r,A_r^c) = \mathcal{L}(A_r,E\cap A_r^c)+\mathcal{L}(A_r,E^c),
		\end{equation*}
		it follows from~\eqref{eq::sub_sol_prop} that
		\begin{equation} \label{eq::ude2}
			\mathcal{L}(A_r,A_r^c)\leq 2\mathcal{L}(A_r,E\cap A_r^c) + \Lambda\mu(r)\leq 2\mathcal{L}(A_r,B_r^c) + \Lambda\mu(r).
		\end{equation}
		Moreover, by Fubini-Tonelli's Theorem, we have that
		\begin{equation*}
			\begin{split}
				&	\mathcal{L}(A_r,B_r^c)
				=\int_{A_r}\int_{B_r^c} \frac{dy\,dx}{|x-y|^{n+s}} 
				\leq C\int_{A_r}\left(\int_{r-|x|}^{+\infty}\frac{dz}{z^{s+1}}\right)\,dx \\
				&\qquad\quad\leq C\int_{A_r} \frac{dx}{(r-|x|)^{s}}
				\leq C\int_0^r\frac{\mu'(\rho)}{(r-\rho)^s}\, d\rho ,
			\end{split}
		\end{equation*}
		for some~$C>0$, depending on~$n$ and~$s$ and possibly changing from line to line.
		
		Plugging this into~\eqref{eq::ude2}, we obtain that
		\begin{equation*} 
			\mathcal{L}(A_r,A_r^c)\le C\int_0^r\frac{\mu'(\rho)}{(r-\rho)^s}\, d\rho + \Lambda\mu(r).
		\end{equation*}
		{F}rom the last inequality and~\eqref{eq::ude1}, we deduce that
		\begin{equation}\label{eq::ude3}
			\mu^{\frac{n-s}{n}}(r)=\norm{\chi_{A_r}}_{L^{\frac{2n}{n-s}}(\R^n)}^2 \leq 
			C \left(\int_0^r\frac{\mu'(\rho)}{(r-\rho)^s}\, d\rho + \Lambda\mu(r)\right) ,
		\end{equation} up to renaming~$C$.
		
		Furthermore, since~$\mu(r)\leq c_nr^n$, we choose~$r_0>0$ such that, for every~$r\in(0,r_0]$,  
		\begin{equation*}
			C\Lambda\mu(r) \leq C\Lambda\mu^{1-\frac{s}{n}} (r)
			( c_nr^n)^{\frac{s}{n}}\leq\frac{1}{2}\mu^{1-\frac{s}{n}}(r).
		\end{equation*}
		Using this information into~\eqref{eq::ude3}
		we thus conclude that
		\begin{equation*}
			\mu^{1-\frac{s}{n}} (r)\leq C\int_0^r\frac{\mu'(\rho)}{(r-\rho)^s}\, d\rho .
		\end{equation*}
		Hence,
		integrating in~$r\in(0,t)$, we deduce that, for all~$t\in(0, r_0]$,
		\begin{equation} \label{eq::ude7}
			\begin{split}
				\int_0^t \mu^{1-\frac{s}{n}}(r)\, dr
				&\leq \int_0^t\left(C
				\int_0^r\frac{\mu'(\rho)}{(r-\rho)^s}\, d\rho  \right)\,dr\\
				&= C\int_0^t \left(\mu'(\rho)\int_\rho^t\frac{dr}{(r-\rho)^s}\right)\, d\rho \\
				&= C\int_0^t \mu'(\rho) (t-\rho)^{1-s}\,d\rho \\&
				\leq C \mu(t) \,t^{1-s}.
			\end{split}
		\end{equation}
		
		Now we set
		$$ c_0:=2^{\frac{n}s-\frac{n^2}{s^2}}(4C)^{-\frac{n}s}$$
		and we claim that
		\begin{equation}\label{y9564rzxcvbnasdfghjkqwertyui1234567}
			\mu(t)\ge c_0t^n \quad {\mbox{for all }} t\in(0,r_0].\end{equation}
		To prove this,
		we argue by contradiction and assume that
		there exists~$t_0\in(0, r_0]$ such that
		\begin{equation}\label{wt436y95687rfdcghvdsj}
			\mu(t_0)< c_0t_0^n.\end{equation}
		
		We define the sequence~$\{t_k\}_k$ as~$t_k:=\frac{t_0}{2}+\frac{t_0}{2^{k+1}}$. Then, using~\eqref{eq::ude7}, we have that
		$$ \frac{t_0}{2^{k+2}} \mu^{1-\frac{s}{n}}(t_{k+1}) = (t_k-t_{k+1})  \mu^{1-\frac{s}{n}}(t_{k+1})  \leq \int_{t_{k+1}}^{t_k}  \mu^{1-\frac{s}{n}}(r)\,dr\leq C \mu(t_k) t_k^{1-s}\leq C \mu(t_k) t_0^{1-s} .$$
		Notice that, by continuity, 
		$$\lim_{k\to+\infty}\mu(t_{k})=\mu(t_0/2)=|E\cap B_{t_0/2}|>0.$$
		Therefore, using Lemma~\ref{lemma::iteration}
		with~$x_k:=\mu(t_k)$,
		$\beta:=s/n$, $M:=4Ct_0^{-s}$, and~$N:=2$, we find that 
		$$\mu(t_0)>N^{\frac{1}{\beta}-\frac{1}{\beta^2}}M^{-\frac{1}{\beta}}=2^{\frac{n}s-\frac{n^2}{s^2}}\big(4Ct_0^{-s}\big)^{-\frac{n}s}.$$ 
		Thus, thanks to~\eqref{wt436y95687rfdcghvdsj} we deduce that
		$$ c_0t_0^n> \mu(t_0)>
		2^{\frac{n}s-\frac{n^2}{s^2}}\big(4Ct_0^{-s}\big)^{-\frac{n}s}=
		2^{\frac{n}s-\frac{n^2}{s^2}}(4C)^{-\frac{n}s} t_0^n
		= c_0t_0^n ,$$
		which gives the desired contradiction
		and proves~\eqref{y9564rzxcvbnasdfghjkqwertyui1234567}. 
		
		{F}rom~\eqref{y9564rzxcvbnasdfghjkqwertyui1234567}
		we deduce that
		$$ |E\cap B_r|\geq c_0r^n \quad \text{ for all }r\in(0,r_0],$$
		which proves the first inequality in~\eqref{eq::unif_dens_estimates}.
		
		Moreover, since also~$E^c$ is an almost minimal set
		(recall Definition~\ref{def::super_sub_sol} and
		Lemma~\ref{lemma::almost_minimal_subsupersol}),
		we exploit~\eqref{y9564rzxcvbnasdfghjkqwertyui1234567}
		with~$E$ replaced by~$E^c$, thus obtaining that, for~$r$ sufficiently small,
		$$ |E^c\cap B_r|\geq c_0r^n,$$
		from which we infer the second inequality in~\eqref{eq::unif_dens_estimates}.
	\end{proof}	
	
	\begin{proof}[Proof of Corollary~\ref{cor::boundary_conv}]
		Since the sequence of nonnegative real numbers~$\Lambda_k$ is convergence, we have that there exists~$\Lambda_0\in[0,+\infty)$ such that~$\Lambda_k\le\Lambda_0$ for all~$k\in\N$. 
		
		As a consequence, the sets~$E_k$
		are almost minimizers with respect to~$\Lambda_0$ in~$\Omega$.
		Thus, we deduce from Theorem~\ref{th::unif_dens_estimates}
		that uniform density estimates hold true with constants that do not depend on~$k$. 
		
		This says that we are in the position of exploiting Corollary~\ref{cor::boundary_conv_general}, thus obtaining the desired claim
		in Corollary~\ref{cor::boundary_conv}.
	\end{proof}
	
	\begin{proof}[Proof of Corollary~\ref{cor::clean_ball}] 
		Up to rescalings and translations, we assume that~$x=0$ and~$r=1$. Let us decompose~$\R^n$ in disjoint
		hypercubes of size~$\delta$ and define~$N_\delta:=\#\text{\Fontauri{Q}}_{\ \delta}$, namely the number of elements of~$\text{\Fontauri{Q}}_{\ \delta}$, where 
		$$ \text{\Fontauri{Q}}_{\ \delta}:=\big\{Q_\delta: Q_\delta \text{ is a cube of size~$\delta$ such that }Q_\delta \subseteq Q_{3\delta}\subseteq B_1,\ Q_\delta\cap\partial E\neq\varnothing\big\}.$$
		Notice that Corollary~\ref{cor::clean_ball} is proved if we show that at least one cube~$Q_\delta$ is completely contained
		in~$E\cap B_1$.
		
		For this, we observe that the uniform density estimates in Theorem~\ref{th::unif_dens_estimates} give that~$|E\cap B_1| \geq c_0$. Therefore,
		if we prove that 
		\begin{equation}\label{eq::clean_ball1}
			N_\delta\leq C\delta^{s-n},
		\end{equation}
		for some~$C>0$ depending only on~$n$, $s$, and~$\Lambda$, then we deduce that at least~$c_0\delta^{-n}$ cubes intersect~$E\cap B_1$, provided that~$\delta$ is sufficiently small. 
		
		Thus, arguing by contradiction, if none of these cubes is completely contained in~$E\cap B_1$, also using~\eqref{eq::clean_ball1}, we have that
		\begin{equation*}
			C\delta^{s-n} \geq N_\delta \geq c_0\delta^{-n}.
		\end{equation*}
		It thereby follows that~$C\delta^s\geq c_0$, which is a contradiction whenever we choose~$\delta$ small enough. 
		
		Hence, from now on, we focus on the proof of~\eqref{eq::clean_ball1}. For this, consider a cube~$Q_\delta\subseteq B_1$ such that~$Q_\delta\cap\partial E\neq\varnothing$. 
		Let~$x_0\in Q_\delta\cap\partial E$. Then, for any~$\delta$ small enough, we have that~$B_\delta(x_0)\subseteq Q_{3\delta}\subseteq \Omega$. Thus, thanks to
		Theorem~\ref{th::unif_dens_estimates}, we obtain that
		$$|E\cap Q_{3\delta}|\geq c_0\delta^n\qquad{\mbox{and}}\qquad |E^c\cap Q_{3\delta}|\geq c_0\delta^n.$$
		
		{F}rom this, using also that~$|x-y|\leq 3\sqrt{n}\delta$, for every~$x$, $y\in Q_\delta$, we deduce that
		\begin{equation}\label{eq::clean_ball2}
			\begin{split}
				\mathcal{L}(E\cap Q_{3\delta},E^c\cap Q_{3\delta}) 
				& = \int_{E\cap Q_{3\delta}}\int_{E^c\cap Q_{3\delta}} \frac{dx\,dy}{|x-y|^{n+s}} \\
				& \geq \int_{E\cap Q_{3\delta}}\int_{E^c\cap Q_{3\delta}} \big(3\sqrt{n}\delta\big)^{-n-s}\,dx\,dy\\
				& = \big(3\sqrt{n}\delta\big)^{-n-s}|E\cap Q_{3\delta}|\,|E^c\cap Q_{3\delta}| \\
				& \geq C_1\delta^{n-s},
			\end{split}
		\end{equation}
		for some~$C_1>0$, depending on~$n$, $s$, and~$\Lambda$.
		
		Set~$\text{\Fontskrivan{R}}_\delta$ to be the family of all cubes $Q_\delta$ such that $Q_{3\delta}$ is contained in~$B_1$, and notice that~$\text{\Fontauri{Q}}_{\ \delta}\subseteq\text{\Fontskrivan{R}}_\delta$. Thus, using~\eqref{eq::clean_ball2}, we see that
		\begin{equation}\label{eq::clean_ball3}
			\begin{split}
				\mathcal{L}(E\cap B_1,E^c\cap B_1) 
				& = \sum_{Q_\delta,Q_\delta'\in\text{\Fontskrivan{R}}_\delta} \mathcal{L}(E\cap Q_\delta,E^c\cap Q_{\delta}') \\
				& \geq 3^{-2n}\sum_{Q_\delta,Q_\delta'\in\text{\Fontskrivan{R}}_\delta} \mathcal{L}(E\cap Q_{3\delta},E^c\cap Q_{3\delta}') \\
				& \geq 3^{-2n}\sum_{Q_\delta\in\text{\Fontskrivan{R}}_\delta} \mathcal{L}(E\cap Q_{3\delta},E^c\cap Q_{3\delta}) \\
				& \geq 3^{-2n}\sum_{Q_\delta\in\text{\Fontauri{Q}}_{\ \delta}} \mathcal{L}(E\cap Q_{3\delta},E^c\cap Q_{3\delta}) \\
				& \geq 3^{-2n}C_1N_\delta\delta^{n-s}.
			\end{split}
		\end{equation}
		
		Now, let us consider the set 
		\begin{equation*}
			A:=
			\begin{cases}
				B_1\quad&\text{in }B_1,\\
				E\quad&\text{in }B_1^c.
			\end{cases}
		\end{equation*}
		Then, the almost minimality of~$E$ gives that
		\begin{equation*}
			\begin{split}
				&\mathcal{L}(E\cap B_1,E^c\cap B_1)
				\leq\Per_s(E,B_1) \leq\Per_s(A,B_1) + \Lambda\omega_n \\
				&\qquad
				= \mathcal{L}(B_1, E^c\cap B_1^c)+ \Lambda\omega_n 
				\leq\Per_s(B_1,\R^n)+\Lambda\omega_n=: C_2(n,s,\Lambda).
			\end{split}
		\end{equation*}
		{F}rom this and~\eqref{eq::clean_ball3}, we obtain that
		$$ 3^{-2n}C_1N_\delta\delta^{n-s}\leq C_2,$$
		and this concludes the proof of~\eqref{eq::clean_ball1}, as desired.
	\end{proof}
	
	\subsection{Euler-Lagrange Inequalities}\label{sec::EL_ineq}
	
	As established in~\cite{caffarelli_roquejoffre_savin_nonlocal}, if~$E$ is a set of minimal~$s$-perimeter, then a weak-formulation of the Euler-Lagrange Equation for the associated minimization problem (in the viscosity sense) takes the form
	\begin{equation}\label{meancurvt5749}
		H_s[E](x):=p.v.\int_{\R^n} \frac{\chi_E(y)-\chi_{E^c}(y)}{|x-y|^{n+s}}\, dy = 0 ,\quad \text{ for a.e. }x\in(\partial E)\cap\Omega,
	\end{equation}
	and~$H_s[E]$ is called non-local mean curvature of~$\partial E$.
	
	We now show that when the minimality requirement is relaxed to
	almost minimality, we are still able to deduce Euler-Lagrange Inequalities, according to the following statements.
	
	\begin{theorem}[Euler-Lagrange Inequality]\label{th::ELeq}
		Let~$E$ be a set satisfying the~$\Lambda$-super-solution property in~$\Omega$, for some~$\Lambda\geq0$. Suppose that~$x_0\in\partial E$ and that~$E\cap\Omega$ has an interior tangent ball~$B$ at~$x_0$.
		
		Then,
		$$ \limsup_{\delta\to0} \int_{\R^n\setminus B_\delta(x_0)} \frac{\chi_E(x)-\chi_{E^c}(x)}{|x-x_0|^{n+s}} dx \leq \Lambda.$$ 
	\end{theorem}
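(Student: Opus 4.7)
The plan is to extract the viscosity Euler-Lagrange inequality from the $\Lambda$-super-solution property~\eqref{eq::super_sol_prop} by applying it to a family of test sets that concentrate near~$x_0$, in the spirit of~\cite[Theorem~5.1]{caffarelli_roquejoffre_savin_nonlocal} but with the $\Lambda$-term appearing on the right-hand side.

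After translating so that~$x_0=0$, I denote by~$\nu$ the outward unit normal at~$0$ to the interior tangent ball~$B\subset E\cap\Omega$. For each small~$\epsilon>0$, the clean ball condition (Corollary~\ref{cor::clean_ball}) supplies~$z_\epsilon\in B_\epsilon(0)$ with~$B_{c\epsilon}(z_\epsilon)\subset E^c$ for some~$c\in(0,1)$ depending only on~$n$,~$s$, and~$\Lambda$; I choose the test set~$A_\epsilon:=B_{c\epsilon}(z_\epsilon)$, noting that the presence of~$B$ on the~$+\nu$ side of~$B_\epsilon(0)$ forces~$z_\epsilon$ to lie essentially on the~$-\nu$ side. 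Applying~\eqref{eq::super_sol_prop} to~$A_\epsilon$ and observing that on~$y\in A_\epsilon$ the combination~$\chi_E(y)-\chi_{E^c}(y)+\chi_{A_\epsilon}(y)$ vanishes, the super-solution property rewrites as
\[ \int_{A_\epsilon}\int_{\R^n\setminus A_\epsilon}\frac{\chi_E(y)-\chi_{E^c}(y)}{|x-y|^{n+s}}\,dy\,dx\;\le\;\Lambda|A_\epsilon|. \]

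Dividing by~$|A_\epsilon|$ and fixing an auxiliary parameter~$\delta>0$ with~$\epsilon\ll\delta$, I split the inner integral into the far part~$\{|y|>\delta\}$ and the near part~$B_\delta(0)\setminus A_\epsilon$. For the far part, the uniform expansion~$|x-y|^{-(n+s)}=|y|^{-(n+s)}(1+O(\epsilon/\delta))$, valid for~$x\in A_\epsilon\subset B_\epsilon(0)$ and~$|y|>\delta$, yields
\[ \lim_{\epsilon\to 0}\frac{1}{|A_\epsilon|}\int_{A_\epsilon}\int_{|y|>\delta}\frac{\chi_E(y)-\chi_{E^c}(y)}{|x-y|^{n+s}}\,dy\,dx=\int_{|y|>\delta}\frac{\chi_E(y)-\chi_{E^c}(y)}{|y|^{n+s}}\,dy. \]

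The crux is the near-field estimate: I need to show that the averaged contribution of~$B_\delta(0)\setminus A_\epsilon$ is bounded below by~$-\eta(\delta)$, with~$\eta(\delta)\to 0$ as~$\delta\to 0$, uniformly in small~$\epsilon$. This is where the interior tangent ball enters crucially, through the reflection~$\sigma$ across the tangent hyperplane at~$0$: the portion~$y\in B\cap B_\delta$ (which lies entirely in~$E$) is paired with~$\sigma(y)\in B^*\cap B_\delta$, and since~$A_\epsilon$ lies on the opposite side, one has~$|x-\sigma(y)|\le|x-y|$ for~$x\in A_\epsilon$, producing a favorable sign-comparison. The quadratic proximity of~$\partial E$ to the tangent hyperplane near~$0$ (implicit in the interior ball condition) then controls the residual asymmetry by~$C\delta^{1-s}$, which vanishes as~$\delta\to 0$ since~$s<1$. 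Combining the far and near estimates, passing to~$\epsilon\to 0$ followed by~$\limsup_{\delta\to 0}$ produces the theorem. The main obstacle is precisely this near-field estimate, since the clean ball~$A_\epsilon$ is only located up to order~$\epsilon$ and~$E$ need not be globally symmetric across the tangent hyperplane, so the reflection argument must absorb these defects via the interior ball geometry.
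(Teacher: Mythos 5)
Your near-field estimate is not just the ``main obstacle''---it is where the argument breaks, and the proposed hyperplane-reflection mechanism cannot close it. With~$A_\epsilon=B_{c\epsilon}(z_\epsilon)\subset E^c$, the set~$(E^c\setminus A_\epsilon)\cap B_\delta$ can abut~$\partial A_\epsilon$: if~$z_\epsilon$ lies at depth comparable to~$\epsilon$ inside~$E^c$, there is an~$\epsilon$-thick shell of~$E^c$ surrounding~$A_\epsilon$ that is not absorbed into~$A_\epsilon$, and its interaction scales like
\[
\frac{1}{|A_\epsilon|}\,\mathcal{L}\bigl(A_\epsilon,\,(E^c\setminus A_\epsilon)\cap B_\delta\bigr)\;\gtrsim\;\epsilon^{-s},
\]
which diverges as~$\epsilon\to0$ at fixed~$\delta$. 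There is no matching positive term to cancel it. Your reflection~$\sigma$ across the tangent hyperplane pairs~$y\in B\subset E$ (at positive distance from~$A_\epsilon$) with~$\sigma(y)$, but this controls only the contribution coming from the tangent-ball side of~$\partial E$, not the portion of~$E^c$ that may surround~$A_\epsilon$ on all sides (the interior ball constrains~$E$, not~$E^c$). Moreover, as written the inequality~$|x-\sigma(y)|\le|x-y|$ for~$x\in A_\epsilon$ weights the possibly-negative contribution at~$\sigma(y)$ \emph{more} heavily than the positive one at~$y$, so the comparison runs in the wrong direction.

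This is exactly the difficulty the paper's construction is engineered to bypass. Its~$A_\epsilon^-:=V_{R,\epsilon}\setminus E$ is \emph{not} a ball but the full trace of~$E^c$ inside a slightly deformed tangent ball, and~$A_\epsilon^+:=T_\epsilon A_\epsilon^-\setminus E$ is its image under the deformed-ball reflection~$T_\epsilon$, which is an involutive near-isometry (Proposition~\ref{prop::perturbation_properties}). By construction, $T_\epsilon$ sends~$(E^c\setminus A_\epsilon)\cap B_\delta$ into~$E\cap B_\delta$, so the troublesome negative near-field term is exactly paired with a positive one up to a controlled remainder (this is the role of~$\mathcal{I}_2$ and the set~$F_{\epsilon,\delta}$). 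Even then the residual interaction~$\mathcal{L}(A_\epsilon^-,V_{R,\epsilon}^c)$ is \emph{not}~$o(|A_\epsilon^-|\epsilon^{-1})$ for every~$\epsilon$: Lemma~\ref{lemma::aux_EL}, which combines the coarea formula with the uniform density estimates in a contradiction argument, delivers the bound only along a well-chosen infinitesimal subsequence~$\epsilon_k$. Your round-ball setup has no analogue either of the cancellation or of this averaging/selection step.

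A secondary issue: Corollary~\ref{cor::clean_ball} is stated for almost minimizers, i.e.\ sets satisfying both the super- and sub-solution properties, whereas Theorem~\ref{th::ELeq} assumes only the super-solution property. The cube-counting bound~$N_\delta\lesssim\delta^{s-n}$ in the proof of Corollary~\ref{cor::clean_ball} hinges on~$|E\cap Q_{3\delta}|\gtrsim\delta^n$, which is obtained from the \emph{sub}-solution property of~$E$ (via the lower density estimate for~$E$). So the existence of a ball~$B_{c\epsilon}(z_\epsilon)\subset E^c$ is not available from your hypotheses without further argument. Finally, note the sign slip at the start: if~$\nu$ is the outward unit normal to the interior tangent ball~$B$ at~$0$, then~$B$, hence~$E$ locally, lies on the~$-\nu$ side, and~$z_\epsilon\in E^c$ is pushed toward~$+\nu$, not~$-\nu$.
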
 
	
	Observe that if~$E$ satisfies the~$\Lambda$-sub-solution property and~$E\cap\Omega$ has an exterior tangent ball, then~$E^c$ satisfies the assumptions of Theorem~\ref{th::ELeq}. Therefore, using also the properties of~$\limsup$, we deduce the following:
	
	\begin{cor}[Reverse Euler-Lagrange Inequality]\label{cor::reverseELeq}
		Let~$E$ be a set satisfying the~$\Lambda$-sub-solution property in~$\Omega$, for some~$\Lambda\geq0$. Suppose that~$x_0\in\partial E$ and that~$E\cap\Omega$ has an exterior tangent ball~$B$ at~$x_0$.
		
		Then,
		$$ \liminf_{\delta\to0} \int_{\R^n\setminus B_\delta(x_0)} \frac{\chi_E(x)-\chi_{E^c}(x)}{|x-x_0|^{n+s}} dx \geq -\Lambda.$$ 
	\end{cor}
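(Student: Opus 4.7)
The plan is to reduce the claim directly to Theorem~\ref{th::ELeq} applied to the complementary set~$E^c$, exploiting the natural duality between the sub- and super-solution properties. First, I would check that~$E$ satisfies the~$\Lambda$-sub-solution property in~$\Omega$ if and only if~$E^c$ satisfies the~$\Lambda$-super-solution property in~$\Omega$. Indeed, writing out Definition~\ref{def::super_sub_sol} for~$E^c$ in place of~$E$, the $\Lambda$-super-solution requirement asks that for every~$A\subset (E^c)^c\cap\Omega=E\cap\Omega$ one has~$\mathcal{L}(A,E^c)-\mathcal{L}(A,E\setminus A)\leq\Lambda|A|$, which, after multiplying by~$-1$, is precisely inequality~\eqref{eq::sub_sol_prop} for~$E$.

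Second, I would match the geometric side of the duality: if~$B$ is an exterior tangent ball to~$E\cap\Omega$ at~$x_0$, then~$B\subset E^c$ touches~$\partial E=\partial E^c$ only at~$x_0$, and therefore~$B$ is an interior tangent ball to~$E^c\cap\Omega$ at~$x_0$. Consequently, $E^c$ fulfills all the hypotheses of Theorem~\ref{th::ELeq}, which, since~$(E^c)^c=E$, yields
$$ \limsup_{\delta\to0}\int_{\R^n\setminus B_\delta(x_0)} \frac{\chi_{E^c}(x)-\chi_E(x)}{|x-x_0|^{n+s}}\,dx \leq \Lambda.$$

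Finally, since~$\chi_{E^c}-\chi_E=-(\chi_E-\chi_{E^c})$, the left-hand side above is the negative of the integral appearing in the statement of the corollary. Using the elementary identity~$\limsup(-f_\delta)=-\liminf(f_\delta)$ and multiplying the displayed inequality by~$-1$ (reversing its direction), the claimed lower bound~$\liminf\geq -\Lambda$ follows at once. The proof is essentially formal, so there is no genuine analytic obstacle; the only point worth checking carefully is that both complementation rules—on one side for the sub/super-solution properties, on the other for the interior/exterior tangent ball condition—indeed match up, after which the deduction is purely algebraic.
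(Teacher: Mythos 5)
Your proposal is correct and is precisely the argument the paper has in mind: the remark preceding Corollary~\ref{cor::reverseELeq} states verbatim that if~$E$ satisfies the~$\Lambda$-sub-solution property and~$E\cap\Omega$ has an exterior tangent ball, then~$E^c$ satisfies the hypotheses of Theorem~\ref{th::ELeq}, and one then uses the properties of~$\limsup$. You have simply filled in the routine verifications (the sign flip turning the super-solution inequality for~$E^c$ into the sub-solution inequality for~$E$, the interior/exterior tangent ball duality, and the identity~$\limsup(-f_\delta)=-\liminf(f_\delta)$), all of which are correct.
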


	\begin{rem}
		Up to a translation, one can always suppose that~$x_0=0$ in Theorem~\ref{th::ELeq} and Corollary~\ref{cor::reverseELeq}. Thus, for the sake of simplicity, in this section we will assume that~$x_0=0$.
	\end{rem}
	
	Before diving into the details of the proof of Theorem~\ref{th::ELeq}, we revisit a perturbation argument developed in~\cite{caputo_guillen}.
	Suppose that~$B$ is an interior tangent ball in~$E$. Up to a rescaling and a translation, we suppose that~$B=B_{2R}(-2Re_n)$, for some~$R\geq1$, and 
	that~$B$ touches~$\partial E$ at the origin. Our goal is to define a function~$T_\epsilon$ as the ``reflection outside a slightly deformed ball'', for every~$\epsilon\in(0,R)$. 
	To this end, let
	\begin{equation} \label{eq::def_V_rho_epsilon}
		V_{\rho,\epsilon}:=\big\{x\in\R^n\;{\mbox{ s.t. }}\; |x+Re_n|\leq \rho+\mbox{d}_\epsilon(x)\big\},
	\end{equation} 
	where~$\mbox{d}(x):=R^{-1}(1-|x'|^2)_+$, and~$\mbox{d}_\epsilon(x):=\epsilon^2\mbox{d}(x/\epsilon)$, see Figure~\ref{fig::deformed_balls}. 
	Here we are also using the notation~$x=(x',x_n)\in\R^{n-1}\times\R$.
	
	We define~$T_\epsilon(x)$ as the only point on the line through~$x$ and~$-Re_n$ such that~$\frac{x+T_\epsilon(x)}{2}\in \partial V_{R,\epsilon}$, i.e. 
	$$ T_\epsilon(x):=-x-2Re_n+2\left(R+\mbox{d}_\epsilon(x)\right)\frac{x+Re_n}{|x+Re_n|} .$$
	Then, we define the perturbed set~$A_\epsilon$ as 
	\begin{equation} \label{eq::def_A_epsilon}
		A_\epsilon := A_\epsilon^-\cup A_\epsilon^+,\qquad \text{ where }\quad A_\epsilon^-:=V_{R,\epsilon}\setminus E \quad \text{ and }\quad A_\epsilon^+:=T_\epsilon A_\epsilon^-\setminus E.
	\end{equation}
	Notice that one can also decompose~$A_\epsilon$ as~$S_\epsilon\dot{\cup} D_\epsilon$,
	with~$T_\epsilon S_\epsilon=S_\epsilon$ and~$D_\epsilon\subseteq V_{R,\epsilon}\setminus E$, where~$S_\epsilon:=A_\epsilon^+\cup T_\epsilon A_\epsilon^+$ and~$D_\epsilon:=A_\epsilon\setminus S_\epsilon$. Here, $\dot{\cup}$ denotes the union of disjoint sets.
	
	Moreover, denoting by~$\mathcal{R}_x$ the reflection with respect to the line
	$$\left\{t\frac{x+Re_n}{|x+Re_n|} \mbox{ s.t. }t\in\R\right\}$$ (see~\eqref{eq::def_reflex} below for an explicit expression) and setting~$r_{x,\epsilon} := \dist(x, \partial V_{R,\epsilon})$, for every~$\epsilon\in(0,{1}/(3n))$,
	we obtain the estimates
	\begin{equation} \label{eq::T_prop1}
		\|DT_\epsilon(x)-\mathcal{R}_x\|\leq\frac{2}{R}\big(
		3nr_{x,\epsilon}+|x'|\big),
	\end{equation}
	for every~$x\in V_{2R,\epsilon}\setminus V_{0,\epsilon}$,
	and
	\begin{equation} \label{eq::T_prop2}
		\left| \frac{|T_\epsilon(x)-T_\epsilon(y)|}{|x-y|}-1\right|\leq \frac{2}{R}\max\big\{3nr_{x,\epsilon}+|x'|,\,3nr_{y,\epsilon}+|y'|\big\} ,
	\end{equation}
	for every~$x$, $y\in V_{2R,\epsilon}\setminus V_{0,\epsilon}$.
	
	Furthermore, we have the inclusions
	\begin{equation} \label{eq::inclusion_A}
		A_\epsilon^-\subseteq B_{2\epsilon}\qquad {\mbox{and}}\qquad B_{\epsilon^2/(2R)}\setminus E\subseteq A_\epsilon\subseteq B_{8\epsilon} .
	\end{equation}
	The proof of these properties is quite technical and it is presented in~\cite{caputo_guillen}. For the sake of completeness, we also include the arguments necessary for our purposes
	in Appendix~\ref{sec::perturbation_properties}.
	
	For the proof of Theorem~\ref{th::ELeq}, we need the following auxiliary results. Throughout
	this section, the notation presented so far is assumed.
	
	\begin{figure}
		\centering
		\includegraphics[width=.7\linewidth]{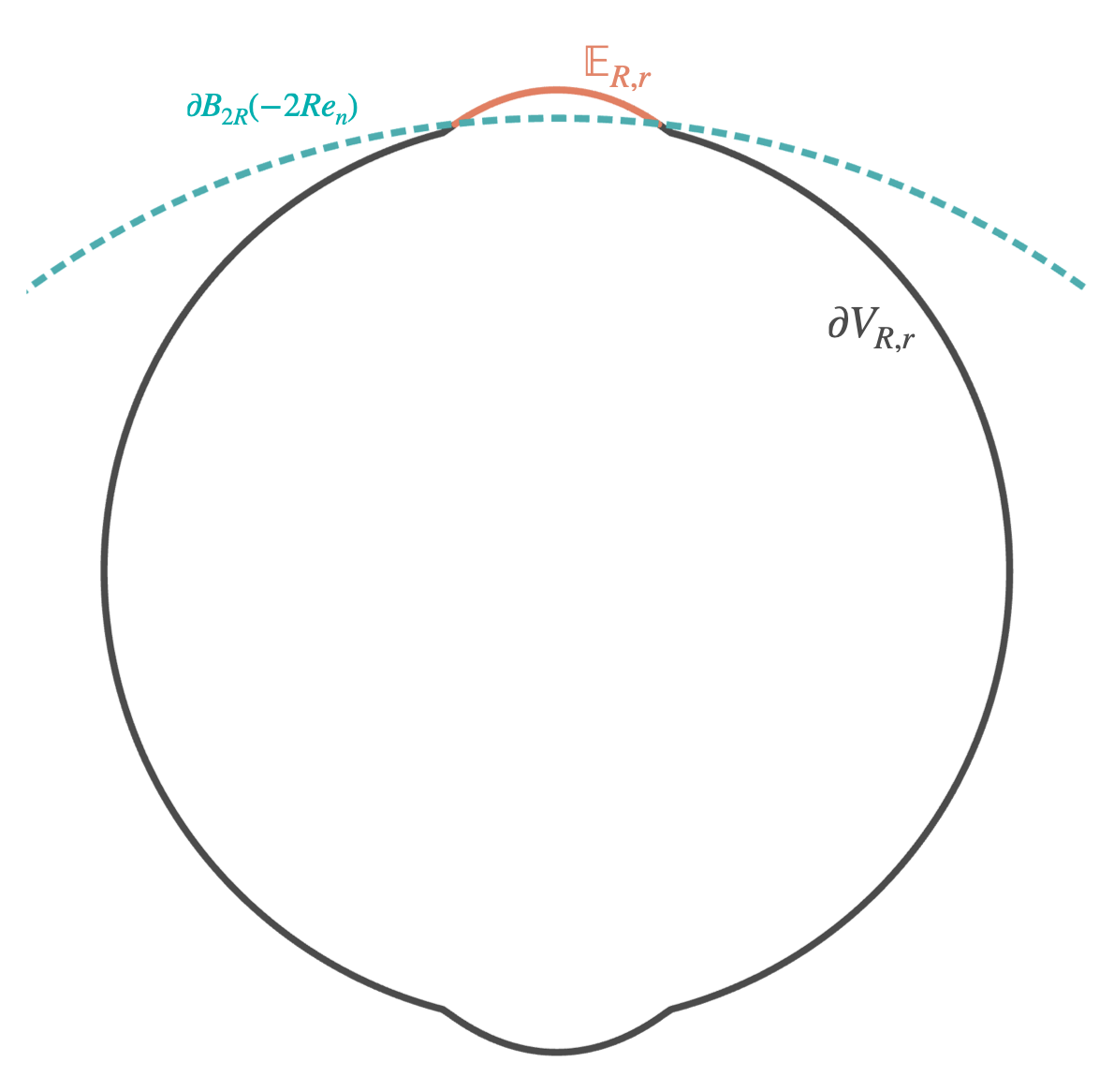}
		\caption{The deformed ball~$V_{R,r}$ and the set~$\mathbb{E}_{R,r}$ (used in the proof of Lemma~\ref{lemma::aux_EL}).}\label{fig::deformed_balls}
	\end{figure}
	
	\begin{lemma}\label{lemma::estimate_dist_V}
		Let~$\epsilon$ and~$\delta$ be such that~$0<16\epsilon<\delta$, and let~$R>1$.  
		
		Then, 
		\begin{equation*}
			\dist(A_\epsilon^-,\partial V_{R,\epsilon})\leq3\epsilon\qquad
			\text{and}\qquad\dist(A_\epsilon,\partial V_{R,\epsilon})\leq9\epsilon<\delta.
		\end{equation*}
	\end{lemma}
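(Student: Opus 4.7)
The plan is to reduce both distance bounds to the inclusions recorded in~\eqref{eq::inclusion_A}, by exhibiting a single explicit point of~$\partial V_{R,\epsilon}$ that lies close to the origin, and then invoking the triangle inequality. No refined analysis of the reflection~$T_\epsilon$ or of the structure of the deformed ball is needed.

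The key observation is that the ``north pole'' $p:=(\epsilon^2/R)\,e_n$ belongs to~$\partial V_{R,\epsilon}$. Indeed, since $p'=0$, one has $\mbox{d}(p/\epsilon)=R^{-1}(1-0)_+=R^{-1}$, so $\mbox{d}_\epsilon(p)=\epsilon^2/R$; on the other hand $|p+Re_n|=R+\epsilon^2/R$, so the defining equality $|p+Re_n|=R+\mbox{d}_\epsilon(p)$ of the boundary of $V_{R,\epsilon}$ is satisfied. Note also that $|p|=\epsilon^2/R\le\epsilon$, because $\epsilon<R$ (the perturbation construction is performed for $\epsilon\in(0,R)$, and moreover $R>1$).

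For the first inequality, pick any $x\in A_\epsilon^-$. By~\eqref{eq::inclusion_A} one has $|x|\le 2\epsilon$, so the triangle inequality produces
\[
\dist(x,\partial V_{R,\epsilon})\le|x-p|\le|x|+|p|\le 2\epsilon+\epsilon=3\epsilon.
\]
For the second, if $x\in A_\epsilon$ then $|x|\le 8\epsilon$ by~\eqref{eq::inclusion_A}, and the same reasoning yields
\[
\dist(x,\partial V_{R,\epsilon})\le|x-p|\le 8\epsilon+\epsilon=9\epsilon,
\]
while the strict inequality $9\epsilon<\delta$ then follows directly from the hypothesis $16\epsilon<\delta$.

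I do not anticipate any genuine obstacle: the only nontrivial ingredient is recognising the boundary point~$p$, after which the estimates are elementary consequences of~\eqref{eq::inclusion_A} and the triangle inequality. The smallness assumption $16\epsilon<\delta$ appears only to guarantee the final strict inequality $9\epsilon<\delta$, and the condition $R>1$ serves only to ensure that $\epsilon^2/R\le\epsilon$ throughout.
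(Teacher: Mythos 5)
Your proof is correct and matches the paper's argument exactly: the paper also singles out the same boundary point $z=\frac{\epsilon^2}{R}e_n\in B_\epsilon\cap\partial V_{R,\epsilon}$ and combines it with the inclusions in~\eqref{eq::inclusion_A} via the triangle inequality.
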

	
	\begin{proof}
		Recall that, by the properties of the reflection~$T_\epsilon$, we have that~$A_\epsilon^-\subseteq B_{2\epsilon}$ and~$A_\epsilon\subseteq B_{8\epsilon}$ (see~\eqref{eq::inclusion_A}). Moreover, notice that~$z:=\frac{\epsilon^2}{R}e_n\in B_\epsilon\cap \partial V_{R,\epsilon}$. 
		
		Thus, if~$x\in A_\epsilon$,
		$$ \dist(x,\partial V_{R,\epsilon})\leq |x-z| \leq 9\epsilon < \delta.$$
		In particular, if~$x\in A_\epsilon^-$, 
		$$ \dist(x,\partial V_{R,\epsilon})\leq |x-z| \leq 3\epsilon.$$
		Since~$x$ is arbitrary, the lemma is proved.
	\end{proof}

	\begin{lemma} \label{lemma::EL_aux2}
		Let~$E$ be a set with an interior tangent ball at~$0\in\partial E\cap\Omega$. 
		Let~$\epsilon$ and~$ \delta$ be such that~$0<16\epsilon<\delta$.
		
		Then, there exists~$C>0$, depending only on~$n$ and~$s$, such that
		\begin{equation*} \label{eq::EL_aux2_claim}
			\Bigg|\frac{\mathcal{L}(A_\epsilon,E\setminus B_\delta) - \mathcal{L}(A_\epsilon,E^c\setminus B_\delta)}{|A_\epsilon|}-\int_{B_\delta^c}\frac{\chi_E(x)-\chi_{E^c}(x)}{|x}|^{n+s}\,dx \Bigg| \leq C\epsilon\delta^{-1-s}.
		\end{equation*}
	\end{lemma}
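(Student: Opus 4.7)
The plan is to use Fubini--Tonelli to rewrite the left-hand quotient as an average over $y \in A_\epsilon$ of shifted versions of the target integral, and then to control the resulting error by a first-order Taylor expansion of the kernel $|x-y|^{-(n+s)}$ around $y = x_0$. The essential structural inputs will be the inclusion $A_\epsilon \subset B_{8\epsilon}(x_0)$ recorded in~\eqref{eq::inclusion_A}, together with the separation $16\epsilon < \delta$, which jointly guarantee that the two singularities stay safely apart.

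Concretely, I would first apply Fubini to write
$$\mathcal{L}(A_\epsilon, E \setminus B_\delta(x_0)) - \mathcal{L}(A_\epsilon, E^c \setminus B_\delta(x_0)) = \int_{A_\epsilon} \left( \int_{\R^n \setminus B_\delta(x_0)} \frac{\chi_E(x) - \chi_{E^c}(x)}{|x-y|^{n+s}} \, dx \right) dy.$$
Dividing by $|A_\epsilon|$ and inserting $\pm |x-x_0|^{-(n+s)}$ reduces the claim to showing that, uniformly in $y \in A_\epsilon$,
$$\left| \int_{\R^n \setminus B_\delta(x_0)} \bigl(\chi_E(x) - \chi_{E^c}(x)\bigr) \left[ \frac{1}{|x-y|^{n+s}} - \frac{1}{|x-x_0|^{n+s}} \right] dx \right| \leq C \epsilon \, \delta^{-1-s}.$$
From~\eqref{eq::inclusion_A} one has $|y - x_0| \leq 8\epsilon$, while $|x - x_0| \geq \delta > 16\epsilon$, so $|y - x_0| \leq |x-x_0|/2$. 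Hence the mean value theorem applied to the smooth function $t \mapsto |x - x_0 - t(y-x_0)|^{-(n+s)}$ on $[0,1]$ gives
$$\left| \frac{1}{|x-y|^{n+s}} - \frac{1}{|x-x_0|^{n+s}} \right| \leq C \frac{|y-x_0|}{|x-x_0|^{n+s+1}} \leq C \frac{\epsilon}{|x-x_0|^{n+s+1}},$$
with $C = C(n,s)$. Integrating $|x-x_0|^{-(n+s+1)}$ over $\R^n \setminus B_\delta(x_0)$ in polar coordinates produces a factor of order $\delta^{-1-s}$, and the stated bound follows.

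I do not anticipate a significant obstacle: the interior tangent ball hypothesis enters only indirectly, via the construction of $A_\epsilon$, and no cancellation between $E$ and $E^c$ is needed. The only delicate point is checking that the separation $16\epsilon < \delta$ is enough to push $|y - x_0|$ below $|x - x_0|/2$ uniformly, which ensures that the first-order Taylor remainder above is valid with a constant depending only on $n$ and $s$.
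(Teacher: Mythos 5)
Your proposal is correct and follows essentially the same strategy as the paper's proof: apply the mean value theorem to the kernel difference and integrate the resulting $O(\epsilon\,|x-x_0|^{-(n+s+1)})$ bound over $B_\delta^c(x_0)$. The only cosmetic difference is that you obtain the uniform lower bound $|x-x_0-t(y-x_0)| \geq |x-x_0|/2$ directly from $|y-x_0| \leq 8\epsilon < \delta/2 \leq |x-x_0|/2$, whereas the paper splits $A_\epsilon$ into $\{|x-y| \geq |x|\}$ and $\{|x-y| < |x|\}$ and treats the two cases separately; your version is slightly more streamlined but equivalent.
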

	
	\begin{proof}
		Since~$|z|^{-n-s}$ is Lipschitz-continuous in~$\{|z|\geq\eta\}$, for every~$\eta>0$, it satisfies the assumptions of the Mean Value Theorem. Thus,
		\begin{equation}\label{eq::ELcor_1}
			\begin{split}
				&\left|\frac{\mathcal{L}(A_\epsilon,E\setminus B_\delta) - \mathcal{L}(A_\epsilon,E^c\setminus B_\delta)}{|A_\epsilon|}
				-\int_{B_\delta^c}\frac{\chi_E(x)-\chi_{E^c}(x)}{|x|^{n+s}}\,dx \right|\\
				&\qquad=\left| \frac{1}{|A_\epsilon|} \int_{A_\epsilon} \int_{B_\delta^c} \left(\chi_E(x)-\chi_{E^c}(x)\right)\left(
				\frac{1}{|x-y|^{n+s}}-\frac{1}{|x|^{n+s}}\right)\,dx\,dy \right|\\
				&\qquad\leq \frac{1}{|A_\epsilon|} \int_{B_\delta^c} \int_{A_\epsilon} \left|\frac{1}{|x-y|^{n+s}}-\frac{1}{|x|^{n+s}}\right|\,dy\,dx \\
				&\qquad= \frac{1}{|A_\epsilon|} \int_{B_\delta^c} \int_{A_\epsilon} c\frac{\big||x-y|-|x|\big|}{|\xi|^{n+s+1}}\,dy\,dx ,
			\end{split}
		\end{equation}
		for some positive constant~$c$ depending on~$n$ and~$s$, and for some~$\xi$ lying on the segment joining~$x$ and~$x-y$ (in particular, $|\xi|\geq\min\{|x|,|x-y|\}$). 
		Since~$A_\epsilon\subseteq B_{8\epsilon}$, we also have the estimate~$||x-y|-|x||\leq|y|\leq8\epsilon$. 
		
		Now, for every~$x\in B_\delta^c$, we decompose~$A_\epsilon$ in two sets:
		\begin{equation*}
			\begin{split}
				&A_\epsilon^1:=\big\{y\in A_\epsilon\;{\mbox{ s.t. }}\; |x-y|\geq|x|\big\}\\
				{\mbox{and}}\quad& A_\epsilon^2:=\big\{y\in A_\epsilon\;{\mbox{ s.t. }}\; |x-y|<|x|\big\} .
			\end{split}
		\end{equation*}
		We observe that
		\begin{equation}\label{eq::ELcor_2}
			\begin{split}
				&\frac{1}{|A_\epsilon|} \int_{B_\delta^c} \int_{A_\epsilon^1} c\frac{\big||x-y|-|x|\big|}{|\xi|^{n+s+1}}\,dy\,dx  
				\leq  \frac{1}{|A_\epsilon|} \int_{B_\delta^c} \int_{A_\epsilon^1} \frac{c\, \epsilon}{|x|^{n+s+1}}\,dy\,dx
				\\				&\qquad\qquad				 \leq \int_{B_\delta^c} \frac{c\,\epsilon}{|x|^{n+s+1}} \,dx
				= \frac{\omega_n}{1+s}c\,\epsilon\delta^{-1-s},
			\end{split}
		\end{equation} up to renaming~$c$.
		
		Furthermore,
		\begin{equation*}
			\begin{split}
				\frac{1}{|A_\epsilon|} \int_{B_\delta^c} \int_{A_\epsilon^2} c_{n,s}\frac{\big||x-y|-|x|\big|}{|\xi|^{n+s+1}}\,dy\,dx 
				\leq&\frac{1}{|A_\epsilon|} \int_{B_\delta^c} \int_{A_\epsilon^2} \frac{c\,\epsilon}{|x-y|^{n+s+1}}\,dy\,dx \\
				\leq&\frac{1}{|A_\epsilon|}\int_{A_\epsilon} \int_{B_\delta^c} \frac{c\,\epsilon}{|x-y|^{n+s+1}}\,dx\,dy .
			\end{split}
		\end{equation*}
		Since~$A_\epsilon\subseteq B_{8\epsilon}\subseteq B_{\delta/2}$, it follows that
		\begin{equation*}
			\begin{split}
				&\frac{1}{|A_\epsilon|} \int_{B_\delta^c} \int_{A_\epsilon^2} c_{n,s}\frac{\big||x-y|-|x|\big|}{|\xi|^{n+s+1}}\,dy\,dx 
				\leq\frac{1}{|A_\epsilon|}\int_{A_\epsilon} \int_{B_{\delta/2}^c} \frac{c\,\epsilon}{|z|^{n+s+1}}\,dz\,dy \\
				&\qquad\qquad=\int_{B_{\delta/2}^c} \frac{c\,\epsilon}{|z|^{n+s+1}}\,dz
				=\frac{\omega_n}{1+s}2^{1+s}c\,\epsilon\delta^{-1-s}.
			\end{split}
		\end{equation*}
		{F}rom this and~\eqref{eq::ELcor_2}, together with~\eqref{eq::ELcor_1}, we obtain the desired result.
	\end{proof}
	
	\begin{lemma}\label{lemma::aux_EL}
		Let~$E$ be a set with an interior tangent ball at~$0\in\partial E\cap\Omega$. Let~$V_{R,\epsilon}$
		and~$A_\epsilon^-$ be
		as in~\eqref{eq::def_V_rho_epsilon} and~\eqref{eq::def_A_epsilon} respectively.
		
		Then, there exists an infinitesimal sequence~$\epsilon_k$ such that 
		$$ \mathcal{L}\big(A_{\epsilon_k}^-,V_{R,{\epsilon_k}}^c\big) \leq c\epsilon_k^{-(1+s)/2}|A_{\epsilon_k}^-|,$$
		for some positive constant~$c$ depending only on~$n$ and~$s$.
	\end{lemma}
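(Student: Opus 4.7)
The plan is to establish the inequality not pointwise in $\epsilon$ but through an averaging argument: I will integrate the quantity of interest against a suitable weight in $\epsilon$ and then extract the desired infinitesimal sequence via a Chebyshev/pigeonhole argument on dyadic intervals. The flexibility of choosing a subsequence is essential, since for exceptional values of $\epsilon$ the deformation $V_{R,\epsilon}$ may be badly aligned with $\partial E$ and the claimed pointwise inequality may genuinely fail.

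The first step is a reduction to a boundary-type integral. Using the standard kernel estimate $\int_{V_{R,\epsilon}^c}|x-y|^{-n-s}\,dy\le C\,\dist(x,\partial V_{R,\epsilon})^{-s}$, we obtain
$$\mathcal{L}(A_\epsilon^-,V_{R,\epsilon}^c)\le C\int_{A_\epsilon^-}\dist(x,\partial V_{R,\epsilon})^{-s}\,dx.$$
The second step is to integrate this inequality against the weight $\epsilon^{(1+s)/2}$ over a small interval $(0,\eta)$ and then apply Fubini--Tonelli. Since $d_\epsilon(x)=R^{-1}(\epsilon^2-|x'|^2)_+$ is non-decreasing in $\epsilon$, the family $\{V_{R,\epsilon}\}_\epsilon$ is nested and increasing, and for each $x$ outside $B_R(-Re_n)$ the set $\{\epsilon : x\in V_{R,\epsilon}\}$ is an interval $[\epsilon_\star(x),+\infty)$ whose threshold $\epsilon_\star(x)$ is computable in closed form. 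Exploiting this monotonicity together with the explicit geometry of the bump $V_{R,\epsilon}\setminus B_R(-Re_n)$ (of height~$\sim\epsilon^2/R$ and footprint~$\sim\epsilon^{n-1}$), the target is an averaged inequality of the form
$$\int_0^\eta\epsilon^{(1+s)/2}\,\mathcal{L}(A_\epsilon^-,V_{R,\epsilon}^c)\,d\epsilon\le C\int_0^\eta|A_\epsilon^-|\,d\epsilon,$$
valid for all sufficiently small $\eta>0$.

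Once such an averaged bound is in hand, the conclusion follows by contradiction. If the lemma were false, there would exist $\eta_0>0$ such that $\mathcal{L}(A_\epsilon^-,V_{R,\epsilon}^c)>c\epsilon^{-(1+s)/2}|A_\epsilon^-|$ for every $\epsilon\in(0,\eta_0)$; multiplying by $\epsilon^{(1+s)/2}$ and integrating would contradict the averaged inequality for $c$ large enough. A dyadic refinement of the same pigeonhole on the intervals $[\eta_0 2^{-k-1},\eta_0 2^{-k}]$ then yields the infinitesimal sequence $\epsilon_k\to 0$.

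The main difficulty I expect is the proof of the averaged inequality itself. The exponent $(1+s)/2$ lies strictly between the worst pointwise boundary exponent $s$ (which would be too weak, since points of $A_\epsilon^-$ may lie arbitrarily close to $\partial V_{R,\epsilon}$) and the ``smooth'' exponent one would get if $\dist(\cdot,\partial V_{R,\epsilon})$ were uniformly of order $\epsilon^2$. Extracting this exponent will require combining a layer-cake decomposition in $\dist(\cdot,\partial V_{R,\epsilon})$ with the explicit formula for $\epsilon_\star$, so that the singularity of $d_x^{-s}$ is partially compensated by the $\epsilon$-integration of the characteristic function of the thin near-boundary strip of $A_\epsilon^-$, producing a bound with a subcritical rate.
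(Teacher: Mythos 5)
Your reduction is the right one and matches the paper's: the kernel estimate gives
\[
\mathcal{L}(A_\epsilon^-,V_{R,\epsilon}^c)\le C\int_{A_\epsilon^-}\frac{dx}{\dist(x,\partial V_{R,\epsilon})^s},
\]
and a co-area computation along the level sets $\partial V_{R,r}$, combined with the lower bound $\dist(x,\partial V_{R,\epsilon})\gtrsim(\epsilon^2-r^2)/R$ for $x\in A_\epsilon^-\cap\partial V_{R,r}$, turns this into
\[
\mathcal{L}(A_\epsilon^-,V_{R,\epsilon}^c)\le C\int_0^\epsilon\frac{m(r)}{(\epsilon-r)^s}\,dr,
\qquad
m(r):=\haus{n-1}(E^c\cap\partial V_{R,r}),
\]
with $|A_\epsilon^-|=\int_0^\epsilon m(r)\,dr$, and $m$ independent of $\epsilon$ (a useful fact you would need to record explicitly). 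Up to here you are on the same track as the paper.

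The plan breaks down at the averaged inequality you propose to prove, namely $\int_0^\eta \epsilon^{(1+s)/2}\,\mathcal{L}(A_\epsilon^-,V_{R,\epsilon}^c)\,d\epsilon\le C\int_0^\eta|A_\epsilon^-|\,d\epsilon$. Applying Fubini to both sides in the $r$ variable, the left-hand side is controlled by $\int_0^\eta m(r)\int_r^\eta\epsilon^{(1+s)/2}(\epsilon-r)^{-s}\,d\epsilon\,dr\sim\eta^{(1+s)/2}\int_0^\eta m(r)(\eta-r)^{1-s}\,dr$, whereas the right-hand side is $\int_0^\eta m(r)(\eta-r)\,dr$. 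Since $(\eta-r)^{1-s}\gg(\eta-r)$ as $r\to\eta$, and $m$ is only constrained by $m(r)\lesssim r^{n-1}$ and $\int_0^\eta m\ge c\eta^{2n}$ (from the density estimates), one can concentrate $m$ near $r=\eta$ so that the ratio of the two sides diverges as $\eta\to0$ whenever $s$ is not very small. So the clean averaged inequality you hope for is simply false, and no amount of ingenuity in the layer-cake/Fubini bookkeeping will rescue it.

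The paper also integrates in $\epsilon$ and applies Fubini, but it does \emph{not} aim for your averaged inequality. It integrates the \emph{negation} of the pointwise claim; after Fubini the left-hand side is bounded above by $\frac{(\epsilon^*)^{1-s}}{1-s}\int_0^{\epsilon^*}m$, while the right-hand side is bounded below by restricting to $r\le\epsilon^*/2$ (so that $(\epsilon^*)^{(1-s)/2}-r^{(1-s)/2}$ stays comparable to $(\epsilon^*)^{(1-s)/2}$). This produces a \emph{reverse doubling} $\int_0^{\epsilon^*}m>M\int_0^{\epsilon^*/2}m$ with $M\sim(\epsilon^*)^{-(1-s)/2}\to\infty$. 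Iterating at dyadic scales $\epsilon^*=2^{1-k}$ and invoking the uniform density estimates (Theorem~\ref{th::unif_dens_estimates}), which give the lower bound $\int_0^{2^{-k}}m(r)\,dr=|E^c\cap V_{R,2^{-k}}|\ge c_0 2^{-nk}$, produces a contradiction. The key external input your outline omits entirely is this use of the density estimates: without it the argument cannot close, and the exponent $(1+s)/2$ is precisely what makes the doubling constant diverge fast enough to beat the geometric rate $2^{-nk}$ coming from the density estimate. I would re-center your proof around the reverse-doubling dichotomy plus the density estimates rather than a direct averaged bound.
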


	\begin{proof}
		Let~$\epsilon\in(0,1)$ and define~$r_{x,\epsilon}:=\dist(x,\partial V_{R,\epsilon})$. 
		For every~$r\in(0,\epsilon)$, we define the function~$m(r):=\haus{n-1}(A_\varepsilon^- \cap \partial V_{R,r})$.
		
		We point out that~$m(r)=\haus{n-1}(E^c \cap \partial V_{R,r})$, since, for all~$r\in(0,\epsilon)$,
		\begin{equation*}
			A_\varepsilon^- \cap \partial V_{R,r}=V_{R,\varepsilon}\cap E^c \cap \partial V_{R,r}=E^c \cap \partial V_{R,r}.
		\end{equation*}
		This says that the function~$m$ does not depend on~$\epsilon$.
		
		Now, if~$x\in A_\epsilon^-\subseteq V_{R,\epsilon}$, we have that
		$$ \int_{V_{R,\epsilon}^c} \frac{dy}{|x-y|^{n+s}}
		\le \int_{\R^n\setminus B_{r_{x,\epsilon}}(x)}\frac{dy}{|x-y|^{n+s}}
		= C\int_{r_{x,\epsilon}}^{+\infty}\frac{dr}{r^{1+s}}=\frac{C}{(r_{x,\epsilon})^s}.$$
		Thus, integrating over~$A_\epsilon^-$, we infer that
		\begin{equation} \label{eq::lemmaaux_1}
			\mathcal{L}(A_\epsilon^-,V_{R,\epsilon}^c) \leq C \int_{A_\epsilon^-} \frac{dx}{(r_{x,\epsilon})^s}.
		\end{equation}
		
		Notice that, by construction, 
		$$A_\epsilon^-=V_{R,\epsilon}\setminus E\subseteq V_{R,\epsilon}\setminus B_{2R}(-2Re_n).$$
		Therefore, if~$x\in A_\epsilon^-\cap \partial V_{R,r}$, for some~$r\in(0,\epsilon)$, we have that
		\begin{equation}\label{eq::r_x_epsilon_lower_bound}
			r_{x,\epsilon} \geq \dist(\mathbb{E}_{R,\epsilon},\mathbb{E}_{R,r}) \geq \frac{1}{1+c}\frac{\epsilon^2-r^2}{R},
		\end{equation}
		where~$c>0$ is a constant depending only on the dimension~$n$, and
		$$\mathbb{E}_{R,r}:=\left\{x=(x',x_n)\;{\mbox{ s.t. }}\; |x+Re_n|= R+\frac{r^2}{R}\left(1-\frac{|x'|^2}{r^2}\right)\right\}\setminus B_{2R}(-2Re_n).$$ 
		To have an idea of the situation described here, see Figure~\ref{fig::deformed_balls}. Full details to obtain the second inequality in~\eqref{eq::r_x_epsilon_lower_bound} are contained in Section~\ref{sec::r_x_epsilon_lower_bound}.
		
		Now, let us define the function~$\psi$ as
		\begin{equation*}
			\psi(x',x_n):=\left(|x'|^2-R^2+R|x+Re_n|\right)^{1/2},
		\end{equation*}
		so that~$ \psi^{-1}(r)=\partial V_{R,r}\cap A_\epsilon^-$, for every~$r\in(0,\epsilon)$.
		
		Moreover, recalling that~$A_\epsilon^-\subseteq B_{2\epsilon}$, for every~$ (x',x_n)\in A_\epsilon^-\cap\partial V_{R,r}$ we have that
		\begin{equation}\label{eq::gradient_level_set}
			\begin{split}
				|\nabla\psi(x',x_n)| &\geq|\partial_n\psi(x',x_n)|=\frac{R}{2\psi(x',x_n)}\frac{|x_n+R|}{|x+Re_n|}=\frac{R}{2r}\frac{|x_n+R|}{|x+Re_n|}\\
				&\geq\frac{R}{2r}\frac{R-2\epsilon}{R+2\epsilon}
				\geq \frac{R}{4r}
				\geq \frac{1}{4r},
			\end{split}
		\end{equation}
		as long as~$\epsilon$ is small enough.
		
		Therefore, using~\eqref{eq::r_x_epsilon_lower_bound} and~\eqref{eq::gradient_level_set} and the co-area formula, we can rewrite~\eqref{eq::lemmaaux_1} as
		\begin{equation} \label{eq::lemmaaux_2}
			\begin{split}
				\mathcal{L}(A_\epsilon^-,V_{R,\epsilon}^c) &\leq C \int_{A_\epsilon^-} \frac{|\nabla \psi(x)|}{(r_{x,\epsilon})^s}\frac{dx}{|\nabla \psi(x)|} = 
				C\int_{0}^\epsilon \int_{A_\epsilon^-\cap\partial V_{R,r}} \frac{1}{(r_{x,\epsilon})^s}\frac{d\haus{n-1}(x)\,dr}{|\nabla \psi(x)|}\\
				&\qquad\leq C\int_{0}^\epsilon \int_{A_\epsilon^-\cap\partial V_{R,r}}\frac{1}{(\epsilon^2-r^2)^s}\frac{d\haus{n-1}(x)\,dr}{|\nabla \psi(x)|}\\
				&\qquad\leq C\int_{0}^\epsilon \int_{A_\epsilon^-\cap\partial V_{R,r}}\frac{r^{1-s}}{(\epsilon-r)^s}d\haus{n-1}(x)\,dr\\
				&\qquad\leq C\int_0^\epsilon \frac{m(r)}{(\epsilon-r)^s} \,dr,
			\end{split}
		\end{equation}
		up to renaming~$C>0$ line after line.
		
		Accordingly, the desired result is proved if we show that	there exists in infinitesimal sequence~$\epsilon_k$ such that
		\begin{equation}\label{eq::lemmaaux_3} 
			\int_0^{\epsilon_k} \frac{m(r)}{(\epsilon_k-r)^s} \,dr \leq \epsilon_k^{-(1+s)/2} \int_0^{\epsilon_k} m(r) \,dr,
		\end{equation}
		since this, together with~\eqref{eq::lemmaaux_2}, will imply that
		$$ \mathcal{L}(A_{\epsilon_k}^-,V_{R,\epsilon_k}^c) \leq	C\epsilon_k^{-(1+s)/2} \int_0^{\epsilon_k} m(r) \,dr=C\epsilon_k^{-(1+s)/2} |A_{\epsilon_k}^-|.$$
		
		Hence, from now on, we focus on the proof of~\eqref{eq::lemmaaux_3}. To this end, we argue by contradiction and assume that
		there exists~$\epsilon^*\in(0,1)$ such that, for all~$\epsilon\in(0,\epsilon^*)$,
		\begin{equation*}
			\int_0^\epsilon \frac{m(r)}{(\epsilon-r)^s} \,dr > \epsilon^{-(1+s)/2} \int_0^\epsilon m(r) \,dr  .
		\end{equation*}
		Thus, integrating in~$\epsilon\in(0,\epsilon^*)$,
		\begin{equation*}
			\int_{0}^{\epsilon^*}\left(\int_0^\epsilon \frac{m(r)}{(\epsilon-r)^s} \,dr\right)\,d\epsilon > \int_{0}^{\epsilon^*}
			\left(\epsilon^{-(1+s)/2} \int_0^\epsilon m(r) \,dr\right) \,d\epsilon .
		\end{equation*}
		Using the Fubini-Tonelli's Theorem, it follows that
		\begin{align*}  
			\int_0^{\epsilon^*}\left( \int_r^{\epsilon^*}\frac{m(r)}{(\epsilon-r)^s} \,d\epsilon\right)\,dr
			&> \int_0^{\epsilon^*}m(r)\left( \int_r^{\epsilon^*} \epsilon^{-(1+s)/{2}}\, d\epsilon\right) \,dr\\
			&= \int_0^{\epsilon^*} \frac{2m(r)}{1-s}\Big((\epsilon^*)^{(1-s)/{2}} -r^{(1-s)/{2}}\Big) \,dr .
		\end{align*}
		Since the integrand is positive, we have that
		\begin{equation*}  
			\begin{split}
				\int_0^{\epsilon^*}\left(\int_r^{\epsilon^*}\frac{m(r)}{(\epsilon-r)^s} \,d\epsilon\right)\,dr 
				& >  \frac{2}{1-s}\int_{0}^{\epsilon^*/2} m(r)\Big((\epsilon^*)^{(1-s)/{2}}-(\epsilon^*/2)^{(1-s)/{2}}\Big)\,dr\\
				&= \frac{2}{1-s} \big(1-2^{(s-1)/2}\big)\,(\epsilon^*)^{(1-s)/{2}}\int_0^{\epsilon^*/2} m(r)\,dr .
			\end{split}
		\end{equation*}
		
		We also observe that
		\begin{equation*}
			\int_0^{\epsilon^*}\left( \int_r^{\epsilon^*}\frac{m(r)}{(\epsilon-r)^s} \,d\epsilon\right)\,dr
			= \frac{1}{1-s} \int_0^{\epsilon^*} m(r) (\epsilon^*-r)^{1-s} \,dr
			\leq \frac{(\epsilon^*)^{1-s}}{1-s} \int_0^{\epsilon^*} m(r) \,dr.
		\end{equation*}
		{F}rom the last two displays, we thus obtain that
		\begin{equation}\label{eq::lemmaaux_6}
			\int_0^{\epsilon^*} m(r)\,dr > 2\big(1-2^{(s-1)/2}\big)\,(\epsilon^*)^{-\frac{1-s}{2}} \int_0^{\epsilon^*/2} m(r)\,dr \geq M\int_{0}^{\epsilon^*/2}m(r)\,dr,
		\end{equation}
		for every~$M \leq 2(1-2^{(s-1)/2})(\epsilon^*)^{-\frac{1-s}{2}}$.
		
		Now, we take~$k_0\in\N$ sufficiently large and, for any~$k\ge k_0$, we use~\eqref{eq::lemmaaux_6}
		with~$\epsilon^*:=2^{1-k}$. In this way, we obtain that
		\begin{equation}\label{226bis}
			\int_{0}^{2^{-k}}m(r)\,dr < M^{-1} \int_{0}^{2^{-k+1}}m(r)\,dr < \dots < M^{k_0-k} \int_{0}^{2^{-k_0}}m(r)\,dr .
		\end{equation}
		By the co-area formula, for every~$k\geq k_0$,
		$$\int_{0}^{2^{-k}}m(r) \,dr = |V_{R,2^{-k}}\cap E^c|.$$
		In particular, 
		\begin{equation} \label{eq::lemmaaux_7}
			\int_{0}^{2^{-k_0}}m(r) \,dr = |E^c\cap V_{R,2^{-k_0}}|\leq |B_{2R}(-Re_n)\cap B_R^c(-Re_n)|=:c_{n,R} .
		\end{equation}
		
		Also, by the uniform density estimates in Theorem~\ref{th::unif_dens_estimates}, we have that
		\begin{equation*}
			|V_{R,2^{-k}}\cap E^c| \geq |E^c\cap B_{2^{-k}}(-Re_n)|\geq c_02^{-nk} .
		\end{equation*}
		This, \eqref{226bis} and~\eqref{eq::lemmaaux_7} lead us to 
		$$ c_02^{-nk} < c_{n,R} M^{k_0-k}.$$
		
		Now, if~$k_0$ is large enough,
		it holds that~$2^{n+1} \leq 2(1-2^{(s-1)/2}) 2^{-\frac{(1-k)(1-s)}{2}}$.
		As a result, we can take~$M:=2^{n+1}$ and we have that
		$$ c_02^{k} < c_{n,R}2^{k_0(n+1)}.$$
		Taking
		$$ k:=\log_2\left(\frac{2c_{n,R}2^{k_0(n+1)}}{c_0}
		\right),$$ we obtain the desired contradiction.
	\end{proof}
	
	With this preliminary work, we can now complete the proof of Theorem~\ref{th::ELeq}.
	
	\begin{proof}[Proof of Theorem~\ref{th::ELeq}]
		
		Let~$\epsilon$ and~$\delta$ be such that~$0<16\epsilon<\delta<\min\{R,\diam(\Omega)\}$. Therefore, we have that~$A_\epsilon
		\subseteq B_{8\epsilon}\subseteq B_{\delta/2}\subseteq B_\delta\subseteq V_{2R,\epsilon}\setminus V_{0,\epsilon}$.
		
		Using the~$\Lambda$-super-solution property of~$E$ with~$A_\epsilon$ (see~\eqref{eq::super_sol_prop}), we obtain that
		\begin{equation} \label{eq::EL_suppprop_estimate1}
			\mathcal{L}(A_\epsilon,E)-\mathcal{L}(A_\epsilon, E^c\setminus A_\epsilon) \leq \Lambda |A_\epsilon|.
		\end{equation}
		
		Now, we define the set
		$$ F_{\epsilon,\delta} := T_\epsilon\left((E^c\setminus A_\epsilon)\cap B_\delta\right)$$
		and we point out that
		\begin{equation}\label{eq::TB_subset_B}
			T_\epsilon(B_\delta\setminus V_{R,\epsilon})\subseteq B_\delta.
		\end{equation}
		Indeed, for every~$x\in B_\delta\setminus V_{R,\epsilon}$ consider the hyperplane 
		$$\Pi_x := \left\{z\in\R^n\;{\mbox{ s.t. }}\; \left\langle \left(z-Re_n+\big(R+\mbox{d}_\epsilon(x)\big)\frac{x+Re_n}{|x+Re_n|}\right),(x+Re_n)\right\rangle=0 \right\} .$$
		Then, the reflection of~$x$ with respect to~$\Pi_x$ is~$T_\epsilon x$ and we have that~$\Pi_x$ separates~$x$ and the origin. Hence, $|T_\epsilon x|\leq|x|\leq\delta$, which gives~\eqref{eq::TB_subset_B}.
		
		In particular, since 
		\begin{equation*}
			\begin{split}
				(E^c\setminus A_\epsilon)\cap B_\delta &= E^c\cap V_{R,\epsilon}^c\cap(T_\epsilon A_\epsilon^-)^c \cap B_\delta\\
				&= E^c\cap V_{R,\epsilon}^c\cap(T_\epsilon E^c)^c\cap B_\delta,
			\end{split}
		\end{equation*}
		we have that
		\begin{equation*}
			(E^c\setminus A_\epsilon )\cap B_\delta \cap T_\epsilon E^c =\varnothing,
		\end{equation*}
		which yields that 
		\begin{equation*}
			F_{\epsilon,\delta}\cap E^c=\varnothing.
		\end{equation*}
		Hence, from this and~\eqref{eq::TB_subset_B}, we conclude that~$F_{\epsilon,\delta}\subseteq E\cap B_\delta$. 
		
		Therefore, we rewrite the left-hand side in~\eqref{eq::EL_suppprop_estimate1} as
		\begin{equation*}
			\begin{split}
				&\mathcal{L}(A_\epsilon,E)-\mathcal{L}(A_\epsilon, E^c\setminus A_\epsilon) \\
				&\qquad\qquad= \mathcal{L}(A_\epsilon,E\setminus B_\delta)-\mathcal{L}(A_\epsilon, E^c\setminus B_\delta) \\
				&\qquad\qquad\quad+ \mathcal{L}(A_\epsilon, F_{\epsilon,\delta})-\mathcal{L}(A_\epsilon, (E^c\setminus A_\epsilon)\cap B_\delta))+ \mathcal{L}(A_\epsilon, (E\cap B_\delta)\setminus  F_{\epsilon,\delta})\\
				&\qquad\qquad = \mathcal{L}(A_\epsilon,E\setminus B_\delta)-\mathcal{L}(A_\epsilon, E^c\setminus B_\delta) \\
				&\qquad\qquad\quad+ \mathcal{L}(A_\epsilon, F_{\epsilon,\delta})-\mathcal{L}(A_\epsilon, T_\epsilon F_{\epsilon,\delta})+ \mathcal{L}(A_\epsilon, (E\cap B_\delta)\setminus  F_{\epsilon,\delta})\\
				&\qquad\qquad \geq \mathcal{I}_1 + \mathcal{I}_2,
			\end{split}
		\end{equation*}
		where
		\begin{eqnarray*}
			\mathcal{I}_1&:=& \mathcal{L}(A_\epsilon,E\setminus B_\delta)-\mathcal{L}(A_\epsilon, E^c\setminus B_\delta)\\
			{\mbox{and }}\quad  \mathcal{I}_2&:=& \mathcal{L}(A_\epsilon, F_{\epsilon,\delta})-\mathcal{L}(A_\epsilon, T_\epsilon F_{\epsilon,\delta}).		
		\end{eqnarray*}
		We deduce from~\eqref{eq::EL_suppprop_estimate1} that
		\begin{equation} \label{eq::EL_suppprop_estimate2}
			\mathcal{I}_1 + \mathcal{I}_2 \leq \Lambda|A_\epsilon|.
		\end{equation}
		
		Now, thanks to Lemma~\ref{lemma::EL_aux2},
		\begin{equation*} 
			\left|\frac{\mathcal{I}_1}{|A_\epsilon|}-\int_{B_\delta^c}\frac{\chi_E(x)-\chi_{E^c}(x)}{|x|^{n+s}}\,dx \right| \leq C\epsilon\delta^{-1-s},
		\end{equation*}
		hence, from~\eqref{eq::EL_suppprop_estimate2} we get that
		\begin{equation} \label{eq::EL_aux2_1}
			\int_{B_\delta^c} \frac{\chi_E(x)-\chi_{E^c}(x)}{|x|^{n+s}}dx 
			\leq C\epsilon\delta^{-1-s}-\frac{\mathcal{I}_2}{|A_\epsilon|}+\Lambda .
		\end{equation}
		
		Now, our goal is to estimate~$\mathcal{I}_2$. To do this, recall that~$A_\epsilon=S_\epsilon\dot{\cup} D_\epsilon$. Then,
		\begin{equation}\label{eq::energy_Fed_estimate}
			\mathcal{I}_2 = \left[\mathcal{L}(S_\epsilon,F_{\epsilon,\delta}) - \mathcal{L}(S_\epsilon,T_\epsilon F_{\epsilon,\delta})\right]+\left[\mathcal{L}(D_\epsilon,F_{\epsilon,\delta}) - \mathcal{L}(D_\epsilon,T_\epsilon F_{\epsilon,\delta})\right] .
		\end{equation}
		Recalling that~$S_\epsilon=T_\epsilon S_\epsilon$ and
		changing variable with the reflection~$T_\epsilon$, we obtain that
		\begin{equation}\label{eq::EL_2}
			\begin{split}
				&	\mathcal{L}(S_\epsilon,T_\epsilon F_{\epsilon,\delta})=\mathcal{L}(T_\epsilon S_\epsilon,T_\epsilon F_{\epsilon,\delta})
				=\int_{T_\epsilon F_{\epsilon,\delta}}\int_{T_\epsilon S_\epsilon} \frac{1}{|x-y|^{n+s}}\,dx\,dy\\ 
				&\qquad\qquad=\int_{F_{\epsilon,\delta}}\int_{S_\epsilon} \frac{\left| \det\big(DT_\epsilon(x)\big) \right|\, \left|\det\big(DT_\epsilon(y)\big)\right| }{|T_\epsilon x-T_\epsilon y|^{n+s}}\,dx\,dy .
			\end{split}
		\end{equation}
		
		Thanks to the properties of~$T_\epsilon$ (see formula~\eqref{eq::perturb_prop1} in Appendix~\ref{sec::perturbation_properties}), we have that
		\begin{equation}\label{eq::EL_3}
			\|DT_\epsilon(x)-\mathcal{R}_x\|\leq\frac{2}{R}\big(3nr_{x,\epsilon}+2|x'|\big),\quad \mbox{for every }x\in S_\epsilon,
		\end{equation}
		where~$\mathcal{R}_x$ is the reflection with the respect to the line
		$$ \left\{t\frac{x+Re_n}{|x+Re_n|} \mbox{ such that }t\in\R\right\},$$
		and it is defined by components as
		\begin{equation} \label{eq::def_reflex}
			\left(\mathcal{R}_x\right)_i(y):= -y_i+2\frac{(x+Re_n)_i}{|x+Re_n|^2}\sum_{j=1}^{n}(x+Re_n)_jy_j,\quad\mbox{for all }i\in\{1,\dots,n\}.
		\end{equation}
		
		Notice that the term in the right-hand side of~\eqref{eq::EL_3} is at most of order~$\delta$ for every~$x\in S_\epsilon\subseteq B_\delta$ (indeed, thanks to Lemma~\ref{lemma::estimate_dist_V}, we have that~$r_{x,\epsilon}\leq\delta$).
		
		In particular, if~$\delta$ is small enough, we deduce from~\eqref{eq::EL_3} that~$\|DT_\epsilon(x)-\mathcal{R}_x\|\leq2$. 
		Hence, 
		\begin{equation*} 
			\begin{split}
				&\left|\det\left(DT_\epsilon(x)\right)\right| \leq\big(1+\|DT_\epsilon-\mathcal{R}_x\|\big)^n \leq1+n!\,2^n\|DT_\epsilon(x)-\mathcal{R}_x\|\\
				&\qquad\qquad\qquad \qquad \leq1+\frac{c_n}{R}\big(3nr_{x,\epsilon}+2|x'|\big) .
			\end{split}
		\end{equation*}
		Similarly, let~$y\in F_{\epsilon,\delta}$. Recall that~$ F_{\epsilon,\delta} \subseteq B_\delta$ and notice that~$z:=\frac{\epsilon^2}{R}e_n\in B_\delta\cap \partial V_{R,\epsilon}$. Thus, we have that~$r_{y,\epsilon}\leq|y-z|\leq2\delta$. 	Thus, we deduce that also~$\frac{2}{R}(3nr_{y,\epsilon}+2|y'|)\leq C\delta$.
		
		In particular, $\|DT_\epsilon(y)-\mathcal{R}_y\|\leq2$, hence
		$$ |\det\big(DT_\epsilon(y)\big)|\leq1+\frac{c_n}{R}\big(
		3nr_{y,\epsilon}+2|y'|\big) .$$
		Therefore, from~\eqref{eq::EL_2} we obtain that 
		$$ \mathcal{L}(S_\epsilon,T_\epsilon F_{\epsilon,\delta}) \leq \int_{F_{\epsilon,\delta}}\int_{S_\epsilon} \frac{1+\frac{c_n}{R}\max\{ 3nr_{x,\epsilon}+2|x'|,3nr_{y,\epsilon}+2|y'|\}}{|T_\epsilon x-T_\epsilon y|^{n+s}}\,dx\,dy .$$
		Using~\eqref{eq::T_prop2}, we also have
		$$ |T_\epsilon x-T_\epsilon y|^{-n-s}\leq \left[1-\frac{2}{R}\max\{ 3nr_{x,\epsilon}+2|x'|,3nr_{y,\epsilon}+2|y'|\}\right]^{-n-s}|x-y|^{-n-s} .$$
		Since~$\frac{2}{R}(3nr_{x,\epsilon}+2|x'|)\leq C\delta$ and~$\frac{2}{R}(3nr_{y,\epsilon}+2|y'|)\leq C\delta$, the function
		$$\left[1-\frac{2}{R}\max\{ 3nr_{x,\epsilon}+2|x'|,3nr_{y,\epsilon}+2|y'|\}\right]^{-n-s}$$ is uniformly bounded in~$x$ and~$y$, up to choosing~$\delta$ small enough. 
		
		Now we observe that the function~$\frac{1+\eta}{(1-\eta)^{n+s}}$ is bounded for~$\eta$ sufficiently small. As a result, there exists a positive constant~$c$ such that~$\frac{1+\eta}{(1-\eta)^{n+s}}\leq 1+c\eta$. Thus, using this with~$\eta:=\max\{ 3nr_{x,\epsilon}+2|x'|,3nr_{y,\epsilon}+2|y'|\}$, we deduce that
		\begin{equation}\label{eq::EL_4}
			\begin{split}
				&\mathcal{L}(S_\epsilon,TF_{\epsilon,\delta})
				\leq\int_{F_{\epsilon,\delta}}\int_{S_\epsilon} \frac{1+\frac{c_n}{R}\max\{ 3nr_{x,\epsilon}+2|x'|,3nr_{y,\epsilon}+2|y'|\}}{|x-y|^{n+s}}\,dx\,dy\\
				&\qquad=\mathcal{L}(S_\epsilon,F_{\epsilon,\delta})+\int_{F_{\epsilon,\delta}}\int_{S_\epsilon} \frac{\frac{c_n}{R}\max\{ 3nr_{x,\epsilon}+2|x'|,3nr_{y,\epsilon}+2|y'|\}}{|x-y|^{n+s}}\,dx\,dy\  .
			\end{split}
		\end{equation}
		
		A similar argument can be applied for the terms involving~$D_\epsilon$ by proceeding as follows. Since~$D_\epsilon\subseteq A_\epsilon\subseteq B_\delta$, considering the reflection with respect to the hyperplane~$\Pi_y$, we have that~$T_\epsilon y$ is the reflection of~$y$ with respect to~$\Pi_y$ and~$\Pi_y$ separates~$x$ and~$T_\epsilon y$. Thus, we obtain the estimate~$|x-T_\epsilon y|\geq|x-y|$, whence 
		\begin{equation*}
			\begin{split}
				&\mathcal{L}(D_\epsilon,T_\epsilon F_{\epsilon,\delta})
				=\int_{D_\epsilon}\int_{F_{\epsilon,\delta}} \frac{\left|\det\big(DT_\epsilon(y)\big)\right|}{|x-T_\epsilon y|^{n+s}}\,dx\,dy\\
				&\qquad\leq\int_{D_\epsilon}\int_{F_{\epsilon,\delta}} \frac{1+\frac{c_n}{R}\max\{ 3nr_{x,\epsilon}+2|x'|,3nr_{y,\epsilon}+2y'|\}}{|x-y|^{n+s}}\,dx\,dy\\
				&\qquad=\mathcal{L}(D_\epsilon, F_{\epsilon,\delta})+\int_{D_\epsilon}\int_{F_{\epsilon,\delta}} \frac{\frac{c_n}{R}\max\{ 3nr_{x,\epsilon}+2|x'|,3nr_{y,\epsilon}+2|y'|\}}{|x-y|^{n+s}}\,dx\,dy .
			\end{split}
		\end{equation*}
		Using this and~\eqref{eq::EL_4}, together with~\eqref{eq::energy_Fed_estimate}, we conclude that
		\begin{equation} \label{eq::energy_Fed_estimate2}
			-\mathcal{I}_2 \leq \int_{A_\epsilon}\int_{F_{\epsilon,\delta}} \frac{\frac{c_n}{R}\max\{ 3nr_{x,\epsilon}+2|x'|,3nr_{y,\epsilon}+2|y'|\}}{|x-y|^{n+s}}\,dx\,dy .
		\end{equation}
		
		Now, to get to the desired result, we need sharper estimates for the term~$\max\{ 3nr_{x,\epsilon}+|x'|,3nr_{y,\epsilon}+|y'|\}$. To this end, notice that if~$x\in A_\epsilon$, then~$|x|\leq8\epsilon$. Moreover, by Lemma~\ref{lemma::estimate_dist_V}, we have that~$r_{x,\epsilon}\leq9\epsilon$. Using also the simple estimates
		$$ |y'|\leq |x'|+|x-y| \qquad{\mbox{and}}\qquad r_{y,\epsilon}\leq r_{x,\epsilon}+|x-y|,$$
		we find that
		\begin{equation*}
			\max\{ 3nr_{x,\epsilon}+2|x'|,3nr_{y,\epsilon}+2|y'|\}\leq
			\begin{cases}
				(6n+2)|x-y|+16\epsilon, &\text{if }|x-y|\geq2r_{x,\epsilon},\\
				Cn\epsilon, &\text{if }|x-y|<2r_{x,\epsilon}.
			\end{cases}
		\end{equation*}
		{F}rom this, we obtain that
		\begin{equation*} 
			\begin{split}
				&\int_{F_{\epsilon,\delta}\setminus B_{2r_{x,\epsilon}}(x)} \frac{\max\{ 3nr_{x,\epsilon}+2|x'|,3nr_{y,\epsilon}+2|y'|\}}{|x-y|^{n+s}}\,dy
				\leq C_1\int_{F_{\epsilon,\delta}\setminus B_{2r_{x,\epsilon}}(x)} \frac{|x-y|+\epsilon}{|x-y|^{n+s}}\,dy \\
				&\qquad\qquad \leq C_1 \left(  \int_{B_\delta} \frac{dy}{|x-y|^{n+s-1}}  +  \int_{F_{\epsilon,\delta}\setminus B_{2r_{x,\epsilon}}(x)} \frac{\epsilon}{|x-y|^{n+s}}\,dy \right) \\
				&\qquad\qquad \leq C_1 \left( \delta^{1-s} + \int_{F_{\epsilon,\delta}\setminus B_{2r_{x,\epsilon}}(x)} \frac{\epsilon}{|x-y|^{n+s}}\,dy \right),
			\end{split}
		\end{equation*}
		and
		\begin{equation*} 
			\int_{F_{\epsilon,\delta}\cap B_{2r_{x,\epsilon}}(x)} \frac{\max\{ 3nr_{x,\epsilon}+2|x'|,3nr_{y,\epsilon}+2|y'|\}}{|x-y|^{n+s}}\,dy \leq C_1 \int_{F_{\epsilon,\delta}\cap B_{2r_{x,\epsilon}}(x)} \frac{\epsilon}{|x-y|^{n+s}}\,dy  ,
		\end{equation*}
		up to renaming~$C_1>0$, that depends only on~$n$ and~$s$.
		
		Therefore, we deduce that
		\begin{equation} \label{eq::EL_pre6}
			\int_{F_{\epsilon,\delta}} \frac{\max\{ 3nr_{x,\epsilon}+2|x'|,3nr_{y,\epsilon}+2|y'|\}}{|x-y|^{n+s}}\,dy 
			\leq C_1\left( \delta^{1-s}+\int_{F_{\epsilon,\delta}} \frac{\epsilon}{|x-y|^{n+s}}\,dy \right).
		\end{equation}
		Integrating~\eqref{eq::EL_pre6} over~$A_\epsilon$, we arrive at
		\begin{equation} \label{eq::EL_6} 
			\begin{split}
				&\int_{A_\epsilon} \int_{F_{\epsilon,\delta}} \frac{\max\{ 3nr_{x,\epsilon}+2|x'|,3nr_{y,\epsilon}+2|y'|\}}{|x-y|^{n+s}}\,dx\,dy \\
				&\qquad\qquad\leq C_1\big(\delta^{1-s}|A_\epsilon| +\epsilon\mathcal{L}(A_\epsilon^-,F_{\epsilon,\delta})+\epsilon\mathcal{L}(A_\epsilon^+,F_{\epsilon,\delta})\big).
			\end{split}
		\end{equation}
		
		The next step is estimating~$\mathcal{L}(A_\epsilon^+, F_{\epsilon,\delta})$ in terms of~$\mathcal{L}(A_\epsilon^-, F_{\epsilon,\delta})$. Since~$T_\epsilon A_\epsilon^+\subseteq A_\epsilon^-$, changing variable with~$T_\epsilon$, we arrive at
		\begin{equation}\label{eq::EL_7}
			\begin{split}
				\mathcal{L}(A_\epsilon^+, F_{\epsilon,\delta})
				&=\int_{T_\epsilon A_\epsilon^+}\int_{F_{\epsilon,\delta}} \frac{|\det\left(DT_\epsilon(x)\right)|}{|T_\epsilon x- y|^{n+s}}\,dx\,dy\\
				&\leq\int_{A_\epsilon^-}\int_{F_{\epsilon,\delta}} \frac{1+\frac{c_n}{R}\max\{ 3nr_{x,\epsilon}+2|x'|,3nr_{y,\epsilon}+2|y'|\}}{|x-y|^{n+s}}\,dx\,dy\\
				&\leq (1+C_2\epsilon)\mathcal{L}(A_\epsilon^-, F_{\epsilon,\delta})+C_2\delta^{1-s}|A_\epsilon^-|,
			\end{split}
		\end{equation}
		for some positive~$C_2$ depending on~$n$ and~$s$.
		
		Using~\eqref{eq::EL_6} and~\eqref{eq::EL_7} in~\eqref{eq::energy_Fed_estimate2}, we are led to
		\begin{equation}\label{eq::energy_Fed_estimate3}
			-\mathcal{I}_2 \leq \frac{C_3}{R}\big(\delta^{1-s}|A_\epsilon|+\epsilon\mathcal{L}(A_\epsilon^-, F_{\epsilon,\delta})\big),
		\end{equation}
		for some~$C_3>0$ depending only on~$n$ and~$s$.
		
		We now show that~$\mathcal{L}(A_\epsilon^-,F_{\epsilon,\delta})$ is controlled by~$\epsilon^{-(s+1)/2}$.
		Since~$F_{\epsilon,\delta}\subseteq E$, using also the~$\Lambda$-super-solution property of~$E$ with~$A_\epsilon^-$, we obtain that
		$$\mathcal{L}(A_\epsilon^-,F_{\epsilon,\delta})\leq\mathcal{L}(A_\epsilon^-,E)\leq\mathcal{L}(A_\epsilon^-,E^c\setminus A_\epsilon^-)+\Lambda|A_\epsilon^-|.$$
		We also see that~$E^c\setminus A_\epsilon^-\subseteq V_{R,\epsilon}$, thus
		$$\mathcal{L}(A_\epsilon^-,F_{\epsilon,\delta})\leq\mathcal{L}(A_\epsilon^-,V_{R,\epsilon})+\Lambda|A_\epsilon^-|.$$
		Thanks to Lemma~\ref{lemma::aux_EL}, we know that there
		exists an infinitesimal sequence~$\epsilon_k$ such that
		$$ \mathcal{L}(A_{\epsilon_k}^-,V_{R,\epsilon_k}^c)\leq c|A_{\epsilon_k}^-|\epsilon_k^{-(s+1)/2}.$$
		
		This with~\eqref{eq::energy_Fed_estimate3} gives that
		\begin{equation} \label{eq::EL_8}
			\begin{split}
				-\mathcal{I}_2 
				\leq & \frac{C_3}{R} \Big(\delta^{1-s}|A_{\epsilon_k}|+c\epsilon_k^{\frac{1-s}{2}}|A_{\epsilon_k}^-|+\epsilon_k \Lambda|A_{\epsilon_k}^-|\Big)
				\\	\leq& \frac{C_3}{R} |A_{\epsilon_k}| \Big(\delta^{1-s}+c\epsilon_k^{\frac{1-s}{2}}+\epsilon_k \Lambda\Big).
			\end{split}
		\end{equation}
		{F}rom this and~\eqref{eq::EL_aux2_1}, we deduce that
		\begin{equation*}
			\int_{B_\delta^c} \frac{\chi_E(x)-\chi_{E^c}(x)}{|x|^{n+s}}dx 
			\leq C\epsilon_k\delta^{-1-s}+ \frac{C_3}{R} \Big(\delta^{1-s}+c\epsilon_k^{\frac{1-s}{2}}+\epsilon_k\Lambda\Big) + \Lambda.
		\end{equation*}
		Taking the limit as~$k\to+\infty$, we conclude that
		\begin{equation*}
			\int_{B_\delta^c} \frac{\chi_E(x)-\chi_{E^c}(x)}{|x|^{n+s}}dx \leq \frac{C_3}{R} \delta^{1-s} + \Lambda.
		\end{equation*}
		Taking now the~$\limsup$ as~$\delta\to0$, we conclude the proof of Theorem~\ref{th::ELeq}.
	\end{proof}
	
	As a consequence of the estimates in~\eqref{eq::EL_suppprop_estimate2} and~\eqref{eq::EL_8}, we deduce the following:
	
	\begin{cor}[Alternative Euler-Lagrange Inequalities] \label{cor::EL_cor}
		There exist sequences~$\{\epsilon_k\}_k$ and~$\{\delta_k\}_k$, with~$0<16\epsilon_k<\delta_k$, such that~$\epsilon_k$, $\delta_k\to0$ as~$k\to+\infty$, and
		\begin{equation*}
			\left| \mathcal{L}(A_{\epsilon_k},E\setminus B_{\delta_k})-\mathcal{L}(A_{\epsilon_k},E^c\setminus B_{\delta_k}) \right| \leq C |A_{\epsilon_k}| \Big(\delta_k^{1-s}+\epsilon_k^{\frac{1-s}{2}}+\Lambda\Big),
		\end{equation*}
		for some positive constant~$C$ depending only on~$n$ and~$s$.
	\end{cor}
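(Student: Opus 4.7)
The left-hand side of the claimed inequality is precisely the quantity $\mathcal{I}_1=\mathcal{L}(A_\epsilon,E\setminus B_\delta)-\mathcal{L}(A_\epsilon,E^c\setminus B_\delta)$ already introduced in the proof of Theorem~\ref{th::ELeq}, so the plan is to simply assemble the two displayed bounds mentioned in the statement of the corollary.

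First, I would take the infinitesimal sequence $\{\epsilon_k\}_k$ delivered by Lemma~\ref{lemma::aux_EL} (along which~\eqref{eq::EL_8} is valid) and pair it with, say, $\delta_k:=\sqrt{\epsilon_k}$ for $k$ large enough. This choice satisfies $16\epsilon_k<\delta_k\to 0$ and it also forces $\delta_k<\min\{R,\diam(\Omega)\}$ eventually, so that the geometric setup underlying the derivation of~\eqref{eq::EL_8} stays in force.

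Next, I would add the two estimates. Rewriting~\eqref{eq::EL_suppprop_estimate2} as $\mathcal{I}_1\leq\Lambda|A_{\epsilon_k}|-\mathcal{I}_2$, substituting $-\mathcal{I}_2\leq\tfrac{C_3}{R}|A_{\epsilon_k}|\bigl(\delta_k^{1-s}+c\,\epsilon_k^{(1-s)/2}+\epsilon_k\Lambda\bigr)$ from~\eqref{eq::EL_8}, and using $\epsilon_k\leq 1$ together with $R\ge 1$, one reaches
$$\mathcal{I}_1\leq C\,|A_{\epsilon_k}|\bigl(\delta_k^{1-s}+\epsilon_k^{(1-s)/2}+\Lambda\bigr),$$
with $C=C(n,s)$. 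To package this into the absolute-value bound, the same reasoning is then applied with the roles of $E$ and $E^c$ interchanged: by Lemma~\ref{lemma::almost_minimal_subsupersol}, whenever $E$ is an almost minimizer both $E$ and $E^c$ satisfy the $\Lambda$-super-solution property, so redirecting the perturbation through the tangent-ball configuration on the $E^c$-side yields $-\mathcal{I}_1\leq C\,|A_{\epsilon_k}|\bigl(\delta_k^{1-s}+\epsilon_k^{(1-s)/2}+\Lambda\bigr)$ along a further subsequence, which is then relabelled.

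The main obstacle I anticipate is precisely this symmetric step: the reflection construction producing~\eqref{eq::EL_8} is tailored to an \emph{interior} tangent ball of $E$, so its transposition to $E^c$ requires either reintroducing an exterior-tangent-ball hypothesis or verifying that the geometry of $V_{R,\epsilon}$ and of the map $T_\epsilon$ remains compatible with the super-solution property of $E^c$. Once this point is sorted out, the absolute-value estimate of the corollary follows by passing to a common pair of sequences $\{\epsilon_k\},\{\delta_k\}$.
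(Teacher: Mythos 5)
Your derivation of the one-sided bound $\mathcal{I}_1\leq C|A_{\epsilon_k}|\big(\delta_k^{1-s}+\epsilon_k^{(1-s)/2}+\Lambda\big)$ from \eqref{eq::EL_suppprop_estimate2} and \eqref{eq::EL_8}, together with the choice of sequences (the $\{\epsilon_k\}_k$ produced by Lemma~\ref{lemma::aux_EL} paired with any admissible $\delta_k\to0$ satisfying $16\epsilon_k<\delta_k$), is correct and is exactly the computation the paper has in mind: the corollary is stated with no separate proof, just as ``a consequence of~\eqref{eq::EL_suppprop_estimate2} and~\eqref{eq::EL_8},'' and those two displays yield precisely this upper bound and nothing more.

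The absolute value, however, is not closed by your proposal, and the gap runs slightly deeper than the one you flag. You correctly note that swapping $E$ and $E^c$ needs an exterior tangent ball. But even granting that, the perturbation set built from the interior tangent ball of $E^c$ would be a \emph{different} set $\hat A_\epsilon\subset E\cap\Omega$ (the super-solution property of $E^c$ only applies to subsets of $E$), so the symmetric run of the argument controls $\mathcal{L}(\hat A_{\epsilon},E^c\setminus B_\delta)-\mathcal{L}(\hat A_{\epsilon},E\setminus B_\delta)$, not $-\mathcal{I}_1$ for the original $A_{\epsilon_k}\subset E^c$ appearing in the corollary's statement. In other words, you cannot ``relabel a subsequence'' to merge the two: they concern two distinct families of competitors. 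Estimates \eqref{eq::EL_suppprop_estimate2} and \eqref{eq::EL_8} are intrinsically one-sided (they rest on the $\Lambda$-super-solution inequality applied to $A_\epsilon\subset E^c$), and a genuine lower bound on $\mathcal{I}_1$ would have to come from a separate ingredient — for instance a comparison with the tangent ball $B\subset E$ via Lemma~\ref{lemma::EL_aux2}, which however introduces a curvature term of size $R^{-s}$ that is not reflected in the claimed right-hand side. So: the half of the corollary you prove matches the paper's intent; the absolute-value half remains unproved under your route, and for reasons beyond the single obstacle you identify.
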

	
	\subsection{Harnack's Inequality} \label{sec::harnack}
	Now, we use Theorem~\ref{th::ELeq} to prove a Harnack's type result. To this purpose, we introduce the notion of flatness for a set. For later convenience, we now define, for any $x_0\in\R^n$, $\nu\in \S^{n-1}$, and $r\in\R$, the cylinder
	\begin{equation*}
		C_r^\nu(x_0) := \{x\in\R^n \mbox{ s.t. } |\pi_\nu(x-x_0)|<r \},
	\end{equation*}
	where $\pi_\nu(x):= x-(x\cdot\nu)\nu$ denotes the projection onto the hyperplane $\{x\cdot\nu=0\}$.
	
	When $\nu=e_n$, we will denote the vertical cylinder of radius $r$ centered at $x_0$ simply with $C_r(x_0)$.
	Also, when the cylinders are centered at the origin, we use the short notations~$C_r^\nu:=C_r^\nu(0)$
	and~$C_r:=C_r(0)$.
	
	\begin{definition} \label{def::flatness}
		Let~$h$, $r>0$ and~$x_0\in\partial E$. We say that a set~$E$ is~$\left(\frac{h}{r}\right)$-flat in $C_r^\nu(x_0)$ if 
		$$ \partial E\cap C_r^\nu(x_0)\subseteq\big\{|(x-x_0)\cdot\nu|<h\big\}.$$
	\end{definition}
	Heuristically, we say that a set is~$\left(\frac{h}{r}\right)$-flat in the cylinder $C_r^\nu(x_0)$ if it can be trapped in a slab of width~$2h$ in direction $\nu$. 
	
	\begin{theorem}[Harnack's Inequalities] \label{th::harnack_ineq}
		Let~$E\subseteq\R^n$ and~$\Lambda\geq0$. 
		
		Then, for any~$s\in(0,1)$ and~$\alpha\in(0,s)$, there exist positive constants $\delta_0$ and~$k_0$, depending only on~$n$, $s$, $\alpha$, and~$\Lambda$, such that, setting~$a:=2^{-\alpha k}$ for some~$k\geq k_0$, if~$E$ is~$(2^{-ks}\Lambda)$-minimal for~$\Per_s$ in~$B_{2^k}$, and there exists a collection $\{\nu_j\}_j\subseteq \S^{n-1}$ such that 
		\begin{equation}  \label{eq::flatness_harnack}
			\partial E\cap C^{\nu_j}_{2^j}\subseteq\{|x\cdot\nu_j|\leq a2^{j(1+\alpha)}\},\qquad {\mbox{for all }} j=0,\dots,k ,
		\end{equation}
		with~$\nu_0=e_n$, then we have the inclusions
		\begin{eqnarray}
			\label{eq::harnack_ineq1}\text{either} \quad&\partial E\cap C_{\delta_0} \subseteq \big\{x_n\leq a(1-\delta_0^2)\big\}\\
			\label{eq::harnack_ineq2}\text{or} \quad &\partial E\cap C_{\delta_0} \subseteq \big\{x_n\geq a(\delta_0^2-1)\big\}.
		\end{eqnarray}
	\end{theorem}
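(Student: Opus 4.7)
The plan is to argue by contradiction. Assume that, for parameters $\delta_0$ and $k_0$ to be chosen, neither~\eqref{eq::harnack_ineq1} nor~\eqref{eq::harnack_ineq2} holds. Then there exist two points $y^\pm\in\partial E\cap B_{\delta_0}$ with $y^+_n>a(1-\delta_0^2)$ and $y^-_n<-a(1-\delta_0^2)$. The strategy is to slide a $\cont^{1,1}$ barrier of the form $x_n = a - \kappa(\delta_0^2 - |x'|^2)$, with small curvature $\kappa>0$, vertically through the slab $\{|x_n|\le a\}$ and catch it at the first contact with $\partial E$. Since $y^+$ forces the barrier to meet $\partial E$ from above while $y^-$ ensures that the contact cannot occur at the lateral part of $\partial B_{\delta_0}$, the contact point $x_0$ lies in a controlled interior region of $B_{\delta_0}$. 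At $x_0$ the barrier furnishes a one-sided tangent ball of definite radius, so Theorem~\ref{th::ELeq} (or Corollary~\ref{cor::reverseELeq} for the opposite orientation) gives
\[
\pm\limsup_{\delta\to 0}\int_{\R^n\setminus B_\delta(x_0)}\frac{\chi_E(x)-\chi_{E^c}(x)}{|x-x_0|^{n+s}}\,dx\leq 2^{-ks}\Lambda.
\]

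The goal is then to compute this same integral directly using the barrier geometry near $x_0$ together with the multiscale flatness~\eqref{eq::flatness_harnack}, and to show that its sign and magnitude are incompatible with the above inequality when $k$ is large. I would decompose the integral into a near zone $B_\rho(x_0)$, in which $E$ is sandwiched between the tangent halfspace at $x_0$ and the paraboloid barrier; performing an odd reflection across the tangent hyperplane to kill the half-space contribution, one isolates a residual term of definite sign and size $c\kappa\rho^{1-s}$ coming from the quadratic opening of the barrier. The intermediate dyadic annuli $B_{2^{j+1}}\setminus B_{2^j}$, for $0\le j\le k-1$, are controlled using the flatness bound~\eqref{eq::flatness_harnack}: in each annulus, $\partial E$ lies in a slab of width $a\,2^{j(1+\alpha)}$ in some direction $\nu_j$, so the odd-reflection cancellation across that slab yields an asymmetric contribution of order $a\,2^{j(1+\alpha)}\cdot 2^{-j(1+s)} = a\,2^{-j(s-\alpha)}$. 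Summing in $j$ converges because $\alpha<s$, giving a total of order $Ca$. Finally, the tail $\R^n\setminus B_{2^k}(x_0)$ contributes at most $C\,2^{-ks}$ by direct estimation.

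Combining these three ingredients, the integral is dominated by a term $-c\kappa\rho^{1-s} + Ca + C\,2^{-ks}$, which can be made strictly more negative than $-2^{-ks}\Lambda$ by first fixing $\rho$ and $\kappa$ depending only on $n,s,\alpha,\Lambda$, then choosing $\delta_0$ small and $k_0$ large enough so that $a=2^{-\alpha k_0}$ and $2^{-k_0 s}\Lambda$ are negligible compared with $c\kappa\rho^{1-s}$. This contradicts the Euler--Lagrange estimate and proves the dichotomy. The main obstacle I foresee is tracking the rotation of the slab direction $\nu_j$ from scale to scale: although~\eqref{eq::flatness_harnack} ensures that consecutive $\nu_j$'s differ by at most $O(a\,2^{j\alpha})$, this rotation must be folded into the odd-reflection cancellation so that the slab estimate at scale $j$ does not lose more than one factor of the slab width. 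A secondary technical point is to guarantee that the contact point $x_0$ is truly interior, so that Theorem~\ref{th::ELeq} or Corollary~\ref{cor::reverseELeq} applies with a tangent ball of definite radius; this is exactly the role of the two-sided failure of~\eqref{eq::harnack_ineq1}--\eqref{eq::harnack_ineq2}, which creates room both above and below to slide the barrier.
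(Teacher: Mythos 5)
Your overall plan---slide a $\cont^{1,1}$ barrier, use the Euler--Lagrange inequality (Theorem~\ref{th::ELeq}/Corollary~\ref{cor::reverseELeq}) at the contact point, and then estimate the same integral directly through a near zone/dyadic annuli/tail decomposition---is in the same spirit as the paper's argument, and your far-field estimate (summing annular contributions of order $a\,2^{-j(s-\alpha)}$, convergent because $\alpha<s$, plus a tail of order $2^{-ks}$) is essentially the paper's \eqref{eq::harnack_proof1}--\eqref{eq::harnack_proof3}. Your concern about tracking the rotation of the directions $\nu_j$ is also on point; the paper controls it with the elementary Lemma~\ref{lemma::geom_estimate} and the inclusion \eqref{eq::flatness_harnack2}.

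The gap is in the near zone. You claim that after odd reflection across the tangent hyperplane $H$ one obtains ``a residual term of definite sign and size $c\kappa\rho^{1-s}$ coming from the quadratic opening of the barrier.'' This is not correct. After the reflection, the integrand is $2(\chi_{E\setminus H}-\chi_{H\setminus E})$. If the paraboloid $P$ touches from above so that $E\subset P$ in $B_{\delta_0}(x_0)$, then only the first summand is controlled: $E\setminus H\subset P\setminus H$ lies in a slab of thickness $\lesssim\kappa|x'-x_0'|^2$, giving a contribution of order $+\kappa\rho^{1-s}$. The second summand $\chi_{H\setminus E}$, which is what must produce the large negative term you need, is not controlled by the barrier at all---it depends on how much of $E^c$ sits in the cylinder $C_{\delta_0,a}$. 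The paper's argument works precisely because it first splits into the cases $|E\cap C_{\delta_0,a}|\geq\frac12|C_{\delta_0,a}|$ or $|E^c\cap C_{\delta_0,a}|\geq\frac12|C_{\delta_0,a}|$ (Lemma~\ref{lemma::partial_harnack}, applied to $E$ or to $E^c$), and this half-measure lower bound is what makes $|(E\setminus P)\cap C_{\delta_0,a}|\gtrsim\delta_0^{n-1}a$, yielding the dominant contribution of order $a\,\delta_0^{-1-s}$ in \eqref{eq::harnack_proof6}. Without a quantitative mass bound in the cylinder, the two points $y^\pm$ alone do not deliver the needed term (uniform density estimates near $y^\pm$ give $\gtrsim c_0 a^n$, which is negligible compared with $a\delta_0^{n-1}$). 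A secondary issue: the claim that ``$y^-$ ensures that the contact cannot occur at the lateral part of $\partial B_{\delta_0}$'' needs a concrete argument; if $\partial E\cap B_{\delta_0}$ concentrates near $|x'|\approx\delta_0$ the sliding paraboloid can first touch laterally, and the paper handles the positioning of the tangent parabola (with opening $-a/2$, tied to $a$) by exploiting the half-measure case split and the multiscale flatness, not a fixed-curvature barrier. To repair the proposal, you would need to insert the half-measure dichotomy in the cylinder $C_{\delta_0,a}$, which is precisely the extra hypothesis of Lemma~\ref{lemma::partial_harnack} that the paper isolates before proving Theorem~\ref{th::harnack_ineq}.
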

	
	We point out that
	Theorem~\ref{th::harnack_ineq} is a generalization to~$\Lambda$-minimizers of~\cite[Theorem~6.9]{caffarelli_roquejoffre_savin_nonlocal}.
	
	We will deduce Harnack's Inequalities in Theorem~\ref{th::harnack_ineq}
	from the following partial result.
	
	\begin{lemma}[Partial Harnack's Inequality]\label{lemma::partial_harnack}
		Let~$E\subseteq\R^n$ and~$\Lambda\geq0$.
		
		Then, for any~$s\in(0,1)$ and~$\alpha\in(0,s)$, there exist positive constants $\delta_0$ and~$k_0$, depending only on~$n$, $s$, $\alpha$, and~$\Lambda$, such that, setting~$a:=2^{-\alpha k}$ for some~$k\geq k_0$, if~$E$ is a~$(2^{-ks}\Lambda)$-minimal set in~$B_{2^k}$ such that
		\begin{equation}  \label{eq::flatness_harnack_lemma}
			\partial E\cap C^{\nu_j}_{2^j}\subseteq\big\{|x\cdot\nu_j|\leq a2^{j(1+\alpha)}\big\},\qquad{\mbox{for all }} j=0,\dots,k ,
		\end{equation}
		with~$\nu_0=e_n$, and
		\begin{equation} \label{eq::half_measure_cylinder}
			|E\cap C_{\delta_0,a}| \geq \frac{1}{2}|C_{\delta_0,a}| = \omega_{n-1}\delta_0^{n-1}a,
		\end{equation}where
		$$C_{\delta_0,a}:=B^{n-1}_{\delta_0}\times(-a,a),$$
		then we have that
		\begin{eqnarray} 
			\label{eq::inclusion_harnack1}\mbox{either}\quad&C_{\delta_0}\cap\big\{x_n<-a(1-\delta_0^2)\big\}\subseteq E\\
			\label{eq::inclusion_harnack2}\mbox{or}\quad&C_{\delta_0}\cap\big\{x_n>a(1-\delta_0^2)\big\}\subseteq E.
		\end{eqnarray}
		
	\end{lemma}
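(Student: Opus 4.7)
My plan is to adapt the partial Harnack argument for \(s\)-minimal sets of~\cite[Section~6]{caffarelli_roquejoffre_savin_nonlocal} to the almost-minimizer setting. The key observation that makes this adaptation work is that the almost-minimality constant is prescribed as~\(2^{-ks}\Lambda\): after rescaling \(B_{2^k}\) to the unit ball, the set~\(E\) becomes \(\Lambda\)-minimal, while the \(\Lambda\)-defect surviving in the Euler-Lagrange inequalities of Theorem~\ref{th::ELeq} and Corollary~\ref{cor::reverseELeq} at unit scale is multiplied by~\(2^{-ks}\), hence can be made arbitrarily small by taking~\(k_0\) sufficiently large.

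The proof proceeds by contradiction. Assume that neither~\eqref{eq::inclusion_harnack1} nor~\eqref{eq::inclusion_harnack2} holds; then there exist points \(y^{+}\in B_{\delta_0}\cap E^c\) with \(y^{+}_n>a(1-\delta_0^2)\) and \(y^{-}\in B_{\delta_0}\cap E^c\) with \(y^{-}_n<-a(1-\delta_0^2)\). The crucial step is the construction of a smooth, compactly supported radial valley-shaped profile \(\phi:\R^{n-1}\to\R\) (admitting a strict local minimum at the origin) so that the subgraph \(\Phi:=\{x_n<\phi(x')\}\) has, at its bottom point \((0,\phi(0))\), a strictly positive non-local mean curvature \(H_s[\partial\Phi](0,\phi(0))\geq c_0>0\), with \(c_0=c_0(n,s,\delta_0)\). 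This is arranged by choosing~\(\phi\) so that near the origin the subgraph is strictly larger than its tangent half-space.

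The vertical translate \(\Phi_t:=\Phi+te_n\) is then slid upward starting from \(t\) so negative that \(\Phi_t\cap B_1\subset E\) (which is ensured by the flatness assumption~\eqref{eq::flatness_harnack_lemma} at scale~\(1\) combined with the density assumption~\eqref{eq::half_measure_cylinder}). The sliding is stopped at the first instant~\(t^{*}\) for which \(\partial\Phi_{t^{*}}\) meets \(\partial E\); the obstacle point~\(y^{-}\), together with flatness at scale~\(1\), forces this first contact to happen at an interior point \(x_0\in B_{\delta_0}\), producing an interior tangent ball for~\(E\) at~\(x_0\). Applying Theorem~\ref{th::ELeq} to the \((2^{-ks}\Lambda)\)-super-solution~\(E\) yields
\[ \limsup_{\delta\to 0}\int_{\R^n\setminus B_\delta(x_0)}\frac{\chi_E(x)-\chi_{E^c}(x)}{|x-x_0|^{n+s}}\,dx\leq 2^{-ks}\Lambda. \]
On the other hand, the inclusion \(\Phi_{t^{*}}\subset E\) provides the pointwise inequality \(\chi_E-\chi_{E^c}\geq\chi_{\Phi_{t^{*}}}-\chi_{\Phi_{t^{*}}^c}\), so the same integral is bounded below by the analogous one computed for~\(\Phi_{t^{*}}\).

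This lower bound is estimated by splitting \(\R^n\) into the inner region~\(B_1(x_0)\), the annuli \(B_{2^{j+1}}(x_0)\setminus B_{2^j}(x_0)\) for \(j=0,\dots,k-1\), and the outer tail~\(\R^n\setminus B_{2^k}(x_0)\): the inner contribution is at least~\(c_0\) by the choice of barrier; each annular contribution is bounded in absolute value by \(C\,a\,2^{-j(s-\alpha)}\) thanks to the flatness~\eqref{eq::flatness_harnack_lemma} at scale~\(2^j\); and the outer tail is bounded by a constant multiple of~\(2^{-ks}\). The main obstacle will be calibrating the constants so that this lower bound strictly exceeds the upper bound~\(2^{-ks}\Lambda\). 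I would first fix~\(\delta_0\) and the profile of~\(\phi\) to make~\(c_0\) strictly positive, and then choose~\(k_0\) so large that the total flatness error \(\sum_{j=0}^{k-1}C\,a\,2^{-j(s-\alpha)}\leq C\,2^{-\alpha k}\), the outer tail, and the residual defect \(2^{-ks}\Lambda\) are each smaller than \(c_0/4\) for every \(k\geq k_0\). This yields the desired contradiction and rules out the failure of~\eqref{eq::inclusion_harnack1}; a symmetric sliding argument, this time pushing a reflected barrier downward using the obstacle point~\(y^{+}\) together with the reverse Euler-Lagrange inequality of Corollary~\ref{cor::reverseELeq}, rules out the failure of~\eqref{eq::inclusion_harnack2} and completes the proof.
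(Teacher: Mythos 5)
Your overall skeleton (contradiction, sliding barrier, comparison principle, dyadic decomposition, rescaling to wash out the residual $\Lambda$-defect) is aligned with the paper's, and your annular and tail estimates match~\eqref{eq::harnack_proof2} and~\eqref{eq::harnack_proof1}. But the key step --- the choice of barrier and what you can extract from it --- is wrong, and the gap is genuine.

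You propose a barrier $\Phi=\{x_n<\phi(x')\}$ with $\phi$ compactly supported and attaining a \emph{strict local minimum} at the origin, so that $\partial\Phi$ dips down to the ``bottom point'' $(0,\phi(0))$ and $H_s[\partial\Phi](0,\phi(0))\geq c_0>0$ with $c_0$ fixed (independent of $a=2^{-\alpha k}$). Two things go wrong. First, when you slide $\Phi_t=\Phi+te_n$ upward inside $E$, the first contact with $\partial E$ occurs at the \emph{highest} part of $\partial\Phi_t$: for your $\phi$, that is the flat plateau/rim where $\phi=0$, not the bottom of the valley, which is the lowest point of $\partial\Phi_t$ and hence exits $E$ last. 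At a plateau point the barrier is a half-space to leading order and gives no useful lower bound; the positive mean-curvature estimate you established at the bottom point is simply not available at the contact point $x_0$. Second, even granting contact at the bottom, a profile with $O(1)$ depth and $O(1)$ non-local mean curvature cannot be inserted tangentially below $\partial E$ when $\partial E\cap B_1\subset\{|x_n|\le a\}$: the plateau at level $t$ already crosses the slab of width $2a$ before the valley floor can be tangent, for any $a$ small enough. So the claim ``inner contribution $\geq c_0$'' with a fixed $c_0$ cannot be realized.

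The paper avoids exactly this by using a \emph{concave} parabola of opening $-\tfrac{a}{2}$, whose vertex is the highest point of its graph, so that the sliding contact is at the vertex; but then the barrier's near contribution is \emph{negative}, of size $-C_3a$ (see~\eqref{eq::harnack_proof7}), not a fixed positive constant. The positive term is supplied not by the barrier but by the half-measure hypothesis~\eqref{eq::half_measure_cylinder}, which gives $|(E\setminus P)\cap C_{\delta_0,a}|\gtrsim \delta_0^{n-1}a$ and hence a near contribution $\gtrsim a\delta_0^{-1-s}$ (see~\eqref{eq::harnack_proof6}); the contradiction then comes from $a\delta_0^{-1-s}\gg a\gg 2^{-sk}\Lambda$ as $k\to\infty$ (since $\alpha<s$), with $\delta_0$ small but fixed. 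In your proposal, assumption~\eqref{eq::half_measure_cylinder} is mentioned only to arrange the initial inclusion $\Phi_t\subset E$, and plays no quantitative role. Without it, the argument cannot close: if the barrier is concave the near contribution is negative, and if it is convex (as you propose) the sliding does not contact at the controlled point.
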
  
	
	\begin{rem}
		We stress that, in Theorem~\ref{th::harnack_ineq} and Lemma~\ref{lemma::partial_harnack}, we are not requiring that~$0\in\partial E$.
	\end{rem}
	
	The idea of the proof of Lemma~\ref{lemma::partial_harnack} is to argue by contradiction in order to show that if neither~\eqref{eq::inclusion_harnack1} nor~\eqref{eq::inclusion_harnack2} hold true, then~$E$ stretches in such a way that it contradicts the Euler-Lagrange Inequality in Theorem~\ref{th::ELeq}.
	
	\begin{proof}[Proof of Lemma~\ref{lemma::partial_harnack}]	
		In what follows, we consider~$k_0=k_0(\delta_0)$ such that~$a\leq 2^{-k_0\alpha}<\delta_0$.
		
		By assumptions, $\partial E\cap C_1 \subseteq \{|x_n|<a\}$. We claim that if~$\{x_n<-a\}\cap C_1 \subseteq E$, then~\eqref{eq::inclusion_harnack1} holds true.
		
		Arguing by contradiction, suppose not, hence there is a portion of~$\partial E \cap C_{\delta_0}$ that is trapped in the strip~$\{-a<x_n<-(1-\delta_0^2)a\}$.
		
		Thanks to the flatness assumptions~\eqref{eq::flatness_harnack_lemma}, there exists a parabola of opening~$-\frac{a}{2}$ tangent to~$\partial E$ from below in~$C_{\delta_0}$. Denoting by~$y$ the tangent point, we have that 
		$$ y\in C_{\delta_0} \qquad\text{and}\qquad -a\leq y_n \leq -(1-\delta_0^2)a.$$
		
		Choosing~$\delta_0<1/2$, we estimate the far-away contributions of the fractional mean curvature of~$E$ at~$y$ as
		\begin{equation} \label{eq::harnack_proof1} 
			\begin{split}
				&	\left| \int_{B_{1/2}^c(y)}  \frac{\chi_E(x)-\chi_{E^c}(x)}{|x-y|^{n+s}}\,dx  \right|
				\\	\leq& \sum_{j=0}^{k} 	\left| \int_{B_{2^j}(y)\setminus B_{2^{j-1}}(y)}  \frac{\chi_E(x)-\chi_{E^c}(x)}{|x-y|^{n+s}}\,dx  \right| 
				+ \left| \int_{B_{2^k}^c(y)}  \frac{\chi_E(x)-\chi_{E^c}(x)}{|x-y|^{n+s}}\,dx  \right| \\
				\leq& \sum_{j=0}^{k} 	\left| \int_{B_{2^j}(y)\setminus B_{2^{j-1}}(y)}  \frac{\chi_E(x)-\chi_{E^c}(x)}{|x-y|^{n+s}}\,dx  \right| 
				+ \omega_n\int_{2^k}^{+\infty}  \frac{dr}{r^{1+s}} . 
			\end{split}
		\end{equation}
		By the flatness assumptions~\eqref{eq::flatness_harnack_lemma}, we have that
		\begin{equation*}
			\begin{split}
				&\partial E\cap B_{2^j}(y) \subseteq \partial E\cap C^{\nu_{j+1}}_{2^{j+1}} \subseteq \big\{|x\cdot\nu_{j+1}| \leq a 2^{(j+1)(\alpha+1)}\big\}\\
				\text{and }\quad  &|y\cdot\nu_{j+1}|  \leq a 2^{(j+1)(\alpha+1)},
			\end{split}
		\end{equation*}
		hence
		\begin{equation} \label{eq::flatness_harnack2}
			\partial E\cap B_{2^j}(y) \subseteq \big\{|(x-y)\cdot\nu_{j+1}| \leq 2 a 2^{(j+1)(\alpha+1)}\big\} .
		\end{equation}
		So, if we define 
		$$ D := \left(B_{2^j}(y)\setminus B_{2^{j-1}}(y)\right) \setminus \big\{ |(x-y)\cdot \nu_{j+1}| \leq 2a2^{(j+1)(\alpha+1)} \big\} ,$$
		by symmetry, we have that
		\begin{equation*}
			\int_{D} \frac{\chi_E(x)-\chi_{E^c}(x)}{|x-y|^{n+s}} dx = 0.
		\end{equation*}
		
		Thus, we infer that
		\begin{equation} \label{eq::harnack_proof2}
			\begin{split}
				&	\left| \int_{B_{2^j}(y)\setminus B_{2^{j-1}}(y)}  \frac{\chi_E(x)-\chi_{E^c}(x)}{|x-y|^{n+s}}\,dx  \right| 
				\leq \int_{B_{2^j}(y)\setminus B_{2^{j-1}}(y)}  \frac{\chi_{\left\{|(x-y)\cdot\nu_{j+1}|\leq2 a2^{(j+1)(\alpha+1)}\right\}}(x)}{|x-y|^{n+s}}\,dx \\
				&\qquad\qquad \leq \omega_n \int_{2^{j-1}}^{2^j} \frac{4 a 2^{(j+1)(\alpha+1)}r^{n-2}}{r^{n+s}} \,dr 
				= C a \int_{2^{j-1}}^{2^j} \frac{dr}{r^{1+s-\alpha}} ,
			\end{split}
		\end{equation}
		for some~$C>0$, depending only on~$n$ and~$\alpha$.
		
		Therefore, from~\eqref{eq::harnack_proof1}, \eqref{eq::harnack_proof2}, and the fact that~$\alpha\in(0,s)$, we deduce that
		\begin{equation} \label{eq::harnack_proof3}
			\begin{split}
				&\left| \int_{B_{1/2}^c(y)}  \frac{\chi_E(x)-\chi_{E^c}(x)}{|x-y|^{n+s}}\,dx  \right|
				\leq \sum_{j=0}^{k} 	C a \int_{2^{j-1}}^{2^j} \frac{dr}{r^{1+s-\alpha}} + \omega_n\int_{2^k}^{+\infty}  \frac{dr}{r^{1+s}}    \\
				&\qquad\leq C a \int_{1/2}^{+\infty} \frac{dr}{r^{1+s-\alpha}}+ \frac{\omega_n}{s}2^{-sk} 
				\leq c_1 \big(a+2^{-\alpha k}\big) \leq c_1 a,
			\end{split}
		\end{equation}
		for some~$c_1>0$ depending only on~$n$, $s$, and~$\alpha$.
		
		Now, let us denote by~$P$ the subgraph of the tangent parabola~$x_n=y_n-\frac{a}{2}|x'-y'|^2$. Then,
		\begin{equation} \label{eq::harnack_proof4}
			\begin{split}
				&\limsup_{\rho\to0} \int_{B_{1/2}(y)\setminus B_{\rho}(y)} \frac{\chi_E(x)-\chi_{E^c}(x)}{|x-y|^{n+s}}\,dx  \\
				&\qquad\qquad\geq \int_{B_{1/2}(y)} \frac{\chi_{E\setminus P}(x)}{|x-y|^{n+s}}\,dx
				+ \int_{B_{1/2}(y)} \frac{\chi_{P}(x)-\chi_{P^c}(x)}{|x-y|^{n+s}}\,dx ,
			\end{split}
		\end{equation}
		Notice that, since~$P$ is smooth, both terms of the right-hand side of~\eqref{eq::harnack_proof4} are well-defined. Now, we estimate these terms.

		Since~$y\in C_{\delta_0}\cap\left\{-a\leq x_n\leq -(1-\delta_0^2)a\right\}$, up to taking~$\delta_0$ (and hence~$a$) small enough, we have that~$C_{\delta_0,a}\subseteq B_{1/2}(y)$, and
		\begin{equation} \label{eq::harnack_proof5}
			|x-y|^{-n-s}\geq (3\delta_0)^{-n-s},
		\end{equation}
		for every~$x\in C_{\delta_0,a}$.
		
		By~\eqref{eq::half_measure_cylinder}, we also have that
		\begin{equation*}
			\begin{split}
				&|(E\setminus P)\cap C_{\delta_0,a}| = |E\cap C_{\delta_0,a}|-|P\cap C_{\delta_0,a}| \\
				&\qquad\geq \frac{1}{2}|C_{\delta_0,a}|-\left|B^{n-1}_{\delta_0}\times\{-a\leq x_n\leq -(1-\delta_0^2)a\}\right|\\
				&\qquad =\omega_{n-1}\delta_0^{n-1}a(1-\delta_0^2).
			\end{split}
		\end{equation*}
		Hence, $|(E\setminus P)\cap C_{\delta_0,a}| \geq \frac{1}{2}\omega_{n-1}\delta_0^{n-1}a$, whenever~$\delta_0^2<1/2$.
		Consequently, using this together with~\eqref{eq::harnack_proof5}, we obtain that
		\begin{equation} \label{eq::harnack_proof6}
			\int_{B_{1/2}(y)}\frac{\chi_{E\setminus P}(x)}{|x-y|^{n+s}}\,dx
			\geq \int_{C_{\delta_0,a}}\frac{dx}{|x-y|^{n+s}} 
			\geq \frac{|(E\setminus P)\cap C_{\delta_0,a}|}{(3\delta_0)^{n+s}}
			\geq c_2a\delta_0^{-1-s},
		\end{equation}
		for some~$c_2>0$ depending on~$n$ and~$s$.
		
		In addition, since~$P$ is smooth,
		\begin{equation} \label{eq::harnack_proof7}
			\begin{split}
				&\int_{B_{1/2}(y)} \frac{\chi_{P}(x)-\chi_{P^c}(x)}{|x-y|^{n+s}}\,dx 
				\geq c_3 \int_{0}^{1/2} r^{n-2} \left(\int_{0}^{-\frac{a}{2}r^2}\frac{dt}{(r^2+t^2)^{\frac{n+s}{2}}}\right) \,dr \\
				& \qquad\qquad\geq -c_3\frac{a}{2}\int_{0}^{1/2}\frac{r^n}{r^{n+s}}\,dr
				= -c_3a ,
			\end{split}
		\end{equation}
		for some~$c_3>0$ depending on~$n$ and~$s$, and possibly changing from line to line.
		To check this, see for instance~\cite[Section~4]{abatangelo_valdinoci_nonlocal_curv}.
		
		Then, using~\eqref{eq::harnack_proof6} and~\eqref{eq::harnack_proof7} in~\eqref{eq::harnack_proof4}, we come to
		\begin{equation}\label{eq::harnack_proof8}
			\limsup_{\rho\to0} \int_{B_{1/2}(y)\setminus B_{\rho}(y)}  \frac{\chi_E(x)-\chi_{E^c}(x)}{|x-y|^{n+s}}\,dx \geq (c_2-c_3\delta_0^{1+s})a\delta_0^{-1-s} \geq c_4 a\delta_0^{-1-s},
		\end{equation}
		for some~$c_4>0$ depending on~$n$, $s$, and~$\alpha$.
		
		Therefore, thanks to~\eqref{eq::harnack_proof3}, \eqref{eq::harnack_proof8}, and Theorem~\ref{th::ELeq}, we infer that
		\begin{equation*} \label{eq::harnack_proof9}
			2^{-sk}\Lambda \geq \limsup_{\rho\to0} \int_{B_\rho^c(y)} \frac{\chi_E(x)-\chi_{E^c}(x)}{|x-y|^{n+s}}\,dx \geq \big(c_4-c_1\delta_0^{1+s}\big)a\delta_0^{-1-s}.
		\end{equation*}
		However, this leads to a contradiction whenever we take~$a$ and~$\delta_0$ small enough.
		
		Furthermore, if~$\{x_n>a\}\cap C_1 \subseteq E$, up to considering the
		supergraph~$\widetilde{P}$ of the parabola~$x_n=y_n+\frac{a}{2}|x'-y'|^2$ tangent to~$E$ at some point~$y$ from above, the foregoing computations show that~\eqref{eq::inclusion_harnack2} holds true, concluding the proof.
	\end{proof}
	
	\begin{proof}[Proof of Theorem~\ref{th::harnack_ineq}]
		Suppose that~$|E\cap C_{\delta_0,a}|\geq \frac{1}{2}|C_{\delta_0,a}|$. Since~$E$ is~$(2^{-ks}\Lambda)$-minimal in~$B_{2^k}$, thanks to Lemma~\ref{lemma::partial_harnack}, we have either $C_{\delta_0}\cap\{x_n<-a(1-\delta_0^2)\}\subseteq E$ or~$C_{\delta_0}\cap\{x_n>a(1-\delta_0^2)\}\subseteq E$, from which it follows 
		
		\begin{align*}
			\mbox{either}\quad& \partial E\cap C_{\delta_0} \subseteq \big\{x_n\geq a(\delta_0^2-1)\big\}\\
			\mbox{or}\quad& \partial E\cap C_{\delta_0} \subseteq \big\{x_n\leq a(1-\delta_0^2)\big\},
		\end{align*}
		respectively.
		
		Additionally, if~$|E\cap C_{\delta_0,a}|< \frac{1}{2}|C_{\delta_0,a}|$, then necessarily~$|E^c\cap C_{\delta_0,a}|\geq \frac{1}{2}|C_{\delta_0,a}|$.
		
		Moreover, recalling Definition~\ref{def::super_sub_sol} and
		Lemma~\ref{lemma::almost_minimal_subsupersol}, we have that also~$E^c$ is an almost minimal set. Thus, using Lemma~\ref{lemma::partial_harnack} with~$E^c$, we deduce again that
		
		\begin{align*}
			\mbox{either}\quad& \partial E\cap C_{\delta_0} \subseteq \big\{x_n\geq a(\delta_0^2-1)\big\}\\
			\mbox{or}\quad& \partial E\cap C_{\delta_0} \subseteq \big\{x_n\leq a(1-\delta_0^2)\big\},
		\end{align*}
		as desired.
	\end{proof}
	
	As shown in~\cite{caffarelli_roquejoffre_savin_nonlocal}, when~$k$ is much larger than~$k_0$, we are able to apply iteratively the Harnack's Inequalities. Since it will be useful in the following section, here we retrace the argument.
	
	\begin{cor} \label{cor::iterative_arg}	
		Let~$E$ be~$(2^{-ks}\Lambda)$-minimizer for~$\Per_s$ in~$B_{2^k}$, for some~$\Lambda>0$ and~$k\geq k_0$ from Theorem~\ref{th::harnack_ineq}, and set~$a:=2^{-\alpha k}$. Suppose that 
		\begin{equation}  \label{eq::tail_control}
			\partial E\cap C^{\nu_j}_{2^j}\subseteq\big\{|x\cdot\nu_j|\leq a2^{j(1+\alpha)}\big\},\qquad {\mbox{for all }} j=0,\dots,k ,
		\end{equation}
		with~$\nu_0=e_n$.
		
		Then, there exists $\zeta>0$, depending only on~$n$, $s$, $\alpha$ , $\Lambda$, $k$, $k_0$, and $\delta_0$ such that, for every $x_0\in\partial E\cap C_{1/2}$ and for all~$j\leq\zeta$, there exists~$z_j(x_0)\in\R$ such that
		\begin{equation} \label{eq::z_j_slab}
			\partial E\cap C_{(\delta_0/2)^j}(x_0) \subseteq \left\{|x_n-z_j(x_0)|\leq a\left(1-\frac{\delta_0^2}{2}\right)^j\right\}.
		\end{equation}
	\end{cor}
	
	\begin{proof}
		Let us start from the case~$x_0=0$ for simplicity. 
		Let also $\delta_{0}$ and $k_0$ be as in Theorem~\ref{th::harnack_ineq}, and take $k>k_0$. Notice that the conclusion in Theorem~\ref{th::harnack_ineq} is still verified for any $\delta<\delta_0$. Thus, we can
		assume that $\delta_0=2^{1-m_0}$, for some $m_0\in\N$. Now we define
		\begin{equation} \label{eq::zeta_choice}
			\widetilde{\zeta} := \left(\log \frac{2-\delta_0^2}{\delta_0}\right)^{-1}\log\left(\frac{a_0}{a}\right) =\Big(\log
			\big(2^{m_0}(1-2^{1-2m_0})\big)\Big)^{-1}\log\left(\frac{a_0}{a}\right),
		\end{equation}
		where $a_0:=2^{-k_0\alpha}$ and $a:=2^{-k\alpha}$.
		
		Let us make an explicit remark on our choice of parameters. On the one hand, we want to apply iteratively the Harnack inequality in Theorem~\ref{th::harnack_ineq}. This is possible as long as the assumption in~\eqref{eq::harnack_ineq1} is satisfied, which occurs exactly as long as $j\leq \widetilde{\zeta}$. On the other hand, we pick $a$, $a_0$, and $\delta_0$ so small that the setting under consideration here is coherent with the almost-minimality. In particular, a dilation of a $\Lambda$-minimal set by a parameter greater or equal than $1$ is again $\Lambda$-minimal in the same domain (see \eqref{eq::still_almost_min} below for a detailed discussion of this property). 
		
		Now, we claim that there exists a collection $\{z_j\}_j\subseteq[-a,a]$, with $j\in\{0,\dots,\widetilde{\zeta}\}$, such that~$z_0=0$,
		\begin{equation} \label{eq::z_j_slab_0}
			\partial E\cap C_{2^{-m_0j}} \subseteq \left\{|x_n-z_j|\leq a\left(1-2^{1-2m_0}\right)^j\right\},
		\end{equation}
		and, for all~$j\in\{0,\dots,\widetilde{\zeta}-1\}$,
		the following inequalities hold true
		\begin{equation} \label{eq::z_j_nested}
			\begin{split}
				&z_j-a\left(1-2^{1-2m_0}\right)^j\leq z_{j+1}-a\left(1-2^{1-2m_0}\right)^{j+1} \\
				&\qquad\qquad\leq z_{j+1}+a\left(1-2^{1-2m_0}\right)^{j+1} \leq z_j+a\left(1-2^{1-2m_0}\right)^j.
			\end{split}
		\end{equation}
		
		In fact, to prove~\eqref{eq::z_j_slab_0} and~\eqref{eq::z_j_nested}, it is convenient first
		to check an auxiliary implication.
		For short, we denote 
		by~${\mathcal{P}}(j)$
		the validity of~\eqref{eq::z_j_slab_0} for the index~$j$
		and by~${\mathcal{Q}}(j)$
		the validity of~\eqref{eq::z_j_nested} for the index~$j$, and we claim that, given~$j\in\{0,\dots,\widetilde\zeta-1\}$,
		\begin{equation}\label{AUSXOLjd9o438m5yu}\begin{split}&
				{\mbox{if~${\mathcal{P}}(i)$ for all indices~$i\in\N\cap[0,j]$}}
				\\&{\mbox{and~${\mathcal{Q}}(i)$ for all indices~$i\in\N\cap[0,j-1]$,}}
				\\&{\mbox{then~${\mathcal{Q}}(j)$ and ${\mathcal{P}}(j+1)$,}}\end{split}\end{equation}
		for a suitable choice of~$z_{j+1}$.
		
		We now prove~\eqref{AUSXOLjd9o438m5yu}.
		Later on, we will use~\eqref{AUSXOLjd9o438m5yu}
		to check~\eqref{eq::z_j_slab_0}
		and~\eqref{eq::z_j_nested}.
		
		The proof of~\eqref{AUSXOLjd9o438m5yu} relies on the 
		Harnack type result presented in Theorem~\ref{th::harnack_ineq} and goes as follows.
		Assume~${\mathcal{P}}(i)$ for all indices~$i\in\N\cap[0,j]$
		and~${\mathcal{Q}}(i)$ for all indices~$i\in\N\cap[0,j-1]$ (the latter, in particular,
		establishes the existence of~$z_j$).
		
		We define \begin{equation}\label{OIJHs0wpkt-957mn8iu}
			\widetilde{E}:=2^{m_0j} (E-z_{j}e_n)\end{equation} and we show that, when~$k$ is as in the statement of Corollary~\ref{cor::iterative_arg},
		\begin{equation}\label{eq::still_almost_min}
			{\mbox{$\widetilde{E}$ is a $(2^{-ks}\Lambda)$-minimizer in $B_{2^k}$.}}\end{equation} 
		Indeed, let $\widetilde{F}\subseteq\R^n$ be such that $\widetilde{F}\setminus B_{2^k}=\widetilde{E}\setminus B_{2^k}$, and define $F:=z_{j}e_n+2^{-m_0j}\widetilde{F}$. Then, we have
		\begin{equation}\label{98nc7vbk7Xui}
			F\setminus B_{2^{k-m_0j}}(z_{j}e_n) = E\setminus B_{2^{k-m_0j}}(z_{j}e_n) .
		\end{equation}
		Since $|z_{j}|\leq a$ by construction, we also have $B_{2^{k-m_0j}}(z_{j}e_n) \subseteq B_{2^k}$, so that~\eqref{98nc7vbk7Xui} yields that
		\begin{equation*}
			F\setminus B_{2^k} = E \setminus B_{2^k}.
		\end{equation*}
		Hence, taking advantage of the the scaling properties of the $s$-perimeter and the almost minimality of $E$, we obtain
		\begin{equation*} 
			\begin{split}
				&\Per_s(\widetilde{E},B_{2^k})-\Per_s(\widetilde{F},B_{2^k}) = 2^{m_0j(n-s)}\Big(\Per_s(E,B_{2^{k-m_0j}}(z_{j}e_n))-\Per_s(F,B_{2^{k-m_0j}}(z_{j}e_n))\Big)\\
				&\qquad =2^{m_0j(n-s)}\Big(\Per_s(E,B_{2^k})-\Per_s(F,B_{2^k})\Big) \leq 2^{m_0j(n-s)}2^{-ks}\Lambda |E\Delta F| \\
				&\qquad = 2^{-m_0 j s}2^{-ks}\Lambda |\widetilde{E}\Delta \widetilde{F}| \leq 2^{-ks}\Lambda |\widetilde{E}\Delta \widetilde{F}|,			
			\end{split} 
		\end{equation*}
		which establishes~\eqref{eq::still_almost_min}.
		
		Now we define $\widetilde{a}:=a2^{m_0 j}(1-2^{1-2m_0})^j$ and use again the notation for~$\widetilde{E}$ in~\eqref{OIJHs0wpkt-957mn8iu}.
		We claim that there exists a collection $\{\nu'_i\}_i\subseteq\S^{n-1}$ such that,
		for every~$i\in\{0,\dots,k\}$,
		\begin{equation} \label{eq::improved_trap}
			\partial\widetilde{E}\cap C^{\nu'_i}_{2^i}  \subseteq \big\{|x \cdot \nu'_i|\leq \widetilde{a} 2^{i(1+\alpha)}\big\}.
		\end{equation}
		To show \eqref{eq::improved_trap}, 
		we first notice that when~$j=0$ we have that~$\widetilde a=a$ and~$\widetilde{E}=E$
		and therefore~\eqref{eq::improved_trap} in this case boils down to~\eqref{eq::tail_control}.
		
		Hence, to prove~\eqref{eq::improved_trap}, one can assume that
		\begin{equation}\label{9qmcv90b34t5bn7.0-1}
			j\ge1.\end{equation}
		We distinguish the cases $i=0$, $1\leq i\leq j$, and $j<i\leq k$. 
		
		First, notice that, after rescaling and translating~${\mathcal{P}}(h)$ for all indices~$h\in\N\cap[0,j]$, we obtain
		\begin{equation} \label{eq::j_leq_ell}
			\partial \widetilde{E}\cap C_{2^{m_0(j-h)}} \subseteq \left\{|x_n-2^{m_0j}(z_h-z_j)|\leq a2^{m_0j}\left(1-2^{1-2m_0}\right)^h\right\}.
		\end{equation}
		Therefore, taking $h:=j$ in~\eqref{eq::j_leq_ell}, it follows that
		\begin{equation*} 
			\partial \widetilde{E}\cap C_1 \subseteq \left\{|x_n| \leq a2^{m_0j}\left(1-2^{1-2m_0}\right)^j\right\} = \left\{|x_n| \leq \widetilde{a}\right\}.
		\end{equation*}
		This shows~\eqref{eq::improved_trap}
		when~$i=0$.
		
		We now check the validity of~\eqref{eq::improved_trap}
		when~$1\leq i\leq j$. By~${\mathcal{Q}}(h)$ for all indices~$h\in\N\cap[0,j-1]$, we have that
		\begin{equation*}
			\begin{split}
				&z_h-a\left(1-2^{1-2m_0}\right)^h\leq z_{j}-a\left(1-2^{1-2m_0}\right)^{j} \\
				&\qquad\qquad\leq z_{j}+a\left(1-2^{1-2m_0}\right)^{j} \leq z_h+a\left(1-2^{1-2m_0}\right)^h.
			\end{split}
		\end{equation*}
		This entails that
		\begin{equation*}
			\begin{split}
				&z_h 
				\leq z_{j}-a\left(1-2^{1-2m_0}\right)^{j}
				+a\left(1-2^{1-2m_0}\right)^h	
				\leq z_j +a\left(1-2^{1-2m_0}\right)^{h}, \\
				\mbox{and}\quad& z_j 
				\leq z_h+a\left(1-2^{1-2m_0}\right)^h-a\left(1-2^{1-2m_0}\right)^{j} 
				\leq z_h +a\left(1-2^{1-2m_0}\right)^{h}.
			\end{split}
		\end{equation*}
		Therefore,
		\begin{equation*}
			|z_h-z_j| \leq a\left(1-2^{1-2m_0}\right)^{h}
		\end{equation*}
		and hence
		\begin{equation*} 
			2^{m_0j}|z_h-z_j| + a2^{m_0j}\left(1-2^{1-2m_0}\right)^{h}\leq a2^{1+m_0j}\left(1-2^{1-2m_0}\right)^{h} .
		\end{equation*}
		
		Combining this with \eqref{eq::j_leq_ell}, and using the notation $i:=j-h$, we come to
		\begin{equation*}
			\partial\widetilde{E} \cap C_{2^i} \subseteq \partial\widetilde{E} \cap C_{2^{m_0 i}} \subseteq \left\{ |x_n|\leq a2^{1+m_0j}(1-2^{1-2m_0})^{j-i}\right\} = \left\{|x_n| \leq \widetilde{a}2(1-2^{1-2m_0})^{-i}\right\},
		\end{equation*}
		for every $i\in\N\cap[1,j]$.
		
		Now, let $\sigma>0$ be such that $1-2^{1-2m_0} = 2^{-\sigma}$. Moreover, up to taking $m_0$ large enough, we assume \begin{equation}\label{9qmcv90b34t5bn7.0-2}\sigma<\alpha .\end{equation} Then, we have
		\begin{equation*}
			2(1-2^{1-2m_0})^{-i} = 2^{1+i\sigma} \leq 2^{1+i\alpha} \leq 2^{i(1+\alpha)}.
		\end{equation*}
		Thus, it follows from the last two displays that, for every~$i\in\N\cap[1,j]$,
		\begin{equation*} 
			\partial\widetilde{E} \cap C_{2^i}  \subseteq \left\{|x_n|\leq \widetilde{a} 2^{i(1+\alpha)}\right\}.
		\end{equation*}
		This proves~\eqref{eq::improved_trap}
		when~$1\leq i\leq j$ with~$\nu'_i:=e_n$.
		
		To conclude the proof of \eqref{eq::improved_trap}, we are only left to deal with the case~$j+1\le i\le k$. 
		For this, we let~$h:=i-j$ and remark that~$h\in\N\cap[1,k-j]$.
		Notice that, by the flatness assumption in~\eqref{eq::tail_control} and the definition of $\widetilde{E}$, we have that
		\begin{equation}\label{eq::improved_trap_2_1}
			\partial\widetilde{E}\cap C^{\nu_h}_{2^{m_0j+h}}(2^{m_0j}z_j e_n) \subseteq \left\{|x\cdot\nu_h|\leq 2^{m_0j}|z_j| + a2^{m_0j+h(1+\alpha)}\right\}.
		\end{equation}
		
		In addition, for every $x\in C^{\nu_h}_{2^{h+j}}$, we have
		\begin{equation*}\begin{split}&
				|(x-2^{m_0 j}z_j e_n)\cdot\nu_h| \leq 
				|x\cdot\nu_h|+2^{m_0 j}|z_j |
				\le2^{h+j}+2^{m_0 j}a\\&\qquad=
				2^{h+j} + 2^{m_0 j-k\alpha} \leq 2^{h+m_0j-1}+2^{1-h-k\alpha}2^{h+m_0j-1}\leq 2^{m_0j+h},\end{split}
		\end{equation*}
		that yields $C^{\nu_h}_{2^{h+j}}\subseteq C^{\nu_h}_{2^{m_0j+h}}(2^{m_0j}z_j)$.
		
		This and~\eqref{eq::improved_trap_2_1} give that
		\begin{equation}\label{eq::improved_trap_2_2}\partial\widetilde{E}\cap C^{\nu_h}_{2^{h+j}} \subseteq \left\{|x\cdot\nu_h|\leq 2^{m_0j}|z_j| + a2^{m_0j+h(1+\alpha)}\right\}.\end{equation}
		
		Moreover, recalling that $|z_j|\leq a$, we see that 
		\begin{equation*} 
			\begin{split}
				&2^{m_0j}|z_j|+a2^{m_0j+h(1+\alpha)} \leq a2^{m_0j}(1+2^{h(1+\alpha)}) \\&\qquad\leq a2^{m_0j}2^{1+h(1+\alpha)} = \widetilde{a}2^{1+h(1+\alpha)}(1-2^{1-2m_0})^{-j} \\
				&\qquad= \widetilde{a}2^{1+h(1+\alpha)+\sigma j}=\widetilde{a}2^{(h+j)(1+\alpha)}2^{1+\sigma j-j(1+\alpha)} \leq \widetilde{a}2^{(h+j)(1+\alpha)},
			\end{split}
		\end{equation*}
		thanks to~\eqref{9qmcv90b34t5bn7.0-1} and~\eqref{9qmcv90b34t5bn7.0-2}. 
		
		Therefore, recalling that $i=j+h$ and using~\eqref{eq::improved_trap_2_2}, we obtain
		\begin{equation*} 
			\partial\widetilde{E}\cap C^{\nu'_i}_{2^{i}} \subseteq \left\{|x\cdot\nu'_i| \leq \widetilde{a}2^{i(1+\alpha)}\right\}.
		\end{equation*}
		for every $i\in\{j+1,\dots,k\}$, where $\nu'_i:=\nu_h$.
		This completes the proof of~\eqref{eq::improved_trap}.
		
		By virtue of~\eqref{eq::still_almost_min} and~\eqref{eq::improved_trap}, we can employ
		Theorem~\ref{th::harnack_ineq} and conclude that
		\begin{align*}
			\mbox{either}\quad&\partial\widetilde{E}\cap C_{2^{-m_0}}\subseteq\partial\widetilde{E}\cap C_{2^{1-m_0}} \subseteq \big\{x_n\leq\widetilde{a} (1-2^{2-2m_0})\big\}\\
			\mbox{or}\quad&\partial\widetilde{E}\cap C_{2^{-m_0}}\subseteq\partial\widetilde{E}\cap C_{2^{1-m_0}} \subseteq \big\{x_n\geq -\widetilde{a}(1-2^{2-2m_0})\big\}.
		\end{align*}

		Rescaling and translating back, and recalling the definition of $\widetilde{a}$, we infer that
		\begin{align}
			\label{eq::harnack_ell_1}\mbox{either}\quad&\partial E\cap C_{2^{-m_0(j+1)}} \subseteq \big\{a(1-2^{1-2m_0})^{j} \leq x_n- z_j\leq a(1-2^{2-2m_0})(1-2^{1-2m_0})^{j}\big\}\\
			\label{eq::harnack_ell_2}\mbox{or}\quad&\partial E\cap C_{2^{-m_0(j+1)}} \subseteq \big\{a(1-2^{2-2m_0})(1-2^{1-2m_0})^{j} \leq x_n- z_j\leq a(1-2^{1-2m_0})^{j}\big\}.
		\end{align}
		
		Suppose that \eqref{eq::harnack_ell_1} holds true. Then, setting $z_{j+1} := z_j- a2^{1-2m_0}(1-2^{1-2m_0})^{j}$, we deduce that
		\begin{equation} \label{eq::z_ell+1_nested}
			\begin{split}
				&z_j-a\left(1-2^{1-2m_0}\right)^j\leq z_{j+1}-a\left(1-2^{1-2m_0}\right)^{j+1} \\
				&\qquad\qquad\leq z_{j+1}+a\left(1-2^{1-2m_0}\right)^{j+1} \leq z_j+a\left(1-2^{1-2m_0}\right)^j
			\end{split}
		\end{equation}
		and
		\begin{equation} \label{eq::improved_trap_ell+1}
			\partial E\cap C_{2^{-m_0(j+1)}} \subseteq \left\{|x_n-z_{j+1}| \leq a(1-2^{1-2m_0})^{j+1}\right\}.
		\end{equation}
		Analogously, if \eqref{eq::harnack_ell_2} is verified, then~$z_{j+1} := z_j+ a2^{1-2m_0}(1-2^{1-2m_0})^{j}$ gives again~\eqref{eq::z_ell+1_nested} and~\eqref{eq::improved_trap_ell+1}.
		In any case, we have established~\eqref{eq::z_ell+1_nested}, which is precisely~${\mathcal{Q}}(j)$,
		as well as~\eqref{eq::improved_trap_ell+1}, which is precisely~${\mathcal{P}}(j+1)$: the proof of~\eqref{AUSXOLjd9o438m5yu} is thereby complete.
		
		We have now to check the claims in~\eqref{eq::z_j_slab_0} and~\eqref{eq::z_j_nested},
		which correspond, respectively, to~${\mathcal{Q}}(j)$ 
		with~$j\in\{0,\dots,\widetilde\zeta-1\}$
		and to~${\mathcal{P}}(j)$
		with~$j\in\{0,\dots,\widetilde\zeta\}$. We argue recursively.
		To start with, notice that~\eqref{eq::tail_control} gives~${\mathcal{P}}(0)$ with $z_j=0$. 
		This and~\eqref{AUSXOLjd9o438m5yu}
		imply~${\mathcal{Q}}(0)$ and~${\mathcal{P}}(1)$.
		Then, we use~${\mathcal{P}}(0)$, ${\mathcal{P}}(1)$,
		and~${\mathcal{Q}}(0)$, together with~\eqref{AUSXOLjd9o438m5yu},
		to deduce~${\mathcal{Q}}(1)$ and~${\mathcal{P}}(2)$.
		Then, we use~${\mathcal{P}}(0)$, ${\mathcal{P}}(1)$, ${\mathcal{P}}(2)$,
		${\mathcal{Q}}(0)$
		and~${\mathcal{Q}}(1)$, together with~\eqref{AUSXOLjd9o438m5yu},
		to deduce~${\mathcal{Q}}(2)$ and~${\mathcal{P}}(3)$.
		Then we keep iterating till we deduce~${\mathcal{Q}}(\widetilde\zeta-1)$
		and~${\mathcal{P}}(\widetilde\zeta)$, which proves~\eqref{eq::z_j_slab_0}
		and~\eqref{eq::z_j_nested}, as desired.
		
		This establishes the inclusion in~\eqref{eq::z_j_slab}
		when~$x_0=0$.
		\smallskip
		
		A similar argument works more generally for every~$x_0\in\partial E\cap C_{1/2}\setminus\{0\}$. Indeed, we have
		$$ \partial E\cap C^{\nu_j}_{2^{j-1}}(x_0) \subseteq \partial E\cap C^{\nu_j}_{2^j} \subseteq \big\{|x\cdot \nu_j|\leq a2^{j(1+\alpha)}\big\},\qquad{\mbox{for all }} j=0,\dots,k ,$$
		thus
		$$  \partial E\cap C^{\nu_j}_{2^{j-1}}(x_0) \subseteq \big\{|(x-x_0)\cdot\nu_j|\leq 2^{2+\alpha}a2^{(j-1)(1+\alpha)}\big\},\qquad {\mbox{for all }} j=0,\dots,k .$$
		Moreover, if~$E$ is almost minimal in~$B_{2^k}$, then the translated set~$E-x_0$ is almost minimal in~$B_{2^{k-1}}$. Therefore, setting $a':=2^{2+\alpha}a$ and applying the argument discussed above to~$E-x_0$, we obtain
		
		\begin{equation*}
			\partial (E-x_0)\cap C_{(\delta_0/2)^j} \subseteq \left\{|x_n-z_j|\leq a'\left(1-\frac{\delta_0^2}{2}\right)^j\right\}
		\end{equation*}
		as long as $j\leq\zeta$, where
		\begin{equation*}
			\zeta:=\Big(\log\big(2^{m_0}(1-2^{1-2m_0})\big)\Big)^{-1}\log\left(\frac{a_0}{a'}\right),
		\end{equation*}
		Namely,		
		\begin{equation*}
			\partial E\cap C_{(\delta_0/2)^j}(x_0) \subseteq \left\{|x_n-\widetilde{z}_j|\leq a'\left(1-\frac{\delta_0^2}{2}\right)^j\right\},
		\end{equation*}
		where $\widetilde{z}_j:=z_j+|x_0|$, as long as~$j\leq\zeta$.
	\end{proof}
	
	\subsection{Improvement of flatness} \label{sec::improv_flat}
	
	The Harnack type result discussed in Theorem~\ref{th::harnack_ineq} constitutes our main tool to prove that if~$E$ is an almost minimal set with controlled flatness in every ball of radius~$2^{-j}$, for every~$j\leq k_0$, then its flatness improves in every ball of smaller radius. We refer to this result as improvement of flatness.
	
	\begin{theorem}[Improvement of flatness] \label{th::improv_flat}
		Let~$E$ be~$\Lambda$-minimal in~$B_1$, for some~$\Lambda\geq0$, and assume that~$0\in\partial E$. 
		
		Then, there exists~$k_0\in\N$, depending only on~$n$, $s$, $\alpha$ and~$\Lambda$, such that if 
		\begin{equation} \label{eq::flat_improve_hp}
			\partial E\cap B_{2^{-j}}\subseteq\big\{|x\cdot\nu_j|\leq 2^{-j(1+\alpha)}\big\},\qquad {\mbox{for all~$j\leq k$ and for some~$k\geq k_0$,}}
		\end{equation}
		where~$\{\nu_j\}_{j=0}^k$ is a family of unit vectors with~$\nu_0=e_n$, then there exist unit vectors~$\{\nu_j\}_{j\geq k+1}$ such that 
		\begin{equation*}
			\partial E\cap B_{2^{-j}}\subseteq\big\{|x\cdot\nu_j|\leq2^{-j(1+\alpha)}\big\},\qquad {\mbox{for all }} j\geq k+1.
		\end{equation*}
	\end{theorem}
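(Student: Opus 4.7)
The plan is to argue by contradiction and compactness, adapting the scheme of~\cite{caffarelli_roquejoffre_savin_nonlocal} to the almost-minimal setting. I would suppose that the statement fails, so that for every $k_0\in\N$ there is a $\Lambda$-minimal set $E$ in $B_1$ with $0\in\partial E$, satisfying~\eqref{eq::flat_improve_hp} for some $k\ge k_0$, but violating its conclusion at scale $2^{-k-1}$. Producing a sequence of such counterexamples $(E_m,k_m)$ with $k_m\to+\infty$, I would then rescale each example by the factor $2^{k_m}$: by the scaling of the $s$-perimeter, the set $\tilde{E}_m:=2^{k_m}E_m$ is $(2^{-k_m s}\Lambda)$-minimal in $B_{2^{k_m}}$; setting $a_m:=2^{-k_m\alpha}$, the hypothesis~\eqref{eq::flat_improve_hp} translates precisely into the assumptions of Corollary~\ref{cor::iterative_arg} applied to $\tilde{E}_m$.

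Applying Corollary~\ref{cor::iterative_arg} at every point $x_0\in\partial\tilde{E}_m\cap B_{1/2}$ provides an H\"older-type oscillation estimate of the form $|x_n-x_{0,n}|\le C a_m r^\beta$ for every $x\in\partial\tilde{E}_m\cap B_r(x_0)$ with $r$ small, where $\beta\in(0,1)$ depends only on $n$, $s$, $\alpha$ and $\Lambda$. This shows that $\partial\tilde{E}_m\cap B_{1/2}$ is the graph of a function $g_m$ with $\|g_m\|_\infty\le C a_m$ and $[g_m]_\beta\le C a_m$, so the vertically rescaled profiles $u_m(x'):=g_m(x')/a_m$ are uniformly bounded and equicontinuous on $B^{n-1}_{1/2}$. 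By Arzel\`a--Ascoli, a subsequence of $u_m$ converges uniformly to some $u_\infty$ with $u_\infty(0)=0$. The convergence of $\tilde{E}_m$ in $L^1_\loc$ to the half-space $\{x_n<0\}$ is guaranteed by Proposition~\ref{prop::conv_almost_min} (which applies because $2^{-k_m s}\Lambda\to 0$), and Corollary~\ref{cor::boundary_conv} upgrades this to uniform convergence of boundaries.

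The next step is to identify the equation satisfied by $u_\infty$. Passing to the limit in the Euler-Lagrange inequalities of Theorem~\ref{th::ELeq} and Corollary~\ref{cor::reverseELeq} — applied to the rescaled graphs after touching them from above and below by smooth test profiles — I would show that $u_\infty$ is a viscosity solution of the linearization of $H_s=0$ around the flat profile, namely $(-\Delta)^{(1+s)/2}u_\infty=0$ in $B^{n-1}_{1/2}$; the crucial point is that, after rescaling, the almost-minimality defect $2^{-k_m s}\Lambda$ tends to zero, so the $\Lambda$-terms drop out in the limit and one recovers the genuine non-local mean-curvature equation. Classical regularity for the fractional Laplacian then yields a unit vector $\nu_\infty$ and a constant such that $|u_\infty(x')-\nu_\infty\cdot x'|\le\tfrac{1}{4}|x'|^{1+\alpha}$ near the origin, and reversing the rescaling produces, for $m$ large, a direction $\nu_{k_m+1}$ witnessing flatness of $E_m$ at scale $2^{-k_m-1}$, contradicting the standing assumption. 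The hard part, I expect, will be the rigorous passage to the limit in the Euler-Lagrange inequalities after the vertical rescaling by $1/a_m$: one must verify that the nonlocal integrals governing the curvature of $\partial\tilde{E}_m$ converge to the linearized fractional operator evaluated at $u_\infty$, which demands careful control of the far-field contributions via the iterated flatness from Corollary~\ref{cor::iterative_arg} and a uniform tracking of all perturbative constants absorbed into the $\Lambda$-term.
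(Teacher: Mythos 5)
Your proposal follows essentially the same route as the paper: rescale by $2^{k}$ to reduce to the version stated as Theorem~\ref{th::improv_flat_rescaled}, argue by contradiction and compactness, extract via the iterated Harnack argument (Corollary~\ref{cor::iterative_arg}, packaged in the paper as Lemmata~\ref{lemma::holder_est}--\ref{lemma::conv_unif}) a uniformly convergent blow-up limit $u_\infty$, show it solves $(-\Delta)^{(1+s)/2}u_\infty=0$ in the viscosity sense by passing to the limit in the Euler--Lagrange inequalities with the $\Lambda$-defect vanishing (Lemma~\ref{lemma::linear_viscosity}), and conclude $u_\infty$ is linear to contradict the failure of flatness at scale $1/2$. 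The only cosmetic difference is that the paper invokes the Liouville-type Lemma~\ref{lemma::CRS_harmonic_implies_linear} rather than a local interior estimate, but the argument is the same.
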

	
	Notice that, thanks to the scaling properties of the~$s$-perimeter, if~$E$ is a~$\Lambda$-minimal set in~$B_1$, then~$2^k E$ is a~$(2^{-ks}\Lambda)$-minimal set in~$B_{2^k}$. Moreover, the flatness assumptions in~\eqref{eq::flat_improve_hp} become
	\begin{equation*}
		\partial(2^k E) \cap B_{2^{k-j}} \subseteq \big\{|x\cdot\nu_j| \leq 2^k2^{-j(1+\alpha)}\big\} = \big\{|x\cdot\nu_j| \leq 2^{-k\alpha}2^{(k-j)(1+\alpha)}\big\} .
	\end{equation*}
	Thus, we deal with a rescaled version of the improvement of flatness, from which Theorem~\ref{th::improv_flat} follows with an iterative argument. 
	
	\begin{theorem}[Rescaled improvement of flatness] \label{th::improv_flat_rescaled}
		Let~$E$ be~$(2^{-ks}\Lambda)$-minimal in~$B_{2^k}$, for some~$\Lambda\geq0$, and assume that~$0\in\partial E$.
		
		Then, there exists~$k_0\in\N$,
		depending only on~$n$, $s$, $\alpha$, and~$\Lambda$,
		such that if, for some~$k\geq k_0$, it holds that 
		\begin{equation} \label{eq::rescaled_flat_improve_hp}
			\partial E\cap C_{2^j}\subseteq \big\{|x\cdot\nu_j|\leq 2^{-k\alpha}2^{j(1+\alpha)}\big\},\quad {\mbox{ for all }} j\in\{ 0,\dots,k\},
		\end{equation}
		then there exists a unit normal~$\nu_{-1}$ such that 
		\begin{equation*}
			\partial E\cap C_{1/2}\subseteq \big\{|x\cdot\nu_{-1}|\leq 2^{-k\alpha-\alpha-1}\big\}.
		\end{equation*}
	\end{theorem}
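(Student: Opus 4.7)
The plan is to argue by contradiction and compactness, using the iterated Harnack estimate of Corollary~\ref{cor::iterative_arg} as the key mechanism for producing a limit object whose regularity then forces the improved flatness. Assume the conclusion fails: there would exist $k_j \to +\infty$ and sets $E_j$, each $(2^{-k_j s}\Lambda)$-minimal in $B_{2^{k_j}}$ with $0 \in \partial E_j$, satisfying the flatness hypothesis~\eqref{eq::rescaled_flat_improve_hp} at level $k_j$ but violating the conclusion, i.e. for every unit vector $\nu$,
$$ \partial E_j \cap B_{1/2} \not\subset \big\{|x\cdot \nu|\leq 2^{-k_j\alpha - \alpha - 1}\big\}. $$
Set $a_j := 2^{-k_j\alpha} \to 0$. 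First, I would apply Corollary~\ref{cor::iterative_arg} to each $E_j$, obtaining, for every $x_0 \in \partial E_j \cap B_{1/2}$ and every integer $i \leq \kappa_j$ (with $\kappa_j \to +\infty$),
$$ \partial E_j \cap B_{\frac12(\delta_0/2)^i}(x_0) \subset \big\{|x_n - x_{0,n}|\leq 2a_j(1-\delta_0^2/2)^i\big\}. $$
Writing $r = \tfrac12(\delta_0/2)^i$, this translates into $|x_n - x_{0,n}| \leq Ca_j r^{\beta}$ on $\partial E_j\cap B_r(x_0)$, with exponent $\beta := \log(1 - \delta_0^2/2)/\log(\delta_0/2) > 0$. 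Combined with the clean-ball condition (Corollary~\ref{cor::clean_ball}) and the density estimates (Theorem~\ref{th::unif_dens_estimates}), this would imply that $\partial E_j \cap (B^{n-1}_{1/2}\times\R)$ is the graph of a function $u_j \in \cont^{0,\beta}(B^{n-1}_{1/2})$ with $u_j(0) = 0$ and $[u_j]_{\cont^{0,\beta}} \leq Ca_j$.

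The second step is to rescale vertically: setting $v_j := a_j^{-1}u_j$ produces a sequence with $\|v_j\|_{\cont^{0,\beta}(B^{n-1}_{1/2})} \leq C$ uniformly in $j$, so Arzel\`a--Ascoli yields a subsequence converging locally uniformly to some $v \in \cont^{0,\beta}(B^{n-1}_{1/2})$ with $v(0) = 0$. I would then verify that the corresponding rescaled sets converge in $L^1_{\loc}$ to the subgraph of $v$, while the rescaled almost-minimality constants go to~$0$, so Proposition~\ref{prop::conv_almost_min} provides a natural framework for the limit. Linearizing the Euler-Lagrange inequalities of Theorem~\ref{th::ELeq} and Corollary~\ref{cor::reverseELeq} in the spirit of~\cite{caffarelli_roquejoffre_savin_nonlocal}, $v$ should turn out to be a viscosity solution of the linearized non-local equation $(-\Delta)^{(1+s)/2}v = 0$ in $B^{n-1}_{1/2}$. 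By the interior regularity for this linear operator, $v$ is then smooth near the origin, so that $|v(x') - p\cdot x'| \leq C|x'|^{1+\tau}$ for some $p \in \R^{n-1}$ and $\tau > 0$.

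Translating this Taylor expansion back to the $E_j$'s at scale $1/2$ and choosing the tilted unit normal $\nu_{-1} \in \R^n$ built out of $p$, for $j$ large enough one would obtain
$$ \partial E_j \cap B_{1/2}\subset\big\{|x\cdot \nu_{-1}|\leq 2^{-k_j\alpha - \alpha - 1}\big\}, $$
producing the desired contradiction. The main obstacle will be Step~2: careful tracking is needed to check that the error term $2^{-k_j s}\Lambda$ leaves no trace after linearization, and that the viscosity formulation of the limit equation is robust enough for the interior regularity theory to apply. A secondary technical difficulty is verifying that $\partial E_j \cap (B^{n-1}_{1/2}\times\R)$ is genuinely a graph, which should follow by combining the iterated Harnack control with the clean-ball condition at appropriate dyadic scales.
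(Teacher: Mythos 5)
Your proposal follows the same overall blueprint as the paper: argue by contradiction, use the iterated Harnack control (Corollary~\ref{cor::iterative_arg}) to get H\"older compactness for the vertically rescaled boundaries, pass to a limit $v$, show $v$ solves $(-\Delta)^{(1+s)/2}v=0$ in the viscosity sense, and conclude that for large $k$ the flatness must improve. These are precisely Lemmas~\ref{lemma::conv_unif}, \ref{lemma::linear_viscosity}, and~\ref{lemma::CRS_harmonic_implies_linear} in the paper, so the structure is right. However, there are two genuine gaps in how you would close the argument.

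First, you restrict the limit $v$ to $B^{n-1}_{1/2}$ and then invoke the equation $(-\Delta)^{(1+s)/2}v = 0$ ``in $B^{n-1}_{1/2}$.'' This operator is non-local: the equation only makes sense if $v$ is defined on all of $\R^{n-1}$ with a growth condition that makes the tail integral converge. This is why the paper's Lemma~\ref{lemma::conv_unif} works on \emph{every} compact subset of $\R^n$ (not just $B_1$) and, crucially, records the global growth bound $|u(x')|\leq c(1+|x'|^{1+\alpha})$. Without a globally defined $v$ with controlled growth, the linearized equation is not well posed and the regularity step is vacuous.

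Second, your Step~5 uses interior regularity plus a Taylor expansion to get $|v(x')-p\cdot x'|\leq C|x'|^{1+\tau}$, and then tries to translate this back to the needed inclusion $\partial E_j\cap B_{1/2}\subset\{|x\cdot\nu_{-1}|\leq 2^{-\alpha-1}a_j\}$. The difficulty is quantitative: at scale $|x'|=1/2$ the Taylor remainder is of size $C\,2^{-1-\tau}$, and there is no reason for $C\,2^{-1-\tau}\leq 2^{-1-\alpha}$ to hold --- the constant $C$ coming from Schauder estimates is universal but not small, and the exponent $\tau$ from interior regularity need not exceed $\alpha$. The paper avoids this entirely by combining the global equation with the subcritical growth bound to invoke the Liouville theorem (Lemma~\ref{lemma::CRS_harmonic_implies_linear}), which gives that $u$ is \emph{exactly} linear, and then the uniform convergence drives the deviation from the linear slab to zero, making the desired inclusion hold for $k$ large without any constant bookkeeping. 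Finally, a minor point: you worry about proving that $\partial E_j$ is ``genuinely a graph,'' but this is not needed and is not what the paper establishes --- Lemma~\ref{lemma::holder_est} only squeezes the (rescaled) boundary between two H\"older graphs $g_k^\pm$ with $g_k^+-g_k^-\to 0$, which is weaker and sufficient; proving graphicality would be an unnecessary detour.
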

	
	The strategy to prove Theorem~\ref{th::improv_flat_rescaled} relies on a contradiction argument. In particular, we consider a sequence of almost minimal sets~$\{E_k\}_k$ such that~$E_k$ is~$2^{-(k-j)\alpha}$-flat in~$C_{2^j}$, but~$\partial E_k$ cannot fit in any cylinder of height~$2^{-(k+1)\alpha}$ in~$C_{1/2}$. Our goal is to show that there exists a subsequence~$\{E_{k_l}\}_l$ uniformly convergent to a subgraph of some linear function~$u$, which leads to a contradiction.
	
	With this intent, let us consider a sequence of~$(2^{-ks}\Lambda)$-minimal sets~$\{E_k\}_k$, for some~$\Lambda\geq0$. For every~$k\in\N$, define~$a_k:=2^{-k\alpha}$, and assume that there exists~$d_k\in\N$ such that
	\begin{equation}\label{eq::flatness_Ek}
		\partial E_k\cap C_{2^h} \subseteq \big\{|x\cdot\nu_h^k|\leq a_k2^{h(1+\alpha)}\big\},\quad{\mbox{for all }}h\in\{ 0,\dots, h_k\} ,
	\end{equation} 
	with~$\nu_0^k=e_n$ for every~$k$.
	
	Moreover, let us denote by~$E_k^*$ the vertical dilation of~$E_k$ of a factor~$a_k$, i.e.
	$$ E_k^*:=\big\{(x',x_n)\;{\mbox{ s.t. }}\; (x',a_kx_n)\in E_k\big\}.$$
	To proceed with this argument, we need the following technical lemma that provides H\"older estimates for every set~$E_k^*$.
	
	\begin{lemma}[H\"older estimates] \label{lemma::holder_est}
		Let~$\{E_k\}_k$ be a sequence of~$(2^{-ks}\Lambda)$-minimal sets, for some~$\Lambda\geq0$, satisfying~\eqref{eq::flatness_Ek}.
		
		For any~$r\geq1$, let
		$$ A_k^r:=\big\{(x',x_n/a_k)\;{\mbox{ s.t. }}\; (x',x_n)\in\partial E_k\cap C_r\big\} \quad\text{ for every } k\in\N.$$
		Assume that there exists a positive constant~$C$ independent of~$k$ such that~$A_k^r\subseteq \{|x_n|\leq C\}$, for all~$k\in\N$.
		
		Then, there exists~$k_0\in\N$, depending only on~$n$, $s$, $\alpha$, and~$\Lambda$, such that, for all~$ k\geq k_0$, 
		\begin{equation}\label{eq::holder_est}
			A_k^r\cap\{|x'|<r/2\}\subseteq \subgr(g_k^+)\setminus \subgr(g_k^-),
		\end{equation}
		where~$g_k^-$ and~$g_k^+$ are H\"older continuous functions with modulus of continuity~$\theta t^\eta$, for some~$\eta\in(0,1)$ and~$\theta>0$ depending only on~$C$ above and~$\delta_0$ from Theorem~\ref{th::harnack_ineq}.
	\end{lemma}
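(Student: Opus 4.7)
The plan is to apply the iterated Harnack-type trapping of Corollary~\ref{cor::iterative_arg} at every boundary point of $E_k$ in the region of interest, and then to transfer the resulting dyadic oscillation estimate to the vertically rescaled set $A_k^r$.

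First, I would check that, for $k\geq k_0$ large enough (depending on $r$, $\Lambda$, $n$, $s$, $\alpha$), the hypotheses of Corollary~\ref{cor::iterative_arg} are satisfied by $E_k$ after the harmless dilation that reduces $B_r$ to the unit ball. Applied at any $x_0\in\partial E_k\cap B_{r/2}$, the corollary gives
\[
\partial E_k\cap B_{(\delta_0/2)^j/2}(x_0) \subset \Big\{ |x_n-x_{0,n}|\leq 2a_k\big(1-\tfrac{\delta_0^2}{2}\big)^j \Big\},
\]
for all $j\leq\kappa$, where $\kappa\to+\infty$ as $k\to+\infty$. Choosing $\eta\in(0,1)$ by $1-\delta_0^2/2=(\delta_0/2)^\eta$, this rewrites as
\[
|x_n-x_{0,n}|\leq C\,a_k\,\rho^\eta \quad\text{for every }(x',x_n)\in\partial E_k\cap B_\rho(x_0),
\]
valid for every $\rho$ in a geometric range which, for $k\geq k_0$ sufficiently large, covers every scale from a fixed fraction of $r$ down to order $a_k$.

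Next, I would transfer this estimate to $A_k^r$. Pick two points $(x_1',z_1),(x_2',z_2)\in A_k^r\cap\{|x'|<r/2\}$; their preimages $(x_1',a_kz_1),(x_2',a_kz_2)\in\partial E_k$ lie vertically within $2Ca_k$ of each other by the standing assumption $A_k^r\subset\{|x_n|\leq C\}$. Writing $t:=|x_1'-x_2'|$ and centering the oscillation bound at $(x_1',a_kz_1)$, the second point lies in $B_{t+2Ca_k}((x_1',a_kz_1))$, so that $a_k|z_1-z_2|\leq C\,a_k\,(t+2Ca_k)^\eta$, which after division by $a_k$ yields
\[
|z_1-z_2|\leq\theta\,t^\eta
\]
for $t$ at least a fixed multiple of $a_k$; for smaller $t$ the same bound follows from the trapping at the scale $\sim a_k$, using that $z_1,z_2\in[-C,C]$. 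The crucial point is that the dilation by $a_k^{-1}$ absorbs the factor $a_k$ and makes the modulus uniform in $k$.

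Finally, I would define
\[
g_k^+(x'):=\sup\{z\colon (x',z)\in A_k^r\},\qquad g_k^-(x'):=\inf\{z\colon (x',z)\in A_k^r\},
\]
on $B_{r/2}^{n-1}$; the clean ball condition of Corollary~\ref{cor::clean_ball}, applied to $E_k$ before rescaling, ensures that these sections are non-empty. The pointwise oscillation estimate above then implies at once that $g_k^+$ and $g_k^-$ are $\eta$-H\"older with common modulus $\theta\,t^\eta$, and the inclusion~\eqref{eq::holder_est} is tautological from the definitions. The principal difficulty is the two-scale analysis: one must choose $k_0$ so that the geometric iteration in Corollary~\ref{cor::iterative_arg} bridges the macroscopic radius $r$ down to the microscopic scale $a_k$, so that the vertical dilation by $a_k^{-1}$ produces a modulus of continuity uniform in $k$ and depending only on $C$ and $\delta_0$.
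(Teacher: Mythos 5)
Your overall strategy is the same as the paper's (apply Corollary~\ref{cor::iterative_arg} at every boundary point, convert the resulting dyadic oscillation decay into a H\"older--type bound, divide by $a_k$), and most of the two-scale bookkeeping is correct. There is, however, a genuine gap at the microscopic scale and in the definition of $g_k^\pm$, and the two issues are linked.

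The iterated Harnack estimate only bridges scales down to roughly $(\delta_0/2)^{j_k}\sim a_k$; below that it yields no further gain. Consequently the oscillation bound you obtain for two points $(x_1',z_1),(x_2',z_2)\in A_k^r$ is really of the form
\[
|z_1-z_2|\le \theta\max\bigl\{|x_1'-x_2'|,\,b_k\bigr\}^\eta ,
\]
where $b_k\to0$ but $b_k>0$ for each fixed $k$. Your claim that ``for smaller $t$ the same bound follows from the trapping at the scale $\sim a_k$, using that $z_1,z_2\in[-C,C]$'' is not correct: the trapping gives $|z_1-z_2|\le\theta\,b_k^\eta$ when $|x_1'-x_2'|<b_k$, and boundedness gives $|z_1-z_2|\le 2C$; neither implies $|z_1-z_2|\le\theta\,|x_1'-x_2'|^\eta$ when $|x_1'-x_2'|\ll b_k$. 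Because of this floor, your functions $g_k^\pm(x'):=\sup/\inf\{z:(x',z)\in A_k^r\}$ are not shown to be $\eta$-H\"older with modulus $\theta t^\eta$; one only gets $|g_k^\pm(x_1')-g_k^\pm(x_2')|\le\theta\max\{|x_1'-x_2'|,b_k\}^\eta$, which blows the modulus at small separations. Since the lemma is used precisely to invoke Ascoli--Arzel\`a with a modulus that is uniform in $k$, this matters.

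The paper circumvents this by not taking the naive section envelopes. Instead, for each $x_0\in A_k^r\cap\{|x'|<r/2\}$ it introduces explicit H\"older ``cone'' functions
\[
f_k^\pm[x_0](x'):=x_{0,n}\pm\theta\max\bigl\{b_k,|x'-x_0'|\bigr\}^\eta ,
\]
each of which has modulus $\theta t^\eta$ by the subadditivity of $t\mapsto t^\eta$, and then sets $g_k^-:=\inf_{x_0}f_k^-[x_0]$, $g_k^+:=\sup_{x_0}f_k^+[x_0]$. The inf/sup of a family of functions with a common modulus again has that modulus, so the H\"older claim is built in; and the inclusion~\eqref{eq::holder_est} follows because $f_k^-[x](x')\le x_n\le f_k^+[x](x')$ for any $x=(x',x_n)\in A_k^r$. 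You should replace the raw section envelopes by this cone construction (or an equivalent one) to close the argument. Your appeal to the clean ball condition for non-emptiness of the vertical sections is also a bit indirect (the flatness condition together with a connectedness argument does the job more readily), but that is a minor point by comparison.
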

	
	\begin{proof}
		Let~$k\in\N$, and take~$x_0\in A_k^r\cap\{|x'|<r/2\}$. Define 
		$$ \Sigma_{k,j}:=A_k^r \cap \left\{|x'-x_0'|<\frac{1}{2}\left(\frac{\delta_0}{2}\right)^j\right\}.$$
		Notice that, since 
		\begin{equation*}
			\partial E^*_k \cap C_{2^h}\subseteq \big\{|x\cdot\nu_h^k|\leq 2^{h(1+\alpha)}\big\},\quad{\mbox{for all }}h\in\{ 0,\dots,h_k\},
		\end{equation*}
		then also
		\begin{equation*}
			\Sigma_{k,j} = \Sigma_{k,j}\cap C_{2^h} \subseteq \big\{|x\cdot\nu_h^k|\leq 2^{h(1+\alpha)}\big\},\quad{\mbox{for all }}h\in\{ 0,\dots,h_k\}.
		\end{equation*}

		Accordingly, as we have shown in Corollary~\ref{cor::iterative_arg}, applying iteratively the Harnack's Inequality (Theorem~\ref{th::harnack_ineq}), we obtain that
		\begin{equation} \label{eq::Sigma_trap}
			\Sigma_{k,j} \subseteq \left\{|x_n-z_j|< 2^{2+\alpha}\left(1-\frac{\delta_0^2}{2}\right)^j\right\},\qquad{\mbox{for all }}j\in\{ 0,\dots, j_k\} ,
		\end{equation}
		for some $z_j=z_j(x_0)$, with $|z_j|\leq1$ for all $j$ (and all $k$).
		Notice also that~$j_k\to+\infty$ as~$k\to+\infty$. 
		
		Now, we show that, for every~$x\in A_k^r\cap\{|x'|<r/2\}$,
		\begin{equation}\label{261BIS00}
			|x_n-z_{j_k}|<\theta\max\{|x'-x_0'|,b_k\}^\eta ,
		\end{equation}
		where~$b_k$ is an infinitesimal sequence of positive real numbers.
		
		For this, we observe that, given~$x \in A_k^r\cap\{|x'|<r/2\}$, we have that~$|x'-x_0'|<r$. We discern three cases depending on whether~$|x'-x_0'|$ is in between~$r$ and~$\frac{1}{2}$, in between~$\frac{1}{2}$ and~$\frac{1}{2}\left(\frac{\delta_0}{2}\right)^{j_k}$, or smaller than~$\frac{1}{2}\left(\frac{\delta_0}{2}\right)^{j_k}$. The second case will be useful to find suitable~$\eta$ and~$\theta$, the first case follows from the second one by a suitable modification, while the third case is related to the quantity~$b_k$.
		
		Let us start from the case~$\frac{1}{2}\left(\frac{\delta_0}{2}\right)^{j_k}<|x'-x_0'|<\frac{1}{2}$. In this situation, there exists~$j\leq j_k$ in~$\N$ such that
		$$ \frac{1}{2}\left(\frac{\delta_0}{2}\right)^{j+1}\leq|x'-x_0'|<\frac{1}{2}\left(\frac{\delta_0}{2}\right)^{j}.$$
		In other words,
		$$ j< \frac{\log(2|x'-x_0'|)}{\log(\delta_0/2)}\leq j+1.$$
		Moreover, from~\eqref{eq::Sigma_trap}, we have that~$|x_n-z_j|\leq 2^{2+\alpha} \left( 1-\frac{\delta_0^2}{2}\right)^j$.
		Therefore, setting
		\begin{equation}\label{yrueiow5674836547839gfhdjvfbdns}
			\eta:=\frac{\log(1-\frac{\delta_0^2}{2})}{\log(\delta_0/2)}\in(0,1),
		\end{equation} 
		we obtain that
		\begin{align*}
			&\left(1-\frac{\delta_0^2}{2}\right)^j 
			\leq\left (1-\frac{\delta_0^2}{2}\right)^{\frac{\log(2|x'-x_0'|)}{\log(\delta_0/2)} -1} 
			=\left(1-\frac{\delta_0^2}{2}\right)^{-1}\left(
			1-\frac{\delta_0^2}{2}\right)^{\eta\frac{\log(2|x'-x_0'|)}{\log(1-\delta_0^2/2)}}
			\\&\qquad\qquad\qquad
			=\left(1-\frac{\delta_0^2}{2}\right)^{-1}2^\eta|x'-x_0'|^\eta . 
		\end{align*}
		Thus, defining~$\theta:=2^\eta(1+2^{2+\alpha}) \left(1-\frac{\delta_0^2}{2}\right)^{-1}$ and also recalling \eqref{eq::z_j_nested}, we have that
		\begin{equation*} 
			|x_n-z_{j_k}| \leq  |x_n-z_j|+|z_j-z_{j_k}| \leq (1+2^{2+\alpha})\left(1-\frac{\delta_0^2}{2}\right)^j \leq \theta |x'-x_0'|^\eta,
		\end{equation*}
		which gives~\eqref{261BIS00} in this case.
		
		Now, if~$\frac{1}{2}\leq|x'-x_0'|<r$, we have that~$|x'-x_0'|^\eta\geq2^{-\eta}$, where~$\eta$ is as in~\eqref{yrueiow5674836547839gfhdjvfbdns}. Also,
		$$ |x_n-z_{j_k}|\leq |x_n|+|z_{j_k}| \leq 2C.$$
		Thus, provided that~$\theta\geq 2^{1+\eta}C$, we conclude that
		$$ |x_n-z_{j_k}|\leq  \theta |x'-x_0'|^\eta,$$
		as desired.
		
		If instead~$|x'-x_0'|\leq \frac{1}{2}\left(\frac{\delta_0}{2}\right)^{j_k}$, we observe that
		\begin{equation*}
			x\in \Sigma_{k,j_k} \subseteq  \Sigma_{k,j_k-1}\subseteq \left\{|x_n-z_{j_k-1}|<2^{2+\alpha} \left(1-\frac{\delta_0^2}{2}\right)^{j_k-1}\right\}. 
		\end{equation*}
		Accordingly, defining $b_k:=\left(1-\frac{\delta_0^2}{2}\right)^{j_k}$ and recalling \eqref{eq::z_j_nested}, we obtain $b_k\to0$ as~$k\to+\infty$, and
		
		\begin{equation*} 
			\begin{split}
				&|x_n-z_{j_k}|<2^{2+\alpha}\left(1-\frac{\delta_0^2}{2}\right)^{j_k-1} + |z_{j_k-1}-z_{j_k}| \leq (1+2^{2+\alpha})\left(1-\frac{\delta_0^2}{2}\right)^{j_k-1} \\
				&\qquad= (1+2^{2+\alpha})\left(1-\frac{\delta_0^2}{2}\right)^{-1}b_k \leq\theta b_k \leq\theta b_k^\eta,
			\end{split}
		\end{equation*}
		which completes the proof of~\eqref{261BIS00}.
		
		From \eqref{261BIS00}, we deduce that~$A_k^r\cap \left\{|x'-x_0'|<\frac{r}{2}\right\} $ lies above the graph of the H\"older continuous function 
		$$ f_k^-[x_0](x') := z_{j_k}- \theta\max\{b_k,|x'-x_0'|\}^{\eta},$$
		and below the graph of the H\"older continuous function 
		$$ f_k^+[x_0](x') := z_{j_k}+ \theta\max\{b_k,|x'-x_0'|\}^{\eta} .$$
		Taking the infimum and the supremum respectively over all~$x_0\in A_k^r\cap \left\{|x'-x_0'|<\frac{r}{2}\right\} $, we conclude that~$A_k^r\cap \left\{|x'-x_0'|<\frac{r}{2}\right\} $ lies between the graphs of the H\"older continuous functions
		\begin{align*}
			&g_k^-:=\inf_{x_0\in A_k^r\cap \left\{|x'-x_0'|<\frac{r}{2}\right\} } f_k^-[x_0]\\
			\text{and }\qquad &g_k^+:=\sup_{x_0\in A_k^r\cap \left\{|x'-x_0'|<\frac{r}{2}\right\} } f_k^+[x_0].\qedhere
		\end{align*}
	\end{proof}
	
	\begin{rem} \label{rem::unif_conv_holder}
		Notice that, given any $x\in A_k^r\cap \left\{|x'|<\frac{r}{2}\right\}$, we have
		$$ 0<g_k^+(x')-g_k^-(x')\leq z_{j_k}(x)+\theta b_k^\eta-(z_{j_k}(x)+\theta b_k^\eta) = 2\theta b_k^\eta .$$
		Therefore, since~$b_k\to0$ and~$j_k\to+\infty$, taking the limit as~$k\to+\infty$, we deduce from Ascoli-Arzel\`a's Theorem that, up to subsequences, $g_k^-$ and~$g_k^+$ uniformly converge to some~$\cont^{0,\eta}$-function~$u$. Hence, $A_k^r$ converges uniformly to~$\graph(u)$ in~$\{|x'|<r/2\}$.
	\end{rem}
	
	Now we prove that we can extract a subsequence of dilated sets~$\{E_{k_j}^*\}_j$ that is uniformly convergent to a linear function. This will lead us to a contradiction, proving Theorem~\ref{th::improv_flat_rescaled}. The procedure is carried through~\cite[Chapter~6]{caffarelli_roquejoffre_savin_nonlocal} and is articulated in the following lemmata.
	
	\begin{lemma}[Lemma 6.10, \cite{caffarelli_roquejoffre_savin_nonlocal}]\label{lemma::conv_unif}
		There exists a subsequence~$\{E_{k_j}^*\}_j$ that is uniformly convergent on every compact subset of~$\R^n$ to the subgraph of a H\"older continuous function~$u$ such that~$u(0)=0$, and for which there exists some~$c>0$ such that~$u(x')\leq c(1+|x'|^{1+\alpha})$, for every~$x'\in\R^{n-1}$.
	\end{lemma}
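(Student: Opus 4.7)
The plan is to combine the Hölder control from Lemma~\ref{lemma::holder_est} with Ascoli--Arzel\`a and a diagonal argument. To apply Lemma~\ref{lemma::holder_est} at a given radius~$r\geq1$, I first verify its hypothesis that~$A_k^r\subset\{|x_n|\leq C\}$ for some constant~$C=C(r)$ independent of~$k$. This follows from the flatness assumption~\eqref{eq::flatness_Ek}: since~$\nu_0^k=e_n$, the set~$\partial E_k\cap B_1$ is trapped in~$\{|x_n|\leq a_k\}$, which after the vertical dilation of factor~$a_k^{-1}$ becomes a strip of fixed width~$1$; moreover, the flatness at scales~$2^h$ (with directions~$\nu_h^k$) compounded with the small-scale flatness forces each~$\nu_h^k$ to lie in a cone around~$e_n$, so the width of the strip containing~$A_k^r$ grows at most polynomially in~$r$.

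Then Lemma~\ref{lemma::holder_est} provides, for each~$r=2^m$ and all sufficiently large~$k$, that~$A_k^r\cap\{|x'|<r/2\}\subset\subgr(g_k^+)\setminus\subgr(g_k^-)$, where~$g_k^\pm$ are H\"older continuous with a common modulus~$\theta t^\eta$. By Remark~\ref{rem::unif_conv_holder}, $g_k^+-g_k^-$ converges to zero uniformly. Applying Ascoli--Arzel\`a on each ball~$\{|x'|<2^{m-1}\}$ and diagonalizing over~$m\in\N$, I extract a single subsequence~$\{k_j\}_j$ and a single H\"older continuous function~$u:\R^{n-1}\to\R$ such that~$\partial E_{k_j}^*\to\graph(u)$ uniformly on every compact set of~$\R^n$. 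The subgraph convergence then follows from this boundary convergence combined with the uniform density estimates of Theorem~\ref{th::unif_dens_estimates}, which exclude any collapse of the bulk of~$E_{k_j}^*$ to its boundary (and likewise for the complement).

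The normalization~$u(0)=0$ is immediate: by the hypothesis of Theorem~\ref{th::improv_flat_rescaled} we have~$0\in\partial E_k$, so~$0\in\partial E_k^*$ for all~$k$, and the local uniform convergence passes this through to~$0\in\graph(u)$. For the growth estimate, rescaling the flatness at scale~$2^h$ in the vertical direction shows that~$A_k^{2^h}$ lies in a slab of the form~$\{|x\cdot\tilde\nu_h^k|\leq 2^{h(1+\alpha)}\}$, where~$\tilde\nu_h^k$ is the vertically-dilated version of~$\nu_h^k$ and, by the argument recalled in the first paragraph, is close to~$e_n$; consequently, on~$\{|x'|\leq 2^{h-1}\}$ the function~$u$ satisfies~$|u(x')|\leq c\,2^{h(1+\alpha)}$, whence~$u(x')\leq c(1+|x'|^{1+\alpha})$ for all~$x'\in\R^{n-1}$, for some constant~$c>0$ depending only on~$n$, $s$, $\alpha$, and~$\Lambda$.

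The main technical obstacle is the careful handling of the varying flatness directions~$\nu_h^k$ under vertical dilation: one must argue that, although the dilation severely distorts angles, the compatibility of the inclusions in~\eqref{eq::flatness_Ek} across all scales~$h\in\{0,\dots,h_k\}$ forces the dilated normals~$\tilde\nu_h^k$ to remain uniformly (in~$k$) close to~$e_n$ for every fixed~$h$, so that the dilated strip retains a graph-like structure on each fixed compact subset. This multi-scale compatibility, rather than any single-scale flatness, is what makes the extraction of a limiting graph possible and what ultimately yields the growth rate~$1+\alpha$ matching the one appearing in~\eqref{eq::rescaled_flat_improve_hp}.
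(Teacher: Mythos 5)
Your proposal is essentially the same strategy as the paper's: apply Lemma~\ref{lemma::holder_est} at radii~$r=2^m$ (verifying the $C$-bound via multi-scale compatibility of~\eqref{eq::flatness_Ek}), use Ascoli--Arzel\`a together with Remark~\ref{rem::unif_conv_holder} to pass to a limit graph, diagonalize over~$m$, deduce~$u(0')=0$ from~$0\in\partial E_k^*$, and read off the growth~$|u(x')|\leq c(1+|x'|^{1+\alpha})$ from the scale-$2^h$ flatness. One remark on your framing: the paper does not track ``dilated normals''~$\tilde\nu_h^k$. Instead it controls the oscillation~$|\nu_l^k-\nu_{l+1}^k|\leq 5\,2^{l(1+\alpha)-\alpha k}$ (Lemma~\ref{lemma::geom_estimate}), concludes~$|\nu_l^k-e_n|\to0$ as~$k\to+\infty$ for each fixed~$l$, and upgrades the slab inclusion to the $e_n$-direction: $\partial E_k\cap B_{2^l}\subset\{|x_n|\leq c\,a_k 2^{l(1+\alpha)}\}$. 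This is the cleaner route, because the $e_n$-directional flatness condition dilates trivially under~$(x',x_n)\mapsto(x',x_n/a_k)$, whereas the dilated normal of a slab with normal~$\nu_h^k$ does not in fact converge to~$e_n$ (it stabilizes at a fixed angle~$\sim\arctan(2^{h\alpha})$), so ``close to~$e_n$'' would need to be replaced by the weaker ``bounded away from horizontal'' if you pursue your formulation. Your extra remark invoking density estimates to promote boundary convergence to subgraph convergence is a sensible (if not strictly necessary) precaution that the paper leaves implicit.
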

	
	\begin{lemma}[Lemma 6.11, \cite{caffarelli_roquejoffre_savin_nonlocal}]\label{lemma::linear_viscosity}
		Let~$u$ be a function such that the sequence of almost-minimizers~$\{E_k\}_k$ converges to~$\subgr(u)$ uniformly on every compact subset of~$\R^n$, as~$k\to+\infty$.
		
		Then, 
		$$ (-\Delta)^{\frac{1+s}{2}} u=0 \text{ in }\R^{n-1},$$
		in the viscosity sense.
	\end{lemma}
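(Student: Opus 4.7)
The proof follows the strategy developed for the $s$-minimal case in~\cite[Theorem~5.1]{caffarelli_roquejoffre_savin_nonlocal}, adapted to the almost-minimal setting by exploiting that the almost-minimality parameters~$\Lambda_k$ of the sets~$E_k$ vanish in the limit (indeed, in the ambient application $\Lambda_k=2^{-ks}\Lambda$). The plan is to verify the two viscosity conditions: given a~$C^2$ test function~$\varphi$ that touches~$u$ strictly from below at a point~$x_0'\in\R^{n-1}$, I would show that $(-\Delta)^{(1+s)/2}\varphi(x_0')\leq 0$. The symmetric inequality when~$\varphi$ touches~$u$ from above follows by the same argument using Corollary~\ref{cor::reverseELeq} in place of Theorem~\ref{th::ELeq}. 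The reduction from strict to non-strict touching is handled by the standard trick of subtracting a vanishing quadratic perturbation and passing to the limit.

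First, the uniform local convergence of~$\partial E_k$ to~$\graph(u)$ together with strict touching produces vertical translates~$\varphi+c_k$ (with $c_k\to 0$) whose subgraphs are contained in~$E_k$ in a fixed neighbourhood of~$x_0'$ and whose graphs meet~$\partial E_k$ at points~$P_k$ converging to $P_0:=(x_0',u(x_0'))$. Since~$\varphi$ is~$C^2$, the subgraph of~$\varphi+c_k$ admits an interior tangent ball at~$P_k$, and this ball is contained in~$E_k$. Applying Theorem~\ref{th::ELeq} at~$P_k$ with parameter~$\Lambda_k$ yields
\begin{equation*}
\limsup_{\delta\to 0}\int_{\R^n\setminus B_\delta(P_k)} \frac{\chi_{E_k}(x)-\chi_{E_k^c}(x)}{|x-P_k|^{n+s}}\,dx \leq \Lambda_k.
\end{equation*}

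Next, I would pass to the limit as $k\to\infty$. Splitting the integral into a local piece on a fixed ball~$B_R(P_0)$ and a far-field piece, the local part is controlled via uniform convergence of~$E_k$ to~$\subgr(u)$ combined with integrability of $|x-P_0|^{-n-s}$ away from~$P_0$. The far-field piece is handled by the flatness bounds in~\eqref{eq::flatness_Ek} together with the growth control $|u(x')|\leq c(1+|x'|^{1+\alpha})$ with~$\alpha<s$ from Lemma~\ref{lemma::conv_unif}, giving a uniform tail estimate. Since $\Lambda_k\to 0$, we obtain
\begin{equation*}
\limsup_{\delta\to 0}\int_{\R^n\setminus B_\delta(P_0)} \frac{\chi_{\subgr(u)}(x)-\chi_{\subgr(u)^c}(x)}{|x-P_0|^{n+s}}\,dx \leq 0.
\end{equation*}
The inclusion $\subgr(\varphi)\subseteq\subgr(u)$, together with $\varphi(x_0')=u(x_0')$, allows replacement of~$u$ by~$\varphi$ in the integrand above preserving the direction of the inequality. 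This is, up to the dimensional normalising constant relating the non-local mean curvature of a smooth subgraph to the $(1+s)/2$-fractional Laplacian of its defining function, precisely the viscosity inequality $(-\Delta)^{(1+s)/2}\varphi(x_0')\leq 0$.

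The main obstacle is the passage to the limit in the non-local mean curvature integrals: one must simultaneously manage the uniform convergence of the indicator functions on compact sets and the decay of the tails at infinity, which is where the flatness hypotheses on $\{E_k\}$ and the sublinear growth of~$u$ from Lemma~\ref{lemma::conv_unif} do the decisive work. A secondary technical step is the identification of the limit integral with the fractional Laplacian of the test function, which in the flat regime coincides with the linearisation of the non-local graph operator.
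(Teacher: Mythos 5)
Your proposal reads the hypothesis literally and treats the sets~$E_k$ themselves as converging to~$\subgr(u)$, but in the intended (and used) meaning it is the \emph{vertically dilated} sets~$E_k^*$ from Lemma~\ref{lemma::conv_unif} that converge to~$\subgr(u)$: the raw sets~$E_k$ actually flatten onto a half-space, and~$\partial E_k\cap B_R$ lies in an~$a_k\epsilon$-neighbourhood of~$\graph(a_k u)$ with~$a_k=2^{-\alpha k}\to0$. This distinction is not a cosmetic one; it is the entire point of the lemma. Because you drop the dilation factor, your argument never linearizes, and the final step fails: the nonlocal mean curvature~$H_s[\subgr(\varphi)]$ of a fixed~$\cont^2$ subgraph is a genuinely nonlinear functional of~$\varphi$ (see~\cite{abatangelo_valdinoci_nonlocal_curv}), \emph{not} a dimensional constant times~$(-\Delta)^{(1+s)/2}\varphi$, so the displayed inequality
$\limsup_{\delta\to0}\int_{B_\delta^c(P_0)}\tfrac{\chi_{\subgr(u)}-\chi_{\subgr(u)^c}}{|x-P_0|^{n+s}}\leq0$
does not yield the claimed viscosity inequality for the fractional Laplacian of the test function.

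The mechanism the paper uses, and that your proposal is missing, is quantitative linearization in the vertical amplitude. Because~$\partial E_k$ lies within~$a_k\epsilon$ of~$\graph(a_k u)$, the local contribution of the nonlocal mean curvature of~$E_k$ at the touching point~$x_k$ expands (formula~\eqref{eq::lin_visc1}) as
$2a_k\int_{B_R^{n-1}(x_k)\setminus B_\delta^{n-1}(x_k)}\tfrac{u(x')-u(x_k')}{|x'-x_k'|^{n+s}}\,d\haus{n-1}+O(\epsilon a_k)+O(a_k^3)$,
i.e.\ the mean curvature is~$2a_k$ times the fractional Laplacian plus higher order errors, precisely because the graph amplitude~$a_k$ is vanishing. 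One then divides by~$2a_k$. The Euler–Lagrange bound from Theorem~\ref{th::ELeq} contributes~$\Lambda_k=2^{-ks}\Lambda$; after division by~$2a_k=2\cdot 2^{-\alpha k}$ this becomes~$\tfrac{\Lambda}{2}\,2^{-k(s-\alpha)}$, which tends to zero exactly because~$\alpha<s$. Your observation that ``$\Lambda_k\to0$'' is not by itself sufficient: it must tend to zero \emph{faster than~$a_k$}, and that is where the constraint~$\alpha<s$ enters. Without tracking the factor~$a_k$, this comparison of rates is lost, and with it the proof. Similarly, the tangent parabola used to invoke the Euler–Lagrange inequality must have opening~$-\tfrac{a_k}{2}$ (so that its error in the integral is~$O(a_k\delta^{1-s})$, which is also~$O(a_k)$); a parabola touching the fixed graph of~$u$ would produce errors of order one.

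In short: the decomposition into local, intermediate and far-field contributions and the use of Theorem~\ref{th::ELeq}/Corollary~\ref{cor::reverseELeq} are the right ingredients, but the proof cannot be run ``at fixed~$u$'' — the argument must be carried out on the flattening sequence~$E_k$ with the explicit factor~$a_k$, and the conclusion is extracted only after dividing by~$2a_k$ and exploiting~$\alpha<s$. The identification with the fractional Laplacian is not a normalizing constant; it is an asymptotic linearization valid only in the limit~$a_k\to0$.
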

	
	\begin{lemma}[Proposition 6.7, \cite{caffarelli_roquejoffre_savin_nonlocal}] \label{lemma::CRS_harmonic_implies_linear}
		If~$(-\Delta)^{\frac{1+s}{2}} u=0$ in~$\R^{n-1}$ in the viscosity sense, and 
		$$|u(x)|\leq c\big(1+|x|^{1+\alpha}\big),$$ 
		then~$u$ is linear.
	\end{lemma}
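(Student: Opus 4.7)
The statement is a Liouville-type theorem for the fractional Laplacian of order $1+s\in(1,2)$. The arithmetical input is that the growth exponent $1+\alpha$ is strictly below the order $1+s$ of the operator, since $\alpha<s$. The strategy is therefore standard: establish that fractional harmonic functions are smooth inside and satisfy interior derivative estimates that incorporate the far-field via a suitable tail, then plug in the growth bound and let the radius tend to infinity to force the second derivatives to vanish.

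First, I would upgrade~$u$ from viscosity to classical solution. By the regularity theory for integro-differential operators of order~$1+s$ (which, in the spirit of Proposition~\ref{prop::char_almost_min_ext}, can be obtained via the Caffarelli-Silvestre extension combined with Schauder estimates for the associated degenerate-elliptic PDE on the half-space, or directly via an incremental-quotient bootstrap in the nonlocal setting), $u$ is~$\cont^\infty$ in~$\R^{n-1}$, with a quantitative estimate of the form
\[
\|D^2 u\|_{L^\infty(B_{R/2})} \leq C R^{-2}\Big(\|u\|_{L^\infty(B_R)} + \mathrm{Tail}(u;R)\Big),
\]
where $\mathrm{Tail}(u;R):=R^{1+s}\int_{\R^{n-1}\setminus B_R}|u(y)|\,|y|^{-(n-1)-(1+s)}\,dy$ accounts for the far-field contribution to the $(1+s)$-Laplacian.

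Next, I would insert the growth assumption. The supremum of~$|u|$ on~$B_R$ is controlled by~$c R^{1+\alpha}$, and a direct computation using~$1+\alpha<1+s$ shows that $\mathrm{Tail}(u;R)\leq C R^{1+\alpha}$. Substituting, I obtain
\[
\|D^2 u\|_{L^\infty(B_{R/2})}\leq C R^{\alpha-1},
\]
and since $\alpha<s<1$, letting $R\to+\infty$ yields $D^2 u\equiv 0$. Hence $u$ is affine, and since in the setting of Lemma~\ref{lemma::conv_unif} one already has $u(0)=0$, we conclude that~$u$ is linear.

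The main obstacle is the rigorous derivation of the interior $C^2$ estimate with tail for an operator of order greater than one, which is the delicate regime where neither bare $C^{1,\gamma}$ viscosity theory nor standard pointwise estimates apply directly off the shelf. The cleanest route is a bootstrap: first, Hölder regularity $u\in\cont^{1,\gamma}$ for viscosity solutions; then differentiating the equation (or passing to incremental quotients, which remain viscosity solutions of the same equation) and iterating to gain two derivatives; finally, rephrasing the estimate in scale-invariant form with the explicit tail so that the Liouville step closes. Everything after the $C^2$ estimate is a short scaling argument and is not the serious part of the proof.
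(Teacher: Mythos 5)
Your argument is correct. Note first that the paper does not give its own proof of this statement: it is cited verbatim from \cite{caffarelli_roquejoffre_savin_nonlocal} (Proposition~6.7 there), so there is no in-paper proof to compare against. Your Liouville strategy --- a scale-invariant interior $C^2$ estimate with a nonlocal tail, followed by sending $R\to+\infty$ --- is the standard route to this result and, in substance, the one in the cited reference (which may also be phrased through the Caffarelli--Silvestre extension, as you remark). The arithmetic checks out: with $\mathrm{Tail}(u;R)=R^{1+s}\int_{\R^{n-1}\setminus B_R}|u(y)|\,|y|^{-(n-1)-(1+s)}\,dy$, the growth bound gives $\mathrm{Tail}(u;R)\leq C R^{1+\alpha}$ precisely because $\alpha<s$ makes the exterior integral converge and scale like $R^{\alpha-s}$, and then $\|D^2u\|_{L^\infty(B_{R/2})}\leq CR^{\alpha-1}\to 0$; since this bound holds on balls exhausting $\R^{n-1}$, it forces $D^2u\equiv 0$. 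The step you rightly single out as the technical content --- passing from viscosity to a classical solution and obtaining the tail-aware $C^2$ estimate for an operator of order $1+s>1$ --- is standard but deserves the explicit cutoff decomposition you hint at: writing $u=\eta u+(1-\eta)u$ with $\eta$ a cutoff at scale $R$ reduces matters to a compactly supported solution with a smooth right-hand side whose $C^\gamma$ norm on $B_R$ is controlled exactly by the tail, after which interior Schauder estimates of order $1+s+\gamma>2$ give the $C^2$ bound. So the sketch closes and the conclusion is as stated.
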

	
	Since in our setting~$\{E_k\}_k$ are almost-minimal, but not necessarily~$s$-minimal sets, few modifications are needed in the proofs of Lemmata~\ref{lemma::conv_unif} and~\ref{lemma::linear_viscosity}. For the details, we refer to Appendices~\ref{sec::lemma_conv_unif} and~\ref{sec::lemma_linear_viscosity} respectively. 
	
	\begin{proof}[Proof of Theorem~\ref{th::improv_flat_rescaled}]
		Arguing by contradiction, suppose that result does not hold. Then, we construct a sequence~$\{E_k\}_k$ such that 
		\begin{equation}\label{iu6y54trstar463782}\begin{split}
				&\partial E_k\cap B_{2^j}\subseteq \big\{|x\cdot\nu_j|\leq2^{j+\alpha(j-k)}\big\},\quad {\mbox{for all }}j\in\{0,\dots, j_k\},\\
				\text{but}\quad &\partial E_k\cap B_{1/2}\not\subseteq \big\{|x\cdot\nu_{-1}|\leq2^{-1-\alpha-\alpha k}\big\}.
		\end{split}\end{equation}
		Thanks to Lemma~\ref{lemma::conv_unif}, we have that, up to considering a subsequence, $\{E_k\}_k$ converges uniformly in~$B_1$ to the subgraph of some H\"older-continuous function~$u$, and we set~$E:=\subgr(u)$. Moreover, by Lemma~\ref{lemma::linear_viscosity}, $u$ is linear, hence~$E$ is a half-space. Since the sequence~$\{E_k\}_k$ is uniformly convergent, $E_k$ lies in an~$\epsilon$-neighborhood of~$E$ in~$B_{1/2}$, for every~$k$ sufficiently large. Thus, up to making~$\epsilon$ sufficiently small, there exists some~$\overline{k}$ such that
		$$ \partial E_{\overline{k}}\cap B_{1/2}\subseteq \big\{|x\cdot\nu |\leq2^{-1-\alpha-\alpha \overline{k}}\big\} ,\quad\text{ for some } \nu\in \S^{n-1}. $$
		However, this contradicts~\eqref{iu6y54trstar463782}.
	\end{proof}
	
	\subsection{Proof of Theorem~\ref{th::holder_reg_almost_min}}
	Now, we show that every almost minimizer in~$B_1$ is a $\cont^{1,\alpha}$-surface in~$B_{1/2}$, for every~$\alpha\in(0,s)$.
	
	To this purpose, consider~$x_0\in\partial E\cap B_{1/2}$. Let~$k_0$ be as in Theorem~\ref{th::improv_flat} and choose~$\epsilon_0<2^{-k_0(1+\alpha)}$. In this way, the improvement of flatness result (Theorem~\ref{th::improv_flat}) holds for the translated set~$E-x_0$. In particular, there exists a sequence~$\{\nu_j\}_j$ of unit vectors such that
	\begin{equation} \label{eq::flat_inclusions}
		\partial E\cap B_{2^{-j}}(x_0) \subseteq \big\{|(x-x_0)\cdot\nu_j|\leq 2^{-j(1+\alpha)}\big\},
	\end{equation}
	for every~$j\in\{0,\dots,k_0\}$, with~$\nu_0=e_n$ . Thanks to Lemma~\ref{lemma::geom_estimate} in Appendix~\ref{sec::geom_idea}, we deduce from~\eqref{eq::flat_inclusions} that
	$$ |\nu_j-\nu_{j+1}|\leq c2^{-j\alpha}.$$
	Thus, the sequence~$\{\nu_j\}_j$ converges to some unit vector~$\nu(x_0)$ and
	\begin{equation*}
		\begin{split}
			&|\nu(x_0)-\nu_j|
			=\lim_{k\to+\infty}|\nu_{j+k}-\nu_{j}| \leq \lim_{k\to+\infty} \sum_{q=0}^{k-1}|\nu_{j+q+1}-\nu_{j+q}|\\
			&\qquad\qquad\leq \lim_{k\to+\infty} \sum_{q=0}^{k-1} 2^{-(j+q)\alpha} \leq c2^{-j\alpha}.
		\end{split}
	\end{equation*}
	In view of this, we obtain that
	$$ |(x-x_0)\cdot\nu(x_0)| \leq |(x-x_0)\cdot\nu_j|+|x-x_0|\,|\nu(x_0)-\nu_j|\leq c2^{-j(1+\alpha)} ,$$
	for every~$x\in\partial E\cap B_{2^{-j}}(x_0)$, namely
	\begin{equation}\label{eq::flat_nu0}
		\partial E\cap B_{2^{-j}}(x_0) \subseteq \big\{|(x-x_0)\cdot\nu(x_0)|<c2^{-j(1+\alpha)}\big\}.
	\end{equation}
	
	Now, for every points~$x_0$, $y_0\in\partial E\cap B_{1/2}$ such that~$x_0\in B_{2^{-j}}(y_0)\setminus B_{2^{-j-1}}(y_0)$, for some~$j\in\N$, we estimate the angle between the unit normals~$\nu(x_0)$ and~$\nu(y_0)$. Thanks to the flatness condition~\eqref{eq::flat_nu0}, we have that
	\begin{align*}
		\text{either }&\big\{(x-y_0)\cdot\nu(x_0)>-c2^{-j(1+\alpha)}\big\}\subseteq \big\{x\cdot\nu(y_0)<c2^{-j(1+\alpha)}\big\}\\
		\text{or }&\big\{x\cdot\nu(y_0)>-c2^{-j(1+\alpha)}\big\}\subseteq\big\{(x-y_0)\cdot\nu(x_0)<c2^{-j(1+\alpha)}\big\} \text{ in }B_{2^{-j}}(y_0).
	\end{align*}
	Therefore, thanks to Lemma~\ref{lemma::geom_estimate}, we have an estimate of the worst case scenario. In particular, 
	$$ |\nu(x_0)-\nu(y_0)|\leq C2^{-j\alpha}\leq 2C|x_0-y_0|^\alpha.$$
	This shows that~$\nu$ is H\"older continuous, that is sufficient to prove that~$\partial E\cap B_{1/2}$ is a~$\cont^{1,\alpha}$ surface (see for instance~\cite[Theorem~5.8]{MR0165073}).
	
	
	\section{Monotonicity formula for almost minimal sets
		and proofs of Proposition~\ref{prop::char_almost_min_ext} and Theorem~\ref{th::Phi_monotone}}
	\label{sec::monotonicity}
	
	Let us recall from the~$s$-minimal surfaces theory that 
	a monotonicity formula holds whenever a set~$E$ is a minimizer for~$\Per_s$, namely
	\begin{equation}\label{eq::monotinicty_sminimal}
		\Xi_E(r) := r^{s-n}\int_{B_r} z^{1-s}|\nabla\widetilde{u}|^2 \quad
		\text{ is increasing in }r,
	\end{equation}
	where~$\widetilde{u}$ is defined through an extension argument from~$\chi_E-\chi_{E^c}$ (see~\cite[Chapters~7-8]{caffarelli_roquejoffre_savin_nonlocal}). This result is particularly useful in classifying regular and singular points of~$s$-minimal surfaces. In order to develop an analogous argument for almost minimal sets, we need some preliminaries. 
	
	\subsection{Preliminaries for a monotonicity formula}
	In~\cite{caffarelli_silvestre_extension}, it is shown that for every function~$u:\R^n\to\R$ such that
	$$ \int_{\R^n} \frac{|u(x)|}{(1+|x|^2)^{\frac{n+s}{2}}}\,dx<+\infty,$$
	its extension to the~$(n+1)$-dimensional half-space~$\R^{n+1}_+$ 
	is a solution~$\widetilde{u}:\R^{n+1}_+\to\R$ of 
	\begin{equation}\label{eq::extension}
		\begin{cases}
			\divergence(z^{1-s}\nabla\widetilde{u})=0 \quad&\text{in }\R^{n+1}_+,\\
			\widetilde{u}=u \quad&\text{on }\{z=0\}.
		\end{cases}
	\end{equation}
	
	Moreover, as anticipated in~\eqref{TILDEUDEF}, $\widetilde{u}$ can be obtained as the convolution~$P*u$ between the trace function~$u$ and the Poisson kernel.
	
	Now, let us consider the functional~$\mathrm{J}_r$, which is the local contribution of the~$H^{s/2}$-Gagliardo seminorm in the ball~$B_r$, and is defined in~\cite{caffarelli_roquejoffre_savin_nonlocal} for every function~$u\in H^{s/2}(\R^n)$ as
	\begin{equation}\label{defjeir}\begin{split}
			\mathrm{J}_r(u) 
			&:= \int_{\R^n}\int_{\R^n} \frac{|u(x)-u(y)|^2}{|x-y|^{n+s}} \chi_{B_r}(x)\big(\chi_{B_r}(y)+2\chi_{B_r^c}(y)\big) \,dx\,dy\\
			&= \int_{B_r}\int_{B_r} \frac{|u(x)-u(y)|^2}{|x-y|^{n+s}}\,dx\,dy + 2  \int_{B_r^c}\int_{B_r} \frac{|u(x)-u(y)|^2}{|x-y|^{n+s}}\,dx\,dy\\
			&= [u]_{H^{s/2}(\R^n)}^2-
			[u]_{H^{s/2}(B_r^c)}^2.
	\end{split}\end{equation}
	A simple computation shows that if~$u=\chi_E-\chi_{E^c}$, then~$\mathrm{J}_r(u)=8\Per_s(E,B_r)$.
	
	Also, in~\cite{caffarelli_roquejoffre_savin_nonlocal}, the authors show that there exists a relationship between the local energy and the extension functional, as stated in the forthcoming lemma. 
	
	In what follows, $\Omega$ is a bounded Lipschitz domain in~$\R^{n+1}$, and we recall that
	$$ \Omega_0:=\Omega\cap\{z=0\}\qquad{\mbox{and}}\qquad\Omega_+:=\Omega\cap\{z>0\}.$$
	
	\begin{lemma}[Lemma 7.2, \cite{caffarelli_roquejoffre_savin_nonlocal}]\label{lemma::local_energy_diff}
		Let~$u$, $v:\R^n\to\R$ be functions such that~$\mathrm{J}_1(u)$ and~$\mathrm{J}_1(v)$ are finite, and~$u-v$ is compactly supported in~$B_1$.
		
		Then,
		\begin{equation} \label{lemma::rel_J_energy}
			\widetilde{c}_{n,s} \left(\mathrm{J}_1(v)-\mathrm{J}_1(u)\right) =\inf_{\Omega,\overline{v}} \int_{\Omega_+} z^{1-s}(|\nabla \overline{v}|^2-|\nabla \widetilde{u}|^2) \,dx\,dz,
		\end{equation}
		where the infimum is taken among all bounded Lipschitz domains~$\Omega$ with~$\Omega_0\subseteq B_1$ and all functions~$\overline{v}$ such that~$\supp(\overline{v}-\widetilde{u})\comp\Omega$ and~$\trace(\overline{v})=v$ on~$\{z=0\}$.
	\end{lemma}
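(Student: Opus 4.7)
The plan is to reduce the identity to the classical Caffarelli-Silvestre variational characterization of $[\cdot]_{H^{s/2}(\R^n)}$ by polarising the weighted Dirichlet energy around the reference extension $\tilde u$. To begin, I observe that the left-hand side can be rewritten as a difference of full Gagliardo seminorms. From the definition~\eqref{defjeir} one has $[w]^2_{H^{s/2}(\R^n)} - \mathrm{J}_1(w) = [w]^2_{H^{s/2}(B_1^c)}$, which depends only on $w|_{B_1^c}$. Since $u=v$ outside $B_1$ (a consequence of $u - v$ being compactly supported there), this exterior piece cancels in the difference, yielding $\mathrm{J}_1(v) - \mathrm{J}_1(u) = [v]^2_{H^{s/2}(\R^n)} - [u]^2_{H^{s/2}(\R^n)}$. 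It therefore suffices to establish the identity with $\tilde c_{n,s}\bigl([v]^2 - [u]^2\bigr)$ on the left.

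Given any competitor $(\Omega, \overline{v})$, I set $w := \overline{v} - \tilde u$, which is compactly supported in $\overline{\Omega}$ and whose trace on $\{z=0\}$ equals $v-u$. Polarising gives
\begin{equation*}
\int_{\Omega_+} z^{1-s}\bigl(|\nabla \overline{v}|^2 - |\nabla \tilde u|^2\bigr)\,dx\,dz = \int_{\R^{n+1}_+} z^{1-s}|\nabla w|^2\,dx\,dz + 2\int_{\R^{n+1}_+} z^{1-s}\,\nabla \tilde u\cdot\nabla w\,dx\,dz.
\end{equation*}
The cross term is in fact independent of the particular competitor: since $\tilde u$ solves~\eqref{eq::extension} and $w$ is compactly supported in the half-space with prescribed trace $v-u$ on $\R^n$, an integration by parts annihilates the bulk contribution and leaves only the boundary term $-2\int_{\R^n}(v-u)\lim_{z\to 0^+}z^{1-s}\partial_z\tilde u\,dx$, which via the Dirichlet-to-Neumann relation for the Caffarelli-Silvestre extension (with $C_P$ fixing the normalisation of $\tilde c_{n,s}$) equals $2\tilde c_{n,s}\,\mathcal{B}(u,v-u)$, where $\mathcal{B}(f,g):=\int\int (f(x)-f(y))(g(x)-g(y))|x-y|^{-n-s}\,dx\,dy$. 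Meanwhile, the variational principle for the extension gives $\int_{\R^{n+1}_+} z^{1-s}|\nabla w|^2 \geq \tilde c_{n,s}[v-u]^2$, with equality attained in the limit when $w$ approaches the canonical extension $\widetilde{v-u}$. Combining with the Gagliardo polarisation identity $[v]^2 - [u]^2 = [v-u]^2 + 2\mathcal{B}(u,v-u)$ produces the lower bound $\tilde c_{n,s}\bigl([v]^2-[u]^2\bigr)$ for the right-hand side.

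For the matching upper bound, I would select an exhausting sequence $\Omega_R$ of bounded Lipschitz domains whose traces $(\Omega_R)_0 \subset B_1$ still contain $\supp(u-v)$, paired with competitors $\overline{v}_R := \tilde u + \eta_R\,\widetilde{v-u}$, where $\eta_R$ is a smooth cutoff equal to $1$ on a large interior subset of $\Omega_R$ (in particular on $(\Omega_R)_0$) and vanishing near the lateral boundary. Because $v-u \in H^{s/2}(\R^n)$ by compact support, $\widetilde{v-u}$ has finite weighted Dirichlet energy, so dominated convergence yields $\int z^{1-s}|\nabla(\eta_R\widetilde{v-u})|^2 \to \tilde c_{n,s}[v-u]^2$, realising the infimum. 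The main obstacle will be justifying the integration by parts for the cross term on the unbounded half-space, since the individual energies $\int z^{1-s}|\nabla\tilde u|^2$ and $\int z^{1-s}|\nabla\tilde v|^2$ need not be finite (the hypothesis only grants $\mathrm{J}_1(u), \mathrm{J}_1(v) < +\infty$): the remedy is to work exclusively with $w$, whose energy is always finite, and to pass to the limit in a radially truncated version of $\tilde u$ using the decay of the Poisson kernel $P$.
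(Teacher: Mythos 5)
The paper does not actually prove this lemma---it is imported verbatim from \cite{caffarelli_roquejoffre_savin_nonlocal}---so your argument can only be measured against the standard proof, whose outline it correctly reproduces: polarize the weighted Dirichlet energy around $\tilde u$, identify the cross term with the bilinear form $\mathcal{B}(u,v-u)$ through the extension equation, bound the quadratic term from below by the minimality of the canonical extension, and realize the infimum with cutoffs of $\tilde u+\widetilde{v-u}$. The architecture is sound, but two steps need repair. The first is the opening reduction $\mathrm{J}_1(v)-\mathrm{J}_1(u)=[v]^2_{H^{s/2}(\R^n)}-[u]^2_{H^{s/2}(\R^n)}$: for the functions this lemma is applied to ($u=\chi_E-\chi_{E^c}$), both $[u]^2_{H^{s/2}(\R^n)}$ and the ``exterior piece'' $[u]^2_{H^{s/2}(B_1^c)}$ are typically infinite, so the cancellation is an $\infty-\infty$ manipulation. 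The fix is to polarize inside $\mathrm{J}_1$ directly: expanding $|v(x)-v(y)|^2-|u(x)-u(y)|^2$ as $|(v-u)(x)-(v-u)(y)|^2+2(u(x)-u(y))\big((v-u)(x)-(v-u)(y)\big)$ and noting that both new integrands vanish when $x,y\in B_1^c$ (so integration against the $\mathrm{J}_1$-kernel coincides with integration against the full kernel), one obtains the finite identity $\mathrm{J}_1(v)-\mathrm{J}_1(u)=[v-u]^2_{H^{s/2}(\R^n)}+2\mathcal{B}(u,v-u)$, with $[v-u]^2\le 2(\mathrm{J}_1(u)+\mathrm{J}_1(v))$ and $|\mathcal{B}(u,v-u)|\le \mathrm{J}_1(u)^{1/2}[v-u]$. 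This is the identity you actually use downstream, so the repair is cosmetic but necessary.

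The second gap is in the upper-bound construction. As described, your cutoff cannot exist: $\eta_R$ cannot equal $1$ on all of $(\Omega_R)_0$ while vanishing near the lateral boundary, because these regions meet at the corner $\partial(\Omega_R)_0\times\{0\}$, and since the Poisson extension $\widetilde{v-u}$ has full support in $\{z>0\}$, the constraint $\supp(\overline v_R-\tilde u)\comp\Omega_R$ forces $\eta_R$ to vanish in a neighborhood of that corner. A transition layer hugging $\{z=0\}$ over $B_1\setminus\supp(v-u)$ is therefore unavoidable; there $|\nabla\eta_R|$ is of order $\delta^{-1}$, and the error $\int z^{1-s}|\widetilde{v-u}|^2|\nabla\eta_R|^2$ is controlled not by the decay of $P$ at infinity (which only handles your far-field cutoff) but by the bound $|\widetilde{v-u}(x,z)|\le Cz^{s}$ valid for $x$ at positive distance from $\supp(v-u)$, which makes the layer's contribution vanish as its height tends to $0$. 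Without this estimate the infimum is not shown to be attained in the limit. Finally, the obstacle you flag yourself is real: the Dirichlet-to-Neumann pairing $-\int_{\R^n}(v-u)\lim_{z\to0^+}z^{1-s}\partial_z\tilde u\,dx=\tilde c_{n,s}\,\mathcal{B}(u,v-u)$ is only known a priori for $u\in H^{s/2}(\R^n)$, and the truncation argument you sketch must actually be carried out (using the $\mathrm{J}_1$-bounds above to pass to the limit); as written the lower bound for the cross term remains formal.
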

	
	As a consequence of Lemma~\ref{lemma::local_energy_diff}, we obtain Proposition~\ref{prop::char_almost_min_ext} arguing as follows.
	
	\begin{proof}[Proof of Proposition~\ref{prop::char_almost_min_ext}]
		Let~$E\subseteq\R^n$ and~$u:=\chi_E-\chi_{E^c}$ with extension~$\widetilde{u}$. 
		
		Suppose that~\eqref{eq::char_almos_min_ext} is satisfied, i.e. 
		\begin{equation*} 
			\int_{\Omega_+} z^{1-s}|\nabla\widetilde{u}|^2\,dx\,dz \leq \int_{\Omega_+} z^{1-s}|\nabla\overline{v}|^2\,dx\,dz + \hat{\Lambda}\big|E\Delta \{\overline{v}(x,0)=1\}\big|,
		\end{equation*}
		for every bounded Lipschitz open set~$\Omega\subseteq\R^{n+1}$ such that~$\Omega_0\subseteq B_1$, and every function~$\overline{v}$ such that $\supp(\overline{v}-\widetilde{u})\comp\Omega$ and~$|\overline{v}|=1$ on~$\Omega_0$. Recall that~$\hat{\Lambda}$ here is as in Remark~\ref{rem::hat_Lambda}.
		
		Let~$F\subseteq\R^n$ be such that~$F\setminus B_1=E\setminus B_1$ and define the function~$v:=\chi_F-\chi_{F^c}$. Then, thanks to Lemma~\ref{lemma::local_energy_diff}, we have that
		\begin{align*}
			8\widetilde{c}_{n,s} \left(\Per_s(F,B_1)-\Per_s(E,B_1)\right)
			&= \widetilde{c}_{n,s} \left(\mathrm{J}_1(v)-\mathrm{J}_1(u)\right) \\
			&=\inf_{\Omega,\overline{v}} 
			\int_{\Omega_+} z^{1-s}(|\nabla \overline{v}|^2-|\nabla \widetilde{u}|^2)\,dx\,dz\\
			&\geq -\hat{\Lambda}|E\Delta F|,
		\end{align*}
		where the infimum is take among all bounded Lipschitz domains~$\Omega$ such that~$\Omega_0\comp B_1$ and all functions~$\overline{v}$ such that~$\supp(\overline{v}-\widetilde{u})\comp\Omega$ and~$\trace(\overline{v})=v$ on~$\{z=0\}$. This gives that~$E$
		is a~$\Lambda$-minimal set in~$B_1$.
		
		Conversely, assume that~$E$ is a~$\Lambda$-minimal set in~$B_1$, for some~$\Lambda\geq0$. Consider a bounded Lipschitz open set~$\Omega\subseteq\R^{n+1}$ such that~$\Omega_0\subseteq B_1$ and let~$\overline{v}$ be a function such that $\supp(\overline{v}-\widetilde{u})\comp\Omega$ and $v:=\trace(\overline{v})\in\{-1,1\}$ in~$\R^n$. Then, using the almost minimality of~$E$ with the set~$F:=\{v(x)=1\}$ and Lemma~\ref{lemma::local_energy_diff}, we obtain that
		\begin{align*}
			0&\leq \Per_s(F,B_1)-\Per_s(E,B_1)+\Lambda|E\Delta F|\\
			&= \frac{1}{8}\left(\mathrm{J}_1(v)-\mathrm{J}_1(u)\right)+\Lambda|E\Delta \{v(x)=1\}|\\
			&\leq \frac{1}{8\widetilde{c}_{n,s}}\int_{\Omega_+} z^{1-s}(|\nabla \overline{v}|^2-|\nabla \widetilde{u}|^2)\,dx\,dz +\Lambda|E\Delta \{v(x)=1\}|,
		\end{align*} which proves~\eqref{eq::char_almos_min_ext}, as desired.
	\end{proof}

	\subsection{Monotonicity formula and proof of Theorem~\ref{th::Phi_monotone}}
	
	Now, we prove that the function~$\Phi_E$, as defined in~\eqref{defphimono}, is increasing in~$r$, as stated in Theorem~\ref{th::Phi_monotone}. 
	
	To achieve this, let $R>0$
	be such that~$B_R \subseteq \Omega$. Our strategy is the following: we first show that $\Phi_E$ is absolutely continuous in every compact subset
	of $(0,R)$ (see Proposition~\ref{prop::Phi_abs_cont} below). Then, we infer that $\Phi_E$ is differentiable almost everywhere in $(0,R)$ with $\Phi_E'(r)\geq0$, for a.e. $r\in(0,R)$. 
	
	\begin{prop} \label{prop::Phi_abs_cont}
		Let $0<r_1<r_2<R$. Then, the function $\Phi_E$ is absolutely continuous in $[r_1,r_2]$.
	\end{prop}
	\begin{proof}
		We write
		\begin{equation*}
			\Phi_E(r) = r^{s-n}\Psi_E(r) + \frac{n-s}{s}\omega_n\hat{\Lambda}r^s,
		\end{equation*}
		where
		\begin{equation*}
			\Psi_E(r) := \int_{B_r^+} z^{1-s}|\nabla \widetilde{u}|^2 dxdz.
		\end{equation*}
		Since $r^{s-n}$ and $\frac{n-s}{s}\omega_n\hat{\Lambda}r^s$ are smooth functions in $[r_1,r_2]$, it is sufficient to prove that $\Psi_E$ is absolutely continuous in $[r_1,r_2]$.
		
		To this purpose, let us pick $\epsilon>0$, and let $\{[a_j,b_j]\}_j$ be
		a collection of (at most countably many) non-overlapping subintervals of $[r_1,r_2]$ such that
		\begin{equation*}
			\sum_j |b_j-a_j| < \delta,
		\end{equation*}
		for some $\delta>0$ to be chosen. 
		
		Notice that
		\begin{equation*} 
			|B_{b_j}^+\setminus B_{a_j}^+| = \frac{\omega_{n+1}}{2}(b_j^{n+1}-a_j^{n+1})\leq c|b_j-a_j|,
		\end{equation*}
		for some positive constant $c$ depending only on $n$ and $R$. This entails that
		\begin{equation}\label{eq::delta_control_sets}
			\sum_j |B_{b_j}^+\setminus B_{a_j}^+| < c\delta
		\end{equation}
		
		On the other hand, by the absolute continuity of the Lebesgue integral, there exists $\widetilde{\delta}>0$ such that if
		\begin{equation*}
			\sum_j |B_{b_j}^+\setminus B_{a_j}^+| < \widetilde{\delta},
		\end{equation*}
		then
		\begin{equation} \label{eq::lebesgue_abs_cont}
			\sum_j \int_{B_{b_j}^+\setminus B_{a_j}^+} z^{1-s}|\nabla\widetilde{u}|^2dxdz <\epsilon
			.\end{equation}
		
		Thus, choosing $\delta<\widetilde{\delta}/c$ in~\eqref{eq::delta_control_sets}, and using \eqref{eq::lebesgue_abs_cont} together with the monotonicity of $\Psi_E$, we infer that
		\begin{equation*}
			\sum_j|\Psi_E(b_j)-\Psi_E(a_j)| = \sum_j \Psi_E(b_j)-\Psi_E(a_j) = \sum_j \int_{B_{b_j}^+\setminus B_{a_j}^+} z^{1-s}|\nabla\widetilde{u}|^2dxdz <\epsilon,
		\end{equation*}
		namely $\Psi_E$ is absolutely continuous in $[r_1,r_2]$.
	\end{proof}
	
	\begin{lemma} \label{lemma::Phi_monotone_ae}
		The function $\Phi_E$ is differentiable for almost every $r\in(0,R)$, and
		\begin{equation*} \label{eq::Phi_monotone_ae}
			\Phi_E'(r)\geq0,\quad\mbox{for a.e. }r\in(0,R).
		\end{equation*}
	\end{lemma}
	
	\begin{proof}
		Thanks to Proposition~\ref{prop::Phi_abs_cont} and the Fundamental Theorem of Calculus for the Lebesgue integral (see e.g. \cite[Theorem 7.20]{rudin}), $\Phi_E$ is differentiable for almost any $r\in(0,R)$ with derivative
		\begin{align*}
			\Phi_E'(r) 
			&= r^{s-n} \int_{\partial B_r^+} z^{1-s}|\nabla\widetilde{u}(x,z)|^2\, d\haus{n}(x,z) \\
			&\qquad\qquad-(n-s)r^{s-n-1} \int_{B_r^+} z^{1-s}|\nabla\widetilde{u}(x,z)|^2 \,dx\,dz +(n-s)\,\omega_n\,\hat{\Lambda} r^{s-1} \\
			&= r^{s-n} \Bigg(\int_{\partial B_r^+} z^{1-s}|\nabla\widetilde{u}(x,z)|^2\, d\haus{n}(x,z) \\
			&\qquad\qquad-\frac{n-s}{r} \int_{B_r^+} z^{1-s}|\nabla\widetilde{u}(x,z)|^2 \,dx\,dz +(n-s)\,\omega_n\,\hat{\Lambda} r^{n-1}\Bigg).
		\end{align*}
		Therefore, in order to show that $\Phi_E'(r)\geq0$
		for almost any~$r\in(0,R)$, it is sufficient to prove that
		\begin{equation*}
			\int_{\partial B_r^+} z^{1-s}|\nabla\widetilde{u}(x,z)|^2\, d\haus{n}(x,z) -\frac{n-s}{r} \int_{B_r^+} z^{1-s}|\nabla\widetilde{u}(x,z)|^2 \,dx\,dz +(n-s)\,\omega_n\,\hat{\Lambda} r^{n-1}\geq0.
		\end{equation*}
		
		With this aim, we consider the function
		\begin{equation*}
			\widetilde{v}_r(x,z):=
			\begin{cases}
				\widetilde{u}\left(\frac{rx}{|(x,z)|},\frac{rz}{|(x,z)|}\right) ,&\quad \text{  if }0<|x|^2+z^2\leq r^2,\\
				\widetilde{u}(x,z) ,&\quad \text{ if }|x|^2+z^2> r^2.
			\end{cases}
		\end{equation*}

		The main idea of the argument that we present is now
		to construct a suitable competitor for the almost minimality condition. Given a radius~$r$, this competitor will be defined as a homogeneous rescaling inside the ball of radius~$r$, while preserving the data outside the ball. However, some technical difficulties arise from taking traces from the extended half-space~$\R^{n+1}_+$ to the original ambient space~$\R^n$. These difficulties stem from the fact that the trace is defined only up to sets of null $n$-dimensional Lebesgue
		measure, whereas the construction of the competitor requires specifying values on a sphere. Consequently, for the traced competitor, the values along a sphere in~$\R^n$ are, in principle, not properly defined, since such a sphere has null $n$-dimensional measure.
		
		To handle this issue, the argument relies on a delicate measure-theoretic selection: the radius~$r$ will be chosen from an appropriate set of full one-dimensional measure, ensuring that the corresponding sphere of radius~$r$ in~$\R^n$ admits suitable homogeneous constructions compatible with the notion of trace. The precise details of this measure-theoretic argument are presented below.
		
		We claim that 
		\begin{equation*}\begin{split}&
				{\mbox{the trace of~$\widetilde{v}_r$ on~$\{z=0\}$ is well-defined for almost every $r\in(0,R)$}}\\&{\mbox{and
						(up to sets of null $n$-dimensional Lebesgue measure)}}\\
				&{\mbox{this trace is of the form~$\chi_{F_r}-\chi_{F_r^c}$, for some set~$F_r$ such that~$F_r\setminus B_r = E\setminus B_r$. }}\end{split}\end{equation*}
		
		Indeed, by construction, $\trace(\widetilde{u})(x)=u(x)$ for almost every $x\in\R^n$, that is 
		this identity holds
		for all~$x\in{\mathcal{G}}$, for a suitable
		set~${\mathcal{G}}\subseteq\R^n$
		with $|\R^n\setminus{\mathcal{G}}|=0$ (where $|\cdot|$ denotes the $n$-dimensional Lebesgue measure). 
		
		We point out that, by the co-area formula,
		\begin{equation*}
			0=|\R^n\setminus{\mathcal{G}}| = \int_{0}^{+\infty} \haus{n-1}( \partial B_r\setminus{\mathcal{G}}) \,dr \geq 0.
		\end{equation*}
		As a result, there exists a set ${\mathcal{W}}\subseteq(0,+\infty)$ such that $\haus{1}\big((0,+\infty)\setminus
		{\mathcal{W}}\big)=0$ and 
		\begin{equation}\label{OLNSdoiewum6nbiu}{\mbox{$\haus{n-1}(\partial B_r\setminus{\mathcal{G}})=0$ for every $r\in{\mathcal{W}}$. }}\end{equation}
		
		Now, for all~$r\in{\mathcal{W}}$ we define
		$${\mathcal{G}}_r:=\left\{ x\in\R^n {\mbox{ s.t. }}\frac{rx}{|x|}\in{\mathcal{G}}\right\}$$
		and
		$$\widehat{\mathcal{G}}_r:=
		{\mathcal{G}}_r\cap\partial B_1=\left\{ x\in\partial B_1 {\mbox{ s.t. }} rx\in{\mathcal{G}}\right\}=
		\frac{ 1}{r}{\mathcal{G}}\cap\partial B_1.$$
		
		We remark that, for all~$r\in{\mathcal{W}}$,
		$$ \haus{n-1}(\widehat{\mathcal{G}}_r)=\haus{n-1}\left(\frac{ 1}{r}{\mathcal{G}}\cap\partial B_1\right)
		=\frac1{r^{n-1}}\haus{n-1}({\mathcal{G}}\cap\partial B_r)
		=\frac1{r^{n-1}}\haus{n-1}(\partial B_r)=\haus{n-1}(\partial B_1),$$
		owing to~\eqref{OLNSdoiewum6nbiu}.
		
		Notice also that~${\mathcal{G}}_r$ is a cone, namely~$x\in{\mathcal{G}}_r$ if and only if~$tx\in{\mathcal{G}}_r$ for all~$t>0$.
		As a result, for all~$r_2>r_1>0$,
		\begin{equation*}
			\begin{split}
				& |{\mathcal{G}}_r\cap (B_{r_2}\setminus B_{r_1})|=
				\int_{r_1}^{r_2} \haus{n-1}( {\mathcal{G}}_r\cap \partial B_t)\,dt=
				\int_{r_1}^{r_2} t^{n-1}\,\haus{n-1}\left( \frac{1}{t}{\mathcal{G}}_r\cap \partial B_1\right)\,dt\\
				&\qquad=
				\int_{r_1}^{r_2} t^{n-1}\,\haus{n-1}( {\mathcal{G}}_r \cap \partial B_1)\,dt=
				\int_{r_1}^{r_2} t^{n-1}\,\haus{n-1}( \widehat{\mathcal{G}}_r )\,dt=
				\int_{r_1}^{r_2} t^{n-1}\,\haus{n-1}( \partial B_1 )\,dt\\
				&\qquad=|B_{r_2}\setminus B_{r_1}|,
			\end{split}
		\end{equation*}
		showing that, for all~$r\in{\mathcal{W}}$, we have that $|\R^n\setminus {\mathcal{G}}_r|=0$ (i.e. the set~${\mathcal{G}}_r$ has full measure in~$\R^n$).
		
		Consequently, for all~$r\in{\mathcal{W}}$,
		the set~$\widetilde{\mathcal{G}}_r:=
		{\mathcal{G}}_r\cap{\mathcal{G}}$
		has full measure in~$\R^n$.
		
		Accordingly, if~$r\in{\mathcal{W}}$ and~$x\in \widetilde{{\mathcal{G}}}_r$, then both~$x$ and~$\frac{rx}{|x|}$
		lie in~${\mathcal{G}}$, and therefore
		\begin{equation*}
			\widetilde{v}_r(x,0)=
			\begin{cases}
				\widetilde{u}\left(\frac{rx}{|x|},0\right)=\chi_E\left(\frac{rx}{|x|}\right)-\chi_{E^c}\left(\frac{rx}{|x|}\right),\quad&\text{if }0<|x|\leq r,\\
				\widetilde{u}(x,0)=\chi_E(x)-\chi_{E^c}(x),\quad&\text{if }|x|> r,
			\end{cases}
		\end{equation*}
		is well-defined (and thus $\widetilde{v}_r(x,0)\in\{-1,1\}$). 
		
		In what follows, we fix $r\in(0,R)\cap{\mathcal{W}}$, and we write $\widetilde{v}:=\widetilde{v}_r$ and $F:=F_r$ for simplicity.
		
		Now, exploiting Proposition~\ref{prop::char_almost_min_ext}, we find that
		\begin{equation}\label{eq::monotone_1}
			\begin{split}
				\int_{B_r^+} z^{1-s}|\nabla\widetilde{v}(x,z)|^2\,dx\,dz
				&\geq \int_{B_r^+} z^{1-s}|\nabla\widetilde{u}(x,z)|^2\,dx\,dz - \hat{\Lambda}|E\Delta F|\\
				&\geq \int_{B_r^+} z^{1-s}|\nabla\widetilde{u}(x,z)|^2\,dx\,dz - \omega_n \hat{\Lambda}r^{n} .
			\end{split}
		\end{equation}
		
		Then, observe that~$\widetilde{v}$ is constant along the radial direction in $B_r\setminus\{0\}$. Hence, 
		\begin{equation*}
			\nabla\widetilde{v}(x,z) = \frac{r}{|(x,z)|} \nabla^T \widetilde{u}\left(\frac{r}{|(x,z)|}x,\frac{r}{|(x,z)|}z\right),\quad \mbox{in }B_r\setminus\{0\},
		\end{equation*}
		where~$\nabla^T$ is the tangential component of the gradient~$\nabla$ with respect to the unitary sphere. 
		
		Therefore, by Fubini-Tonelli's Theorem, we have that
		\begin{equation*}
			\begin{split}
				\int_{B_r^+} z^{1-s}|\nabla\widetilde{v}(x,z)|^2\,dx\,dz 
				&= \int_{0}^{r}\int_{\partial B_t^+} z^{1-s}|\nabla\widetilde{v}(x,z)|^2\,d\haus{n}(x,z)\, dt \\
				&= \int_{0}^{r}\int_{\partial B_t^+} \frac{r^2}{t^2}z^{1-s}\left|\nabla^T\widetilde{u}\left(\frac{r}{t}x,\frac{r}{t}z\right)\right|^2\,d\haus{n}(x,z)\, dt.
			\end{split}
		\end{equation*}
		Using the change of variable~$(\hat{x},\hat{z}):=\left(\frac{x}{t},\frac{z}{t}\right)$, we obtain that
		\begin{equation*}
			\begin{split}
				\int_{B_r^+} z^{1-s}|\nabla\widetilde{v}(x,z)|^2\,dx\,dz 
				& = \int_{0}^{r}\int_{\partial B_1^+} r^2t^{n-2}(t\hat{z})^{1-s}|\nabla^T\widetilde{u}(r\hat{x},r\hat{z})|^2\,d\haus{n}(\hat{x},\hat{z})\, dt\\
				& = r^2\int_{0}^{r} t^{n-1-s}\,dt \int_{\partial B_1^+} \hat{z}^{1-s}|\nabla^T\widetilde{u}(r\hat{x},r\hat{z})|^2\,d\haus{n}(\hat{x},\hat{z}) \\
				& = \frac{r^{n+2-s}}{n-s} \int_{\partial B_1^+} \hat{z}^{1-s}|\nabla^T\widetilde{u}(r\hat{x},r\hat{z})|^2\,d\haus{n}(\hat{x},\hat{z}).
			\end{split}
		\end{equation*}
		{F}rom this, the change of variable~$(x,z):=(r\hat{x},r\hat{z})$ leads us to
		\begin{equation}  \label{eq::monotone_2}
			\begin{split}
				\int_{B_r^+} z^{1-s}|\nabla\widetilde{v}(x,z)|^2\,dx\,dz 
				& = \frac{r}{n-s} \int_{\partial B_1^+} (r\hat{z})^{1-s}|\nabla^T\widetilde{u}(r\hat{x},r\hat{z})|^2\,r^{n+1}
				\,d\haus{n}(\hat{x},\hat{z})\\
				& = \frac{r}{n-s} \int_{\partial B_r^+} z^{1-s}|\nabla^T\widetilde{u}(x,z)|^2\,d\haus{n}(x,z).
			\end{split}
		\end{equation}
		
		Plugging~\eqref{eq::monotone_2} into~\eqref{eq::monotone_1}, we obtain that
		\begin{equation*}
			\begin{split}
				\frac{r}{n-s} \int_{\partial B_r^+} z^{1-s}|\nabla \widetilde{u}(x,z)|^2\,d\haus{n}(x,z)
				&\geq \frac{r}{n-s} \int_{\partial B_r^+} z^{1-s}|\nabla^T\widetilde{u}(x,z)|^2\,d\haus{n}(x,z)\\
				&\geq \int_{B_r^+} z^{1-s}|\nabla \widetilde{u}|^2\,dx\,dz - \omega_n \hat{\Lambda} r^{n}\  ,
			\end{split}
		\end{equation*}
		which gives that~$\Phi_E'(r)\geq0$, as desired. 
	\end{proof}
	
	\begin{proof}[Proof of Theorem~\ref{th::Phi_monotone}]
		Let $r_1,r_2\in(0,R)$ with $r_1<r_2$. Thanks to Proposition~\ref{prop::Phi_abs_cont} and Lemma~\ref{lemma::Phi_monotone_ae}, we infer that
		\begin{equation*}
			\Phi_E(r_2)-\Phi_E(r_1) = \int_{r_1}^{r_2} \Phi_E'(\rho)d\rho \geq 0,
		\end{equation*}
		showing the result.
	\end{proof}
	
	Another relevant property of~$\Phi_E$ is that~$\Phi_E$ is bounded for any~$r$ small enough. We point out that this is not a direct consequence of Theorem~\ref{th::Phi_monotone}. In fact, we want to avoid the pathological case in which~$\Phi_E\equiv+\infty$.
	
	In order to prove the boundedness of~$\Phi_E$, it is convenient to introduce the function
	\begin{equation}\label{eq::def_Xi}
		\Xi_E(r):= r^{s-n}\int_{B_r^+} z^{1-s}|\nabla\widetilde{u}|^2\,dx\,dz.
	\end{equation}
	Hence, 
	\begin{equation}\label{eq::Phi_with_Xi}
		\Phi_E(r)=\Xi_E(r)+\frac{n-s}{s}\omega_n\hat{\Lambda} r^s.
	\end{equation}
	
	\begin{theorem} \label{th::Phi_bound}
		If~$E$ is a~$\Lambda$-minimal set in~$B_R$, for some~$R>0$, then~$\Phi_E$ is bounded for every~$r\leq R/2$.
	\end{theorem}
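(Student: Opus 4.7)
The plan is to use monotonicity to reduce the statement to a finite-energy bound at a single radius, and then establish that bound via a competitor argument or a standard Caffarelli-Silvestre-type localization.

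First I would exploit the decomposition~\eqref{eq::Phi_with_Xi}, namely $\Phi_E(r)=\Xi_E(r)+\frac{n-s}{s}\omega_n\hat{\Lambda} r^s$. The polynomial tail is trivially bounded by $\frac{n-s}{s}\omega_n\hat{\Lambda}(R/2)^s$ on $(0,R/2]$, so the task is to bound $\Xi_E(r)=r^{s-n}I(r)$, where $I(r):=\int_{B_r^+}z^{1-s}|\nabla\tilde{u}|^2\,dx\,dz$. By Theorem~\ref{th::Phi_monotone}, whose proof only requires $B_r$ to be compactly contained in the domain of almost-minimality, $\Phi_E$ is monotone increasing on the entire interval $(0,R)$. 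Picking any $\rho_0\in (R/2,R)$, monotonicity then yields $\Phi_E(r)\le \Phi_E(\rho_0)$ for every $r\le R/2$. Thus it suffices to exhibit one such radius with $I(\rho_0)<+\infty$.

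To produce such a $\rho_0$, the plan is to apply Proposition~\ref{prop::char_almost_min_ext} with a competitor of explicitly finite weighted Dirichlet energy. Fix $R/2<\rho_0<\rho_1<R$ and let $F:=(E\setminus B_{\rho_0})\cup B_{\rho_0/2}$, so $F\setminus B_{\rho_0}=E\setminus B_{\rho_0}$ and $F\cap B_{\rho_0}$ has smooth boundary. Denote by $\tilde{v}_F$ the Caffarelli-Silvestre extension of $v:=\chi_F-\chi_{F^c}$. Taking a smooth cutoff $\phi\in C^\infty_c(\overline{B_{\rho_1}^+})$ with $\phi\equiv 1$ on $\overline{B_{\rho_0}^+}$, I define
\[\overline{v}:=\phi\,\tilde{v}_F+(1-\phi)\,\tilde{u}.\]
Since $v\equiv u$ outside $B_{\rho_0}$ and $\phi(\cdot,0)\equiv 1$ on $B_{\rho_0}$, the trace equals $v$ on $B_{\rho_0}$ and $u$ on $\R^n\setminus B_{\rho_0}$, hence $|\overline{v}|=1$ on $\{z=0\}$; moreover $\overline{v}-\tilde{u}=\phi(\tilde{v}_F-\tilde{u})$ is compactly supported in $B_{\rho_1}^+$. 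Proposition~\ref{prop::char_almost_min_ext} then gives
\[I(\rho_0)\le \int_{B_{\rho_1}^+}z^{1-s}|\nabla\tilde{u}|^2\,dx\,dz\le \int_{B_{\rho_1}^+}z^{1-s}|\nabla\overline{v}|^2\,dx\,dz+\hat{\Lambda}\,|E\Delta F|.\]
The symmetric difference is bounded by $|B_{\rho_0}|<+\infty$, and the competitor's energy is finite because, as is standard in the Caffarelli-Silvestre framework, the weighted Dirichlet energy of $\tilde{v}_F$ on a bounded half-ball is controlled by $\mathrm{J}_{\rho_1}(v)=8\Per_s(F,B_{\rho_1})$, which is finite thanks to the smoothness of $\partial F$ inside $B_{\rho_0}$ and the bound $\Per_s(E,B_{\rho_1})<+\infty$ (the latter obtained by testing almost-minimality with the comparison set $E\cup B_{\rho_1}$, which yields $\Per_s(E,B_{\rho_1})\le \Per_s(B_{\rho_1},\R^n)+\Lambda\omega_n\rho_1^n$).

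The main obstacle is the technical step of making the competitor's Dirichlet-type energy finite: in the annular transition region where $0<\phi<1$, one must control both $\int z^{1-s}|\nabla \tilde{u}|^2$ and the cutoff-induced term $\int z^{1-s}|\nabla\phi|^2|\tilde{v}_F-\tilde{u}|^2$. The latter is harmless because $|\tilde{u}|,|\tilde{v}_F|\le 1$ and $\phi$ is smooth; the former requires the local extension-energy bound of Caffarelli-Silvestre type, which follows by integration by parts against $\phi^2\tilde{u}$ using $\divergence(z^{1-s}\nabla\tilde{u})=0$ in $\{z>0\}$ and the pointwise bound $|\tilde{u}|\le 1$, producing an estimate of the form $\int_{B_{\rho_1}^+}z^{1-s}|\nabla\tilde{u}|^2\lesssim \Per_s(E,B_{\rho_2})+\rho_2^{n-s}$ for some $\rho_2\in(\rho_1,R)$. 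With this, $I(\rho_0)<+\infty$ and the monotonicity of $\Phi_E$ closes the argument.
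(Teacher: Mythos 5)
Your argument is essentially correct but contains a logical redundancy that makes the competitor construction superfluous. The goal of that construction is to establish $\int_{B_{\rho_1}^+}z^{1-s}|\nabla\tilde u|^2\,dx\,dz<+\infty$; however, as you yourself note in the last paragraph, controlling $\int z^{1-s}|\nabla\overline v|^2$ in the transition annulus already requires the Caffarelli--Silvestre local extension-energy estimate
\[
\int_{B_{\rho_1}^+}z^{1-s}|\nabla\tilde u|^2\,dx\,dz\ \lesssim\ \Per_s(E,B_{\rho_2})+\rho_2^{n-s},
\]
which is precisely~\cite[Proposition~7.1]{caffarelli_roquejoffre_savin_nonlocal}. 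But once you have this estimate together with your bound $\Per_s(E,B_{\rho_2})\le\Per_s(B_{\rho_2},\R^n)+\Lambda\omega_n\rho_2^n$ (obtained by testing almost-minimality against $E\cup B_{\rho_2}$), the finiteness of $I(\rho_0)$ follows immediately. The choice of $F$, the cutoff $\phi$, the mixed competitor $\overline v=\phi\,\tilde v_F+(1-\phi)\,\tilde u$, and the appeal to Proposition~\ref{prop::char_almost_min_ext} are all a detour that, in the end, never gets used: the competitor's energy is not easier to control than $\tilde u$'s own energy, since $\overline v$ itself involves $\tilde u$.

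The paper's proof avoids the detour and does not even use the monotonicity formula. It observes that $\Xi_E$ is scale-invariant, namely $\Xi_E(\mu r)=\Xi_{\mu E}(r)$, so that $\Xi_E(r)=\Xi_{E/(2r)}(1/2)$, and then invokes~\cite[Proposition~7.1]{caffarelli_roquejoffre_savin_nonlocal} once (at radius $1/2$) to get $\Xi_E(r)\le C(2r)^{s-n}\Per_s(E,B_{2r})$. The perimeter bound via the competitor $A:=B_{2r}\cup(E\cap B_{2r}^c)$ then yields $\Xi_E(r)\le C\big(\Per_s(B_1,\R^n)+\Lambda(2r)^s\big)$ \emph{directly for every $r\le R/2$}, so no monotonicity is needed to propagate from a single radius. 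Your approach would go through once stripped of the competitor step, but you trade a one-line scaling argument for a two-step reduction (bound at one radius, then monotone propagation) plus a spurious construction. Both hinge on the same core ingredient, so neither buys additional generality.
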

	
	\begin{proof}
		In light of~\eqref{eq::Phi_with_Xi}, it is sufficient to check that~$\Xi_E$ is bounded.
		
		{F}rom~\cite[Proposition~7.1]{caffarelli_roquejoffre_savin_nonlocal},
		used with~$\Omega:=B_{1/2}^{n+1}$, we have that
		$$ \Xi_E(1/2) = 2^{n-s}\int_{\Omega^+} z^{1-s}|\nabla\widetilde{u}|^2\,dx\,dz \leq C2^{n-s}\mathrm{J}_1(u)=C\Per_s(E,B_1).$$
		Notice that~$\Xi_E$ is scale invariant, that is~$\Xi_E(\mu r)=\Xi_{\mu E}(r)$, for every~$\mu>0$. 
		
		Consider a set~$A:=B_{2r}\cup(E\cap B_{2r}^c)$. Since~$E$ is an almost minimal set in~$B_{2r}\subseteq B_R$, we have that
		\begin{equation}\label{56789rtyuifghjxcvb}
			\begin{split}
				&\Xi_E(r) = \Xi_{\frac{1}{2r}E}(1/2)
				\leq C\Per_s\left(\frac{1}{2r}E,B_1\right)
				= C(2r)^{s-n}\Per_s(E,B_{2r})\\
				&\qquad\leq C(2r)^{s-n}\left(\Per_s(A,B_{2r}) + \Lambda(2r)^n\right)
				\leq C(2r)^{s-n}\left(\Per_s(B_{2r},\R^n) + \Lambda(2r)^n\right)\\
				&\qquad\leq C\left(\Per_s(B_1,\R^n)+\Lambda(2r)^{s}\right).
			\end{split}
		\end{equation}
		Therefore, for all~$r\leq R/2$,
		\begin{equation*}
			\Phi_E(r) \leq C\left(\Per_s(B_1,\R^n)+(2r)^{s}\right) + \frac{n-s}{s}\omega_n\hat{\Lambda} r^s \leq C(1+R^s),
		\end{equation*}
		as desired.
	\end{proof}
	
	\subsection{Tangent cones as blow-up limits}
	With the monotonicity formula from Theorem~\ref{th::Phi_monotone} in hand, we are able to study regular and singular points of~$E$ through a blow-up analysis. In particular, if~$E$ is an almost minimal set in~$B_1$ such that~$0\in\partial E$, we consider the dilation~$\lambda E$ and study the limit set when~$\lambda\to+\infty$. We start our discussion with a convergence result.
	
	\begin{prop} \label{prop::phi_continuous_in_E}
		Let~$\{E_k\}_k$ be a sequence of~$\Lambda_k$-minimal sets in~$B_k$. Assume that~$E_k\to E$ in~$L^1_{\loc}$, for some set~$E$
		of finite fractional perimeter, and~$\Lambda_k\to\Lambda\in[0,+\infty)$, as~$k\to+\infty$.
		
		Let~$u_k:=\chi_{E_k}-\chi_{E_k^c}$ and~$u:=\chi_E-\chi_{E^c}$, with extension~$\widetilde{u}_k$ and~$\widetilde{u}$, respectively.
		
		Then,
		\renewcommand{\theenumi}{\roman{enumi}}
		\begin{enumerate}
			\item\label{item::phi_continuous_in_E_1} $\widetilde{u}_k\to\widetilde{u}$ uniformly on every compact set~$K\comp\R^{n+1}_+$;
			\item\label{item::phi_continuous_in_E_2} $\nabla\widetilde{u}_k\to\nabla\widetilde{u}$ in~$L^2_{\loc}(\R^{n+1}_+,z^{1-s}\,dx\,dz)$.
		\end{enumerate}
		In particular, $\Phi_{E_k}\to\Phi_E$.
	\end{prop}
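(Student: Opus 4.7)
The proof plan is to establish (i), then (ii), and then deduce $\Phi_{E_k}\to\Phi_E$ as an immediate corollary of (ii).

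For (i), I would work directly from the Poisson-kernel representation $\tilde u_k(x,z) = (u_k * P(\cdot,z))(x)$ given by \eqref{TILDEUDEF}. On a compact $K \subset \R^{n+1}_+$ contained in $\{z\geq z_0>0\}\cap\{|x|\leq M\}$, the difference $(\tilde u_k-\tilde u)(x,z) = 2\int_{\R^n}(\chi_{E_k}-\chi_E)(y)\,P(x-y,z)\,dy$ splits as a near-field integral on a large ball $B_R$, bounded by $\|P\|_{L^\infty(K\times B_R)}\cdot |(E_k\Delta E)\cap B_R|$, plus a tail integral on $B_R^c$ controlled by the decay $P(x-y,z)\lesssim z^s|y|^{-n-s}$ for large $|y|$. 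The tail can be made arbitrarily small uniformly in $(x,z)\in K$ by taking $R$ large, while for any fixed $R$ the near-field term vanishes in the limit $k\to\infty$ by the $L^1_{\loc}$-convergence of $E_k$ to $E$. This yields the claimed uniform convergence.

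For (ii), I start from a uniform energy bound $\int_{B_R^+} z^{1-s}|\nabla \tilde u_k|^2 \leq C(R,\Lambda_0)$ on any fixed range of $R$, obtained by the same comparison argument as in the proof of Theorem~\ref{th::Phi_bound} together with $\Lambda_k\leq\Lambda_0$. This gives weak precompactness of $\nabla \tilde u_k$ in $L^2_{\loc}(\R^{n+1}_+, z^{1-s}\,dx\,dz)$, and part (i) identifies the weak limit as $\nabla \tilde u$ via a standard distributional argument. On any compact $K\subset\{z>0\}$, the extension equation $\divergence(z^{1-s}\nabla \tilde u_k)=0$ is uniformly elliptic with smooth coefficients, so standard interior Schauder estimates combined with $\|\tilde u_k\|_{L^\infty}\leq1$ upgrade (i) to $C^0_{\loc}$-convergence of gradients on $\{z>0\}$; hence strong $L^2(K,z^{1-s}\,dx\,dz)$-convergence for every compact $K$ strictly inside the open half-space.

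The main obstacle is extending the strong convergence to compact sets that touch $\{z=0\}$. My plan is to prove convergence of norms $\int_{B_r^+} z^{1-s}|\nabla \tilde u_k|^2 \to \int_{B_r^+} z^{1-s}|\nabla \tilde u|^2$, which together with the weak convergence yields strong convergence in $L^2(z^{1-s})$ by the Radon-Riesz property. The $\liminf$ inequality is immediate from weak lower semicontinuity, so the crux is the $\limsup$ bound. For this, I would exploit the characterization~\eqref{eq::char_almos_min_ext} from Proposition~\ref{prop::char_almost_min_ext} applied to each $E_k$, with a competitor $\bar v_k$ obtained by gluing a suitable extension of $\chi_E-\chi_{E^c}$ (matching the trace of $\tilde u$) on a domain $\Omega$ with $\Omega_0\comp B_r$, to $\tilde u_k$ outside a slightly larger set, using that $\tilde u$ and $\tilde u_k$ are uniformly close in the transition region by (i). The resulting almost-minimality inequality $\int_{\Omega_+} z^{1-s}|\nabla \tilde u_k|^2 \leq \int_{\Omega_+} z^{1-s}|\nabla \bar v_k|^2 + \hat\Lambda_k|E_k\Delta\{\bar v_k(\cdot,0)=1\}|$ gives the desired bound after passing to the limit $k\to\infty$ and then shrinking the transition region.

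Finally, $\Phi_{E_k}(r)\to\Phi_E(r)$ follows directly from~\eqref{defphimono}: (ii) handles the convergence of the first summand, while the second summand converges since $\hat\Lambda_k=8\tilde c_{n,s}\Lambda_k\to 8\tilde c_{n,s}\Lambda=\hat\Lambda$.
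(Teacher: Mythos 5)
Your approach to part~(i) is different from the paper's but correct and arguably cleaner: the paper extracts a uniformly convergent subsequence via Ascoli--Arzel\`a and then laboriously identifies the trace of the limit by combining energy bounds, a H\"older-weighted estimate near~$\{z=0\}$, $W^{1,1}$-convergence and the Sobolev trace theorem; your direct near-field/far-field splitting of the Poisson-kernel representation~\eqref{TILDEUDEF} bypasses the trace identification entirely. For part~(ii), however, the paper takes a much shorter route than yours: it applies~\cite[Proposition~7.1]{caffarelli_roquejoffre_savin_nonlocal} to the \emph{difference} $u_k-u$, obtaining
\begin{equation*}
\limsup_{k\to+\infty}\int_{B_r^+}z^{1-s}|\nabla(\tilde u_k-\tilde u)|^2\,dx\,dz\leq c\,\limsup_{k\to+\infty}\mathrm{J}_{2r}(u_k-u),
\end{equation*}
and then shows~$\mathrm{J}_{2r}(u_k-u)\to0$ by pairing the a.e.\ convergence of~$u_k$ with the convergence of perimeters~\eqref{lapers} established in Proposition~\ref{prop::conv_almost_min} (convergence of $L^2$-norms plus pointwise convergence gives $L^2$-convergence of the Gagliardo integrands). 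Nowhere does the paper need to manipulate competitors directly in the extension variable.

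Your replacement strategy for the $\limsup$ inequality in~(ii) --- a Radon--Riesz argument with a glued competitor $\bar v_k$ --- has a gap at the key step. You invoke ``$\tilde u$ and $\tilde u_k$ uniformly close in the transition region by~(i)'' to control the gluing, but part~(i) gives uniform closeness only on compact sets of the \emph{open} half-space~$\{z>0\}$: on~$\{z=0\}$ the traces are $\pm1$-valued and differ on the positive-measure set $E_k\Delta E$, so no uniform smallness holds there and a naive interpolation $\phi\tilde u+(1-\phi)\tilde u_k$ would violate the constraint $|\bar v_k|=1$ on~$\Omega_0$ required by Proposition~\ref{prop::char_almost_min_ext}. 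To make your scheme rigorous one must instead let the trace jump discontinuously from~$u$ to~$u_k$ across some interface chosen by a Fubini/averaging argument so that the associated energy is small, and interpolate only for~$z>0$; this is essentially a reproof of the content of Lemma~\ref{lemma::local_energy_diff} and of~\eqref{lapers} in the extension variable, and it is substantially more involved than the plan you describe. You should either carry out that construction in detail or, more economically, quote~\eqref{lapers} and~\cite[Proposition~7.1]{caffarelli_roquejoffre_savin_nonlocal} as the paper does. The final deduction $\Phi_{E_k}\to\Phi_E$ from~(ii) and $\hat\Lambda_k\to\hat\Lambda$ is correct and agrees with the paper.
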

	
	The proof of Proposition~\ref{prop::phi_continuous_in_E} is just a variation of the proof of~\cite[Proposition~9.1]{caffarelli_roquejoffre_savin_nonlocal}. For the sake of completeness, it can be found in the Appendix~\ref{sec::prop_phi_continuous_in_E}.
	
	We now give the following:
	
	\begin{definition}
		A set~$C$ is a minimal cone if it is a~$0$-homogeneous local minimizer for the~$s$-perimeter, i.e.~$C$ is a minimizer for~$\Per_s(\cdot,\Omega)$ such that~$\lambda C=C$, for every~$\lambda>0$.
	\end{definition}
	
	We point out that~$E$ is a minimal cone with vertex at the origin, that is the~$(n+1)$-dimensional extension of~$\chi_E-\chi_{E^c}$ is a~$0$-homogeneous function, if and only if the function~$\Xi_E$, as defined in~\eqref{eq::def_Xi}, is constant (see~\cite[Corollary~8.2]{caffarelli_roquejoffre_savin_nonlocal}).
	
	Now,
	replacing~$B_k$ with~$B_{\lambda_k}$, for some divergent sequence~$\{\lambda_k\}_k$, in Proposition~\ref{prop::phi_continuous_in_E}, we infer that the limit set of a convergent blow-up sequence~$\{E_k:=\lambda_k E\}_k$ is a minimal cone, according to the following statement:
	
	\begin{theorem}[Blow up limit] \label{th::blowup_cone}
		Let~$E$ be a~$\Lambda$-minimal set in~$B_1$ such that~$0\in\partial E$, and let~$\{\lambda_k\}_k$ be a sequence of positive real numbers
		such that~$\lambda_k\to+\infty$ as~$k\to+\infty$. 
		
		If there exists a set~$C$ such that~$\lambda_k E\to C$ in~$L^1_{\loc}$, then~$C$ is a minimal cone.
	\end{theorem}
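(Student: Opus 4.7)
The plan is to combine the scaling behaviour of the $s$-perimeter, the convergence result for almost minimal sets (Proposition~\ref{prop::conv_almost_min}), the monotonicity formula (Theorem~\ref{th::Phi_monotone}), the boundedness of $\Phi_E$ (Theorem~\ref{th::Phi_bound}) and the continuity of $\Phi$ under $L^1_{\loc}$-convergence (Proposition~\ref{prop::phi_continuous_in_E}) in order to show that the blow-up $C$ is a $0$-minimizer whose associated extension energy density $\Xi_C$ is constant, hence a cone by~\cite[Corollary~8.2]{caffarelli_roquejoffre_savin_nonlocal}.

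First I would check that the rescaled sets $E_k:=\lambda_k E$ are almost minimizers of the $s$-perimeter in increasingly large domains with vanishing almost-minimality constant. A direct computation based on the scaling $\Per_s(\lambda E,\lambda\Omega)=\lambda^{n-s}\Per_s(E,\Omega)$ and $|\lambda E\Delta\lambda F|=\lambda^n|E\Delta F|$ shows that $E_k$ is a $(\lambda_k^{-s}\Lambda)$-minimal set in $B_{\lambda_k}$. Since $\lambda_k^{-s}\Lambda\to0$ and $E_k\to C$ in $L^1_{\loc}$, Proposition~\ref{prop::conv_almost_min} gives that $C$ is a $0$-minimal set, i.e.\ a local minimizer of $\Per_s$; in particular $\hat{\Lambda}_C=0$ and $\Phi_C\equiv\Xi_C$.

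Next I would use the monotonicity formula to identify the limit. By Theorem~\ref{th::Phi_monotone} and Theorem~\ref{th::Phi_bound}, the function $\rho\mapsto\Phi_E(\rho)$ is monotone non-decreasing and bounded for small $\rho$, so the limit
\begin{equation*}
\alpha:=\lim_{\rho\to0^+}\Phi_E(\rho)
\end{equation*}
exists and is finite. A change of variables $(y,w)=(x/\lambda_k,z/\lambda_k)$ in the definition of the extension energy yields the scaling identity $\Xi_{\lambda_k E}(r)=\Xi_E(r/\lambda_k)$, and therefore
\begin{equation*}
\Phi_{\lambda_k E}(r)=\Xi_E(r/\lambda_k)+\tfrac{n-s}{s}\omega_n\,\hat{\Lambda}_k\,r^s
=\Phi_E(r/\lambda_k)+\tfrac{n-s}{s}\omega_n\big(\hat{\Lambda}_k r^s-\hat{\Lambda}(r/\lambda_k)^s\big),
\end{equation*}
where $\hat{\Lambda}_k:=8\tilde{c}_{n,s}\lambda_k^{-s}\Lambda\to0$. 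For each fixed $r>0$, both perturbation terms tend to $0$ as $k\to+\infty$, and $\Phi_E(r/\lambda_k)\to\alpha$ since $r/\lambda_k\to0^+$. Hence $\Phi_{\lambda_k E}(r)\to\alpha$ for every $r>0$.

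Finally, Proposition~\ref{prop::phi_continuous_in_E} applied to the sequence $\{E_k\}$ gives $\Phi_{E_k}(r)\to\Phi_C(r)$ pointwise in $r$. Combining this with the previous step yields $\Phi_C(r)=\alpha$ for all $r>0$; since $C$ is $0$-minimal we also have $\Phi_C=\Xi_C$, so $\Xi_C$ is constant. The cited characterization of $0$-homogeneous local minimizers (\cite[Corollary~8.2]{caffarelli_roquejoffre_savin_nonlocal}) then gives that $C$ is a minimal cone; its vertex is at the origin because $0\in\partial(\lambda_k E)$ for every $k$ and the boundaries converge in the Hausdorff sense by Corollary~\ref{cor::boundary_conv}. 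The delicate point I expect is the last step: one has to ensure that the perturbation term $\hat{\Lambda}_k r^s$ does not spoil the limit argument and that the convergence $\Phi_{E_k}\to\Phi_C$ is applied on the enlarging domains $B_{\lambda_k}$ in a way compatible with Proposition~\ref{prop::phi_continuous_in_E}; both are handled by fixing $r$ and letting $k\to+\infty$, which is precisely why the scaling decouples the two limits.
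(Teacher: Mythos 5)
Your proof is correct and follows essentially the same route as the paper's: the scaling identity $\Xi_{\lambda_k E}(r)=\Xi_E(r/\lambda_k)$, monotonicity and boundedness of $\Phi_E$ to identify the limit at the origin, and Proposition~\ref{prop::phi_continuous_in_E} to pass to the blow-up limit. One useful detail you supply, via Proposition~\ref{prop::conv_almost_min}, is that $C$ is a $0$-minimizer; this is in fact needed before invoking~\cite[Corollary~8.2]{caffarelli_roquejoffre_savin_nonlocal} (which characterizes the cone property via constancy of $\Xi$ \emph{among minimizers}), and the paper's proof leaves it tacit. A minor simplification of your display: since $\hat{\Lambda}_k=\lambda_k^{-s}\hat{\Lambda}$, the two perturbation terms coincide, $\hat{\Lambda}_k r^s=\hat{\Lambda}(r/\lambda_k)^s$, so one has the exact identity $\Phi_{\lambda_k E}(r)=\Phi_E(r/\lambda_k)$ rather than only equality up to vanishing errors.
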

	
	
	We call the minimal cone~$C$ in Theorem~\ref{th::blowup_cone}
	a tangent cone to~$E$ at~$0$.
	This definition is well-posed in light of the forthcoming result, which guarantees the existence of a blow-up limit.
	
	\begin{theorem}[Existence of blow-up limits] \label{th::blowup_existence}
		Let~$E$ be a~$\Lambda$-minimal set in~$B_1$ such that~$0\in\partial E$. Then, the blow-up sequence~$\{\lambda_k E\}_k$ admits limit (up to a subsequence). 
		
		Namely, there exists a tangent cone~$C$ to~$E$ at the origin.
	\end{theorem}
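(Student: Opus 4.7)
The plan is to establish the compactness of the blow-up sequence in~$L^1_{\loc}$ via uniform~$s$-perimeter bounds, and then invoke Theorem~\ref{th::blowup_cone} to identify the limit as a minimal cone.

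First, I would exploit the scaling properties of the~$s$-perimeter. Since~$\Per_s(\lambda F,\lambda\Omega)=\lambda^{n-s}\Per_s(F,\Omega)$, the assumption that~$E$ is~$\Lambda$-minimal in~$B_1$ translates into the fact that~$E_k:=\lambda_k E$ is~$\Lambda_k$-minimal in~$B_{\lambda_k}$, with~$\Lambda_k:=\lambda_k^{-s}\Lambda$. In particular, $\Lambda_k\to0$ as~$k\to+\infty$ and~$0\in\partial E_k$ for every~$k$.

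Second, I would obtain a uniform~$s$-perimeter bound for~$\{E_k\}_k$ on every fixed ball. For any~$R>0$ and for~$k$ large enough so that~$2R<\lambda_k$, I would apply the~$\Lambda_k$-minimality of~$E_k$ in~$B_{2R}$ with the competitor~$A_k:=B_{2R}\cup(E_k\setminus B_{2R})$, obtaining
\begin{equation*}
\Per_s(E_k,B_{2R})\leq\Per_s(A_k,B_{2R})+\Lambda_k|E_k\Delta A_k|\leq\Per_s(B_{2R},\R^n)+\Lambda_k\omega_n(2R)^n\leq C(R),
\end{equation*}
with~$C(R)$ independent of~$k$. This is essentially the same computation carried out in~\eqref{56789rtyuifghjxcvb} during the proof of Theorem~\ref{th::Phi_bound}.

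Third, the uniform bound~$\sup_k\Per_s(E_k,B_R)<+\infty$ entails a uniform control on the Gagliardo seminorm~$[\chi_{E_k}]_{H^{s/2}(B_R)}$. Coupled with the trivial bound~$\|\chi_{E_k}\|_{L^\infty}\leq1$, the fractional Rellich--Kondrachov compact embedding gives, for each fixed~$R$, a subsequence of~$\{\chi_{E_k}\}_k$ converging in~$L^1(B_R)$. A standard diagonal extraction over a sequence~$R_j\to+\infty$ then produces a subsequence~$E_{k_j}\to C$ in~$L^1_{\loc}(\R^n)$, for some measurable set~$C$ of locally finite fractional perimeter. Finally, applying Theorem~\ref{th::blowup_cone} to this convergent subsequence identifies~$C$ as a minimal cone, thereby completing the proof.

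The main point to verify is the uniform~$s$-perimeter bound, but this follows directly from almost minimality applied to the natural ball competitor, with the vanishing of~$\Lambda_k$ providing additional slack. Once the bound is secured, both the compactness extraction and the cone identification are immediate consequences of results already established in the excerpt, so I do not expect any essential technical obstacle.
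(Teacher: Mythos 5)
Your argument is correct and follows essentially the same route as the paper's proof: scaling of the almost-minimality constant, a uniform $s$-perimeter bound on fixed balls obtained by comparing with the ball itself as a competitor, compactness via a fractional Sobolev embedding, and a diagonal extraction. The only cosmetic difference is that you invoke the $H^{s/2}\hookrightarrow\hookrightarrow L^2$ embedding while the paper uses the $W^{s,1}\hookrightarrow\hookrightarrow L^1$ embedding (both seminorms are equivalently controlled by $\Per_s$ on characteristic functions); your explicit final appeal to Theorem~\ref{th::blowup_cone} to identify the limit as a cone is harmless and matches the stated conclusion.
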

	
	We now present the proofs of Theorems~\ref{th::blowup_cone} and~\ref{th::blowup_existence}.
	
	\begin{proof}[Proof of Theorem~\ref{th::blowup_cone}]
		We notice that~$\lambda_k E$ is a~$\lambda_k^{-s}\Lambda$-minimal set
		in~$B_{\lambda_k}$. Hence, we are in the position of exploiting
		Proposition~\ref{prop::phi_continuous_in_E} and, if~$\lambda_k E\to C$, obtain that
		$$\lim_{k\to+\infty} \Phi_{\lambda_k E}(r)=\Phi_C(r).$$
		As a consequence, recalling the notation in~\eqref{eq::def_Xi},
		\begin{align*}
			\Phi_C(r) =\lim_{k\to+\infty}\Phi_{\lambda_k E}(r) 
			= \lim_{k\to+\infty}\Xi_{\lambda_k E}(r) + \frac{n-s}{s}\omega_n\hat{\Lambda} r^{s}=\lim_{k\to+\infty} \Xi_E\left(\frac{r}{\lambda_k }\right) +\frac{n-s}{s}\omega_n\hat{\Lambda} r^{s}.
		\end{align*}
		
		Now, since~$\Phi_E$ is positive and monotone increasing, we have that~$\Phi_E(r/\lambda_k)$ admits limit as~$k\to+\infty$, and therefore
		$$\lim_{k\to+\infty} \Xi_E\left(\frac{r}{\lambda_k}\right) =\lim_{k\to+\infty} \Phi_E\left(\frac{r}{\lambda_k}\right) - \frac{n-s}{s}\omega_n\hat{\Lambda} \left(\frac{r}{\lambda_k}\right)^{s} = \Phi_E(0) = \Xi_E(0) .$$
		As a result, we find that
		$$	\Phi_C(r)=	\Xi_E(0)+\frac{n-s}{s}\omega_n\hat{\Lambda} r^{s},$$
		and therefore
		$$\Xi_C(r) 
		= \Phi_C(r) - \frac{n-s}{s}\omega_n\hat{\Lambda} r^{s} =\Xi_E(0).$$
		This shows that~$\Xi_C$ is constant, hence~$C$ is a minimal cone with vertex at~$0$.
	\end{proof}
	
	\begin{proof}[Proof of Theorem~\ref{th::blowup_existence}]
		Notice that if~$E$ is a~$\Lambda$-minimal set in~$B_1$, then~$\lambda_k E$ is a~$\lambda_k^{-s}\Lambda$-minimal set in~$B_{\lambda_k}$. Therefore, for~$k$ sufficiently large, we have that~$\lambda_k E$ is almost minimal in~$B_1$.

		Let us consider the set~$A_{k}^1:=B_1\cup(\lambda_k E\cap B_1^c)$. In this setting, we have that~$|A_{k}^1\Delta\lambda_k E|\leq|B_1|$, and, by the almost-minimality of~$\lambda_k E$,
		\begin{align*}
			&\norm{\chi_{\lambda_k E}}_{W^{1,s}(B_1)} = \norm{\chi_{\lambda_k E}}_{L^1(B_1)}+[\chi_{\lambda_k E}]_{W^{1,s}(B_1)}\\
			&\qquad\leq |B_1|+2\Per_s(\lambda_k E,B_1)\leq |B_1|+2\Per_s(A_{k}^1,B_1) + 2\lambda_k^{-s}\Lambda|B_1|\\
			&\qquad\leq (1+2\Lambda)|B_1|+2\Per_s(B_1,\R^n)  ,
		\end{align*} 
		provided that~$k$ is sufficiently large.
		
		Using the Sobolev embedding~$W^{1,s}(B_1)\hookrightarrow\hookrightarrow L^1(B_1)$, we extract a subsequence~$\lambda_k^1$ such that
		$$ (\lambda_k^1 E)\cap B_1 \to E_1 \text{ in } L^1(B_1)$$
		for some set~$E_1\subseteq B_1$. Repeating the argument with~$\{\lambda_k^1 E\}_k$ in~$B_2$, we find a subsequence~$\{\lambda_k^2\}_k\subseteq\{\lambda_k^1\}_k$ and a set~$E_2\subseteq B_2$ such  that 
		$$ (\lambda_k^2 E)\cap B_2 \to E_2 \text{ in } L^1(B_2).$$
		Moreover, by the uniqueness of the limit, we have that~$E_2\cap B_1=E_1$.
		
		Iterating this procedure~$m$ times, we obtain a subsequence~$\{\lambda_k^m\}_k$ such that~$ (\lambda_k^m E)\cap B_m \to E_m$ in~$L^1(B_m)$ as~$k\to+\infty$, and~$E_m\cap B_j=E_j$, for all~$j=1,\dots,m-1$.
		
		Using a diagonal argument, we extract a subsequence~$\{\lambda_k^k\}_k$ such that
		$$ \lambda_k^k E \to \bigcup_{m=1}^{+\infty} E_m=:C \text{ in }L^1_{\loc},$$
		and this concludes the proof.
	\end{proof}
	
	\subsection{Classification of tangent cones}
	Now, we show how the classification of the tangent cones to an almost minimal set allows to deduce regularity.
	
	\begin{theorem}\label{th::regular_tangent_cone}
		Let~$E$ be a~$\Lambda$-minimal set in~$B_1$ such that~$0\in\partial E$. Suppose that the tangent cone to~$E$ at~$0$ is a half-space. 
		
		Then, there exists~$\delta>0$ such that~$\partial E\cap B_\delta$ is a~$\cont^{1,\alpha}$-regular surface.
	\end{theorem}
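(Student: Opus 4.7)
The plan is to reduce the statement to the flatness-implies-smoothness result in Theorem~\ref{th::holder_reg_almost_min} via a blow-up/compactness argument. Since by hypothesis the tangent cone~$C$ to~$E$ at~$0$ is a half-space, up to a rotation I would write~$C=\{x_n\le 0\}$ and fix, via Theorem~\ref{th::blowup_existence}, a sequence~$\lambda_k\to+\infty$ with~$\lambda_k E\to C$ in~$L^1_{\loc}(\R^n)$. The usual scaling identities $\Per_s(\lambda E,\lambda\Omega)=\lambda^{n-s}\Per_s(E,\Omega)$ and~$|\lambda E\Delta \lambda F|=\lambda^n|E\Delta F|$ show that~$\lambda_k E$ is a $(\lambda_k^{-s}\Lambda)$-minimal set in~$B_{\lambda_k}$. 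For $k$ large enough one has~$\lambda_k^{-s}\Lambda\le\Lambda$, and since relaxing the constraint~$\Lambda$ in Definition~\ref{def::almost_min} preserves almost minimality, each~$\lambda_k E$ is then a~$\Lambda$-minimal set in~$B_1$, with a parameter~$\Lambda$ that is uniform in~$k$.

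Next, I would upgrade the $L^1_{\loc}$-convergence $\lambda_k E\to C$ to Hausdorff convergence of boundaries by invoking Corollary~\ref{cor::boundary_conv}, whose hypotheses are satisfied thanks to the uniformity just observed. Letting~$\epsilon_0=\epsilon_0(n,s,\Lambda)$ be the flatness threshold provided by Theorem~\ref{th::holder_reg_almost_min}, this convergence yields an index~$k$ such that
$$\partial(\lambda_k E)\cap B_1\subset\{|x_n|\le\epsilon_0\}.$$
At this point, the~$\Lambda$-minimality of~$\lambda_k E$ in~$B_1$ combined with this flatness puts us in the position to apply Theorem~\ref{th::holder_reg_almost_min}, which tells us that~$\partial(\lambda_k E)\cap B_{1/2}$ is a~$\cont^{1,\alpha}$-surface for every~$\alpha\in(0,s)$. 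Undoing the dilation by~$\lambda_k$, one concludes that~$\partial E\cap B_\delta$ is~$\cont^{1,\alpha}$-regular with~$\delta:=1/(2\lambda_k)$, which is precisely the statement.

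I do not anticipate serious technical obstacles: every ingredient in the scheme (existence of a tangent cone, scaling of~$\Lambda$-minimality, Hausdorff convergence of almost minimal boundaries, and flatness implies~$\cont^{1,\alpha}$) has already been established earlier in the paper, and the blow-up argument just assembles them. The only point that requires a moment of care is to fix one common value of~$\Lambda$ along the rescaled sequence before invoking Corollary~\ref{cor::boundary_conv} and Theorem~\ref{th::holder_reg_almost_min}; this is handled by the elementary monotonicity remark that any~$\Lambda'$-minimizer with~$\Lambda'\le\Lambda$ is automatically~$\Lambda$-minimal, since~$\Lambda'|E\Delta F|\le\Lambda|E\Delta F|$.
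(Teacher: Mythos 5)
Your argument reproduces the paper's proof essentially verbatim: blow-up to the half-space, upgrade $L^1_{\loc}$ convergence to Hausdorff convergence of boundaries via Corollary~\ref{cor::boundary_conv}, apply Theorem~\ref{th::holder_reg_almost_min} at a fixed large $k$, and scale back. Your explicit remark that $\lambda_k^{-s}\Lambda\le\Lambda$ for large $k$ (so the rescaled sets are $\Lambda$-minimal with a uniform constant) is a small point the paper leaves implicit, but the route is the same.
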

	
	The proof of Theorem~\ref{th::regular_tangent_cone} will be presented here below. Now, we give the definitions of regular and singular points.
	
	\begin{definition}[Definition 9.5, \cite{caffarelli_roquejoffre_savin_nonlocal}]
		Let~$E$ be a~$\Lambda$-minimal set in~$\Omega$ and let~$x\in\partial E\cap\Omega$. We say that~$x$ is a regular point for~$E$ if the tangent cone to~$E$ at~$x$ is a half-space. 
		
		We say that~$x$ is singular if it is not regular. We denote by~$\Sigma_E$ the set of singular points for~$E$. In particular, $\Sigma_E$ is a closed set in~$\R^n$.
	\end{definition}
	
	In other words, half-spaces are the tangent cones with least local energy. Moreover, there exists an energy gap between regular and singular tangent cones. To state this result, we recall the definition of~$\Xi_E$ in~\eqref{eq::def_Xi}.
	
	\begin{theorem}[Energy gap]\label{th::energy_gap}
		Let~$\Pi:=\{x_1>0\}$. 
		Then, for every tangent cone~$C$, 
		\begin{equation}\label{eq::PileqC}
			\Xi_\Pi\leq\Xi_C,
		\end{equation}
		and equality holds if and only if~$C$ is a half-space. 
		
		In particular, if~$C$ is not a half-space, then
		there exists~$\delta_0>0$, depending only on~$n$ and~$s$, such that
		\begin{equation}\label{eq::gap}
			\Xi_\Pi\leq\Xi_C-\delta_0 .
		\end{equation}
	\end{theorem}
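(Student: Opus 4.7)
The plan is to prove the non-strict inequality~\eqref{eq::PileqC} together with the rigidity statement first, and then upgrade it to the quantitative gap~\eqref{eq::gap} by a compactness argument. The starting observation is that every tangent cone $C$ is an honest minimal cone, i.e.\ a $0$-minimizer for $\Per_s$: indeed, the blow-up in Theorem~\ref{th::blowup_cone} sends $\lambda_k^{-s}\Lambda\to 0$, so the $\Lambda$-term in~\eqref{defphimono} disappears in the limit and $\Xi_C$ is constant in $r$ by the $0$-homogeneity of its extension.

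For the non-strict inequality, I would carry out a Federer-type dimension reduction on $C$. At any boundary point $x_0\in(\partial C)\setminus\{0\}$, the $0$-homogeneity of $C$ forces the tangent cone $C_{x_0}$ at $x_0$ to possess a translation-invariant direction, so it splits as a cylinder over a minimal cone in one lower dimension. The monotonicity formula of Theorem~\ref{th::Phi_monotone} (in the $\hat\Lambda=0$ case relevant for minimal cones) applied at $x_0$ yields $\Xi_{C_{x_0}}\leq\Xi_C$, and induction on the dimension eventually reaches a half-space, which gives~\eqref{eq::PileqC}. For the rigidity case $\Xi_C=\Xi_\Pi$, equality in monotonicity forces $\Phi_E$ to be constant, which in turn forces $C$ to be invariant under a further translation beyond its $0$-homogeneity; iterating this argument produces enough invariances to conclude that $C$ is a half-space.

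For the strict inequality~\eqref{eq::gap} I would argue by contradiction, assuming a sequence of minimal cones $\{C_k\}_k$, none of which is a half-space, with $\Xi_{C_k}\to\Xi_\Pi$. By the uniform density estimates in Theorem~\ref{th::unif_dens_estimates}, the uniform $s$-perimeter bound worked out in~\eqref{56789rtyuifghjxcvb}, and the compactness supplied by Proposition~\ref{prop::conv_almost_min}, one extracts a subsequence converging in $L^1_{\loc}$ to a minimal cone $C_\infty$. The continuity of $\Phi$ in Proposition~\ref{prop::phi_continuous_in_E} gives $\Xi_{C_\infty}=\Xi_\Pi$, and the rigidity from the first step identifies $C_\infty$ with a half-space. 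Uniform boundary convergence (Corollary~\ref{cor::boundary_conv}) then makes $C_k$ arbitrarily flat in $B_1$ for large $k$, so Theorem~\ref{th::holder_reg_almost_min} forces $\partial C_k$ to be a $\cont^{1,\alpha}$ surface near the origin. But a conical set whose boundary is $\cont^1$ at the vertex is necessarily a half-space, contradicting the choice of $C_k$.

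The principal obstacle is the dimension-reduction argument underlying $\Xi_\Pi\leq\Xi_C$, because the non-local perimeter does not split cleanly across cylindrical products as its classical counterpart does, and one must be cautious when identifying the splitting tangent cone at a non-origin point of a minimal cone. In practice, this step and the resulting energy ordering can be imported directly from the analogous classification of $s$-minimal cones in~\cite{caffarelli_roquejoffre_savin_nonlocal}, whose arguments apply verbatim once one has passed to the blow-up limit, since every tangent cone is a genuine $0$-minimizer irrespective of the almost minimality parameter of the original set.
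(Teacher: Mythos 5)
Your overall architecture matches the paper's: first establish $\Xi_\Pi\leq\Xi_C$ together with the rigidity statement, then derive the quantitative gap~\eqref{eq::gap} by compactness and contradiction. The compactness part is essentially identical to the paper's (pass to a limit cone $C_\infty$ via Theorem~\ref{th::blowup_existence}, identify it with a half-space by rigidity, use Corollary~\ref{cor::boundary_conv} to make $\partial C_k$ flat, then invoke the $\cont^{1,\alpha}$-regularity theorem to contradict that $C_k$ is a genuine cone that is not a half-space).

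Where you diverge is in the proof of $\Xi_\Pi\leq\Xi_C$ and the rigidity. You propose a Federer-type dimension reduction: at a non-origin boundary point $x_0$ the tangent cone splits as a cylinder over a lower-dimensional minimal cone, and you iterate down to dimension two. This is a legitimate route, but it forces you to compare the extension energy of a cylindrical cone $C'\times\R$ in $\R^n$ with that of $C'$ in $\R^{n-1}$, which is exactly the non-trivial cylindrical identity you flag as an obstacle at the end; that identity does not come for free in the non-local extension setting and would need to be imported from the dimension-reduction lemma of~\cite{caffarelli_roquejoffre_savin_nonlocal}. The paper sidesteps the issue entirely by using the clean ball condition (Corollary~\ref{cor::clean_ball}): sliding a ball inside $C$ until it touches $\partial C$ produces a point $x_0$ with an interior tangent ball, whence $\partial C$ is $\cont^{1,\alpha}$ near $x_0$ and the tangent cone there is \emph{directly} a half-space. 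Monotonicity of $\Xi_{C-x_0}$, together with the $0$-homogeneity identity $\Xi_{C-x_0}(kr)=\Xi_{C-x_0/k}(r)\to\Xi_C(r)$, then immediately gives $\Xi_\Pi=\Xi_{C-x_0}(0^+)\leq\Xi_C$, and equality forces $\Xi_{C-x_0}$ to be constant so that $C-x_0$ coincides with its own tangent cone, i.e.\ a half-space. Your translation-invariance iteration for rigidity would also work, but the paper's argument needs only one well-chosen point $x_0$ rather than an induction over all boundary points, and it is precisely the reason Corollary~\ref{cor::clean_ball} was isolated as a consequence of the density estimates. In short: your proof is plausible and in the Federer tradition, but the paper's clean-ball shortcut avoids the cylindrical-energy issue you correctly identify as the main difficulty of your route.
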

	
	Theorem~\ref{th::energy_gap} can be proved as~\cite[Theorem~9.6]{caffarelli_roquejoffre_savin_nonlocal}, up to using the clean ball condition for almost minimizers given in Corollary~\ref{cor::clean_ball}). For the sake of completeness, a proof of the energy gap result for almost minimal boundaries can be found in Appendix~\ref{sec::energy_gap}.
	
	\begin{proof}[Proof of Theorem~\ref{th::regular_tangent_cone}]
		By assumptions, if we consider the blow-up sequence~$\{\lambda_k E\}_k$, for some~$\lambda_k\to+\infty$, then we have that~$\lambda_k E\to \Pi$ in~$L^1_{\loc}$, for some half-space~$\Pi$. Up to rotations, we can always assume that~$\Pi=\{x_n<0\}$. 
		
		As a consequence of Corollary~\ref{cor::boundary_conv}, we have that
		$$ \partial(\lambda_k E)\cap B_1 \subseteq \mathscr{U}_\epsilon(\partial\Pi) = \{|x_n|<\epsilon\}, \quad\text{ for every~$k$ large enough.}$$
		In particular, there exists some~$\overline{k}\in\N$ such that 
		$$ \partial(\lambda_{\overline{k}} E)\cap B_1 \subseteq \{|x_n|<\epsilon\} .$$
		Thus, by Theorem~\ref{th::holder_reg_almost_min}, we have that~$\partial(\lambda_{\overline{k}} E)$ is a~$\cont^{1,\alpha}$-regular surface in~$B_{1/2}$. Therefore, we infer that~$\partial E$ is a~$\cont^{1,\alpha}$-regular surface in~$B_{1/(2\lambda_{\overline{k}})}$.
	\end{proof}
	
	\section{Hausdorff dimension of the singular set and proof of Theorem~\ref{th::haus_dim_singular_almost}} \label{sec::haus_dim}
	In this section, we focus on the proof of Theorem~\ref{th::haus_dim_singular_almost}. To do this, we follow Federer's reduction method, as developed in~\cite{caffarelli_roquejoffre_savin_nonlocal} for the fractional setting. In particular, we exploit the fact that if~$C$ is a minimal cone in~$\R^n$ with a singular point~$x_0\neq0$, then there exists a minimal cone~$C'$ in~$\R^{n-1}$ such that the origin is a singular point for~$C'$ and~$C'\times\R$ is a tangent cone to~$C$ at~$x_0$. Applying this argument iteratively, we reduce to dimension~$2$, where any~$s$-minimal cone is a half-space.
	
	\subsection{Hausdorff dimension of the singular set}
	In~\cite{savin_valdinoci_regularity}, the authors prove that there are no singular~$s$-minimal cones in dimension~$2$. The precise statement goes as follows:
	
	\begin{theorem}[Theorem 1, \cite{savin_valdinoci_regularity}]\label{th::cones_R2}
		If~$C$ is an~$s$-minimal cone in~$\R^2$, then~$C$ is a half-space.
	\end{theorem}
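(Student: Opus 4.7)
The plan is to argue by contradiction: assume that $C\subset\R^2$ is an $s$-minimal cone which is not a half-space, and derive a contradiction by classifying $\partial C$ and then violating the Euler-Lagrange identity at a regular boundary point.

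First I would show that $\partial C\setminus\{0\}$ is a finite disjoint union of open rays emanating from the origin. Given $x_0\in\partial C\setminus\{0\}$, a blow-up analysis in the spirit of Theorems~\ref{th::blowup_cone}--\ref{th::blowup_existence} produces a tangent cone at $x_0$ of the form $\lim_{\lambda\to+\infty}(C-\lambda x_0)$, which is translation invariant along the direction $x_0$. In the plane, invariance along one direction forces the product form $C'\times\R x_0$ with $C'$ a $1$-dimensional $s$-minimal cone; the only such $C'$ are half-lines, so the tangent is a half-plane and $x_0$ is a regular point. Theorem~\ref{th::regular_tangent_cone} (applied with $\Lambda=0$, since a genuine $s$-minimizer is $0$-minimal) then provides a $\cont^{1,\alpha}$-neighborhood of $x_0$ in $\partial C$. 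The dilation invariance $\lambda C=C$ implies that this smooth branch has radial tangent at every one of its points, and any $\cont^1$-curve whose tangent at each point is parallel to its position vector is contained in a ray from the origin. The clean ball condition of Corollary~\ref{cor::clean_ball} rules out accumulation of rays on $S^1$, leaving only finitely many.

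Second, I would label the rays as $\ell_1,\dots,\ell_N$ at angles $\theta_1<\cdots<\theta_N$ on $S^1$, with $N\ge2$. If $C$ were a half-space, then $N=2$ and $\theta_2-\theta_1=\pi$; since this case is excluded, by pigeonhole there exists a ray $\ell_i$ whose two adjacent sectors have unequal opening angles. Pick a point $p\in\ell_i\setminus\{0\}$. Since $C$ is a true $s$-minimizer, Theorem~\ref{th::ELeq} and Corollary~\ref{cor::reverseELeq} applied with $\Lambda=0$ combine to give
\begin{equation*}
\lim_{\delta\to0}\int_{\R^2\setminus B_\delta(p)}\frac{\chi_C(x)-\chi_{C^c}(x)}{|x-p|^{2+s}}\,dx=0.
\end{equation*}
I would compute this principal value by reflecting $C$ across the tangent line $T_p$ at $p$: denoting the reflection by $\mathcal{R}$, the integrand over $C\cap\mathcal{R}C$ and $C^c\cap(\mathcal{R}C)^c$ is odd-symmetric across $T_p$ and contributes zero, while the remaining contribution is supported on the non-empty region $C\,\Delta\,\mathcal{R}C$ (whose existence is precisely the sector asymmetry at $\ell_i$) and carries a definite sign, contradicting the vanishing of the limit.

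The hard part will be the quantitative sign analysis in the last step, namely rigorously identifying a ray $\ell_i$ at which the reflection asymmetry translates into strictly signed nonlocal mean curvature. A more robust alternative, bypassing the pointwise Euler-Lagrange computation, would be to combine the energy gap of Theorem~\ref{th::energy_gap}, which guarantees $\Xi_C\ge\Xi_\Pi+\delta_0$ for any non-half-space minimal cone, with a sector-rotation perturbation analyzed through the extension representation of Proposition~\ref{prop::char_almost_min_ext}. Rotating one sector of $C$ by a small angle yields a non-cone competitor no longer subject to the gap bound; a first-order computation of the Dirichlet energy in the extension should reveal a strict decrease, contradicting the minimality of $C$ and completing the contradiction.
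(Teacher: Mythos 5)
The paper does not prove this theorem; it cites it as Theorem~1 of~\cite{savin_valdinoci_regularity}, so your task was to produce a standalone proof.

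Your first step is essentially sound: dimension reduction shows every point of $\partial C\setminus\{0\}$ is regular (since $1$-dimensional $s$-minimal cones are half-lines), scale-invariance forces regular branches to be rays, and the density estimates give finiteness. This matches the structure of the known argument.

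The second step, however, has a genuine gap, and it cannot be patched within your framework. Your pigeonhole claim --- that a non-half-space cone has a ray flanked by sectors of unequal opening --- is simply false: the symmetric cross cone $C=\{|x_1|<|x_2|\}$ has $N=4$ rays with all sectors of equal angle $\pi/2$. More fundamentally, your reflection computation shows precisely that the nonlocal mean curvature vanishes for that cone. Writing $\mathcal{R}$ for reflection across the tangent line at $p\in\partial C\setminus\{0\}$, the principal value reduces to
\begin{equation*}
\int\frac{\chi_{C\cap\mathcal{R}C}(y)-\chi_{C^c\cap\mathcal{R}C^c}(y)}{|y-p|^{2+s}}\,dy,
\end{equation*}
and for the cross one has $\mathcal{R}C=C^c$, so both indicator sets are empty and the integral vanishes identically. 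Thus the cross cone satisfies the Euler--Lagrange identity at every regular boundary point, and no contradiction can come from Theorem~\ref{th::ELeq}/Corollary~\ref{cor::reverseELeq} applied pointwise. The obstruction to minimality for such cones is not first-order at a regular point; it sits at the vertex and must be detected by a more global mechanism. Indeed the proof in~\cite{savin_valdinoci_regularity} never contradicts the Euler--Lagrange equation: it compares the cone with competitors built from $C\cup(C+tv)$ and $C\cap(C+tv)$, exploiting the submodularity of the fractional perimeter together with the density of regular points to force $C$ to be monotone in some direction and hence a half-plane. Your alternative sketch (energy gap plus a sector-rotation perturbation) is closer in spirit to a workable argument, but as written it is only a heuristic: you would need to exhibit an explicit competitor, verify admissibility (it is no longer a cone, so the comparison must be localized), and carry out the Dirichlet-energy expansion in the extension variable, none of which is outlined. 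As it stands, the proposal does not prove the theorem.
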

	
	
	In light of Theorem~\ref{th::cones_R2} above and~\cite[Theorem~10.3]{caffarelli_roquejoffre_savin_nonlocal} (fractional dimension reduction method), we deduce that if~$C$ is a singular cone in~$\R^3$, its only singularity must be at the vertex.
	
	Since we are interested in the Hausdorff dimension of the singular set of an almost minimizer, let us now recall some definitions.
	
	\begin{definition}
		Given a set~$E\subseteq\R^n$, $d>0$, and~$\delta\in(0,+\infty]$, we set 
		$$ \haus{d}_\delta(E):=2^{-d}\omega_d\inf\left\{\sum_{j=1}^{+\infty}\diam^d(A_j):  E\subseteq\bigcup_{j=1}^{+\infty}A_j,\ \diam(A_j)<\delta\right\}.$$
		We recall that the Hausdorff measure of~$E$ is defined as
		$$ \haus{d}(E):= \lim_{\delta\to0} \haus{d}_\delta(E) = \sup_{\delta\in(0,+\infty]} \haus{d}_\delta(E),$$
		and the Hausdorff dimension of~$E$ as
		$$ \dim_\haus{}(E) := \inf\{d>0: \haus{d}(E)>0\}.$$
	\end{definition} 
	For a detailed discussion about the properties of the Hausdorff measure we refer to e.g.~\cite[Chapter~11]{giusti_minimal_surfaces}, and~\cite[Chapter~2]{MR3409135}.
	
	Now, we provide two auxiliary results that are useful for the proof of Theorem~\ref{th::haus_dim_singular_almost}.
	The first one claims that if~$\{E_k\}_k$ is a sequence of almost minimal sets~$L^1_{\loc}$-convergent to an almost minimal set~$E$, then the sequence of singular sets~$\{\Sigma_{E_k}\}_k$ is uniformly convergent to~$\Sigma_E$. 
	
	\begin{prop}\label{prop::haus_dim_singular_preliminary}
		Let~$\{E_k\}_k$ be a sequence of~$\Lambda_k$-minimal sets convergent to a~$\Lambda$-minimal set~$E$ in~$L^1_{\loc}$.
		
		Then, for every compact set~$K\subseteq\Omega$ and~$\epsilon>0$, we have that
		\begin{equation} \label{eq::sing_set_unif_conv}
			\Sigma_{E_k} \cap K \subseteq \mathscr{U}_\epsilon(\Sigma_E\cap K) ,\quad\text{ for every~$k$ large enough.}
		\end{equation}
	\end{prop}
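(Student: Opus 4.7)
The plan is to argue by contradiction: assume that there exists a compact set~$K\subset\Omega$, an~$\epsilon>0$, and (up to passing to a subsequence) a sequence of singular points~$x_k\in\Sigma_{E_k}\cap K$ such that~$\dist(x_k,\Sigma_E\cap K)\geq\epsilon$. Since~$K$ is compact, we can extract a further subsequence (not relabeled) so that~$x_k\to x_\infty\in K$. The goal is then to show that~$x_\infty\in\Sigma_E\cap K$, which immediately contradicts~$\dist(x_k,\Sigma_E\cap K)\geq\epsilon$.

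First I would check that~$x_\infty\in\partial E$. Each~$E_k$ is a~$\Lambda_k$-minimizer and~$E_k\to E$ in~$L^1_{\loc}$ with~$\Lambda_k\to\Lambda$, so by Corollary~\ref{cor::boundary_conv}, $\partial E_k\to\partial E$ in the Hausdorff distance on every compact set. Since~$x_k\in\partial E_k\cap K$ and~$x_k\to x_\infty$, this forces~$x_\infty\in\partial E$. Moreover, $x_\infty\in K$ because~$K$ is closed.

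The heart of the argument is to show~$x_\infty\in\Sigma_E$ by combining the monotonicity formula (Theorem~\ref{th::Phi_monotone}), the continuity of~$\Phi$ under~$L^1_{\loc}$-convergence (Proposition~\ref{prop::phi_continuous_in_E}), and the energy gap (Theorem~\ref{th::energy_gap}). The translated sets~$E_k-x_k$ are again~$\Lambda_k$-minimal and converge to~$E-x_\infty$ in~$L^1_{\loc}$. Since~$x_k\in\Sigma_{E_k}$, the tangent cone~$C_k$ to~$E_k$ at~$x_k$ is not a half-space, so by the energy gap~$\Xi_{C_k}\geq\Xi_\Pi+\delta_0$. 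Using that~$\Xi_{C_k}$ is the limit of~$\Xi_{E_k-x_k}(r)$ as~$r\to 0^+$ and the monotonicity of~$\Phi_{E_k-x_k}$, I obtain
$$\Phi_{E_k-x_k}(r)\geq \Xi_{C_k}+\frac{n-s}{s}\omega_n\,\widehat{\Lambda_k}\,r^s\geq \Xi_\Pi+\delta_0+\frac{n-s}{s}\omega_n\,\widehat{\Lambda_k}\,r^s$$
for every~$r$ small enough. Passing to the limit~$k\to+\infty$ (for fixed~$r$) via Proposition~\ref{prop::phi_continuous_in_E} applied to the translated sequence, I deduce
$$\Phi_{E-x_\infty}(r)\geq \Xi_\Pi+\delta_0+\frac{n-s}{s}\omega_n\,\hat{\Lambda}\,r^s.$$
Letting now~$r\to 0^+$ and using that, by Theorem~\ref{th::blowup_existence}, $E-x_\infty$ admits a tangent cone~$C_\infty$ at the origin with~$\Xi_{C_\infty}=\lim_{r\to0^+}\Xi_{E-x_\infty}(r)$, I conclude that~$\Xi_{C_\infty}\geq\Xi_\Pi+\delta_0>\Xi_\Pi$. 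By the equality case in~\eqref{eq::PileqC}, this rules out~$C_\infty$ being a half-space, so~$x_\infty\in\Sigma_E$, yielding the contradiction.

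The main technical point I expect to need to handle with care is the interchange of the limits~$k\to+\infty$ and~$r\to 0^+$: Proposition~\ref{prop::phi_continuous_in_E} only gives convergence of~$\Phi_{E_k-x_k}(r)$ at each fixed~$r>0$, so one must keep~$r$ fixed while passing to the limit in~$k$ and only then send~$r\to 0^+$. The monotonicity of~$\Phi_{E_k-x_k}$ (which is why the formula is stated in this form) is exactly what makes this two-step limit viable, because it converts the pointwise~$r$-bounds into a uniform lower bound on the density at the origin of the limit set. A secondary check is that, although the~$x_k$ are not at the origin, the monotonicity and the Poisson-extension characterization pass to translations without change, so~$\Phi_{E_k-x_k}$ is well-defined and monotone for~$r$ smaller than~$\dist(x_k,\partial\Omega)$, which is uniformly bounded below for~$k$ large since~$x_k\to x_\infty\in K\subset\Omega$.
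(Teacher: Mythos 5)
Your approach is correct but genuinely different from the paper's. The paper proves that~$x_\infty$ must be singular by a \emph{flatness} argument: if~$x_\infty$ were regular, the half-space tangent cone at~$x_\infty$ together with the Hausdorff convergence of boundaries (Corollary~\ref{cor::boundary_conv}) would force~$\partial E_k$ to be trapped in a thin slab near~$x_k$ for large~$k$, and then the~$\epsilon$-regularity theorem (Theorem~\ref{th::holder_reg_almost_min}) would make~$x_k$ regular, a contradiction. You instead pass a quantitative \emph{density excess} to the limit: the energy gap~$\Xi_{C_k}\geq\Xi_\Pi+\delta_0$ at the singular points~$x_k$, combined with the monotonicity of~$\Phi_{E_k-x_k}$ and the continuity~$\Phi_{E_k-x_k}(r)\to\Phi_{E-x_\infty}(r)$ (Proposition~\ref{prop::phi_continuous_in_E}), shows~$\Xi_{C_\infty}\geq\Xi_\Pi+\delta_0>\Xi_\Pi$, so~$C_\infty$ cannot be a half-space. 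The two routes are the two classical implementations of ``upper semicontinuity of the singular set''; the paper's is shorter because the machinery it needs (Corollary~\ref{cor::boundary_conv} and Theorem~\ref{th::holder_reg_almost_min}) is stated in a ready-to-use form, while yours is more quantitative and makes explicit that the obstruction to regularity is stable under~$L^1_{\loc}$ limits.

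One small correction: the inequality
\begin{equation*}
\Phi_{E_k-x_k}(r)\geq \Xi_{C_k}+\tfrac{n-s}{s}\omega_n\,\widehat{\Lambda_k}\,r^s
\end{equation*}
does not follow from the monotonicity of~$\Phi_{E_k-x_k}$. Monotonicity only gives~$\Phi_{E_k-x_k}(r)\geq\Phi_{E_k-x_k}(0^+)=\Xi_{C_k}$; since~$\Phi=\Xi+\tfrac{n-s}{s}\omega_n\hat{\Lambda}r^s$, your displayed bound would require~$\Xi_{E_k-x_k}(r)\geq\Xi_{C_k}$, which is not a consequence of the monotonicity of~$\Phi$ (indeed, $\Xi$ itself need not be monotone for almost minimizers). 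Fortunately the extra term is not used: the weaker~$\Phi_{E_k-x_k}(r)\geq\Xi_\Pi+\delta_0$, passed to the limit first in~$k$ and then in~$r$, yields~$\Xi_{C_\infty}\geq\Xi_\Pi+\delta_0$, which is all you need for the equality case of Theorem~\ref{th::energy_gap}. With that term removed, your argument is complete and correct.
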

	
	\begin{rem}
		Notice that, by definition of convergence with respect to the Hausdorff measure, the uniform convergence condition is precisely
		\begin{equation*}
			\Sigma_{E_k} \cap K \subseteq \mathscr{U}_\epsilon(\Sigma_E\cap K) ,\quad\text{ for every~$k$ large enough.}
		\end{equation*}
		{F}rom this, we arrive at
		\begin{equation} \label{eq::haus_conv_sing_set}
			\limsup_{k\to+\infty} \haus{d}_\infty(\Sigma_{E_k}\cap K) \leq \haus{d}_\infty(\Sigma_E\cap K).
		\end{equation}
	\end{rem}
	
	\begin{proof}[Proof of Proposition~\ref{prop::haus_dim_singular_preliminary}]
		Consider a compact set~$K\subseteq\Omega$, and~$\epsilon>0$. Arguing by contradiction, suppose that for every~$k$ there exists~$x_k\in\Sigma_{E_k}\cap K$ such that~$\dist(x_k,\Sigma_E\cap K)\geq\epsilon$. 
		By compactness, there exists~$x_\infty\in K$ such that~$x_k\to x_\infty$, up to a subsequence. 
		
		Also, for every~$k\in\N$, we have that~$x_k\in\partial E_k$. Consequently, by Corollary~\ref{cor::boundary_conv}, we infer that~$x_\infty\in\partial E$. 
		
		Now, suppose that~$x_\infty$ is a regular point. Hence, the tangent cone to~$E$ at~$x_\infty$ is a half-space. Therefore, up to rigid transformations, the uniform convergence result in Corollary~\ref{cor::boundary_conv} gives that
		\begin{equation*}
			\partial E \cap B_\delta(x_\infty) \subseteq \{|x_n-x_{\infty,n}|<\epsilon_0/2\} ,
		\end{equation*}
		for the same~$\epsilon_0$ as in Theorem~\ref{th::holder_reg_almost_min}.
		
		By Corollary~\ref{cor::boundary_conv}, we also have that
		\begin{equation*}
			\partial E_k \cap B_{\delta/2}(x_k) \subseteq \{|x_n-x_{k,n}|<\epsilon_0\} ,\quad\text{ for every~$k$ large enough.}
		\end{equation*}
		It follows from Theorem~\ref{th::holder_reg_almost_min} that~$x_k$ is a regular point for~$\partial E_k$, which is a contradiction. Thus, it must be~$x_\infty\in\Sigma_E$. Hence, by definition of convergence, 
		\begin{equation*}
			x_k \in B_{\epsilon/2}(x_\infty) \subseteq \mathscr{U}_\epsilon(\Sigma_E\cap K) ,\quad\text{ for every~$k$ large enough.}
		\end{equation*}
		This is against our initial assumptions, proving~\eqref{eq::sing_set_unif_conv}.
	\end{proof}
	
	The second result is a technical lemma that guarantees the existence of a tangent cone whose singular set has a suitable Hausdorff dimension.
	
	\begin{lemma}\label{lemma::singular_cones}
		Let~$E$ be an almost minimal set such that~$\haus{d}(\Sigma_E)>0$, for some~$d>0$.
		
		Then, there exists a tangent cone~$C$ to~$E$ with~$\haus{d}(\Sigma_C)>0$.
	\end{lemma}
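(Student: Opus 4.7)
The plan is to follow a Federer-type reduction: I would locate a point $x_0\in\Sigma_E$ of positive upper $d$-density for the pre-measure $\haus{d}_\infty$, blow up at $x_0$ to obtain a tangent cone $C$, and then use the upper semicontinuity of the singular set recorded in Proposition~\ref{prop::haus_dim_singular_preliminary} and~\eqref{eq::haus_conv_sing_set} to transfer the positive $\haus{d}_\infty$-mass onto $\Sigma_C$.

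First, since $\haus{d}(\Sigma_E)>0$ and $\Sigma_E$ is closed (hence Borel), I would appeal to standard density estimates for Hausdorff measures (see e.g.~\cite[Chapter~11]{giusti_minimal_surfaces}) to produce a point $x_0\in\Sigma_E\cap\Omega$, at positive distance from $\partial\Omega$, and a constant $\theta>0$ such that
\[
\limsup_{r\to 0^+} r^{-d}\,\haus{d}_\infty\big(\Sigma_E\cap B_r(x_0)\big) \geq \theta .
\]
Then I would extract a sequence $r_k\to 0^+$ which realizes this bound up to a factor $1/2$, and set $\lambda_k:=1/r_k\to+\infty$.

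Second, I would consider the blow-up sequence $E_k:=\lambda_k(E-x_0)$. After translating $x_0$ to the origin, each $E_k$ is a $\lambda_k^{-s}\Lambda$-minimizer on balls of diverging radii, so by Theorems~\ref{th::blowup_existence} and~\ref{th::blowup_cone} a subsequence (not relabeled) converges in $L^1_{\loc}(\R^n)$ to some tangent cone $C$ to $E$ at $x_0$. Since the notion of singular point depends only on the tangent cone and is therefore preserved under rigid motions and dilations, $\Sigma_{E_k}=\lambda_k(\Sigma_E-x_0)$, and the scaling property of the Hausdorff pre-measure yields
\[
\haus{d}_\infty\big(\Sigma_{E_k}\cap \overline{B_1}\big) = r_k^{-d}\,\haus{d}_\infty\big(\Sigma_E\cap \overline{B_{r_k}(x_0)}\big) \geq \theta/2
\]
along the chosen subsequence, for all $k$ sufficiently large.

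Finally, applying Proposition~\ref{prop::haus_dim_singular_preliminary} with $K:=\overline{B_1}$ (which, for $k$ large enough, is contained in the domain of almost-minimality of $E_k$) the consequence~\eqref{eq::haus_conv_sing_set} gives
\[
\haus{d}_\infty\big(\Sigma_C\cap \overline{B_1}\big) \geq \limsup_{k\to+\infty} \haus{d}_\infty\big(\Sigma_{E_k}\cap \overline{B_1}\big) \geq \theta/2 > 0 ,
\]
whence $\haus{d}(\Sigma_C)\geq \haus{d}_\infty(\Sigma_C\cap \overline{B_1})>0$, as desired. The main subtlety will be the very first step, namely producing a density point for $\haus{d}_\infty$ (rather than for $\haus{d}$): this relies on the classical geometric-measure-theoretic fact that on Borel sets of positive and finite $\haus{d}$-measure the $\haus{d}_\infty$ upper density is bounded below at $\haus{d}$-almost every point. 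Once this is available, the rest is bookkeeping with the scaling of the almost-minimality constant and of Hausdorff measure.
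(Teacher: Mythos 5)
Your proposal follows essentially the same argument as the paper: pick a point of positive upper $\haus{d}_\infty$-density in $\Sigma_E$ (the paper invokes \cite[Proposition~11.3]{giusti_minimal_surfaces} for this, with the explicit density constant $\omega_d 2^{-d-1}$), rescale so that this density is carried to $B_1$, pass to a blow-up limit via Theorems~\ref{th::blowup_existence} and~\ref{th::blowup_cone}, and conclude by combining the scaling of $\haus{d}_\infty$ with the upper semicontinuity of the singular set from Proposition~\ref{prop::haus_dim_singular_preliminary} and~\eqref{eq::haus_conv_sing_set}. The only cosmetic difference is that you first state the $\limsup$ lower bound and then extract a sequence realizing it up to a factor $1/2$, whereas the paper fixes the sequence $\{r_k\}_k$ directly from the density estimate.
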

	
	\begin{proof}
		Let us consider an almost minimal set~$E$ such that~$\haus{d}(\Sigma_E)>0$. Thanks to the properties of the Hausdorff measure (see~\cite[Proposition~11.3]{giusti_minimal_surfaces}), there exist at least one point~$x\in\Sigma_E$, and an infinitesimal sequence~$\{r_k\}_k$
		such that
		\begin{equation} \label{eq::haus_prop1}
			\haus{d}_\infty(\Sigma_{E}\cap B_{r_k}(x)) \geq \omega_dr_k^d2^{-d-1}.
		\end{equation}
		Up to a rigid transformation, we assume that~$x=0$. Then, if we consider the sequence of dilated sets~$\{E_k=r^{-k}E\}_k$, $E_k$ is almost minimal in~$r_k^{-1}\Omega$, and, up to extracting a subsequence, $E_k$ converges in~$L^1_{\loc}$ to a minimal cone~$C$ (see Theorem~\ref{th::blowup_existence}). Moreover, by construction, we have that~$\Sigma_{E_k}=r_k^{-1}\Sigma_E$. 
		
		We recall the scaling property of the Hausdorff measure (see e.g.~\cite{MR3409135}), that is, for every~$d>0$ and for every set~$A$,
		\begin{equation*}\label{eq::scaling_haus}
			\haus{d}_\infty(\lambda A) = \lambda^d \haus{d}_\infty(A).
		\end{equation*}
		Therefore, using this in~\eqref{eq::haus_prop1},
		$$ \haus{d}_\infty(\Sigma_{E_k}\cap B_{1}) = r_k^{-d} \haus{d}_\infty(\Sigma_{E}\cap B_{r_k}) \geq \omega_d2^{-d-1}.$$
		{F}rom the last inequality and~\eqref{eq::haus_conv_sing_set}, taking the limit as~$k\to+\infty$, we deduce that~$\haus{d}_\infty(\Sigma_C)>0$, which in turn entails that~$\haus{d}(\Sigma_C)>0$ (see~\cite[Lemma~11.2]{giusti_minimal_surfaces}).
	\end{proof}
	
	\subsection{Proof of Theorem~\ref{th::haus_dim_singular_almost}}
	
	Let us consider an almost minimal set~$E$, and suppose that~$\haus{d}(\Sigma_E)>0$. Thanks to Lemma~\ref{lemma::singular_cones}, there exists a tangent cone~$C$ to~$E$ such that~$\haus{d}(\Sigma_C)>0$. 
	
	Consider a point~$x_0\in \Sigma_C\setminus\{0\}$ (we stress that a blow-up at~$0$ would give~$C$ itself) and define the sequence~$\{C_k:=\lambda_k(C-x_0)\}_k$, for some~$\{\lambda_k\}_k$ such that~$\lambda_k\to+\infty$ as~$k\to+\infty$. Then, according to~\cite[Theorem~10.3]{caffarelli_roquejoffre_savin_nonlocal}, its blow-up limit is a minimal cone~$A_{n-1}\times\R$, where~$A_{n-1}\subseteq\R^{n-1}$ is a minimal cone in lower dimension. Since~$C$ is~$s$-minimal, it is in particular almost minimal, hence applying Lemma~\ref{lemma::singular_cones}, we obtain
	$$ \haus{d}(\Sigma_C)>0 \implies \haus{d}(\Sigma_{A_{n-1}\times\R})>0\ \implies \haus{d-1}(\Sigma_{A_{n-1}})>0.$$
	Using iteratively this procedure, we construct a sequence of minimal cones~$A_{n-j}\subseteq\R^{n-j}$ such that 
	$$\haus{d-j}(\Sigma_{A_{n-j}})>0 ,\quad\text{ for every } j\leq d.$$ 
	Thanks to Theorem~\ref{th::cones_R2}, we conclude that~$n-d>2$, that is~$d\leq n-3$.
	
	
	\section{Proof of Theorem~\ref{th::perturbed_almost_min}} \label{sec::perturbed_almost_min}
	
	This section is devoted to the proof of Theorem~\ref{th::perturbed_almost_min}, which states that almost minimality is preserved under suitable external perturbations. This fact is a direct consequence of the properties of the~$s$-perimeter.	The technical details go as follows.
	
	\begin{proof}[Proof of Theorem~\ref{th::perturbed_almost_min}]
		Let~$\Omega=B^{n-1}_1\times\R$. Since~$\Omega$ is unbounded, recall that almost minimality in~$\Omega$ is defined as almost minimality in every bounded Lipschitz subset of~$\Omega$. Therefore, for every~$M>0$, we consider the family of truncated cylinders~$\{\Omega_M\}_M$, where
		$$ \Omega_M := B^{n-1}_1\times(-M,M).$$
		In this setting, the fractional perimeter of a set~$E$ in~$\Omega_M$ is given by
		\begin{equation*}
			\Per_s(E,\Omega_M) = \mathscr{L}(E\cap\Omega_M,E^c)+\mathscr{L}(E\cap\Omega_M^c,E^c\cap\Omega_M).
		\end{equation*}
		
		Now, we compute the fractional perimeter of the set~$\widetilde{E}:=E\dot{\cup}\Sigma$ in~$\Omega_M$:
		\begin{align*}
			\Per_s(\widetilde{E},\Omega_M) 
			&= \mathscr{L}(\widetilde{E}\cap\Omega_M,\widetilde{E}^c)+\mathscr{L}(\widetilde{E}\cap\Omega_M^c,\widetilde{E}^c\cap\Omega_M)\\
			&= \mathscr{L}\left((E\dot{\cup}\Sigma)\cap\Omega_M,E^c\cap\Sigma^c\right)+\mathscr{L}\left((E\dot{\cup}\Sigma)\cap\Omega_M^c,E^c\cap\Sigma^c\cap\Omega_M\right).
		\end{align*}
		
		Therefore, since~$\Sigma\subseteq E^c\cap\Omega_M^c$, we obtain that
		\begin{equation}\label{eq::perturb_calc1}
			\begin{split}
				&	\Per_s(\widetilde{E},\Omega_M)	\\	
				&=\mathscr{L}(E\cap\Omega_M,E^c\cap\Sigma^c)+\mathscr{L}(E\cap\Omega_M^c,E^c\cap\Omega_M)+\mathscr{L}(\Sigma\cap\Omega_M^c,E^c\cap\Omega_M)\\
				&=\mathscr{L}(E\cap\Omega_M,E^c)-\mathscr{L}(E\cap\Omega_M,E^c\cap\Sigma)+\mathscr{L}(E\cap\Omega_M^c,E^c\cap\Omega_M)\\
				&\qquad+\mathscr{L}(\Sigma\cap\Omega_M^c,E^c\cap\Omega_M)\\
				&=\Per_s(E,\Omega_M)+\left[ \mathscr{L}(\Sigma,E^c\cap\Omega_M)-\mathscr{L}(E\cap\Omega_M,\Sigma)\right].
			\end{split}
		\end{equation}
		
		Let us consider a set~$F$ such that~$F\setminus\Omega_M=E\setminus\Omega_M$. Hence, the set~$\widetilde{F}:=F\dot{\cup}\Sigma$ is such that~$\widetilde{F}\setminus\Omega_M=\widetilde{E}\setminus\Omega_M$. 
		Moreover, from~\eqref{eq::perturb_calc1}, we have that
		\begin{equation}\label{eq::perturb_calc2}
			\begin{split}
				&\Per_s(\widetilde{E},\Omega_M)-\Per_s(\widetilde{F},\Omega_M)	\\	
				&= \Per_s(E,\Omega_M)-\Per_s(F,\Omega_M)-\left[\mathscr{L}(E\cap\Omega_M,\Sigma) -\mathscr{L}(F\cap\Omega_M,\Sigma)\right] \\
				&\qquad\quad+\left[ \mathscr{L}(\Sigma,E^c\cap\Omega_M)-\mathscr{L}(\Sigma,F^c\cap\Omega_M)\right] .
			\end{split}
		\end{equation}
		
		We observe that
		\begin{equation}\label{eq::perturb_calc3}
			\begin{split}
				&\left| \mathscr{L}(E\cap\Omega_M,\Sigma)-\mathscr{L}(F\cap\Omega_M,\Sigma)\right|
				= \left| \int_\Sigma\int_{\Omega_M} \frac{\chi_{E}(x)-\chi_{F}(x)}{|x-y|^{n+s}}\,dx\,dy\right|\\
				&\qquad\qquad\leq \int_\Sigma\int_{\Omega_M} \frac{|\chi_{E}(x)-\chi_{F}(x)|}{|x-y|^{n+s}}\,dx\,dy
				\le \int_\Sigma\int_{\Omega} \frac{\chi_{E\Delta F}(x)}{|x-y|^{n+s}}\,dx\,dy .
			\end{split}
		\end{equation}
		Since~$E\Delta F = E^c\Delta F^c$, the same calculations also show that 
		\begin{equation} \label{eq::perturb_calc4}
			\left| \mathscr{L}(\Sigma,E^c\cap\Omega_M)-\mathscr{L}(\Sigma,F^c\cap\Omega_M)\right| \leq \int_\Sigma\int_{\Omega} \frac{\chi_{E\Delta F}(x)}{|x-y|^{n+s}}\,dx\,dy .
		\end{equation}
		
		We now claim that
		\begin{equation}\label{785943vbcnxmpoiuytre0987654}
			\int_\Sigma\int_{\Omega} \frac{\chi_{E\Delta F}(x)}{|x-y|^{n+s}}\,dx\,dy
			\le  \widetilde c|E\Delta F|,\end{equation}
		for some~$\widetilde c>0$ depending only on~$n$, $s$, and~$\Sigma$.
		
		To this purpose, decompose~$\Sigma$ in the sets
		\begin{equation*}
			\Sigma_1 := \Sigma\cap\Big( \big(B^{n-1}_{1+\delta}\setminus\overline{B^{n-1}_1}\big) \times\R \Big)
			\qquad	\text{and}\qquad\Sigma_2 :=\Sigma\setminus\Sigma_1,
		\end{equation*}
		where~$\delta$ is as in the assumptions. We point out that~$\Sigma_1$ is possibly empty.
		
		Then, notice that, for every~$x\in\Omega$ and~$y\in\Sigma_1$,
		$$ |x-y| \geq |x'-y'| \geq |y'|-|x'| \geq |y'|-1 .$$
		Moreover, in light of the assumptions in~\eqref{eq::E_assumption} and~\eqref{eq::Sigma_assumption},
		we have that~$E=\subgr(\psi)$ in~$\Omega^c$, for some~$\psi\in\cont^{1,\alpha}$, and
		\begin{equation*}
			\tau_n(\Sigma)\subseteq \big\{(y',y_n)\;{\mbox{ s.t. }}\; \psi(x')<y_n<\psi(x)+C(|y'|-1)^{n+s-1+\xi}\big\} \quad \text{ in } B^{n-1}_{1+\delta}\setminus \overline{B^{n-1}_1}\times\R .
		\end{equation*} 
		As a result, we see that~$\Sigma_1\subseteq \tau_n^{-1}(G_{C,\xi})$, and therefore	
		\begin{equation} \label{eq::perturb_calc11}
			\begin{split} 
				\int_{\Sigma_1} \int_{\Omega}\frac{\chi_{E\Delta F}(x)}{|x-y|^{n+s}}\, dx\,dy
				&\leq |(E\Delta F)| \int_{\Sigma_1} \frac{dy}{(|y'|-1)^{n+s}} \\
				&\leq |E\Delta F| \int_{\tau_n^{-1}(G_{C,\xi})} \frac{dy}{(|y'|-1)^{n+s}} \\
				&= |E\Delta F| \int_{B^{n-1}_{1+\delta}\setminus {B}^{n-1}_1}\int_{0}^{C(|y'|-1)^{n+s-1+\xi}} \frac{dy_ndy'}{(|y'|-1)^{n+s}} \\
				&= |E\Delta F| \int_{B^{n-1}_{1+\delta}\setminus {B}^{n-1}_1} C(|y'|-1)^{\xi-1}\, dy' \\
				&= C_1 |E\Delta F|,
			\end{split}
		\end{equation}
		for some~$C_1>0$, depending on~$n$, $s$, and~$\Sigma$.
		
		Furthermore, notice that~$\dist(\Sigma_2,\Omega)\geq\delta>0$, and accordingly
		\begin{equation*}
			\begin{split}
				&\int_{\Sigma_2}\int_{\Omega} \frac{\chi_{E\Delta F}(x)}{|x-y|^{n+s}} \,dx\,dy 
				\leq \delta^{-n-s} \int_{\Sigma_2}\int_{\Omega_{M_\Sigma}}  \chi_{E\Delta F}(x) \,dx\,dy \\
				&\qquad\qquad\leq  \delta^{-n-s} |\Sigma|\, |(E\Delta F)|\leq C_2|E\Delta F|,
			\end{split}
		\end{equation*}
		for some~$C_2>0$, depending on~$n$, $s$, and~$\Sigma$. 
		
		This and~\eqref{eq::perturb_calc11} give the desired claim in~\eqref{785943vbcnxmpoiuytre0987654}.
		
		Using~\eqref{785943vbcnxmpoiuytre0987654} into~\eqref{eq::perturb_calc3} and~\eqref{eq::perturb_calc4}, we obtain that
		\begin{equation}\label{eq::perturbcalc12}
			\begin{split}&
				\left| \mathscr{L}(\Sigma,E\cap\Omega_M)-\mathscr{L}(\Sigma,F\cap\Omega_M)\right| \leq \widetilde{c} |E\Delta F|\\ {\mbox{and }}\quad
				&
				\left| \mathscr{L}(\Sigma,E^c\cap\Omega_M)-\mathscr{L}(\Sigma,F^c\cap\Omega_M)\right| \leq \widetilde{c} |E\Delta F| .
			\end{split}
		\end{equation}
		
		Now, assume that~$E$ is~$\Lambda$-minimal for~$\Per_s(\cdot,\Omega_M)$ with external datum~$E\setminus\Omega_M$, for some~$\Lambda\geq0$. Then, for every~$\widetilde{F}$ such that~$\widetilde{F}\setminus\Omega_M=\widetilde{E}\setminus\Omega_M$, if~$F:=(\widetilde{F}\cap\Omega_M)\cup(E\cap\Omega_M^c)$, we have that
		\begin{align*}
			\Per_s(\widetilde{E},\Omega_M)-\Per_s(\widetilde{F},\Omega_M) 
			&\leq \Per_s(E,\Omega_M)-\Per_s(F,\Omega_M) + 2\widetilde{c} |E\Delta F|\\
			&\leq \Lambda|E\Delta F|+2\widetilde{c} |\widetilde{E}\Delta \widetilde{F}|\\
			&= (\Lambda+2\widetilde{c})|\widetilde{E}\Delta \widetilde{F}|.
		\end{align*}
		Conversely, if~$\widetilde{E}$ is~$\widetilde{\Lambda}$-minimal for~$\Per_s(\cdot,\Omega_M)$ with external datum~$(E\dot{\cup}\Sigma)\setminus\Omega_M$, for some~$\widetilde{\Lambda}\geq0$, then for every set~$F$ such that~$F=E$ in~$\Omega_M^c$, we set~$\widetilde{F}:=F\dot{\cup}\Sigma$, and deduce that
		\begin{align*}
			\widetilde{\Lambda}|\widetilde{E}\Delta \widetilde{F}|
			&\geq \Per_s(\widetilde{E},\Omega_M)-\Per_s(\widetilde{F},\Omega_M) \\
			&\geq \Per_s(E,\Omega_M)-\Per_s(F,\Omega_M) - 2\widetilde{c} |E\Delta F| .
		\end{align*}
		Reordering the last inequality, and observing that~$|\widetilde{E}\Delta \widetilde{F}|=|E\Delta F|$, we obtain that
		\begin{equation*} 
			\Per_s(E,\Omega_M)\leq\Per_s(F,\Omega_M) + (\widetilde{\Lambda}+2\widetilde{c})|E\Delta F|.\qedhere
		\end{equation*}
	\end{proof}
	
	
	\section{Stickiness for almost minimal sets} \label{sec::stickiness_almost_min}
	Recall that stickiness for~$s$-minimal graphs in dimension~$2$ can be read as a discontinuity at the boundary as stated in Definition~\ref{def::stickiness}. 
	
	In this section, we prove that stickiness does not characterize almost minimal surfaces. In order to do this, we address the problem of the existence of sticking and non-sticking non-local almost minimal boundaries. In particular, we investigate whether or not it is true that every time we have a non-sticking~$s$-minimal surface~$E$ in~$\Omega$, we can find a sticking almost minimizer for~$\Per_s(\cdot,\Omega)$ with external datum~$E$, and, vice versa, whether or not is it true that every time we have a sticking~$s$-minimal surface, we can find a non-sticking almost minimizer. 
	
	\subsection{Existence of sticking almost minimizers}
	In~\cite{dipierro_savin_valdinoci_generality_stickiness}, the authors prove that~$s$-minimal graphs are generally sticky, in the sense that whatever the external datum is, one should expect to observe the stickiness phenomenon, up to an arbitrarily small perturbation of the external datum (see~\cite[Theorem~1.1]{dipierro_savin_valdinoci_generality_stickiness}). 

	We use this fact and Theorem~\ref{th::perturbed_almost_min} to show that it is always possible to find a sticking almost minimal set.
	\begin{prop}[Existence of sticking almost minimal sets] \label{prop::sticking_almost_min}
		Consider the domain~$\Omega=(-1,1)\times\R$ and a~$\cont^{1,\alpha}$-function~$v$ such that~$\subgr(v)$ has finite~$s$-perimeter in~$\Omega$.
		
		Then, there exists an almost minimal set~$E$ in~$\Omega$ such that~$E=\subgr({u})$, for some~${u}\in\cont^{1,\alpha}\left((-1,1)\right)$ such that~${u}=v$ in~$\Omega^c$ and~${u}$ is discontinuous at~$\{-1,1\}$.
	\end{prop}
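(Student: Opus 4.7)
The plan is to realize~$E$ as a suitable external perturbation of a sticky~$s$-minimal subgraph. If the~$s$-minimizer with external datum~$\subgr(v)\setminus\Omega$ is already discontinuous at~$\pm 1$, then it is itself a~$0$-almost minimizer with all of the desired properties, and there is nothing more to prove. Otherwise, the unperturbed~$s$-minimizer is continuous at the boundary, and we are in the setting where the generality-of-stickiness result of~\cite{dipierro_savin_valdinoci_generality_stickiness} applies nontrivially.

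Pick~$d>0$ and any function~$\phi\in\cont^{1,\alpha}(\R,[0,+\infty))$, not identically zero, with~$\supp(\phi)\subset\R\setminus[-1-d,1+d]$, and~$t>0$ small enough that~$\subgr(v+t\phi)$ still has finite~$s$-perimeter in~$\Omega$. Let~$E^*=\subgr(u^*)$ denote the~$s$-minimal subgraph in~$\Omega$ with external datum~$\subgr(v+t\phi)\setminus\Omega$. Since~$\phi$ vanishes near the boundary~$\{-1,1\}$, we have~$(v+t\phi)(\pm 1)=v(\pm 1)$, and the generality-of-stickiness theorem yields
$$\limsup_{x\to -1^+}u^*(x)>v(-1).$$

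Define
$$\Sigma:=\big\{(x,y)\in\R^2\;{\mbox{ s.t. }}\; |x|>1,\;v(x)<y<v(x)+t\phi(x)\big\},$$
and set~$E:=E^*\setminus\Sigma$. Equivalently, $E=\subgr(u)$ with~$u:=u^*\chi_{(-1,1)}+v\chi_{\R\setminus(-1,1)}$, so that~$u=v$ outside~$\Omega$ and~$E\cup\Sigma=E^*$. Since~$\Sigma\subset(\R\setminus(-1-d,1+d))\times\R$, the set~$\Sigma$ is separated from~$\overline{\Omega}$, and for any~$\delta\in(0,d)$ the condition~\eqref{eq::Sigma_assumption} in Theorem~\ref{th::perturbed_almost_min} is vacuously satisfied (the left-hand side is empty). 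Thus, Theorem~\ref{th::perturbed_almost_min}, applied with~$\psi:=v\in\cont^{1,\alpha}$, transforms the~$0$-minimality of~$\tilde E=E\cup\Sigma=E^*$ into~$\Lambda$-minimality of~$E$, for some~$\Lambda\geq 0$.

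Two easy observations then complete the proof. First, $u\in\cont^{1,\alpha}((-1,1))$ because on the interior~$u=u^*$, and~$u^*$ is~$\cont^{1,\alpha}$ as the graph-function of an~$s$-minimal subgraph (equivalently, by Theorem~\ref{th::holder_reg_almost_min} applied to~$E$ in small balls on the flat portion of~$\partial E\cap\Omega$). Second, the stickiness of~$u$ at~$-1$ is inherited directly from~$u^*$: indeed, $\limsup_{x\to -1^+}u(x)=\limsup_{x\to -1^+}u^*(x)>v(-1)=u(-1)$. The main obstacle is conceptual rather than technical: one has to recognize that Theorem~\ref{th::perturbed_almost_min} allows us to decouple the stickiness mechanism (which requires a specific exterior bump~$t\phi$) from the prescribed external datum of the final set (which must equal~$v$), and that the geometric hypothesis on~$\Sigma$ becomes trivial once~$\Sigma$ is placed outside a collar neighborhood of~$\partial\Omega$.
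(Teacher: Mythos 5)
Your proof follows the same blueprint as the paper's: apply the generality-of-stickiness theorem of Dipierro--Savin--Valdinoci with a compactly supported bump on the exterior datum to produce a sticky $s$-minimal graph, and then invoke Theorem~\ref{th::perturbed_almost_min} to peel the bump off while retaining almost-minimality with the original datum~$\subgr(v)\setminus\Omega$. The only genuinely different ingredient is your preliminary case split (if the unperturbed minimizer is already sticky, take~$\Lambda=0$); this is a useful refinement, since~\cite[Theorem~1.1]{dipierro_savin_valdinoci_generality_stickiness} requires the unperturbed minimizer to attain the boundary value continuously, and the paper's proof glosses over that hypothesis. Your verification that condition~\eqref{eq::Sigma_assumption} holds vacuously (because~$\Sigma$ is supported outside a collar of~$\partial\Omega$) and that the external trace~$\psi$ can be taken equal to~$v$ is also more explicit than in the paper.

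One small point worth flagging: you establish~$\limsup_{x\to -1^+}u^*(x)>v(-1)$, i.e.\ stickiness at~$-1$ only, whereas the statement asks for discontinuity at~$\{-1,1\}$. The paper's proof takes an \emph{even} bump precisely so that symmetry (when the datum permits it) or two separate applications of the stickiness theorem yield detachment at both endpoints. With a one-sided~$\phi$, stickiness at~$+1$ does not automatically follow. To be safe, either insist that~$\supp\phi$ has components on both sides of~$\Omega$ and run the stickiness argument at each endpoint (handling the sub-case where the unperturbed minimizer is already sticky there), or observe, as is implicit in the paper, that Definition~\ref{def::stickiness} only asks for discontinuity somewhere on~$\partial\Omega'$. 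Either way the fix is routine, so the argument stands.
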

	
	\begin{proof}
		Let~$F:=\subgr(v)\subseteq\R^2$, and consider an even bump-like function~$w\in\cont^\infty_c(\Omega^c)$ (e.g.~$w(t):=(\chi_{-I}+\chi_I)*e^{-t^2}$, for some interval~$I\subseteq(0,+\infty)$). Thus, the assumptions of Theorem~\ref{th::perturbed_almost_min} are satisfied with~$\Sigma:=\{v(x)+w(x) \text{ s.t. }x\in\supp(w)\}$.
		
		Thanks to~\cite[Theorem~1.1]{dipierro_savin_valdinoci_generality_stickiness}, there exists a minimizer~$\widetilde{E}$ for the~$s$-perimeter in~$\Omega$ with external datum~$(F\dot{\cup}\Sigma)\setminus\Omega$ that sticks to the boundary of~$\Omega$. 
		
		Therefore, Theorem~\ref{th::perturbed_almost_min} entails that 
		$$ E := (\widetilde{E}\cap\Omega)\cup (F\cap\Omega^c)$$
		is a sticking almost minimizer for~$\Per_s(\cdot,\Omega)$ with external datum~$F\setminus\Omega$.
	\end{proof}
	
	\subsection{Existence of non-sticking almost minimal sets}
	Now, we tackle the problem of the existence of non-sticking almost minimal sets. 
	What we show here is that it is always possible to find a non-sticking almost minimizer, up to requiring some assumptions on the external datum.
	\begin{prop}[Existence of non-sticking almost minimal sets] \label{prop::non_sticking_almost_min}
		Consider the domain~$\Omega=(-1,1)\times\R$ and a~$\cont^{1,\alpha}$-function~$v$ such that~$E:=\subgr(v)$ has finite~$s$-perimeter in~$\Omega$.
		Assume that there exists a non-sticking almost minimizer for~$\Per_s(\cdot,\Omega)$ with external datum~$\widetilde{E}:=\subgr(\widetilde{v})$ in~$\Omega^c$, for some~$\cont^{1,\alpha}$-function~$\widetilde{v}$.
		
		If~$\Sigma:=(E\Delta\widetilde{E})\cap\Omega^c$ satisfies the assumptions of Theorem~\ref{th::perturbed_almost_min},
		then there exists a non-sticking almost minimizer for~$\Per_s(\cdot,\Omega)$ with external datum~$E$.
	\end{prop}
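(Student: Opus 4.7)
The plan is to reverse-engineer the desired non-sticking almost minimizer by starting from the hypothetical non-sticking almost minimizer with external datum~$\tilde{E}\setminus\Omega$ and subtracting the external perturbation~$\Sigma$, then invoking Theorem~\ref{th::perturbed_almost_min} to pass the almost-minimality across this modification.

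First I would observe that the containment~$\Sigma\subset\Omega^c\cap E^c$ built into the hypothesis of Theorem~\ref{th::perturbed_almost_min} forces~$v\leq\tilde{v}$ outside the interval~$\Omega'=(-1,1)$, and hence~$\Sigma=\tilde{E}\setminus E$ on~$\Omega^c$. Let~$\tilde{F}$ denote the given non-sticking almost minimizer in~$\Omega$ with external datum~$\tilde{E}\setminus\Omega$, and set
$$F:=\tilde{F}\setminus\Sigma.$$
Since~$\Sigma\subset\Omega^c$ we have~$F\cap\Omega=\tilde{F}\cap\Omega$, while
$$F\setminus\Omega=(\tilde{F}\setminus\Omega)\setminus\Sigma=(\tilde{E}\setminus\Omega)\setminus\Sigma=E\setminus\Omega,$$
so~$F$ has the prescribed external datum~$E\setminus\Omega$. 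Moreover~$\Sigma\subset\tilde{F}$ outside~$\Omega$ and~$\Sigma\cap F=\varnothing$, hence~$\tilde{F}=F\cup\Sigma$ with~$\Sigma\subset\Omega^c\cap F^c$.

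The second step is to apply Theorem~\ref{th::perturbed_almost_min} with base set~$F$ and perturbation~$\Sigma$. The hypotheses transfer readily: outside~$\Omega$ the set~$F$ coincides with~$E=\subgr(v)$, so the exterior graph requirement~\eqref{eq::E_assumption} holds with~$\psi=v$, while~$\Sigma$ and the growth bound~\eqref{eq::Sigma_assumption} are inherited from the assumption on the pair~$(E,\tilde{E})$. Since~$F\cup\Sigma=\tilde{F}$ is almost minimal by hypothesis, Theorem~\ref{th::perturbed_almost_min} yields that~$F$ itself is almost minimal for~$\Per_s(\cdot,\Omega)$ with external datum~$E\setminus\Omega$.

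It then remains to transfer the non-sticking property. Writing~$u_{\tilde{F}}$ and~$u_F$ for the graph functions of~$\tilde{F}$ and~$F$, the identity~$F\cap\Omega=\tilde{F}\cap\Omega$ gives~$u_F=u_{\tilde{F}}$ on~$\Omega'$, so
$$\lim_{x\to\pm1^{\mp}}u_F(x)=\lim_{x\to\pm1^{\mp}}u_{\tilde{F}}(x)=\tilde{v}(\pm1),$$
where the last equality is precisely the assumed non-stickiness of~$\tilde{F}$. On the other hand, the growth bound~\eqref{eq::Sigma_assumption} with~$n=2$ and~$\psi=v$ yields~$0\leq\tilde{v}(x)-v(x)\leq C(|x|-1)^{1+s+\xi}$ for~$1<|x|<1+\delta$, whence~$v(\pm1)=\tilde{v}(\pm1)$ by continuity. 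Thus the interior limits of~$u_F$ match its exterior values at~$\pm1$, and~$F$ is the required non-sticking almost minimizer.

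The only delicate point is making sure the hypotheses of Theorem~\ref{th::perturbed_almost_min} really apply to the \emph{new} base set~$F$ rather than to~$E$; the main obstacle is therefore verifying that the exterior graph structure and the growth of~$\Sigma$ survive the substitution~$E\leadsto F$, which above is immediate because~$F$ and~$E$ agree on~$\Omega^c$ and the set~$\Sigma$ is unchanged.
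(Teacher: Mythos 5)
Your proposal is correct and follows essentially the same strategy as the paper's (very terse) proof: take the given non-sticking almost minimizer with external datum $\tilde{E}\setminus\Omega$, remove the exterior perturbation $\Sigma$ to obtain a set $F$ with external datum $E\setminus\Omega$, invoke Theorem~\ref{th::perturbed_almost_min} to transfer almost minimality, and observe that the interior graph function is unchanged so non-stickiness is preserved. Your write-up is more explicit than the paper's one-sentence argument, and the only point worth flagging is that the growth bound~\eqref{eq::Sigma_assumption} involves a vertical translation~$\tau_n$, so the inequality $0\le\tilde{v}-v\le C(|x'|-1)^{1+s+\xi}$ strictly speaking holds after first noting that~$\tau_n$ must be trivial (by sending $|x'|\to1^+$), which does not affect your conclusion $v(\pm1)=\tilde{v}(\pm1)$.
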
 
	\begin{proof}
		Thanks to Theorem~\ref{th::perturbed_almost_min}, if a set~$A$ is a non-sticking almost minimizer for~$\Per_s(\cdot,\Omega)$ with external datum~$\widetilde{E}$, then it is also an almost minimizer for~$\Per_s(\cdot,\Omega)$ with external datum~$E$. 
		
		Moreover, in this situation, no sticking phenomena occur.
	\end{proof}
	
	In particular, we have a non-sticking almost minimal set for every suitable perturbation of the half-space. 
	
	\begin{cor}
		Let~$\Omega$ be the cylinder~$(-1,1)\times\R\subseteq\R^2$. Consider the~$2$-dimensional half-space~$H:=\{(x_1,x_2): x_2<0\}$ and a set~$\Sigma\subseteq\Omega^c$ satisfying the assumptions of Theorem~\ref{th::perturbed_almost_min}.
		
		Then, $H\dot{\cup}\Sigma$ is a non-sticking almost minimizer for~$\Per_s(\cdot,\Omega)$ with external datum~$(H\dot{\cup}\Sigma)\setminus\Omega$. 
	\end{cor}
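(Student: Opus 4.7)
The plan is to apply Theorem~\ref{th::perturbed_almost_min} directly with $E:=H$. First, I would observe that the half-space $H$ is itself a classical $0$-minimizer for $\Per_s(\cdot,\Omega)$: by the symmetry of the kernel $|x-y|^{-n-s}$, the principal value integral in~\eqref{meancurvt5749} vanishes at every point of $\partial H$, and $H$ is a well-known $s$-minimal set. In particular, it is a $\Lambda$-minimizer with external datum $H\setminus\Omega$ for every $\Lambda\geq 0$, so the left-hand side of the equivalence in Theorem~\ref{th::perturbed_almost_min} is in place.

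Next, I would verify the hypotheses of Theorem~\ref{th::perturbed_almost_min} with $E:=H$. Choosing $\psi\equiv 0\in\cont^{1,\alpha}(\R)$ and $\tau_n$ the identity, we have $H=\subgr(\psi)$ on all of $\R^2$, which in particular yields~\eqref{eq::E_assumption} in $\Omega_\delta\setminus\Omega$. The condition~\eqref{eq::Sigma_assumption} on $\Sigma$ holds by hypothesis. Applying Theorem~\ref{th::perturbed_almost_min} then delivers that $\tilde{E}:=H\dot{\cup}\Sigma$ is a $\tilde{\Lambda}$-minimizer for $\Per_s(\cdot,\Omega)$ with external datum $(H\dot{\cup}\Sigma)\setminus\Omega$, for some $\tilde{\Lambda}\geq 0$. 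This is the almost-minimality half of the claim.

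Finally, for the non-sticking property: since $\Sigma\subset\Omega^c$, we have $\tilde{E}\cap\Omega=H\cap\Omega$, so the boundary of $\tilde{E}$ inside $\Omega$ is the flat segment $\{(x_1,0)\,:\,|x_1|<1\}$, coinciding with $\graph(\psi)$. By~\eqref{eq::Sigma_assumption}, outside $\Omega$ the set $\Sigma$ is trapped in the cusp $G_{C,\xi}$, whose height $C(|x_1|-1)^{s+1+\xi}$ shrinks to zero as $|x_1|\to 1^+$. Thus $\partial\tilde{E}$ approaches the horizontal line $\{x_2=0\}$ from both sides of the vertical lines $\{x_1=\pm 1\}$, and no jump discontinuity is created across $\partial\Omega$ in the spirit of Definition~\ref{def::stickiness}; hence $\tilde{E}$ is non-sticking.

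The main obstacle is largely conceptual rather than technical: strictly speaking, Definition~\ref{def::stickiness} is formulated for subgraphs of a single function $u$, while $\tilde{E}=H\dot{\cup}\Sigma$ need not be globally such a subgraph. One must therefore interpret the non-sticking conclusion as the absence of a jump in the height of $\partial\tilde{E}$ across $\{x_1=\pm 1\}$, which the continuity of $\psi$ combined with the cusp estimate on $\Sigma$ secures. Aside from this bookkeeping, the corollary is a direct combination of the trivial $s$-minimality of the half-space with the external-perturbation invariance furnished by Theorem~\ref{th::perturbed_almost_min}.
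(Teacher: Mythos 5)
Your proof is correct and takes the approach the paper implicitly intends: the corollary is stated without proof, positioned as a direct consequence of Theorem~\ref{th::perturbed_almost_min} and the discussion around Proposition~\ref{prop::non_sticking_almost_min}, which is precisely what you carry out. A small remark: the non-sticking conclusion follows already from the equality $\tilde E\cap\Omega = H\cap\Omega$, since the internal boundary is the flat segment $\{x_2=0\}$ and this matches the trace of $\partial H$ at $\partial\Omega$ regardless of the vertical translation $\tau_n$ (which you implicitly fix to be the identity, even though the hypotheses of Theorem~\ref{th::perturbed_almost_min} allow a nontrivial one); the cusp estimate~\eqref{eq::Sigma_assumption} is really what powers the almost-minimality half of the claim via Theorem~\ref{th::perturbed_almost_min}, not the non-sticking half.
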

	
	\subsection{Almost minimal surfaces can show intermediate behaviors}
	In~\cite{dipierro_savin_valdinoci_generality_stickiness}, the authors also prove that~$2$-dimensional~$s$-minimal graphs cannot show intermediate behavior between discontinuity and differentiability. In other words, an~$s$-minimal graph in~$\R^2$ can only either stick to the boundary or be globally~$\cont^1$. 
	
	\begin{theorem}[Theorem 1.2, \cite{dipierro_savin_valdinoci_generality_stickiness}] \label{prop::cont_implies_diff}
		Let~$\alpha\in(s,1)$,
		and~$u:\R\to\R$ be a~$\cont^{1,\alpha}$-function in~$[-h,0]$, for some~$h\in(0,1)$, and
		$$ E := \big\{(x_1,x_2)\in\R^2 \text{ s.t. } x_2<u(x_1)\big\}.$$
		Suppose that~$E$ is~$s$-minimal in~$(0,1)\times\R$, and that
		$$ \lim_{x\to0^-} u(x) = \lim_{x\to0^+} u(x).$$
		
		Then, $u\in\cont^{1,\eta}([-h,1/2])$, with~$\eta:=\min\left\{\alpha,\frac{1+s}{2}\right\}$.
	\end{theorem}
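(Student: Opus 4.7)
The plan is to reduce the claim to establishing $\cont^{1,\eta}$ regularity at the single boundary point $P := (0, u(0))$. Away from $x_1 = 0$, the interior $s$-minimality of $E$ combined with Theorem~\ref{th::holder_reg_almost_min} yields $\cont^{1,\alpha}$ regularity of $\partial E \cap \big((0,1) \times \R\big)$ at points where a preliminary flatness assumption is verified (for example via the density and clean-ball estimates of Theorem~\ref{th::unif_dens_estimates} and Corollary~\ref{cor::clean_ball}); since $u$ is already $\cont^{1,\alpha}$ on $[-h,0]$ by hypothesis, the only delicate issue is matching the two sides at $P$.

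To handle $P$, I would perform a blow-up analysis. After translating so that $P$ is the origin, consider the dilated sets $E_\lambda := \lambda E$ as $\lambda \to +\infty$. The $\cont^{1,\alpha}$ regularity of $u$ on $[-h,0]$ ensures that the external datum of $E_\lambda$ on $(-\infty, 0] \times \R$ converges locally uniformly to a half-space $H_0 := \{x_2 < c_0 x_1\}$, where $c_0 := u'(0^-)$. The continuity hypothesis $\lim_{x\to 0^-} u(x) = \lim_{x\to 0^+} u(x)$ is essential at this stage: it is precisely the condition that prevents the left and right graphs from being at different heights at $0$, so that the blow-up external datum is a single linear graph rather than a jump.

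The main obstacle is to show that any $L^1_{\loc}$ subsequential limit $E_\infty$ of $\{E_\lambda\}$ coincides with $H_0$ throughout $\R^2$. A convergence argument in the spirit of Proposition~\ref{prop::conv_almost_min} (together with the natural scaling of $\Per_s$) shows that $E_\infty$ is $s$-minimal in the half-plane $\{x_1 > 0\}$ with external datum $H_0$. One then rules out ``sticking'' in the blow-up: the continuity hypothesis forces the trace of $\partial E_\infty$ to approach $0$ as $x_1 \to 0^+$, so a sliding/barrier comparison with vertical translates of $H_0$ excludes any competitor. Alternatively, a second blow-up at any candidate singular point of $E_\infty$, combined with the classification of planar $s$-minimal cones (Theorem~\ref{th::cones_R2}), yields $E_\infty = H_0$.

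Once the blow-up limit is identified as a half-space, Corollary~\ref{cor::boundary_conv} provides uniform convergence of boundaries, so $\partial E$ is $\epsilon_0$-flat in a sufficiently small ball around $P$, and Theorem~\ref{th::holder_reg_almost_min} gives $\cont^{1,\alpha}$ regularity up to $P$. To obtain the sharper exponent $\eta = \min\{\alpha, (1+s)/2\}$, one would refine the iterative improvement-of-flatness procedure at $P$ along the lines of Theorem~\ref{th::improv_flat_rescaled}, but adapted to a boundary point with prescribed $\cont^{1,\alpha}$ data: the linearization at the half-space yields a problem for $(-\Delta)^{(1+s)/2}$ on a half-line with $\cont^{1,\alpha}$ datum, whose solutions are $\cont^{1,(1+s)/2}$, producing the exponent $(1+s)/2$, while the competing exponent $\alpha$ merely inherits the regularity of the external datum itself.
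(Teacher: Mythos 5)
This is a quoted result from the cited reference, not one the paper proves, so there is no in-paper argument to compare against; I therefore assess your proposal on its own terms.

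There is a serious structural problem at the very step you treat as routine. You claim that once the blow-up limit at $P=(0,u(0))$ is identified as a half-space, Corollary~\ref{cor::boundary_conv} gives flatness near $P$ and Theorem~\ref{th::holder_reg_almost_min} then yields $\cont^{1,\alpha}$ regularity up to $P$. But $P$ lies on $\partial\bigl((0,1)\times\R\bigr)$: $E$ is $s$-minimal only on one side of the line $\{x_1=0\}$, and on the other side the data is fixed. Theorem~\ref{th::holder_reg_almost_min} is an interior regularity statement — it presupposes (almost) minimality in a full ball $B_1$ and delivers regularity in $B_{1/2}$ — so it simply does not apply at $P$. The whole content of this theorem is precisely boundary regularity, and your proposal collapses it to interior regularity plus a blow-up. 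The sharper exponent $\eta=\min\{\alpha,(1+s)/2\}$ would have to come out of a genuine boundary improvement-of-flatness scheme (with prescribed $\cont^{1,\alpha}$ data on the left half-line feeding into the iteration), which you only gesture at in the final paragraph; that is the hard part, not a refinement.

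The blow-up classification is also not closed. You need any $L^1_\loc$ subsequential limit $E_\infty$ of $\lambda E$ to equal the half-space $H_0$ in all of $\R^2$, but your two proposed routes both have gaps. The ``sliding with vertical translates'' of $H_0$ is delicate in the nonlocal setting when the external datum is fixed on a half-plane (the translates are not admissible competitors, and the comparison needs a quantitative version of stability of half-planes with Dirichlet exterior data, which you do not supply). The ``second blow-up plus Theorem~\ref{th::cones_R2}'' route does not apply as stated either: $E_\infty$ is $s$-minimal only in $\{x_1>0\}$ with a prescribed linear datum for $x_1\le 0$, so a blow-up at the origin is not a globally $s$-minimal cone, and the planar cone classification cannot be invoked. (At interior points of $\{x_1>0\}$ it would apply, but the delicate point is exactly the boundary vertex.) Until these two issues are resolved — the boundary regularity mechanism and the blow-up rigidity — the argument is not a proof.
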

	
	In contrast, almost minimal sets can show intermediate behaviors. To prove this, we exploit Theorem~\ref{th::perturbed_almost_min} to provide a counterexample to Theorem~\ref{prop::cont_implies_diff} for almost minimal sets. Namely, we exhibit a non-sticking continuous, but non-$\cont^1$ almost minimizers. 
	
	The idea is that by applying a suitable external perturbation to a non-sticking~$s$-minimal graph close to the boundary of the domain we force some discontinuity in the first derivative at the boundary
	without breaking the continuity of the function.
	
	\begin{ex}[Counterexample: a non-sticking~$\cont^0\setminus\cont^1$ almost minimal set] \label{ex::nonstiking_C0_nonC1}
		This counterexample is built starting from~\cite[Theorem~1.4]{dipierro_savin_valdinoci_boundary_behavior}. For this, we recall that for every~$\eta_0>0$ there exists~$\delta_0>0$ such that if we consider the sets~$H:=\R\times(-\infty,0)$, $F_-:=(-3,-2)\times(0,\delta)$ and~$F_+:=(2,3)\times(0,\delta)$, with~$\delta\in(0,\delta_0]$, and if~$E$ is an~$s$-minimal set in~$\Omega=(-1,1)\times\R$ with smooth external datum~$M\supset F_-\cup H\cup F_+$, then
		$$ (-1,1)\times(-\infty,\delta^\beta) \subseteq E,$$
		with~$\beta=\frac{2+\eta_0}{1-2s}$. 
		
		In particular, we have that~$\partial E$ is~$\cont^\infty$ in~$\Omega$, it sticks to the boundary, and it is symmetric with respect to the vertical axis whenever~$M$ is symmetric
		(see e.g.~\cite[Lemma~A.1]{dipierro_savin_valdinoci_boundary_behavior}). Moreover, thanks to~\cite[Corollary~1.3]{dipierro_savin_valdinoci_generality_stickiness}, we have that~$E$ detaches from~$\partial\Omega$ in a~$\cont^1$ fashion. 
		
		This yields that the boundary of~$E$ in~$\Omega$
		cannot be a horizontal straight line. Thus, there exists a point~$x_0\in(0,1)$ such that, if~$E=\subgr(u)$, then~$u'(\pm x_0)\neq0$. 
		
		We restrict our domain to~$\Omega_0:=(-x_0,x_0)\times\R$ and we see that~$E$ is an~$s$-minimal set in~$\Omega_0$. We also smooth out the external datum, in such a way that we preserve almost minimality, according to Theorem~\ref{th::perturbed_almost_min}. For a qualitative picture of the situation described here, see
		Figures~\ref{fig::counterex1}--\ref{fig::counterex3}.
		
		Now, let us construct a perturbation set~$\Sigma$.
		We consider~$\Sigma_+$ such that it is enclosed between~$\partial E$ and~$\graph(\phi)$, where~$\phi$ is a~$\cont^1$-function in~$[x_0,1)$ (to be chosen appropriately in what follows). In particular, notice that~$E$ is smooth in~$\Omega_0^c$.
		
		Moreover, given~$(x_0, y_0)\in \partial E\cap \partial\Omega_0$ and some small~$\xi>0$, we choose~$\phi$ so that it agrees with~$y_0+(x- x_0)^{1+s+\xi}$ in a right neighborhood of~$x_0$. Then, we define 
		$$ \Sigma_-=\{(x,y): (-x,y)\in\Sigma_+\} \qquad	\text{and} \qquad\Sigma=\Sigma_-\cup\Sigma_+ .	$$
		According to Theorem~\ref{th::perturbed_almost_min}, $\widetilde{E}=E\cup\Sigma$ is an almost minimizer for~$\Per_s(\cdot,\Omega_0)$ with boundary datum~$\widetilde{E}\setminus\Omega_0$. Furthermore, $\widetilde{E}$ does not stick at the boundary. 
		
		In addition, $\widetilde{E}$ is a~$\cont^0$, but not~$\cont^1$, subgraph in a small neighborhood of~$\Omega_0$ (see Figure~\ref{fig::counterex4}). Indeed,
		$$ \lim_{x\to x_0^-} u'(x) \neq 0 = \frac{d}{dx}\left((x-x_0)^{1+s+\xi}\right)\big|_{x=x_0}.$$

		\begin{figure}[!h]
			\centering
			\includegraphics[width=.7\linewidth]{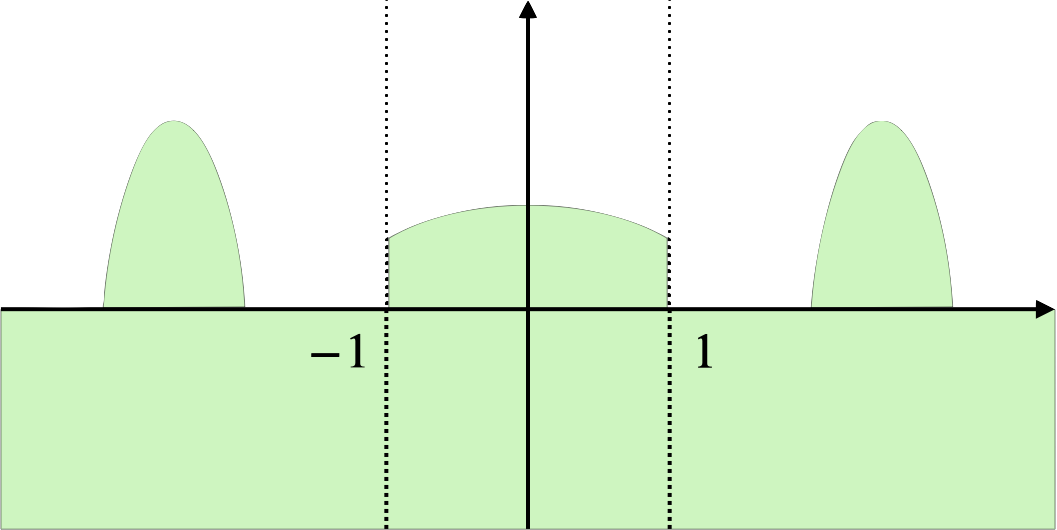}
			\caption{The sticking half-space.}\label{fig::counterex1}
		\end{figure}
		\begin{figure}[!h]
			\centering
			\includegraphics[width=.7\linewidth]{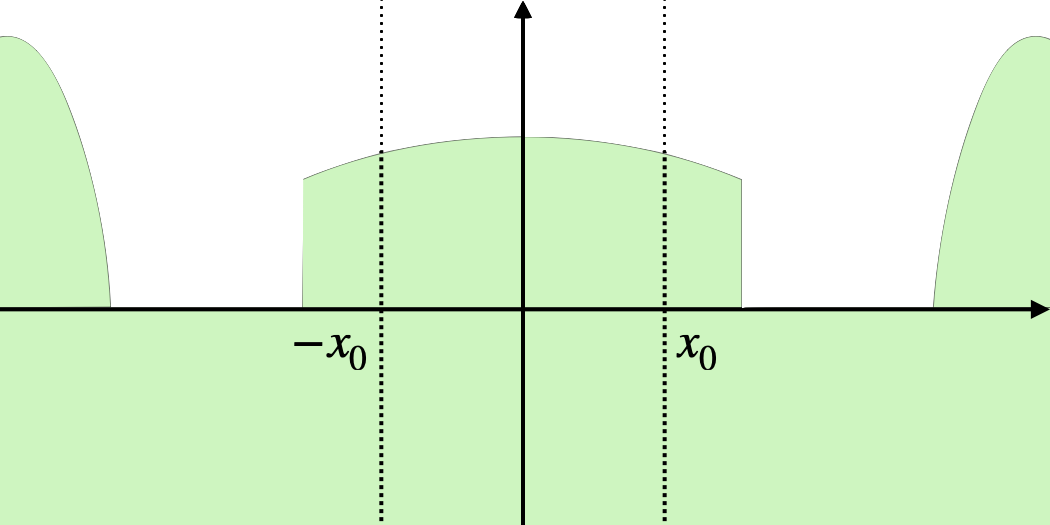}
			\caption{A zoom in of the sticking half-space.}\label{fig::counterex2}
		\end{figure}
		\begin{figure}[!h]
			\centering
			\includegraphics[width=.7\linewidth]{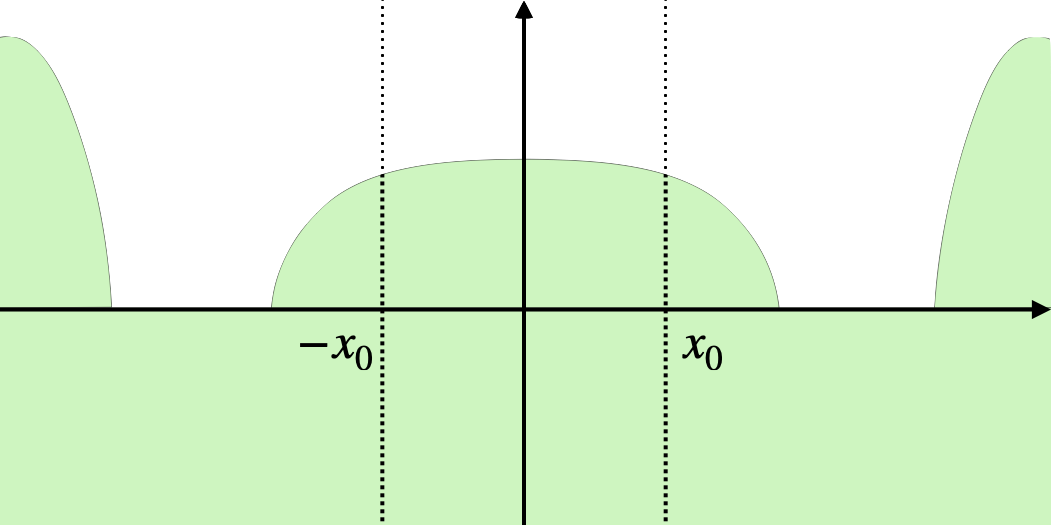}
			\caption{Smoothing out the external datum.}\label{fig::counterex3}
		\end{figure}
		\begin{figure}[!h]
			\centering
			\includegraphics[width=.7\linewidth]{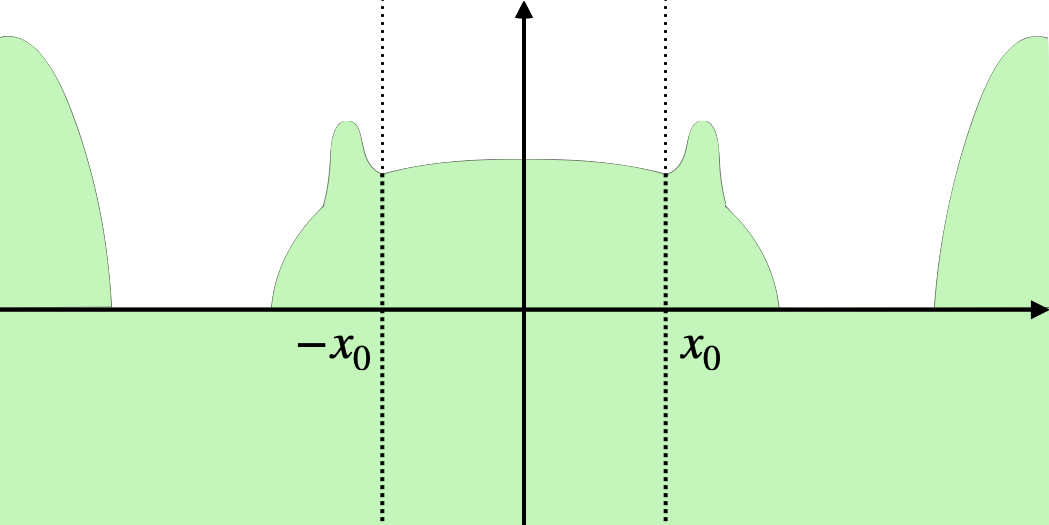}
			\caption{The non-sticking~$\cont^0\setminus\cont^1$ almost minimizer.}\label{fig::counterex4}
		\end{figure}
	\end{ex}
	
	
	\section{An interesting application: the non-local Massari's Problem
		and proofs of Theorems~\ref{th::existence}
		and~\ref{th::regularity}} \label{sec::massari}
	
	The regularity theory of almost minimizers that we have discussed so far
	finds a convenient application to the non-local version of the Massari's Problem. In particular, we will apply 
	the regularity theory developed in Section~\ref{sec::holder_reg}
	to establish the existence and regularity of sets with prescribed non-local mean curvature.
	
	
	We start with the following observation, that relates
	the non-local mean curvature (as defined in~\eqref{meancurvt5749})
	of a minimizer for~$\mathscr{J}^H_{\Omega,s}$ with the function~$H$,
	and justifies Definition~\ref{def::var_frac_min_curv}.
	
	\begin{lemma}\label{lma:visccomecaff}
		Let~$A$ be a smooth open set in~$\Omega$ that minimizes~$\mathscr{J}^H_{\Omega,s}$.
		
		Then, $H_s[A]=H$ in~$\Omega$,
		that is~$A$ has fractional mean curvature~$H$ in~$\Omega$.
	\end{lemma}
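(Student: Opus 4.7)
The plan is to derive $H_s[A] = H$ on $\partial A \cap \Omega$ as the Euler--Lagrange equation of the minimization, via a classical first-variation argument along smooth surface deformations. Since $A$ is smooth and open, the nonlocal mean curvature $H_s[A](x)$ is well-defined pointwise on $\partial A \cap \Omega$ and all the geometric quantities below make sense in the classical sense.

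First, I would fix $\zeta \in \cont^\infty_c(\partial A \cap \Omega)$ and extend it to a compactly supported vector field $X \in \cont^\infty_c(\Omega; \R^n)$ with $X \cdot \nu_A = \zeta$ on $\partial A$, where $\nu_A$ denotes the outward unit normal to $A$. Letting $\Phi_t$ be the flow generated by $X$ and setting $A_t := \Phi_t(A)$, the fact that $\supp(X) \comp \Omega$ ensures $A_t \setminus \Omega = A \setminus \Omega$ for every sufficiently small $|t|$, so that $A_t$ is an admissible competitor in the minimization.

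Next, I would compute $\frac{d}{dt}\big|_{t=0} \mathscr{J}^H_{\Omega,s}(A_t)$ summand by summand. For the volume term, the change of variable $\int_{A_t \cap \Omega} H = \int_{A \cap \Omega} (H \circ \Phi_t)\, |\det D\Phi_t|$ together with the divergence theorem (applicable since $X$ vanishes on $\partial \Omega$) yields
\[\frac{d}{dt}\bigg|_{t=0}\int_{A_t\cap\Omega} H(x)\, dx = \int_{\partial A \cap \Omega} H\,\zeta\, d\haus{n-1}.\]
For the fractional perimeter term, I would invoke the classical first-variation identity for smooth sets in the nonlocal setting (obtained by plugging $\chi_{A_t} = \chi_A \circ \Phi_t^{-1}$ into the double-integral representation underlying $\Per_s(\cdot, \Omega)$ and extracting the linear-in-$t$ contribution from the thin tube $A \Delta A_t$):
\[\frac{d}{dt}\bigg|_{t=0}\Per_s(A_t, \Omega) = -\int_{\partial A \cap \Omega} H_s[A]\,\zeta\, d\haus{n-1}.\]
Combining the two identities and using the minimality of $A$ then produces
\[ 0 = \int_{\partial A \cap \Omega} \bigl(H - H_s[A]\bigr)\, \zeta \, d\haus{n-1}\]
for every admissible $\zeta$. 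Since $A$ is smooth, $H_s[A]$ is continuous on $\partial A \cap \Omega$, so the fundamental lemma of the calculus of variations yields $H_s[A] = H$ on $\partial A \cap \Omega$, as claimed.

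The main technical obstacle is the rigorous justification of the first-variation identity for $\Per_s$, namely the identification of the linear-in-$t$ part coming from the tube $A \Delta A_t$ with the principal-value integral defining $H_s[A]$. The cancellation needed for the principal value to exist emerges from the symmetric structure of the tube-to-complement interactions; this is by now a classical step in the nonlocal minimal surfaces theory (cf.~\cite{caffarelli_roquejoffre_savin_nonlocal, MR3322379}), but it requires careful bookkeeping of the leading-order geometry of $\Phi_t(\partial A)$ and of the singular kernel near $\partial A$.
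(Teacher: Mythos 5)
Your proposal is correct and follows essentially the same route as the paper: consider compactly supported normal perturbations of $\partial A$, invoke the known first-variation formula for the fractional perimeter (the paper cites~\cite{MR3798717} for this), compute the first variation of the volume term, and conclude from the vanishing of the total first variation. The only cosmetic differences are that you phrase the perturbation via a vector-field flow rather than a normal-graph deformation and that your divergence-theorem computation of the volume variation implicitly requires a little more regularity of $H$ than the paper's direct tube argument, but neither affects the substance.
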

	
	\begin{proof}
		Let~$\nu$ be the exterior unit normal to~$\partial A$, and consider a smooth compactly supported perturbation function~$h$. For every~$t>0$, we define the perturbed set~$A_t$ such that
		$$ \partial A_t := \big\{x+th(x)\nu(x): x\in\partial A\big\}.$$
		Since~$A$ is a smooth minimizer for~$\mathscr{J}^H_{\Omega,s}$, we have that
		\begin{equation}\label{65748392vbncmxgfhdjytruei0} \frac{d}{dt} \mathscr{J}^H_{\Omega,s}(A_t) \big\rvert_{t=0} = 0.\end{equation}
		
		The first variation of the fractional perimeter has been computed e.g. in~\cite{MR3798717}, and is given by
		\begin{equation}\label{65748392vbncmxgfhdjytruei}
			\frac{d}{dt} \Per_s(A_t,\Omega)\big\rvert_{t=0} = -\int_{\partial A} \int_{\R^n} h(y)\frac{\chi_A(x)-\chi_{A^c}(x)}{|x-y|^{n+s}}\,dx\,
			d\haus{n-1}(y).\end{equation}
		Moreover,
		$$ \frac{d}{dt} \left(\int_{A_t\cap \Omega} H(x)\, dx\right) \big\rvert_{t=0} = \int_{\partial A\cap \Omega} h(x)H(x)\, d\haus{n-1}(x). $$
		{F}rom this, \eqref{65748392vbncmxgfhdjytruei0} and~\eqref{65748392vbncmxgfhdjytruei}, we obtain the desired result.
	\end{proof}
	
	We point out that Lemma~\ref{lma:visccomecaff} is the analog in this setting of the fractional mean curvature equation, presented in~\cite[Section~5]{caffarelli_roquejoffre_savin_nonlocal}. Indeed, if~$A$ is an~$s$-minimal surface, then
	\begin{equation*} 
		H_s[A] = 0 \text{ in }\partial A\cap\Omega,
	\end{equation*}
	in the viscosity sense (and pointwise if~$A$ is smooth).
	
	For more details about the fractional mean curvature, we refer to~\cite{abatangelo_valdinoci_nonlocal_curv}.
	
	\subsection{Existence of sets with prescribed non-local mean curvature}
	
	We use the classical direct methods in the calculus of variations to prove the existence of sets with prescribed non-local mean curvature. 
	
	For later convenience, we provide the following compactness result.
	
	\begin{theorem}[Compactness of the class of finite fractional perimeter sets] \label{th::compact_finite_Pers}
		Let~$\mathcal{M}^n$ be the family of all measurable subsets of~$\R^n$ and consider a function~$f:\mathcal{M}^n\to\R$.
		
		Then, the class 
		$$\mathcal{C}_f=\big\{ F\in \mathcal{M}^n\;{\mbox{ s.t. }}\; \Per_s(F,K)\leq f(K), \;{\mbox{ for all }}  K\comp\R^n\big\} $$ 
		is compact in~$L^1_{\loc}(\R^n)$.
	\end{theorem}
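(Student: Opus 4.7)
The plan is to combine the compact Sobolev embedding of fractional spaces into~$L^1$ with the lower semicontinuity of~$\Per_s$ under~$L^1_{\loc}$-convergence. Concretely, given any sequence~$\{F_k\}_k\subset\mathcal{C}_f$, I would extract a subsequence converging in~$L^1_{\loc}(\R^n)$ to the characteristic function of some measurable set~$F$, and then verify that the perimeter bound is inherited in the limit.

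First, I would fix an exhaustion of~$\R^n$ by balls~$B_m$ with~$m\in\N$. The hypothesis~$\Per_s(F_k,B_m)\leq f(B_m)$ forces a uniform bound on the~$H^{s/2}(B_m)$-seminorm of each~$\chi_{F_k}$: since~$\chi_{F_k}$ takes only the values~$0$ and~$1$, the quantity~$\mathcal{L}(F_k\cap B_m,F_k^c\cap B_m)$ agrees up to a universal factor with the Gagliardo double integral of~$\chi_{F_k}$ on~$B_m$, and is in turn dominated by~$\Per_s(F_k,B_m)\leq f(B_m)$. Together with the trivial estimate~$\|\chi_{F_k}\|_{L^1(B_m)}\leq|B_m|$, the Rellich-type compact embedding~$H^{s/2}(B_m)\hookrightarrow\hookrightarrow L^1(B_m)$ produces an~$L^1(B_m)$-convergent subsequence. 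A standard diagonalization across~$m$ then delivers a single subsequence~$\{\chi_{F_{k_j}}\}_j$ converging in~$L^1_{\loc}(\R^n)$ to some~$u$; passing to a further subsequence yields pointwise a.e.\ convergence, and since~$u$ is a limit of~$\{0,1\}$-valued functions it must itself take values in~$\{0,1\}$ almost everywhere. This identifies~$u=\chi_F$ for a measurable set~$F\subseteq\R^n$, so that~$F_{k_j}\to F$ in~$L^1_{\loc}(\R^n)$.

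It then remains to check that~$F\in\mathcal{C}_f$, namely that~$\Per_s(F,K)\leq f(K)$ for every~$K\comp\R^n$. For this I would invoke the lower semicontinuity of~$\Per_s(\cdot,K)$ with respect to~$L^1_{\loc}$-convergence: applying Fatou's lemma to the non-negative integrands defining~$\mathcal{L}(F\cap K,F^c)$ and~$\mathcal{L}(F\cap K^c,F^c\cap K)$ (and using the pointwise a.e.\ convergence of~$\chi_{F_{k_j}}$ to~$\chi_F$) one obtains
$$\Per_s(F,K)\leq\liminf_{j\to+\infty}\Per_s(F_{k_j},K)\leq f(K),$$
which is the sought-after inclusion.

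The one genuinely delicate ingredient is the step producing uniform~$H^{s/2}$-control from the~$s$-perimeter bound, which specifically exploits that the functions~$\chi_{F_k}$ are characteristic functions (so that~$|\chi_{F_k}(x)-\chi_{F_k}(y)|^2=|\chi_{F_k}(x)-\chi_{F_k}(y)|$, which is what makes the perimeter dominate the squared Gagliardo seminorm). Once this is in hand, the remaining steps---diagonal extraction, identification of the limit as a characteristic function via pointwise a.e.\ convergence, and the Fatou-based lower semicontinuity---are routine and pose no additional obstruction specific to the class~$\mathcal{C}_f$.
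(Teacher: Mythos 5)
Your argument is correct and follows essentially the same route as the paper's proof: a uniform fractional Sobolev bound deduced from the $s$-perimeter hypothesis, a compact Sobolev embedding on each compact set, a diagonal extraction over an exhaustion to obtain $L^1_{\loc}$-convergence, identification of the limit as a characteristic function via pointwise a.e.\ convergence, and lower semicontinuity (Fatou) to close. The only cosmetic difference is that the paper first extracts a weakly convergent subsequence from the Hilbert ball in the fractional Sobolev space and then applies Rellich--Kondrachov, whereas you invoke the compact embedding directly; both are fine, and your explicit remark that $|\chi_{F_k}(x)-\chi_{F_k}(y)|^2=|\chi_{F_k}(x)-\chi_{F_k}(y)|$ is exactly the point that makes the Gagliardo seminorm comparable to the $s$-perimeter.
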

	
	\begin{proof} 
		Let~$K$ be an arbitrary compact set of~$\R^n$. Notice that, for every~$F\in\mathcal{C}_f$, we have that 
		\begin{equation*}
			\begin{split}
				&	\norm{\chi_{F}}_{H^s(K)}=\norm{\chi_{F}}_{L^2(K)}+[\chi_{F}]_{H^s(K)}\leq\norm{\chi_{F}}_{L^2(K)}+\Per_s(F,K)\\
				&\qquad\qquad\leq |K|^{1/2}+f(K) =: g(K).
			\end{split}
		\end{equation*}
		This gives that~$\chi_{F}\in B^s_{g(K)}$, 
		where~$B^s_{g(K)}$ denotes
		the ball of radius~$g(K)$ and centered at 0 in the Hilbert space~$H^s(K)$. 
		
		Moreover, $B^s_{g(K)}$ is weakly compact since~$H^s(K)$ is a Hilbert space. Therefore, for every sequence~$\{E_j\}_j$ in~$\mathcal{C}_f$, there exist some~$\psi\in B^s_{g(K)}$ and a subsequence~$\{E_{j_l}\}_l$ such that~$\chi_{E_{j_l}}\rightharpoonup \psi$ in~$H^s(K)$.
		
		Thanks to the fractional Rellich-Kondrachov's Theorem, $H^s(K)$ is compactly embedded in~$L^2(K)$. Thus, $E_{j_l}\to \psi$ strongly in~$L^2(K)$, hence in~$L^1(K)$, up to a subsequence. Since~$K$ is arbitrarily chosen, we deduce that~$\chi_{E_{j_l}}$ converges~$\psi$ in~$L^1_{\loc}(\R^n)$. 
		
		We recall now that the~$L^1_{\loc}$-convergence implies the convergence almost everywhere along a subsequence. 
		Hence, $\psi$ must be a characteristic function of some set~$E$, i.e.~$\psi=\chi_{E}$ (to see this, consider a sequence of nested sets compactly contained in~$\R^n$ and use a diagonal argument). 
		
		Moreover, by lower semi-continuity of the fractional perimeter, we have that
		$$ \Per_s(E,K)\leq\liminf_{l\to+\infty}\Per_s(E_{j_l},K)\leq f(K),\qquad
		{\mbox{for all }} K\comp\R^n.$$
		Therefore, $E\in\mathcal{C}_f$, and this concludes the proof.
	\end{proof}
	
	As a consequence of Theorem~\ref{th::compact_finite_Pers}, we infer the following:
	
	\begin{cor} \label{cor::C_Omega_comp}
		Let~$\{E_j\}_j$ be a sequence of sets such that~$E_j\setminus\Omega=M\setminus\Omega$ and
		$$\Per_s(E_j,\Omega)<c,$$
		for some constant~$c>0$ independent of~$j$.
		
		Then, there exist a subsequence~$\{E_{j_l}\}_l$ and a set~$E$ such that~$\chi_{E_{j_l}}\to \chi_{E}$ in~$L^1(\Omega)$, $\Per_s(E,\Omega)<c$, and~$E\setminus\Omega=M\setminus\Omega$.
	\end{cor}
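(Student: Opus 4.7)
The plan is to deduce the statement from Theorem~\ref{th::compact_finite_Pers} by verifying that the sequence~$\{E_j\}_j$ lies in a compact class~$\mathcal{C}_f$, and then passing to the limit to recover both the external-datum condition and the perimeter bound.

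The main step is to show that there is a function~$f:\mathcal{M}^n\to\R$ (which can in fact be chosen as a constant depending only on~$c$, $\Omega$, and~$M$) such that~$\Per_s(E_j,K)\leq f(K)$ for every compact~$K\subset\R^n$. To this end, I would decompose~$K=K_\Omega\,\dot{\cup}\,K_c$ with~$K_\Omega:=K\cap\Omega$ and~$K_c:=K\setminus\Omega$, and split the two interaction terms in
$$\Per_s(E_j,K)=\mathcal{L}(E_j\cap K,E_j^c)+\mathcal{L}(E_j\cap K^c,E_j^c\cap K)$$
accordingly. All contributions involving~$K_\Omega$ are controlled immediately by~$\Per_s(E_j,\Omega)<c$. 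For contributions involving~$K_c$, one exploits that~$E_j=M$ on~$\Omega^c$: the interactions lying entirely in~$\Omega^c$ are majorised by~$\Per_s(M,\R^n)$, which is finite since~$M$ has finite~$s$-perimeter, while the remaining cross-interactions between~$K_c\subseteq\Omega^c$ and~$\Omega$ are precisely dominated by the terms~$\mathcal{L}(M\cap\Omega^c,E_j^c\cap\Omega)$ and~$\mathcal{L}(E_j\cap\Omega,M^c\cap\Omega^c)$, both of which are already built into~$\Per_s(E_j,\Omega)$ and hence bounded by~$c$.

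Once the uniform bound is in place, Theorem~\ref{th::compact_finite_Pers} yields a subsequence~$\{E_{j_l}\}_l$ and a measurable set~$E$ with~$\chi_{E_{j_l}}\to\chi_E$ in~$L^1_{\loc}(\R^n)$, which gives the claimed convergence in~$L^1(\Omega)$ since~$\Omega$ is bounded. Passing to the limit (along a further subsequence converging almost everywhere) in the identity~$\chi_{E_{j_l}}=\chi_M$ on~$\Omega^c$ gives~$E\setminus\Omega=M\setminus\Omega$, and the estimate~$\Per_s(E,\Omega)\leq c$ then follows from the lower semicontinuity of the fractional perimeter under~$L^1_{\loc}$-convergence.

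The hard part, in my view, is precisely the first step. Since~$\Omega$ need not be bounded and~$K_c$ may touch~$\partial\Omega$, the naive estimate~$\mathcal{L}(K_c,\Omega)$ is in general infinite, so one cannot bound cross-interactions involving~$K_c$ by purely geometric quantities depending on~$K$ and~$\Omega$. What rescues the argument is the observation that the singular cross-terms above are not arbitrary but coincide with interactions already appearing in the definition of the localised fractional perimeter~$\Per_s(E_j,\Omega)$, so that the uniform bound~$\Per_s(E_j,\Omega)<c$ is exactly what is needed to control them independently of~$j$.
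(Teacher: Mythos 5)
Your strategy of deriving the corollary from Theorem~\ref{th::compact_finite_Pers} by constructing an $f$ with $\Per_s(E_j,K)\leq f(K)$ for \emph{every} compact $K\subset\R^n$ runs into an unjustified step. The pieces of your decomposition involving $K_\Omega$, and the cross-interactions between $K_c$ and $\Omega$, are indeed captured by $\Per_s(E_j,\Omega)<c$; but the claim that the remaining interactions, those lying entirely in $\Omega^c$, are ``majorised by $\Per_s(M,\R^n)$, which is finite'' is not warranted by the hypotheses. The standing assumption on the external datum is only $\Per_s(M,\Omega)<+\infty$ (this is what is used in the definition of $\mathscr{F}$ and in the proof of Theorem~\ref{th::existence}); $\Per_s(M,\R^n)$ may well be $+\infty$, for instance when $M$ is a half-space. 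Consequently $\mathcal{L}(M\cap K_c,\,M^c\cap\Omega^c)$ has no a priori bound once $K$ extends beyond $\overline\Omega$, and a uniform bound on $\Per_s(E_j,K)$ over all compacts cannot be obtained without additional assumptions on $M$.

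The detour is also unnecessary: since $\Omega$ is an open bounded Lipschitz domain (standing hypothesis of Section~\ref{sec::massari}), $\overline\Omega$ is compact and $\Per_s(E_j,\overline\Omega)=\Per_s(E_j,\Omega)<c$ directly, because $\partial\Omega$ is Lebesgue-null. Running the argument inside the proof of Theorem~\ref{th::compact_finite_Pers} for the single compact $K:=\overline\Omega$, namely the uniform bound on $\|\chi_{E_j}\|_{H^{s/2}(\overline\Omega)}$, weak compactness, and the compact embedding into $L^2(\Omega)$, already yields a subsequence with $\chi_{E_{j_l}}\to\chi_E$ in $L^1(\Omega)$; no decomposition, and no assumption on $M$ outside $\Omega$, is required. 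The rest of your argument is fine in substance: define $E$ globally by $E:=(E\cap\Omega)\cup(M\setminus\Omega)$, so that $E\setminus\Omega=M\setminus\Omega$ by construction, and note that since every $E_{j_l}$ agrees with $E$ on $\Omega^c$, the $L^1(\Omega)$ convergence upgrades to $L^1_{\loc}(\R^n)$ convergence, whence $\Per_s(E,\Omega)\le\liminf_l\Per_s(E_{j_l},\Omega)\le c$ by lower semicontinuity.
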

	
	
	We can now complete the proof of Theorem~\ref{th::existence}.
	
	\begin{proof}[Proof of Theorem~\ref{th::existence}]
		Set~$m:=\inf\{\mathscr{J}^H_{\Omega,s}(F): F\in\mathscr{F}\}$. We show that~$m$ is finite.
		
		Notice that~$M\in\mathscr{F}$, so that
		$$m\leq\mathscr{J}^H_{\Omega,s}(M)\leq \Per_s(M,\Omega)+\int_{\Omega} H(x)\, dx<+\infty.$$
		Moreover, for every~$F\in\mathscr{F}$, we have that
		$$ \mathscr{J}^H_{\Omega,s}(F)\geq -\left|\int_{E\cap \Omega} H(x)\, dx\right|\geq-\norm{H}_{L^1(\Omega)}>-\infty.$$
		
		Now, consider a minimizing sequence~$\{E_j\}_j\subseteq\mathscr{F}$ such that~$\mathscr{J}^H_{\Omega,s}(E_j)\to m$. Therefore, the sequence~$\{\mathscr{J}^H_{\Omega,s}(E_j)\}_j$ is bounded in~$\R$ by a constant~$C>0$ independent of~$j$, and consequently 
		$$ \Per_s(E_j,\Omega) \leq |\mathscr{J}^H_{\Omega,s}(E_j)| + \left| \int_{E_{j_l}\cap \Omega} H(x)\,dx \right| \leq C + \norm{H}_{L^1(\Omega)}=:c_{\Omega,H}.$$
		Thanks to Corollary~\ref{cor::C_Omega_comp}, there exist a subsequence~$\{E_{j_l}\}_l$ and a set~$E\in\mathscr{F}$ such that~$E_{j_l}\to E$ in~$L^1(\Omega)$. Hence, $E_{j_l}\to E$ pointwise, up to a subsequence. 
		
		Moreover, 
		$$|(\chi_{E_{j_l}}-\chi_{E})H|\leq |H|\in L^1(\Omega).$$ 
		Then, using the Dominated Convergence Theorem, we obtain
		\begin{equation*}
			\lim_{l\to+\infty}	\left| \int_{E\cap \Omega}H(x)\,dx - \int_{E_{j_l}\cap \Omega}H(x)\,dx\right| \leq \lim_{l\to+\infty}\int_\Omega \left|\left[\chi_{E_{j_l}}(x)-\chi_{E}(x)\right]H(x)\right| \,dx =0.
		\end{equation*}
		{F}rom this and the lower semi-continuity of~$\Per_s$, we get that
		\begin{equation*}
			\begin{split}
				&		\mathscr{J}^H_{\Omega,s}(E) 
				= \Per_s(E,\Omega)+\int_{E\cap\Omega}H\,dx \\
				&\quad\leq \liminf_{l\to+\infty} \Per_s(E_{j_l},\Omega)+\int_{E_{j_l}\cap\Omega}H\,dx 
				= \liminf_{l\to+\infty} \mathscr{J}^H_{\Omega,s}(E_{j_l}) = m.
			\end{split}
		\end{equation*}
		Hence, $E$ is a minimizer for~$\mathscr{J}^H_{\Omega,s}$.
	\end{proof}
	
	\subsection{Regularity of sets with prescribed non-local mean curvature}\label{gfyeui65748123456wfgh007686}
	
	Now, we apply the almost minimizers theory to the non-local Massari's Problem to deduce regularity for sets with prescribed non-local mean curvature. We point out that, in order to achieve this goal, some additional assumption on~$H$ is needed, namely we require~$H\in L^\infty(B_1)$.
	
	\begin{proof}[Proof of Theorem~\ref{th::regularity}]
		Let~$F$ be such that~$F\setminus B_1=E\setminus B_1$. {F}rom the minimality of~$E$ for~$\mathscr{J}^H_{B_1,s}$, we see that
		\begin{align*}
			&\Per_s(E,B_1)-\Per_s(F,B_1)
			= \mathscr{J}^H_{B_1,s}(E)-\int_{E\cap B_1} H\, dx - \mathscr{J}^H_{B_1,s}(F) + \int_{F\cap B_1} H\,dx\\
			&\qquad\qquad\leq \int_{F\cap B_1} H\, dx-\int_{E\cap B_1} H\, dx
			\leq \int_{(E\Delta F)\cap B_1} |H|\, dx \\
			&\qquad\qquad \leq \norm{H}_{L^\infty(B_1)} |E\Delta F|.
		\end{align*}
		Hence, $E$ is~$\norm{H}_{L^\infty(B_1)}$-minimal for~$\Per_s$ in~$B_1$.
		{F}rom this and Theorem~\ref{th::holder_reg_almost_min}, we deduce
		the claim of Theorem~\ref{th::regularity}.
	\end{proof}
	
	
	\appendixtitleon
	\appendixtitletocon
	\begin{appendices}
		
		\section{Proof of Proposition~\ref{prop::conv_almost_min}} \label{sec::prop_conv_almost_min}
		
		Let us assume that~$\Omega$ is smooth.
		Let~$F$ be such that~$F\setminus \Omega=E\setminus \Omega$ and define 
		\begin{equation} \label{eq::competitor_choice}
			F_k:=(F\cap \Omega)\cup(E_k\setminus \Omega),\qquad {\mbox{for all }} k\in\N.
		\end{equation}
		By construction, $F_k\setminus \Omega=E_k\setminus \Omega$. 
		Thus, by the almost minimality of~$E_k$,
		\begin{equation} \label{eq::almost_min_conv1}
			\Per_s(E_k,\Omega)\leq \Per_s(F_k,\Omega)+\Lambda_k|E_k\Delta F_k| = \Per_s(F_k,\Omega)+\Lambda_k|(E_k\Delta F)\cap\Omega|.
		\end{equation}
		
		We now show that we can rewrite~\eqref{eq::almost_min_conv1} as
		\begin{equation} \label{eq::almost_min_conv1_2}
			\Per_s(E_k,\Omega)\leq\Per_s(F,\Omega)+\Lambda_k|(E_k\Delta F)\cap\Omega| +b_k,
		\end{equation}
		where~$b_k$ is an infinitesimal sequence as~$k\to+\infty$. 
		
		To check this, we observe that
		\begin{align*}
			&|\Per_s(F,\Omega)-\Per_s(F_k,\Omega)| \\
			=&\big| \mathscr{L}(F\cap \Omega,F^c)+\mathscr{L}(F\cap \Omega^c,F^c\cap \Omega)
			-\mathscr{L}(F_k\cap \Omega,F_k^c) -\mathscr{L}(F_k\cap \Omega^c,F_k^c\cap \Omega)\big|\\
			=&\big| \mathscr{L}(F\cap \Omega,F^c\cap \Omega^c)+\mathscr{L}(F\cap \Omega^c,F^c\cap \Omega) 
			-\mathscr{L}(F\cap \Omega,F_k^c\cap \Omega^c)-\mathscr{L}(F_k\cap \Omega^c,F^c\cap \Omega)\big|.
		\end{align*} 
		Writing explicitly the functional~$\mathcal{L}$, we obtain that
		\begin{align*}
			&|\Per_s(F,\Omega)-\Per(F_k,\Omega)| \\
			&\qquad =\left| \int_{F\cap \Omega}\int_{\Omega^c}\frac{\chi_{F^c}(x)-\chi_{F_k^c}(x)}{|x-y|^{n+s}}\,dx\,dy
			+\int_{F^c\cap \Omega}\int_{\Omega^c}\frac{\chi_{F}(x)-\chi_{F_k}(x)}{|x-y|^{n+s}}\,dx\,dy\right|\\
			&\qquad \leq \int_{F\cap \Omega}\int_{\Omega^c}\frac{|\chi_{F^c}(x)-\chi_{F_k^c}(x)|}{|x-y|^{n+s}}\,dx\,dy
			+\int_{F^c\cap \Omega}\int_{\Omega^c}\frac{|\chi_{F}(x)-\chi_{F_k}(x)|}{|x-y|^{n+s}}\,dx\,dy\\
			&\qquad=\int_{F\cap \Omega}\int_{\Omega^c} \frac{\chi_{F^c\Delta F_k^c}(x)}{|x-y|^{n+s}}\,dx\,dy+\int_{F^c\cap \Omega}\int_{\Omega^c}\frac{\chi_{F\Delta F_k}(x)}{|x-y|^{n+s}}\,dx\,dy.
		\end{align*} 
		Recalling that, for every sets~$A$ and~$B$, we have that~$A^c\Delta B^c=A\Delta B$, and using that~$F=E$
		and~$F_k=E_k$ in~$\Omega^c$, we arrive at
		\begin{equation*}
			\begin{split}
				&	|\Per_s(F,\Omega)-\Per(F_k,\Omega)| 
				\leq\int_{\Omega}\int_{\Omega^c}\frac{\chi_{F\Delta F_k}(x)}{|x-y|^{n+s}}\,dx\,dy\\
				&\qquad\qquad=\mathscr{L}(\Omega,(F\Delta F_k)\cap \Omega^c)=\mathscr{L}(\Omega,(E\Delta E_k)\cap \Omega^c)=:b_k.
			\end{split}
		\end{equation*}
		Using this into~\eqref{eq::almost_min_conv1}, we obtain the expression in~\eqref{eq::almost_min_conv1_2}.
		
		Hence, to complete the proof of~\eqref{eq::almost_min_conv1_2}, it remains to check that~$b_k\to0$ as~$k\to+\infty$. To this aim, let 
		\begin{align*}
			&\Omega_\rho:=\{x\in\R^n \mbox{ s.t. }\dist(x,\Omega)<\rho\}\\
			{\mbox{and}}\qquad &a_k(\rho):=\haus{n-1}\left((E\Delta E_k)\cap\partial \Omega_\rho\right).
		\end{align*}
		Then, for any~$R>0$ we have that
		\begin{equation}\label{eq::b_k}
			b_k = \mathscr{L}\left( \Omega,(E\Delta E_k)\cap( \Omega_R\setminus \Omega) \right)+\mathscr{L}(\Omega,(E\Delta E_k)\cap \Omega_R^c).
		\end{equation}
		
		We now check that
		\begin{equation}\label{6785943zaqwsxderfvb}
			\lim_{k\to+\infty}\mathscr{L}\left( \Omega,(E\Delta E_k)\cap( \Omega_R\setminus \Omega)\right)=0.
		\end{equation}
		Indeed, by the co-area formula, we have that
		\begin{equation*} 
			\begin{split}
				&\mathscr{L}\left(\Omega,(E\Delta E_k)\cap( \Omega_R\setminus \Omega)\right)
				= \int_{\Omega_R\setminus\Omega}\int_\Omega \frac{\chi_{E\Delta E_k}(x)}{|x-y|^{n+s}}\,dx\,dy\\
				&\qquad =\int_{0}^{R} \int_{\partial\Omega_\rho}\int_\Omega \frac{\chi_{E\Delta E_k}(x)}{|x-y|^{n+s}} dy\,d\haus{n-1}(x)\,d\rho\\
				&\qquad =\int_{0}^{R} \int_{\partial\Omega_\rho}\int_{\Omega-x} \frac{\chi_{E\Delta E_k}(x)}{|z|^{n+s}} dz\,d\haus{n-1}(x)\,d\rho.\\
			\end{split}
		\end{equation*}
		By the definition of~$\Omega_\rho$, we have that~$|x-y|\geq\rho$, for every~$x\in\partial\Omega_\rho$ and~$y\in\Omega$, thus we deduce that~$\Omega-x\subseteq B_\rho^c$, for any~$x\in\partial\Omega_\rho$. 
		
		Consequently,
		\begin{equation} \label{eq::ak_rho_estimate}
			\begin{split}
				&\mathscr{L}\left(\Omega,(E\Delta E_k)\cap( \Omega_R\setminus \Omega)\right)
				\leq \int_{0}^{R} \int_{\partial\Omega_\rho}\int_{B_\rho^c} \frac{\chi_{E\Delta E_k}(x)}{|z|^{n+s}} dz\,d\haus{n-1}(x)\,d\rho\\
				&\qquad \leq \frac{c_n}{s}  \int_{0}^{R} \int_{\partial\Omega_\rho} \frac{\chi_{E\Delta E_k}(x)}{\rho^s}d\haus{n-1}(x)\,d\rho = \frac{c_n}{s}\int_{0}^{R}\frac{a_k(\rho)}{\rho^s}d\rho.
			\end{split}
		\end{equation}
		
		Now, observe that~$a_k(\rho)\to0$, as~$k\to+\infty$, and
		\begin{equation*}
			\frac{a_k(\rho)}{\rho^s} \leq \frac{\haus{n-1}(\partial \Omega_\rho)}{\rho^s} \leq \frac{\haus{n-1}(\partial \Omega_R)}{\rho^s}\in L^1\left((0,R)\right).
		\end{equation*}
		Therefore, thanks to the Dominated Convergence Theorem and~\eqref{eq::ak_rho_estimate},
		\begin{equation*}
			\lim_{k\to+\infty}	 \mathscr{L}\left(\Omega,(E\Delta E_k)\cap( \Omega_R\setminus \Omega)\right)\leq\lim_{k\to+\infty} c_n\int_{0}^{R}\frac{a_k(\rho)}{\rho^s}d\rho = 0,
		\end{equation*}
		which establishes~\eqref{6785943zaqwsxderfvb}.
		
		Besides, the term~$\mathscr{L}\left(\Omega,(E\Delta E_k)\cap \Omega_R^c\right)$ in~\eqref{eq::b_k} is bounded uniformly in~$k$. Indeed, since $\Omega$ is bounded, there exists $M>0$ such that $\Omega\subseteq B_M$. Then, for every $x\in\Omega_R^c$, we have that
		\begin{equation*}
			\mbox{dist}(x,B_M) \geq  \mbox{dist}(x,\Omega) - \mbox{diam}(B_M) \geq R-2M\geq R/2,
		\end{equation*}
		whenever $R$ is large enough. Hence, $\Omega_R^c\subseteq B_{M+R/2}^c$. This entails that
		\begin{equation}  \label{eq::bk_to_0_2}
			\begin{split}
				&\mathscr{L}\left(\Omega,(E\Delta E_k)\cap \Omega_R^c\right)
				= \int_\Omega \int_{\Omega_R^c} \frac{\chi_{E\Delta E_k}(x)}{|x-y|^{n+s}}dxdy\\
				&\qquad\leq \int_{B_M} \int_{B_{M+R/2}^c} \frac{dxdy}{|x-y|^{n+s}}\\
				&\qquad\leq C \int_{R/2}^{+\infty} \frac{d\rho}{\rho^{1+s}} \leq \frac{C}{R^{s}},
			\end{split}
		\end{equation}
		for some constant $C>0$ depending only on $n$, $s$, and $M$, and possibly changing from line to line.
		
		Thus, from~\eqref{6785943zaqwsxderfvb} and~\eqref{eq::bk_to_0_2}, we deduce that 
		\begin{equation*}
			0\leq\limsup_{k\to+\infty} b_k \leq\lim_{R\to+\infty} \frac{C}{R^{s}} = 0,
		\end{equation*}
		which shows that~$b_k$ is infinitesimal and completes the proof of~\eqref{eq::almost_min_conv1_2}.
		
		Using the lower semi-continuity of the fractional perimeter and~\eqref{eq::almost_min_conv1_2}, we obtain that
		\begin{equation} \label{eq::almost_min_conv3}
			\begin{split}
				&	\Per_s(E,\Omega)
				\leq\liminf_{k\to+\infty}\Per_s(E_k,\Omega) \leq \limsup_{k\to+\infty}\Per_s(E_k,\Omega)\\
				&\qquad\leq \limsup_{k\to+\infty} \left(\Per_s(F,\Omega)+\Lambda_k|(E_k\Delta F)\cap\Omega| +b_k\right)\\&\qquad\leq \limsup_{k\to+\infty} \left(\Per_s(F,\Omega)+\Lambda_k|(E_k\Delta F)\cap\Omega|\right).
			\end{split}
		\end{equation}
		Now, an easy computation shows that
		\begin{equation*}
			\begin{split}
				& \lim_{k\to+\infty}\left| |(E_k\Delta F)\cap\Omega|-|E\Delta F|\right| \leq \lim_{k\to+\infty} \left|\left( (E_k\Delta F)\Delta(E\Delta F)\right) \cap\Omega \right|\\
				&\qquad\qquad \leq  \lim_{k\to+\infty}\left|(E_k\Delta E)\cap\Omega \right|=0.
			\end{split}
		\end{equation*}
		Therefore, this and~\eqref{eq::almost_min_conv3} give that
		\begin{equation*} 
			\Per_s(E,\Omega) \leq \Per_s(F,\Omega)+\Lambda|E\Delta F|,
		\end{equation*}
		which shows that~$E$ is~$\Lambda$-minimal in~$\Omega$.
		
		We observe that, choosing~$F:=E$ in~\eqref{eq::competitor_choice}, we obtain that
		$$\limsup_{k\to+\infty}\Per_s(E_k,\Omega) \leq\Per_s(E,\Omega),$$ 
		which gives the claim in~\eqref{lapers} when~$\Omega$ has smooth boundary.
		
		Furthermore, if~$\Omega$ is not smooth, by interior smooth approximation
		there exists a sequence~$\{\Omega^h\}_h$ of smooth sets such that~$\Omega^h\subseteq\Omega$, for every~$h$, and~$\Omega^h\to\Omega$ locally in~$\R^n$. Then, the desired result follows from the the continuity of the~$s$-perimeter with respect to the~$L^1_{\loc}$-convergence of the domain.
		
		
		\section{A general corollary of Theorem~\ref{th::unif_dens_estimates}}\label{sec::unif_density_estimates_cor}
		
		In this section, we provide a general corollary of the uniform density estimates in Theorem~\ref{th::unif_dens_estimates}, since we think that it is interesting in its own right. Also observe that from this result we deduce straight-forwardly the analogous for almost minimal sets as stated in Corollary~\ref{cor::boundary_conv}.
		
		\begin{cor} \label{cor::boundary_conv_general}
			Let~$\{E_k\}_k$ be a sequence of sets in~$\R^n$ such that~$E_k\to E$ in~$L^1_{\loc}(\R^n)$, for some set~$E$ of finite~$s$-perimeter.
			
			Suppose that
			there exist constants~$r_0>0$ and~$c_0\in(0,1)$
			such that, for any~$k\in\N$ and for any~$x_k\in(\partial E_k)\cap\Omega$ and~$r\in(0,\min\{r_0,\dist(x_k,\partial\Omega)\})$,
			\begin{equation}\label{eq::unif_dens_estimatesBIS}
				c_0r^n\leq |E_k\cap B_r(x_k)|\leq (1-c_0)r^n.
			\end{equation} 
			
			Then, for every~$\epsilon>0$ and for every compact set~$K$ of~$\Omega$, there exists~$k_0\in\N$ such that, for all~$k\geq k_0$,
			\begin{eqnarray}
				\label{eq::E_k_close_E}&\partial E_k\cap K \subseteq \mathscr{U}_\epsilon(\partial E)\cap K\\
				\label{eq::E_close_E_k}\mbox{and}&\partial E\cap K \subseteq \mathscr{U}_\epsilon(\partial E_k)\cap K.
			\end{eqnarray}
			
		\end{cor}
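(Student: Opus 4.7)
The plan is to establish both inclusions by a contradiction argument, with the following dichotomy lemma at the core: \emph{if $F \subset \R^n$ and $B_\rho(y) \cap \partial F = \varnothing$, where $\partial F$ is the measure theoretic boundary normalization adopted in the paper, then $|F \cap B_\rho(y)| \in \{0,\,\omega_n \rho^n\}$.} To prove this, I would cover $B_\rho(y)$ by balls $B_{r_z}(z)$ with $z \in B_\rho(y)$, along each of which $|F \cap B_{r_z}(z)|$ equals either $0$ or $\omega_n r_z^n$ (since $z \notin \partial F$), and assemble these into two open sets $U$ (the ``$F$-full'' balls) and $V$ (the ``$F$-empty'' balls) whose union contains $B_\rho(y)$. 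These sets must be disjoint --- any point of $U \cap V$ would lie in an open set that is simultaneously of full and of zero $F$-measure --- so by connectedness $B_\rho(y)$ lies entirely in $U$ or entirely in $V$, proving the claim.

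For the first inclusion~\eqref{eq::E_k_close_E}, I would argue by contradiction, picking $x_k \in \partial E_k \cap K$ with $\dist(x_k, \partial E) \geq \epsilon$ along a subsequence and, by compactness of $K$, extracting a further subsequence with $x_k \to x_*$. For $k$ large we have $B_{\epsilon/2}(x_*) \subset B_\epsilon(x_k)$, so $B_{\epsilon/2}(x_*) \cap \partial E = \varnothing$, and the dichotomy lemma applied to $E$ yields, say, $|E^c \cap B_{\epsilon/2}(x_*)| = 0$. The $L^1_{\loc}$-convergence then forces $|E_k^c \cap B_{\epsilon/2}(x_*)| \to 0$. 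On the other hand, shrinking to $B_{\epsilon/4}(x_k) \subset B_{\epsilon/2}(x_*)$ and invoking the uniform density estimate~\eqref{eq::unif_dens_estimatesBIS} at $x_k \in \partial E_k$ (which is applicable once $\epsilon/4$ is smaller than $r_0$ and $\dist(K,\partial\Omega)$) produces the lower bound $|E_k^c \cap B_{\epsilon/2}(x_*)| \geq c_0 (\epsilon/4)^n$, contradicting the limit $0$.

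For the second inclusion~\eqref{eq::E_close_E_k}, I would again argue by contradiction, assuming the existence of $x \in \partial E \cap K$ with $\dist(x, \partial E_k) \geq \epsilon$ for infinitely many $k$. For such $k$ the ball $B_\epsilon(x)$ avoids $\partial E_k$, so the dichotomy lemma applied to $E_k$ gives $|E_k \cap B_\epsilon(x)| \in \{0,\,\omega_n \epsilon^n\}$; along a sub-subsequence the same value occurs infinitely often, and then $L^1_{\loc}$-convergence transfers that value to $|E \cap B_\epsilon(x)|$, which contradicts $x \in \partial E$. Note that this direction does not even require the density estimates, only the $L^1_{\loc}$-convergence and the choice of representative.

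The main obstacle is really the dichotomy lemma: it is the geometric-measure-theoretic ingredient that lets us convert the topological statement ``$B_\rho$ avoids $\partial F$'' into the measure-theoretic statement ``$F$ fills or misses $B_\rho$'', without which the step from ``no boundary points nearby'' to a volume contradiction cannot be closed. Once the lemma is in place the two inclusions are standard contradiction-plus-compactness arguments, so the bulk of the work is in setting up that lemma carefully.
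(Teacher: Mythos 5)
Your proof is correct. For the first inclusion~\eqref{eq::E_k_close_E}, your argument is essentially the paper's: contradiction, density estimate for $E_k$ at $x_k\in\partial E_k$, incompatibility with $L^1_{\loc}$ convergence. You do add two useful refinements that the paper leaves tacit, namely the explicit dichotomy lemma (``a ball avoiding $\partial F$ is either $F$-full or $F$-empty up to measure zero'') and the extraction of a limit point $x_*$ of the sequence $\{x_k\}$, both of which make the case-splitting rigorous where the paper just writes $E_k\cap B_{\epsilon_0/2}(x_k)\subset E_k\setminus E$ as though only one of the two dichotomy cases could occur.

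For the second inclusion~\eqref{eq::E_close_E_k}, your route is genuinely different and in fact repairs a gap in the paper's argument. The paper proves~\eqref{eq::E_close_E_k} by invoking the density estimate~\eqref{eq::unif_dens_estimatesBIS} at a point of $\partial E$, i.e.\ applied to the limit set $E$ rather than to the $E_k$'s; but the corollary's hypotheses only grant density estimates for the $E_k$'s, so as written the paper tacitly assumes that $E$ inherits them (true in the intended application by Proposition~\ref{prop::conv_almost_min} plus Theorem~\ref{th::unif_dens_estimates}, but not part of the statement of this general corollary). Your version applies the dichotomy lemma to $E_k$ only, passes the value $|E_k\cap B_\epsilon(x)|\in\{0,\omega_n\epsilon^n\}$ to $|E\cap B_\epsilon(x)|$ by $L^1_{\loc}$ convergence, and gets the contradiction directly from the normalization $x\in\partial E$, with no density estimate for $E$ needed. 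This is cleaner and is the argument that actually matches the hypotheses as stated. The only small point worth spelling out in your dichotomy lemma is the final reduction: once $B_\rho(y)\subset U$ (the ``$F$-full'' region), one still needs a Lindel\"of countable subcover to sum the null sets $F^c\cap B_{r_z}(z)$ and conclude $|F^c\cap B_\rho(y)|=0$; this is routine but worth a sentence.
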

		
		\begin{proof} 
			We start by
			proving~\eqref{eq::E_k_close_E}. Let~$\epsilon_0>0$ and~$r>0$ be such that~$B_r\comp\Omega$. 
			
			Arguing by contradiction, let us suppose that there exists a (possibly relabeled) sequence
			of points~$x_k\in\partial E_k\cap \overline{B}_r$, with~$\dist(x_k,\partial E)>\epsilon_0$. 
			Then,
			$$ E_k\cap B_{\epsilon_0/2}(x_k) \subseteq E_k\setminus E\not=\varnothing.$$
			
			Moreover, assume that~$\epsilon_0$ is so small that~$B_{\epsilon_0/2}(x_k) \subseteq\Omega$. Therefore, the uniform density estimates in~\eqref{eq::unif_dens_estimatesBIS} give that
			$$ |E_k\cap B_{\epsilon_0/2}(x_k)| \geq c_02^{-n} \epsilon_0^n.$$ 
			Since~$x_k\in\overline{B}_r$, we have that~$B_{\epsilon_0/2}(x_k)\subseteq B_R$, with~$R:=r+\epsilon_0$, and therefore
			$$ |(E\Delta E_k)\cap B_R| \geq  |(E_k\setminus E)\cap B_R| \geq |E_k\cap B_{\epsilon_0/2}(x_k)| \geq c_02^{-n} \epsilon_0^n,$$
			against the~$L^1_{\loc}$-convergence.
			
			This proves~\eqref{eq::E_k_close_E} and
			we now show~\eqref{eq::E_close_E_k}. To this end, let again~$\epsilon_0>0$ and~$r>0$ be such that~$B_r\comp\Omega$, and assume by contradiction that there exists a (possibly relabeled)
			sequence of points~$x_k\in\partial E\cap \overline{B}_r$, with~$\dist(x_k,\partial E_k)>\epsilon_0$. This reads
			$$ E\cap B_{\epsilon_0/2}(x_k) \subseteq E\setminus E_k\not=\varnothing.$$
			
			Furthermore, assuming that~$B_{\epsilon_0/2}(x_k) \subseteq\Omega$, it follows from~\eqref{eq::unif_dens_estimatesBIS} that
			$$ |E\cap B_{\epsilon_0/2}(x_k)| \geq c_02^{-n} \epsilon_0^n.$$
			
			As shown above, since~$x_k\in\overline{B}_r$, we have that~$B_{\epsilon_0/2}(x_k)\subseteq B_R$, with~$R:=r+\epsilon_0$. It follows that
			$$ |(E\Delta E_k)\cap B_R| \geq  |(E\setminus E_k)\cap B_R| \geq |E\cap B_{\epsilon_0/2}(x_k)| \geq c_02^{-n} \epsilon_0^n,$$
			which contradicts the fact that~$E_k\to E$ in~$L^1_{\loc}$, concluding the proof.
			
		\end{proof}
		
		\section{Properties of the perturbation~$T_\epsilon$} \label{sec::perturbation_properties}
		Here, we prove some properties of the perturbation~$T_\epsilon$ introduced in Section~\ref{sec::EL_ineq}. To this purpose, recall that~$r_{x,\epsilon} := \dist(x, \partial V_{R,\epsilon})$, and 
		$$ T_\epsilon(x):=-x-2Re_n+2(R+\mbox{d}_\epsilon(x))\frac{x+Re_n}{|x+Re_n|},$$
		where~$\mbox{d}(x):=R^{-1}(1-|x'|^2)_+$, and~$\mbox{d}_\epsilon(x)=\epsilon^2\mbox{d}(x/\epsilon)$. 
		
		Then, we have the following:
		
		\begin{prop}\label{prop::perturbation_properties}
			For every~$\epsilon\in\left(0,\frac{1}{3n}\right)$, and
			for all~$x,y\in V_{2R,\epsilon}\setminus V_{0,\epsilon}$, it holds that
			\begin{equation} \label{eq::perturb_prop1}
				\|DT_\epsilon(x)-\mathcal{R}_x\|\leq\frac{2}{R}(3nr_{x,\epsilon}+2|x'|),
			\end{equation}
			and
			\begin{equation} \label{eq::perturb_prop2}
				\left| \frac{|T_\epsilon(x)-T_\epsilon(y)|}{|x-y|}-1\right|\leq
				\frac{2}{R}\max\big\{3nr_{x,\epsilon}+2|x'|,\,3nr_{y,\epsilon}+2|y'|\big\}.
			\end{equation}
			
			Moreover, we have the inclusions
			\begin{equation}\label{eq::perturb_inclusions}
				A_\epsilon^-\subseteq B_{2\epsilon}\qquad {\mbox{and}}\qquad B_{\epsilon^2/(2R)}\setminus E\subseteq A_\epsilon\subseteq B_{8\epsilon}.
			\end{equation}
		\end{prop}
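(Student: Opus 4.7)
The proof proceeds in three parts, tackling each conclusion in turn. The derivative estimate~\eqref{eq::perturb_prop1} is the delicate one; the others follow once it is in hand.

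For~\eqref{eq::perturb_prop1}, I would begin with a direct computation of $DT_\epsilon$. Writing $w(x):=x+Re_n$, $\nu(x):=w(x)/|w(x)|$, and $g(x):=R+\mbox{d}_\epsilon(x)$, differentiation of $T_\epsilon(x)=-x-2Re_n+2g(x)\nu(x)$ gives
$$DT_\epsilon(x)=-I+2\nu(x)\otimes\nabla\mbox{d}_\epsilon(x)+\frac{2g(x)}{|w(x)|}\bigl(I-\nu(x)\otimes\nu(x)\bigr).$$
Comparing with $\mathcal{R}_x$, the key observation is that the coefficient $2g(x)/|w(x)|$ equals $2$ on $\partial V_{R,\epsilon}$, so the discrepancy is controlled by $|g(x)-|w(x)||/|w(x)|$, which in turn is comparable to $r_{x,\epsilon}/R$ by a radial-projection argument: the tangential motion required to bring a generic $x\in V_{2R,\epsilon}\setminus V_{0,\epsilon}$ onto $\partial V_{R,\epsilon}$ is what produces the dimensional factor $3n$. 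Meanwhile $\mbox{d}_\epsilon(x)=R^{-1}(\epsilon^2-|x'|^2)_+$ gives $|\nabla\mbox{d}_\epsilon(x)|\leq 2|x'|/R$, accounting for the $|x'|$ term in the claimed bound.

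The bilipschitz estimate~\eqref{eq::perturb_prop2} follows by an integrated mean-value argument. For $x,y\in V_{2R,\epsilon}\setminus V_{0,\epsilon}$, setting $\xi(t):=(1-t)y+tx$ and writing $T_\epsilon(x)-T_\epsilon(y)=\int_0^1 DT_\epsilon(\xi(t))(x-y)\,dt$, I would compare with the isometry $\mathcal{R}_{\xi(t)}(x-y)$, whose Euclidean norm equals $|x-y|$. The reverse triangle inequality combined with~\eqref{eq::perturb_prop1} yields
$$\Bigl||T_\epsilon(x)-T_\epsilon(y)|-|x-y|\Bigr|\leq|x-y|\sup_{t\in[0,1]}\|DT_\epsilon(\xi(t))-\mathcal{R}_{\xi(t)}\|.$$
To convert the supremum over the segment into pointwise data at $x$ and $y$, I would use convexity, namely $|\xi(t)'|\leq\max\{|x'|,|y'|\}$, together with a Lipschitz comparison of $r_{\xi(t),\epsilon}$ to $\max\{r_{x,\epsilon},r_{y,\epsilon}\}$, which also absorbs the residual contribution from $|x-y|$.

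The inclusions in~\eqref{eq::perturb_inclusions} are purely geometric. Since $E\supset B_{2R}(-2Re_n)$ by the interior tangent ball hypothesis, and $V_{R,\epsilon}\subset B_{R+\epsilon^2/R}(-Re_n)$ from the bound $\mbox{d}_\epsilon\leq\epsilon^2/R$, a Pythagoras computation restricted to $|x'|<\epsilon$ (the support of $\mbox{d}_\epsilon$) gives $A_\epsilon^-\subset B_{2\epsilon}$. For $x\in B_{2\epsilon}\cap V_{R,\epsilon}$, the identity $|T_\epsilon(x)-x|=2(g(x)-|w(x)|)\leq 2(2\epsilon+\epsilon^2/R)\leq 6\epsilon$ (valid for $\epsilon<R$) then implies $A_\epsilon\subset B_{8\epsilon}$. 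Finally, for $x\in B_{\epsilon^2/(2R)}$ one checks directly that $|x+Re_n|\leq R+\epsilon^2/(2R)\leq R+\mbox{d}_\epsilon(x)$, hence $x\in V_{R,\epsilon}$, so that $B_{\epsilon^2/(2R)}\setminus E\subset A_\epsilon^-\subset A_\epsilon$. The main obstacle will be bookkeeping the constants in~\eqref{eq::perturb_prop1}: decomposing $DT_\epsilon-\mathcal{R}_x$ into the ``tangential error'' $2\nu\otimes\nabla\mbox{d}_\epsilon$ and the ``radial error'' $((2g/|w|)-2)(I-\nu\otimes\nu)$ is transparent, but producing the explicit factor $3n$ (rather than an abstract dimensional $C(n)$) requires careful use of the geometry of the perturbed surface $\partial V_{R,\epsilon}$.
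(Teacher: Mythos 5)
Your proof follows essentially the same route as the paper: compute $DT_\epsilon(x)$ explicitly, split $DT_\epsilon(x)-\mathcal R_x$ into a radial error proportional to $\bigl(\tfrac{2g}{|w|}-2\bigr)(I-\nu\otimes\nu)$ and a tangential error $2\nu\otimes\nabla\mathrm{d}_\epsilon$, deduce the bi-Lipschitz estimate by integrating along the segment, and verify the inclusions geometrically. One small improvement in your argument: for $A_\epsilon\subset B_{8\epsilon}$ you use the exact identity $T_\epsilon(x)-x=2\bigl(g(x)-|w(x)|\bigr)\nu(x)$ and bound directly, which is both tighter and more transparent than the paper's detour through a bound on $\|DT_\epsilon\|$ (the latter requires also controlling $|T_\epsilon(0)|$, a point the paper glosses over). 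Two minor caveats: your attribution of the factor $3n$ to "tangential motion" is off — in the paper it arises from the matrix norm of $I-\nu\otimes\nu$ (and the identity matrix), not from the comparison of $|g-|w||$ with $r_{x,\epsilon}$, which contributes no dimensional factor; and the step $r_{\xi(t),\epsilon}\lesssim\max\{r_{x,\epsilon},r_{y,\epsilon}\}$ via $1$-Lipschitzness of $\mathrm{dist}(\cdot,\partial V_{R,\epsilon})$ actually produces an extra $|x-y|$ that needs to be absorbed, a point the paper itself leaves implicit.
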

		
		\begin{proof}
			Let~$x\in V_{2R,\epsilon}\setminus V_{0,\epsilon}$. Differentiating~$T_\epsilon$ at~$x$, we obtain that
			\begin{equation} \label{eq::DT_estimate1}
				\begin{split}
					\partial_j(T_\epsilon)_i(x) 
					&= \left(2\frac{R+\mbox{d}_\epsilon(x)}{|x+Re_n|}-1\right)\delta_{i,j} - 2 \frac{R+\mbox{d}_\epsilon(x)}{|x+Re_n|^3}(x+Re_n)_i(x+Re_n)_j \\
					&\qquad\quad+2\epsilon(\partial_j \mbox{d})\left(\frac{x}{\epsilon}\right) \frac{(x+Re_n)_i}{|x+Re_n|},\quad \mbox{for all },j\in\{1,\dots,n\}.
				\end{split}
			\end{equation}
			Since~$|x+Re_n|=R+\mbox{d}_\epsilon(x)\pm r_{x,\epsilon}$, this entails that
			\begin{equation}\label{eq::DT_estimate1_2}
				\begin{split}
					\partial_j(T_\epsilon)_i(x) 
					&= (\mathcal{R}_x)_i + 2\left(\frac{R+\mbox{d}_\epsilon(x)}{R+\mbox{d}_\epsilon(x)\pm r_{x,\epsilon}}-1\right)\left(\delta_{i,j}-\frac{(x+Re_n)_i}{|x+Re_n|}\frac{(x+Re_n)_j}{|x+Re_n|}\right)\\
					&\quad\qquad+2\epsilon(\partial_j \mbox{d})\left(\frac{x}{\epsilon}\right) \frac{(x+Re_n)_i}{|x+Re_n|},
				\end{split}
			\end{equation}
			Recalling that for every couple of vectors~$v$, $w\in\R^n$, $v\otimes w$ is defined as the~$n\times n$ matrix with entries~$\{v_iw_j\}_{i,j\in\{1,\dots,n\}}$, we rewrite~\eqref{eq::DT_estimate1_2} in the operatorial form as
			\begin{equation} \label{eq::DT_estimate2}
				\begin{split}
					DT_\epsilon(x) 
					&= \mathcal{R}_x + 2\left(\frac{R+\mbox{d}_\epsilon(x)}{R+\mbox{d}_\epsilon(x)\pm r_{x,\epsilon}}-1\right)\left(\mbox{Id}-\frac{x+Re_n}{|x+Re_n|}\otimes\frac{x+Re_n}{|x+Re_n|}\right)\\
					&\quad\qquad+2\epsilon\frac{x+Re_n}{|x+Re_n|}\otimes (\nabla \mbox{d})\left(\frac{x}{\epsilon}\right),
				\end{split}
			\end{equation}
			where~$\mathcal{R}_x$ is defined as in~\eqref{eq::def_reflex}. 
			
			Notice that we also have 
			\begin{eqnarray}
				\label{eq::DT_estimate3} &&\left| \frac{R+\mbox{d}_\epsilon(x)}{R+\mbox{d}_\epsilon(x)\pm r_{x,\epsilon}}-1\right| =  \frac{r_{x,\epsilon}}{R+\mbox{d}_\epsilon(x)\pm r_{x,\epsilon}} \\
				\label{eq::DT_estimate4} \text{and }	&&|\nabla \mbox{d}(x)| \leq 2\frac{|x'|}{R}.
			\end{eqnarray}
			
			Let us denote by~$\|\cdot\|$ the Euclidean norm for linear operators. Then, \eqref{eq::DT_estimate2}, together with~\eqref{eq::DT_estimate3} and~\eqref{eq::DT_estimate4}, entails that
			$$ \|DT_\epsilon(x) - \mathcal{R}_x \| \leq 3n\frac{r_{x,\epsilon}}{R+\mbox{d}_\epsilon(x)\pm r_{x,\epsilon}} + 4\frac{|x'|}{R} \leq 6n\frac{r_{x,\epsilon}}{R} +  4\frac{|x'|}{R} ,$$
			as long as~$\epsilon<R/2$, proving~\eqref{eq::perturb_prop1}.
			
			As a consequence of~\eqref{eq::perturb_prop1} and the triangle inequality, we also have that
			\begin{equation*}
				\left|\norm{DT_\epsilon(x)}-\norm{\mathcal{R}_x}\right| \leq {2}{R}(3nr_{x,\epsilon}+2|x'|).
			\end{equation*}
			Thus,
			using the fact that~$\mathcal{R}_x$ is an isometry and integrating along the segment~$[x,y]$, for any~$x$, $y\in V_{2R,\epsilon}\setminus V_{0,\epsilon}$, we obtain that
			\begin{equation*}
				\begin{split}
					&|T_\epsilon(x)-T_\epsilon(y)| \geq \left(1- \frac{2}{R}\max\{3nr_{x,\epsilon}+2|x'|,3nr_{y,\epsilon}+2|y'|\}\right)|x-y|\\
					\text{and}\quad&|T_\epsilon(x)-T_\epsilon(y)| \leq \left(1+ \frac{2}{R}\max\{3nr_{x,\epsilon}+2|x'|,3nr_{y,\epsilon}+2|y'|\}\right)|x-y|, 
				\end{split}
			\end{equation*}
			from which~\eqref{eq::perturb_prop2} follows immediately.
			
			In order to prove~\eqref{eq::perturb_inclusions}, recall that, by construction,
			$$ A_\epsilon^- \subseteq V_{R,\epsilon}\setminus B_{2R}(-2Re_n).$$
			In other words, for every~$x\in A_\epsilon^-$, we have that
			\begin{equation*}
				|x+Re_n| \leq R+\epsilon d\left(\frac{x}{\epsilon}\right) \qquad\text{and} \qquad |x+2Re_n|\geq2R.
			\end{equation*}
			These inequalities yield that 
			\begin{equation*}
				|x'|^2+x_n^2+2Rx_n\leq0\qquad{\mbox{and}}\qquad
				|x'|^2+x_n^2+4Rx_n\geq0 .
			\end{equation*}
			Hence, it must be~$d\left(\frac{x}{\epsilon}\right)\neq0$, therefore~$|x'|<\epsilon$. 
			
			Furthermore, the parabola~$-\frac{|x'|^2}{R}$ is contained in~$B_{2R}(-2Re_n)$ when~$|x'|<\epsilon$. Thus, recalling also that~$\epsilon<R$, we infer that 
			$$ -\epsilon \leq -\frac{|x'|^2}{R} \leq x_n < \epsilon.$$
			Therefore, we deduce that~$|x|<2\epsilon$, that is~$ A_\epsilon^-\subseteq B_{2\epsilon}$.
			
			Now, using the assumption~$\epsilon<1/(3n)$ with~\eqref{eq::DT_estimate1}, we obtain that
			$$ \| DT_\epsilon(x)\| \leq 1+2\epsilon\frac{3n+1}{R}\leq 4.$$
			This implies that~$T_\epsilon(A_\epsilon^-)\subseteq 4B_{2\epsilon}$, and consequently~$A_\epsilon\subseteq B_{8\epsilon}$.
			
			To conclude, if~$|x|\leq\frac{\epsilon^2}{2R}$, then~$\mbox{d}_\epsilon(x)\geq \frac{3\epsilon^2}{4R}$, from which we find that~$|x+Re_n|\leq R+\mbox{d}_\epsilon(x)$. 	
			It thereby follows that~$B_{\epsilon^2/(2R)}\subseteq V_{R,\epsilon}$, hence
			\begin{equation*}
				B_{\epsilon^2/(2R)}\setminus E\subseteq A_\epsilon^-\subseteq A_\epsilon.\qedhere
			\end{equation*}
		\end{proof}
		
		\section{Proof of Formula~\eqref{eq::r_x_epsilon_lower_bound}} \label{sec::r_x_epsilon_lower_bound}
		
		In this section, we show that
		\begin{equation*}
			\dist(\mathbb{E}_{R,\epsilon},\mathbb{E}_{R,r}) \geq \frac{1}{1+c}\frac{\epsilon^2-r^2}{R},
		\end{equation*}
		where~$c>0$ is a constant depending only on~$n$, and
		$$\mathbb{E}_{R,r}:=\left\{x=(x',x_n)\;{\mbox{ s.t. }}\; |x+Re_n|= R+\frac{r^2}{R}\left(1-\frac{|x'|^2}{r^2}\right)\right\}\setminus B_{2R}(-2Re_n),$$ 
		as claimed in~\eqref{eq::r_x_epsilon_lower_bound}.
		
		To this aim, we define the function~$\Phi:(0,+\infty)\times[0,+\infty)\to\R$ as
		\begin{equation*}
			\Phi(r,\rho):=-R+\left[\frac{\rho^4}{R^2}-\left(3+\frac{2r^2}{R^2}\right)\rho^2+\left(R+\frac{r^2}{R}\right)^2\right]^{1/2},
		\end{equation*}
		and we notice that~$\left(x',\Phi(r,|x'|)\right)\in\mathbb{E}_{R,r}$, for every~$x'\in\R^{n-1}$ such that~$|x'|<r$.
		
		Since~$\mathbb{E}_{R,\epsilon}$ is closed, for every~$x\in \mathbb{E}_{R,r}$ there exists~$y\in\mathbb{E}_{R,\epsilon}$ such that~$\dist(x,\mathbb{E}_{R,\epsilon})=|x-y|$. 
		Thanks to the triangle inequality, we obtain that
		\begin{equation*}
			\begin{split}
				|x-y|&\geq |\Phi(r,|x'|)-\Phi(\epsilon,|y'|)|\\& 
				\geq |\Phi(r,|x'|)-\Phi(\epsilon,|x'|)|-|\Phi(\epsilon,|x'|)-\Phi(\epsilon,|y'|)|\\
				& \geq |\Phi(r,|x'|)-\Phi(\epsilon,|x'|)|-\norm{\partial_\rho\Phi(\epsilon,\cdot)}_{L^\infty([0,+\infty))}\left||x'|-|y'|\right|\\ 
				& \geq |\Phi(r,|x'|)-\Phi(\epsilon,|x'|)|-\norm{\partial_\rho\Phi(\epsilon,\cdot)}_{L^\infty([0,+\infty))}|x-y|,
			\end{split}
		\end{equation*}
		which reads 
		\begin{equation} \label{eq::dist_lower_bound}
			|x-y|\geq \frac{1}{1+\norm{\partial_\rho\Phi(\epsilon,\cdot)}_{L^\infty([0,+\infty))}} |\Phi(r,|x'|)-\Phi(\epsilon,|x'|)|. 
		\end{equation}
		
		Now, notice that, for every~$0<r<R/\sqrt{2}$ and~$0\leq \rho \leq r$, we have that
		\begin{equation} \label{eq::phi_plus_R_lower_bound}
			\begin{split}
				&\Phi(r,\rho)+R = \left[\frac{\rho^4}{R^2}-\left(3+\frac{2r^2}{R^2}\right)\rho^2+\left(R+\frac{r^2}{R}\right)^2\right]^{1/2}\\
				&\qquad\geq \left(R^2+\frac{r^4}{R^2}+2r^2-3\rho^2-\frac{2r^2\rho^2}{R^2}\right)^{1/2}\geq\left(R^2-r^2-\frac{r^4}{R^2}\right)^{1/2}\\
				&\qquad\geq \left(R^2-\frac{R^2}{2}-\frac{R^2}{4}\right)^{1/2}=\frac{R}{2}.
			\end{split}
		\end{equation}
		It thereby follows that,
		for every~$0<\epsilon<R/\sqrt{2}$ and~$0\leq \rho \leq \epsilon$,
		\begin{equation} \label{eq::grad_sup_norm}
			|\partial_\rho\Phi(\epsilon,\rho)| = \left|\frac{\rho}{R^2(\Phi(\epsilon,\rho)+R)}(2\rho^2-3R^2-2\epsilon^2)\right| \leq \frac{R/\sqrt{2}}{R^3/2}4R^2=4\sqrt{2}.
		\end{equation}
		
		Now, let us define the function~$f_{r,\epsilon}:[0,+\infty)\to [0,+\infty)$ as
		$$ f_{r,\epsilon}(\rho) := |\Phi(r,\rho)-\Phi(\epsilon,\rho)|. $$ 
		Then,
		\begin{equation} \label{eq::f_r_eps_prime}
			f'_{r,\epsilon}(\rho)=\frac{\Phi(r,\rho)-\Phi(\epsilon,\rho)}{|\Phi(r,\rho)-\Phi(\epsilon,\rho)|}\big(\partial_\rho\Phi(r,\rho)-\partial_\rho\Phi(\epsilon,\rho)\big).
		\end{equation}
		
		We observe that 
		\begin{eqnarray*}	
			\partial_r \Phi(r,\rho) &=& \frac{2r}{R^2(\Phi(r,\rho)+R)}(R^2+r^2-\rho^2) \\ {\mbox{and }}\qquad
			\partial_r \partial_\rho\Phi(r,\rho) 
			&=&
			\frac{2r\rho}{R^2(\Phi(r,\rho)+R)^3}(R^2+r^2+\rho^2).
		\end{eqnarray*}
		Therefore, recalling also~\eqref{eq::phi_plus_R_lower_bound}, we infer that~$\Phi(\cdot,\rho)$ and~$\partial_\rho\Phi(\cdot,\rho)$ are increasing in the first variable. Namely, for every~$0<r<\epsilon<R/\sqrt{2}$ and~$0\leq \rho \leq r$.
		\begin{align}
			\label{eq::lemon_monotonicity}&\Phi(r,\rho)<\Phi(\epsilon,\rho), \\
			\label{eq::lemon_monotonicity_derivative}\mbox{and}\qquad &\partial_\rho\Phi(r,\rho)\leq\partial_\rho\Phi(\epsilon,\rho),
		\end{align}
		
		Inequalities~\eqref{eq::lemon_monotonicity} and~\eqref{eq::lemon_monotonicity_derivative}, together with~\eqref{eq::f_r_eps_prime}, give that~$f'_{r,\epsilon}$ is always non-negative. Hence, $f_{r,\epsilon}$ is increasing and has a minimum at~$\rho=0$. 
		It thereby follows that
		\begin{equation} \label{eq::psi_monotone_in_r}
			|\Phi(r,|x'|)-\Phi(\epsilon,|x'|)| \geq |\Phi(r,0)-\Phi(\epsilon,0)| = \frac{\epsilon^2-r^2}{R}.
		\end{equation}
		
		Therefore, plugging~\eqref{eq::grad_sup_norm} and~\eqref{eq::psi_monotone_in_r} in~\eqref{eq::dist_lower_bound}, we deduce that
		\begin{equation*}
			\dist(x,\mathbb{E}_{R,r}) = |x-y|\geq \frac{1}{1+4\sqrt{2}}\frac{\epsilon^2-r^2}{R}.
		\end{equation*}
		Hence, taking the minimum among all~$x\in\mathbb{E}_{R,\epsilon}$, we obtain
		\begin{equation*}
			\dist(\mathbb{E}_{R,\epsilon},\mathbb{E}_{R,r}) \geq \frac{1}{1+4\sqrt{2}}\frac{\epsilon^2-r^2}{R},
		\end{equation*}
		as desired.
		
		\section{Proof of Lemma~\ref{lemma::conv_unif}} \label{sec::lemma_conv_unif}
		
		We exploit Lemma~\ref{lemma::holder_est} with~$r:=1$ and~$C:=1$. In this way, up to a subsequence, $A_k^1$ converges uniformly to the graph of a H\"older-continuous function~$u$ in~$\{|x'|<1/2\}$ (see Remark~\ref{rem::unif_conv_holder}).
		
		Now, controlling the oscillation between the unit vectors~$\nu_l^k$ and~$\nu_{l+1}^k$, we repeat the same argument in larger and larger balls, and infer that~$\partial E_k^*\to\partial E^*$ uniformly in every compact subset of~$\R^n$. In order to explicit this idea, we recall that
		\begin{align*}
			&\partial E_k\cap B_{2^l}\subseteq\big\{|x\cdot\nu_l^k|\leq a_k2^{l(1+\alpha)}\big\}\\ {\mbox{and }}\quad &
			\partial E_k\cap B_{2^{l+1}}\subseteq\big\{|x\cdot\nu_{l+1}^k|\leq a_k2^{(l+1)(1+\alpha)}\big\}.
		\end{align*}
		{F}rom these inclusions we obtain that
		$$ |\nu_l^k-\nu_{l+1}^k| \leq a_k2^{l(1+\alpha)}+a_k2^{(l+1)(1+\alpha)}\leq 5\,2^{l(1+\alpha)-\alpha k}.$$
		The geometric idea behind this implication is quite simple, however, in order to not interrupt the flow of the proof, we refer to Appendix~\ref{sec::geom_idea} for an exhaustive explanation of the argument.
		
		Notice also that, for every~$l$,
		$$ \lim_{k\to+\infty}|\nu_l^k-e_n|\leq \lim_{k\to+\infty}|\nu_l^k-\nu_{l-1}^k|+\dots+|\nu_1^k-\nu_0^k|\leq  \lim_{k\to+\infty}5\,2^{-\alpha k}\sum_{m=0}^{l-1}2^{m(1+\alpha)}= 0.$$
		Therefore, for every~$x\in\partial E_k\cap B_{2^l}$,
		\begin{align*}
			&	|x_n| \leq |x\cdot\nu_l^k|+2^l|e_n-\nu_l^k| \leq a_k2^{l(1+\alpha)}+5 a_k2^l\sum_{m=0}^{l-1}2^{m\alpha} \\
			&\qquad\quad\leq 5 a_k2^l\sum_{m=0}^{l}2^{m\alpha} \leq 5 a_k2^l\frac{2^{\alpha(l+1)}-1}{2^\alpha-1}\leq c a_k 2^{l(1+\alpha)},
		\end{align*}
		so that we have the inclusion
		$$ \partial E_k\cap B_{2^l} \subseteq \big\{|x_n|\leq c a_k 2^{l(1+\alpha)}\big\}.$$
		
		Now, we apply Lemma~\ref{lemma::holder_est} with~$r:=2$ and~$C:=c2^{1+\alpha}$, and deduce that 
		$A_k^2$ is uniformly convergent to the graph of a H\"older-continuous function~$v$ in~$\{|x'|<1\}$. By uniqueness, we have that~$v$ must coincide with~$u$ in~$\{|x'|<1/2\}$.
		
		We apply iteratively Lemma~\ref{lemma::holder_est} to~$A_k^j$ and, using a diagonal argument, we infer that~$A_k^k$ uniformly converges to the graph of a H\"older-continuous function~$u:\R^{n-1}\to\R$, along a subsequence. Hence~$E_k^*$ converges uniformly to~$\subgr(u)$ in every compact set of~$\R^n$.
		
		Moreover, the fact that~$0\in\partial E_k^*$ for every~$k$ yields that~$0\in\partial E^*$. Therefore, since
		the point~$(0',u(0'))\in\partial E^*$, it follows that~$u(0')=0$.
		
		Finally, we exploit the uniform convergence of~$\{E_k^*\}_k$ to conclude that
		$$ |u(x')| \leq \widetilde{c}\big(1+|x'|^{1+\alpha}\big),$$
		for some~$\widetilde{c}>0$.
		Indeed, for every given~$l$, taking the limit as~$k\to+\infty$, we have that~$\partial E_k^*\cap B^{n-1}_{2^l}
		\subseteq \{|x_n|\leq c 2^{l(1+\alpha)}\}$ and~$\partial E_k^*\cap B^{n-1}_{2^l}$ converges to~$\partial E^*\cap B^{n-1}_{2^l}$.
		
		As a consequence, since for every~$x'\in\R^{n-1}$ there exists~$l\in\N$ such that~$x'\in B^{n-1}_{2^l}\setminus B^{n-1}_{2^{l-1}}$, we are led to
		$$ \frac{|u(x')|}{(1+|x'|^{1+\alpha})} \leq \frac{c 2^{l(1+\alpha)}}{1+2^{(l-1)(1+\alpha)}}\leq c2^{1+\alpha} =: \widetilde{c}.$$
		It follows that 
		$$ |u(x')|\leq \widetilde{c}\big(1+|x'|^{1+\alpha}\big),$$
		which concludes the proof.			
		
		\section{Proof of Lemma~\ref{lemma::linear_viscosity}} \label{sec::lemma_linear_viscosity}
		
		We prove that~$u$ is a viscosity solution of 
		\begin{equation} \label{eq::viscosity_harmonic}
			(-\Delta)^{\frac{1+s}{2}} u=0
		\end{equation}
		in the sense of~\cite[Definition~2.2]{MR2494809}. more precisely, we show that~$u$ is both viscosity super- and sub-solution of~\eqref{eq::viscosity_harmonic}.
		
		To this end, let~$\psi$ be a function touching~$u$ from below at some point~$x_0$. Without loss of generality, we can assume that~$x_0$ coincides with the origin. Moreover assume that~$\psi\in\cont^2(\overline{B}_\eta)$, for some~$\eta>0$, and~$\psi<u$ in~$B_\eta\setminus\{0\}$. 
		
		We point out that, in order to prove that~$u$ is a viscosity super-solution of~\eqref{eq::viscosity_harmonic}, it is sufficient to show that
		\begin{equation}\label{eq::lin_visc_claim}
			\int_{\R^{n-1}} \frac{u(x')-u(0')}{|x'|^{n+s}}\,d\haus{n-1}(x) \leq 0.
		\end{equation}
		Indeed, by definition, we have that
		\begin{equation*}
			\begin{split}
				&	 \int_{\R^{n-1}}  \frac{u(x')-u(0')}{|x'|^{n+s}}\,d\haus{n-1}(x) 
				\\&\qquad\geq \int_{B_\eta} \frac{\psi(x')-\psi(0')}{|x'|^{n+s}}\,d\haus{n-1}(x)  
				+ \int_{\R^{n-1}\setminus B_\eta} \frac{u(x')-u(0')}{|x'|^{n+s}}\,d\haus{n-1}(x) .
			\end{split}
		\end{equation*}
		Thus, we focus on proving~\eqref{eq::lin_visc_claim}.
		
		Thanks to Lemma~\ref{lemma::conv_unif}, for any given~$\epsilon>0$ and~$R>0$, by definition of uniform convergence, there exists~$k_\epsilon\in\N$ such that, for every~$k\geq k_\epsilon$, $\partial E_k\cap B_R$ lies in a~$a_k\epsilon$-neighborhood of~$\graph(a_ku)$, where~$a_k:=2^{-\alpha k}$. 
		
		Furthermore, thanks to the flatness hypothesis on~$E_k$, there exists a vertical translation of a parabola of opening~$-\frac{a_k}{2}$ that touches~$\partial E_k$ in~$B_\epsilon$ at some point~$x_k$.
		Clearly, $x_k$ converges to the origin as~$\epsilon\to0$. Besides, notice that~$k_\epsilon\to+\infty$ as~$\epsilon\to0$.
		
		Now, for every~$\overline{x}=(\overline{x}',\overline{x}_n)\in\R^{n-1}\times\R$ and for every~$\delta>0$, we define the truncated closed cylinder
		$$ D_r(\overline{x}):=\big\{x\in\R^n\;{\mbox{ s.t. }}\;|x'-\overline{x}'|\leq r,\; |x_n-\overline{x}_n|\leq r\}.$$
		
		By the flatness assumptions in~\eqref{eq::flatness_Ek}, we know that
		\begin{equation*}
			\partial E_k\cap D_R(x_k)\subseteq\partial E_k\cap B_{2R}(x_k)\subseteq\big\{|x_n-x_{k,n}|\leq c_R a_k\big\},
		\end{equation*} 
		where~$c_R$ is a positive constant depending on~$R$.
		Up to taking~$k$ large enough, we assume that~$c_Ra_k<\delta/2$. Then, for every~$x\in \big(D_R(x_k)\setminus D_\delta(x_k)\big) \cap \{|x_n-x_{k,n}|\leq c_R a_k\}$,
		\begin{equation*}
			|x'-x_k'| \geq \delta^2 - |x_n-x_{k,n}|^2 \geq \delta^2 - c_R^2a_k^2 \geq \frac{3}{4}\delta^2.
		\end{equation*}
		
		Moreover, we observe that
		\begin{equation*}
			\begin{split}
				\left|\frac{1}{|x-x_k|^{n+s}}-\frac{1}{|x'-x_k'|^{n+s}}\right| 
				&= \frac{n+s}{2} \int_{0}^{1} \frac{|x_n-x_{k,n}|^2}{(|x'-x_k'|^2+t|x_n-x_{k,n}|^2)^{\frac{n+s+2}{2}}} \,dt \\
				&\leq \frac{n+s}{2} c_R^2a_k^2 \int_{0}^{1} \frac{dt}{|x'-x_k'|^{n+s+2}} .
			\end{split}
		\end{equation*}
		Thus, from the last two estimates, we infer that
		\begin{equation*}
			\left|\frac{1}{|x-x_k|^{n+s}}-\frac{1}{|x'-x_k'|^{n+s}}\right|  \leq Ca_k^2 ,
		\end{equation*}
		for some positive constant~$C$, depending on~$n$, $s$, $R$, and~$\delta$.
		
		Using this and the fact that~$\partial E_k$ lies in a~$a_k\epsilon$-neighborhood of~$a_k\graph(u)$, we arrive at 
		\begin{equation}\label{eq::lin_visc1}
			\begin{split}
				&\int_{D_R(x_k)\setminus D_\delta(x_k)} \frac{\chi_{E_k}(x)-\chi_{E_k^c}(x)}{|x-x_k|^{n+s}}\,dx \\
				&\qquad= 2a_k\int_{B_R^{n-1}(x_k)\setminus B_\delta^{n-1}(x_k)} \frac{u(x')-u(x_k')+O(\epsilon)}{|x'-x_k'|^{n+s}}\,d\haus{n-1}(x) + O(a_k^3)\\
				&\qquad= 2a_k\int_{B_R^{n-1}(x_k)\setminus B_\delta^{n-1}(x_k)} \frac{u(x')-u(x_k')}{|x'-x_k'|^{n+s}} \,d\haus{n-1}(x) + O(\epsilon a_k) +O(a_k^3).
			\end{split}
		\end{equation}
		We now write the set~$D_R(x_k)\setminus D_\delta(x_k)$ as~$B_\delta^c(x_k)\setminus\left[D_R(x_k)^c\cup \left(D_\delta(x_k)\cap B_\delta^c(x_k)\right)\right]$ and prove suitable estimates for the left-hand side in~\eqref{eq::lin_visc1}.
		
		To this end, let us denote by~$P_k$ the subgraph of the tangent parabola~$x_{k,n}-\frac{a_k}{2}|x'-x_k'|^2$. Since, $P_k$ is smooth, the non-local mean curvature of~$P_k$ is well-defined at~$x_k$, for every~$k$. 
		Then, we have
		\begin{equation} \label{eq::lin_visc_parabola}
			\begin{split}
				&\liminf_{\rho\to0}\int_{D_\delta(x_k)\setminus B_\rho(x_k)} \frac{\chi_{E_k}(x)-\chi_{E_k^c}(x)}{|x-x_k|^{n+s}}\,dx \\ 
				&\quad  \geq  \liminf_{\rho\to0} \int_{D_\delta(x_k)\setminus B_\rho(x_k)} \frac{\chi_{P_k}(x)-\chi_{P_k^c}(x)}{|x-x_k|^{n+s}}\,dx  = \int_{D_\delta(x_k)} \frac{\chi_{P_k}(x)-\chi_{P_k^c}(x)}{|x-x_k|^{n+s}}\,dx .
			\end{split}
		\end{equation}
		Arguing as in~\eqref{eq::harnack_proof7}, we thereby obtain that
		\begin{equation}\label{eq::lin_visc2}
			\liminf_{\rho\to0}\int_{D_\delta(x_k)\setminus B_\rho(x_k)} \frac{\chi_{E_k}(x)-\chi_{E_k^c}(x)}{|x-x_k|^{n+s}}\,dx 
			\geq -\int_{0}^{\delta}  \frac{Ca_kr^n}{r^{n+s}} dr 
			= -Ca_k\delta^{1-s} ,
		\end{equation}
		for a positive constant~$C$ independent of~$k$.
		
		For the term involving~$D_R^c(x_k)$, recalling the estimates in~\eqref{eq::harnack_proof3}, we have
		\begin{equation}\label{eq::lin_visc3}
			\begin{split}
				\left|\int_{D_R^c(x_k)} \frac{\chi_{E_k}(x)-\chi_{E_k^c}(x)}{|x-x_k|^{n+s}}\,dx\right|
				&\leq C'a_k \int_{R}^{+\infty} \rho^{\alpha-1-s}\,d\rho + \frac{\omega_n}{s}2^{-ks}\\
				&\leq C'\big(a_kR^{\alpha-s}+ a_k^{1+\xi}\big),
			\end{split}
		\end{equation}
		for some positive exponent~$\xi$, and some positive constant~$C'$,
		depending only on~$n$, $s$, and~$\alpha$, and possibly changing from line to line.
		
		Now, let us denote by~$F^k_{x_k}(x):=\frac{\chi_{E_k}(x)-\chi_{E_k^c}(x)}{|x-x_k|^{n+s}}$ for simplicity, and observe that 
		\begin{equation*}
			\begin{split}
				&\int_{D_R(x_k)\setminus D_\delta(x_k)} F^k_{x_k}(x)\,dx \\
				&= \limsup_{\rho\to0} \left[ \int_{B_\rho^c(x_k)} F^k_{x_k}(x)\,dx - \int_{D_\delta(x_k)\setminus B_\rho(x_k)} F^k_{x_k}(x)\,dx \right] 
				- \int_{D_R^c(x_k)} F^k_{x_k}(x)\,dx\\
				& \leq \limsup_{\rho\to0} \left[ \int_{B_\rho^c(x_k)} F^k_{x_k}(x)\,dx \right] - \liminf_{\rho\to0} \left[ \int_{D_\delta(x_k)\setminus B_\rho(x_k)} F^k_{x_k}(x)\,dx \right] + \left|\int_{D_R^c(x_k)} F^k_{x_k}(x)\,dx \right|.
			\end{split}
		\end{equation*}
		
		Therefore, using this, \eqref{eq::lin_visc2} and~\eqref{eq::lin_visc3} in~\eqref{eq::lin_visc1}, we obtain that
		\begin{equation} \label{eq::lin_visc_xk_estimate}
			\begin{split}
				&\int_{B_R^{n-1}(x_k)\setminus B_\delta^{n-1}(x_k)} \frac{u(x')-u(x_k')}{|x'-x_k'|^{n+s}}\,d\haus{n-1}(x) \\
				& = \frac{1}{2a_k} \int_{D_R(x_k)\setminus D_\delta(x_k)} \frac{\chi_{E_k}(x)-\chi_{E_k^c}(x)}{|x-x_k|^{n+s}}\,dx +O(\epsilon) +O(a_k^2)\\
				& \leq \limsup_{\rho\to0}
				\frac{1}{2a_k}\int_{B_\rho^c(x_k)} \frac{\chi_{E_k}(x)-\chi_{E_k^c}(x)}{|x-x_k|^{n+s}}\,dx 
				+C\delta^{1-s}+ O(\epsilon) +O(a_k^{\xi})+O(R^{\alpha-s}).
			\end{split}
		\end{equation}
		
		Thanks to the assumptions on~$E_k$ and Theorem~\ref{th::ELeq}, we know that 
		\begin{equation*}
			\limsup_{\rho\to0}
			\frac{1}{2a_k}\int_{B_\rho^c(x_k)} \frac{\chi_{E_k}(x)-\chi_{E_k^c}(x)}{|x-x_k|^{n+s}}\,dx \leq 2^{-k(s-\alpha)-1}\Lambda.
		\end{equation*}
		Thus,
		\begin{equation}\label{eq::lin_visc_xk_estimate_2}
			\begin{split}
				&\int_{B_R^{n-1}(x_k)\setminus B_\delta^{n-1}(x_k)} \frac{u(x')-u(x_k')}{|x'-x_k'|^{n+s}}\,d\haus{n-1}(x) \\
				&\qquad\qquad\leq 2^{-k(s-\alpha)-1}\Lambda + C\delta^{1-s}+ O(\epsilon) +O(a_k^{\xi})+O(R^{\alpha-s}).
			\end{split}
		\end{equation}
		Since~$\alpha\in(0,s)$, taking the limit as~$\epsilon\to0$, and hence as~$k\to+\infty$, in~\eqref{eq::lin_visc_xk_estimate_2}, we infer that
		\begin{equation*}
			\int_{B_R^{n-1}\setminus B_\delta^{n-1}} \frac{u(x')-u(0')}{|x'|^{n+s}}\,d\haus{n-1}(x) \leq C\delta^{1-s}+O(R^{\alpha-s}).
		\end{equation*}
		Hence, taking the limits as~$\delta\to0$ and~$R\to+\infty$, we obtain~\eqref{eq::lin_visc_claim}.
		\smallskip
		
		Now, to show that~$u$ is a viscosity sub-solution of~\eqref{eq::viscosity_harmonic}, let~$\phi$ be a~$\cont^2$-function touching~$u$ at~$0$ from above. Using the same idea in~\eqref{eq::lin_visc_parabola} with a tangent parabola of opening~$\frac{a_k}{2}$, we deduce that
		\begin{equation*}
			\limsup_{\rho\to0}\int_{D_\delta(x_k)\setminus B_\rho(x_k)} \frac{\chi_{E_k}(x)-\chi_{E_k^c}(x)}{|x-x_k|^{n+s}}\,dx 
			\leq Ca_k\delta^{1-s} .
		\end{equation*}
		This, together with~\eqref{eq::lin_visc1}, \eqref{eq::lin_visc3}, and Corollary~\ref{cor::reverseELeq}, leads us to
		\begin{equation} \label{eq::lin_visc4}
			\begin{split}
				&\int_{B_R^{n-1}(x_k)\setminus B_\delta^{n-1}(x_k)} \frac{u(x')-u(x_k')}{|x'-x_k'|^{n+s}}\,d\haus{n-1}(x) \\
				& \geq \liminf_{\rho\to0} \frac{1}{2a_k}\int_{B_\rho^c(x_k)} \frac{\chi_{E_k}(x)-\chi_{E_k^c}(x)}{|x-x_k|^{n+s}}\,dx 
				-C\delta^{1-s}+ O(\epsilon) +O(a_k^{\xi})+O(R^{\alpha-s})\\
				&\geq -2^{-k(s-\alpha)-1}\Lambda - C\delta^{1-s}+ O(\epsilon) +O(a_k^{\xi})+O(R^{\alpha-s}).
			\end{split}
		\end{equation}
		Taking~$\epsilon\to0$, and consequently~$k\to+\infty$, in~\eqref{eq::lin_visc4}, we obtain that
		\begin{equation*}
			\int_{B_R^{n-1}\setminus B_\delta^{n-1}} \frac{u(x')-u(0')}{|x'|^{n+s}}\,d\haus{n-1}(x) \geq -C\delta^{1-s}+O(R^{\alpha-s}).
		\end{equation*}
		Finally, taking the limits as~$\delta\to0$ and~$R\to+\infty$, we infer that~$u$ is also a viscosity sub-solution of~\eqref{eq::viscosity_harmonic}, concluding the proof.
		
		\section{Geometric estimates for unit normals} \label{sec::geom_idea}
		In this section, we estimate the angle between the unit vectors~$\nu_j$ and~$\nu_{j+1}$ given the flatness assumptions
		\begin{equation}\label{eq::geom_flatness}
			\partial E \cap B_{2^j} \subseteq \big\{|x\cdot\nu_j|<a2^{j(1+\alpha)}\big\},\qquad{\mbox{for all }}j=0,\dots,k ,
		\end{equation}
		where~$E$ is a~$\Lambda$-minimal set in~$B_{2^k}$ such that~$0\in\partial E$, and~$a:=2^{-\alpha k}$, for some~$\alpha\in(0,1)$
		(see Figure~\ref{fig::geom1}).
		
		We claim the following:
		
		\begin{lemma}\label{lemma::geom_estimate}
			For every~$j\in\{0,\dots,k-1\}$, we have that
			$$ |\nu_j-\nu_{j+1}| \leq c2^{\alpha(j-k)},$$
			for some constant~$c>0$ independent of~$j$ and~$k$.
		\end{lemma}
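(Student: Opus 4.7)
My plan is to derive the estimate by a purely geometric comparison between two consecutive flatness slabs, using the uniform density estimates for the almost minimal set $E$ (Theorem~\ref{th::unif_dens_estimates}) to pin down the orientation of the $\nu_j$. Setting $S_j := \{|x \cdot \nu_j| < a\,2^{j(1+\alpha)}\}$, I would first observe that, since $a\,2^{j\alpha} = 2^{\alpha(j-k)} \leq 2^{-\alpha} < 1$, the two caps $C_j^{\pm} := \{\pm x\cdot\nu_j > a\,2^{j(1+\alpha)}\}\cap B_{2^j}$ are open, connected and of volume comparable to $2^{jn}$. The flatness hypothesis~\eqref{eq::geom_flatness} prevents $\partial E$ from meeting either cap, so each $C_j^{\pm}$ is entirely contained in $E$ or in $E^c$; the density estimates of Theorem~\ref{th::unif_dens_estimates} at $0\in\partial E$ then force one cap to lie in $E$ and the other in $E^c$.

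I would then fix the orientation by selecting, for each $j$, the representative $\pm \nu_j$ so that $C_j^{-} \subset E$ and $C_j^{+} \subset E^c$. This normalization is the only content imposed on the sign of $\nu_j$ --- the flatness being invariant under $\nu_j \mapsto -\nu_j$ --- and the claim is tautologically false without it (for instance $\nu_{j+1} = -\nu_j$ respects the slab but not the bound). With this convention in place, writing $\theta := |\nu_j - \nu_{j+1}|$ and decomposing
\[ \nu_{j+1} = \cos\vartheta\,\nu_j + \sin\vartheta\,w,\qquad |w|=1,\ w\perp\nu_j,\ \vartheta \in [0,\pi/2], \]
where the upper bound $\vartheta \leq \pi/2$ is precisely the orientation normalization just fixed, one has $\sin\vartheta \geq \theta/\sqrt{2}$.

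The core of the proof is then an explicit geometric contradiction. For parameters $t,s>0$, I would test the point $x_0 := -t\,\nu_j + s\, w$. It lies in $C_j^{-}\subset E$ as soon as $t > a\,2^{j(1+\alpha)}$ and $t^2+s^2 < 2^{2j}$, while its inner product with $\nu_{j+1}$ equals $-t\cos\vartheta + s\sin\vartheta$, which exceeds $a\,2^{(j+1)(1+\alpha)}$ (placing $x_0 \in C_{j+1}^{+} \subset E^c$) as soon as $s\,\sin\vartheta \geq t\cos\vartheta + a\,2^{(j+1)(1+\alpha)}$. Selecting $t := a\,2^{j(1+\alpha)}$ and using $\sin\vartheta \geq \theta/\sqrt{2}$, this condition becomes
\[ s \geq \frac{\sqrt{2}\,(1+2^{1+\alpha})}{\theta}\,a\,2^{j(1+\alpha)}, \]
which is compatible with the ambient constraint $s < 2^j\sqrt{1 - 2^{2\alpha(j-k)}}$ (a strictly positive multiple of $2^j$, since $j\leq k-1$) exactly when $\theta \geq C_\alpha \cdot 2^{\alpha(j-k)}$ for an explicit constant $C_\alpha$ depending only on $\alpha$. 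In that case $x_0$ would lie simultaneously in $E$ and in $E^c$, which is absurd; hence $\theta < C_\alpha\cdot 2^{\alpha(j-k)}$, which is the claim.

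The substance of the argument, and the only delicate point, is the orientation consistency: without the density estimates of Theorem~\ref{th::unif_dens_estimates} (equivalently, the clean ball condition of Corollary~\ref{cor::clean_ball}) there is no way to rule out the pathological choice $\nu_{j+1} \approx -\nu_j$, in which case the inequality would fail. Once the orientation is pinned down, the rest is an elementary two-dimensional trigonometric computation.
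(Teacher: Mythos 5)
Your proof is correct in spirit and takes a genuinely different route from the paper's. Where the paper sets up the slab inclusion $\{x\cdot\nu_{j+1}<-a2^{(j+1)(1+\alpha)}\}\subset\{x\cdot\nu_j<a2^{j(1+\alpha)}\}$ in $B_{2^j}$, rescales to $B_1$, and then bounds $|\nu_j-\nu_{j+1}|$ by the inscribed-angle/chord-length relation (angle at the center versus angle at the circumference, giving the factor $2$), you instead build an explicit test point $x_0 = -t\nu_j + sw$ that would have to lie simultaneously in $E$ and $E^c$ if the angle were too large. Both are elementary trigonometric arguments; yours is perhaps easier to reproduce from scratch, while the paper's yields a cleaner $\alpha$-independent constant ($c=10$, versus your $C_\alpha\sim(1-2^{-2\alpha})^{-1/2}$). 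The genuinely valuable addition in your write-up is that you make the orientation normalization explicit: the flatness hypothesis is invariant under $\nu_j\mapsto-\nu_j$, and the lemma is literally false without pinning down signs (as you observe, $\nu_{j+1}=-\nu_j$ satisfies the flatness hypotheses). The paper asserts the inclusion above without comment, which tacitly assumes this normalization; you supply the missing justification via the clean ball / density argument.

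One small gap remains in your justification of the orientation step, and it is worth flagging. You claim the caps $C_j^{\pm}$ have volume comparable to $2^{jn}$ and that the density estimates therefore force one into $E$ and one into $E^c$. This fails when $2^{\alpha(j-k)}$ is close to $1$ (i.e.\ $j$ near $k$, or $\alpha$ small): the rescaled cap $\{x\cdot\nu_j>2^{\alpha(j-k)}\}\cap B_1$ then has arbitrarily small volume, and the slab can carry nearly the full measure of $B_{2^j}$, so putting both caps in $E$ does not contradict $|E^c\cap B_{2^j}|\geq c_0\,2^{jn}$. The fix is standard: fix $\tau>0$ with $2\omega_{n-1}\tau<c_0$; for $j$ with $2^{\alpha(j-k)}<\tau$ your density argument works, and for the remaining $j$ the claimed inequality is vacuous since $c\cdot 2^{\alpha(j-k)}\geq c\tau\geq 2\geq|\nu_j-\nu_{j+1}|$ once $c\geq 2/\tau$. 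A parallel remark applies to the compatibility of your constraint $s<2^j\sqrt{1-2^{2\alpha(j-k)}}$, which also degenerates as $j\to k$. Neither issue is fatal, but an explicit split into these two regimes would close the argument.
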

		
		This fact follows straight-forwardly from the geometric properties of our problem, and to prove it, it is sufficient to estimate the worst case scenario, which occurs when the slab~$\{|x\cdot\nu_j|<a2^{j(1+\alpha)}\}$ touches~$\{|x\cdot\nu_{j+1}|<a2^{(j+1)(1+\alpha)}\}$. We now provide the technical details of this fact.
		
		\begin{proof}[Proof of Lemma~\ref{lemma::geom_estimate}]
			Since we are interested in estimating the angle between unit normals of hypersurfaces, we narrow down to the~$2$-dimensional case. Then, to our purpose, we consider~$j<k$, and observe that the flatness assumptions in~\eqref{eq::geom_flatness} yield that
			$$ \big\{x\cdot\nu_{j+1}<-a2^{(j+1)(1+\alpha)}\big\} \subseteq \big\{x\cdot\nu_j<a2^{j(1+\alpha)}\big\} \text{ in }B_{2^j}.$$
			Therefore, rescaling by a factor~$2^{-j}$, we deduce that
			\begin{equation*}\label{eq::geom_1}
				\big\{x\cdot\nu_{j+1}<-a2^{\alpha j+\alpha+1}\big\} \subseteq
				\big\{x\cdot\nu_j<a2^{\alpha j}\big\} \text{ in }B_{1}.
			\end{equation*}
			Let~$\delta_j := a2^{\alpha j} = 2^{\alpha(j-k)}<1$. Then, we rewrite the last display as
			\begin{equation*}
				\big\{x\cdot\nu_{j+1}<-2^{\alpha+1}\delta_j\big\} 
				\subseteq \big\{x\cdot\nu_j<\delta_j\big\} \text{ in }B_{1},
			\end{equation*} see Figure~\ref{fig::geom2}.
			
			{F}rom this it follows that the maximum angle between~$\nu_j$ and~$\nu_{j+1}$ is realized when~$\{x\cdot\nu_{j+1}<-2^{\alpha+1}\delta_j\} \cap \{x\cdot\nu_j<\delta_j\}\neq\varnothing$. This case is represented in Figure~\ref{fig::geom3}. 
			
			Notice that, by trivial geometric properties, we have that
			$$ \cos\eta = \nu_{j+1}\cdot \nu_j,$$
			where~$\eta$ is the angle between~$\{x\cdot\nu_j=\delta_j\}$ and~$\{x\cdot\nu_{j+1}=-2^{1+\alpha}\delta_j\}$, as shown in Figure~\ref{fig::geom4}. Since~$\nu_{j+1}\cdot \nu_j$ defines an angle at the center, while~$\eta$ is an angle at the circumference, we have the estimate 
			$$|\nu_{j+1}-\nu_j|\leq L,$$
			where~$L$ is the length of the chord between~$\{x\cdot\nu_j=\delta_j\}$ and~$\{x\cdot\nu_{j+1}=-2^{1+\alpha}\delta_j\}$ in~$B_1$. 
			
			We consider the points
			\begin{align*}
				p&\in\{\lambda\nu_j: \lambda\in\R\}\cap\{x\cdot\nu_j=\delta_j\}\\
				{\mbox{and }}\quad q&\in\{\lambda\nu_{j+1}: \lambda\in\R\}\cap\{x\cdot\nu_{j+1}=-2^{1+\alpha}\delta_j\},
			\end{align*}
			and let~$d$ be the segment between~$p$ and~$q$ (see Figure~\ref{fig::geom4}). By a similarity property, we have that~$L=2d$ and hence
			\begin{equation}\label{eq::geom_3}
				|\nu_{j+1}-\nu_j| \leq 2 d.
			\end{equation}
			
			Also, using basic trigonometry, we see that 
			\begin{equation*}
				d \leq \dist\big(0,\{x\cdot\nu_j=\delta_j\}\big) + \dist\big(0,\{x\cdot\nu_{j+1}=-2^{1+\alpha}\delta_j\}\big)
				=\delta_j + 2^{1+\alpha}\delta_j \leq 5\delta_j.
			\end{equation*}
			{F}rom this and~\eqref{eq::geom_3}, we infer that
			$$ |\nu_{j+1}-\nu_j|\leq 10\delta_j = 5\,2^{\alpha(j-k)+1},$$
			as desired. 
		\end{proof}
		
		\begin{figure}[!h]
			\begin{minipage}[t]{0.48\textwidth}
				\centering
				\includegraphics[width=.89\linewidth]{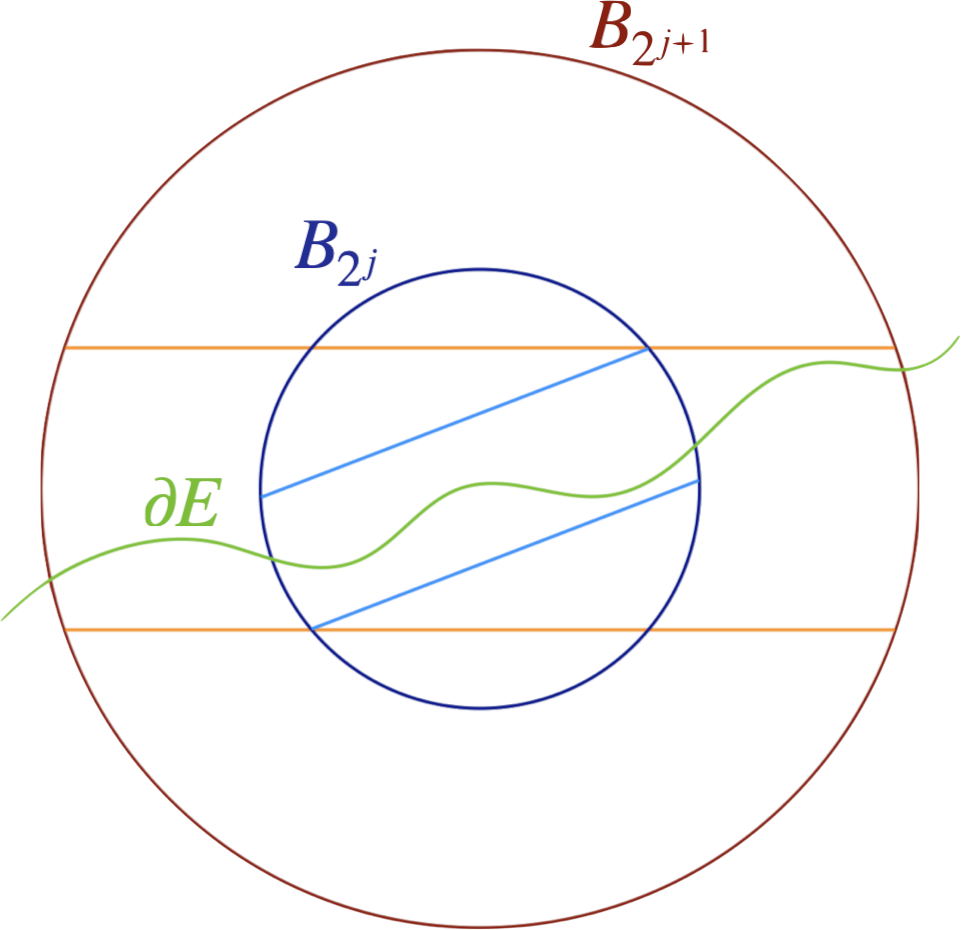}
				\caption{Initial situation with flatness assumptions on~$\partial E$.}\label{fig::geom1}
			\end{minipage}\hfill
			\begin{minipage}[t]{0.48\textwidth}
				\centering
				\includegraphics[width=.8\linewidth]{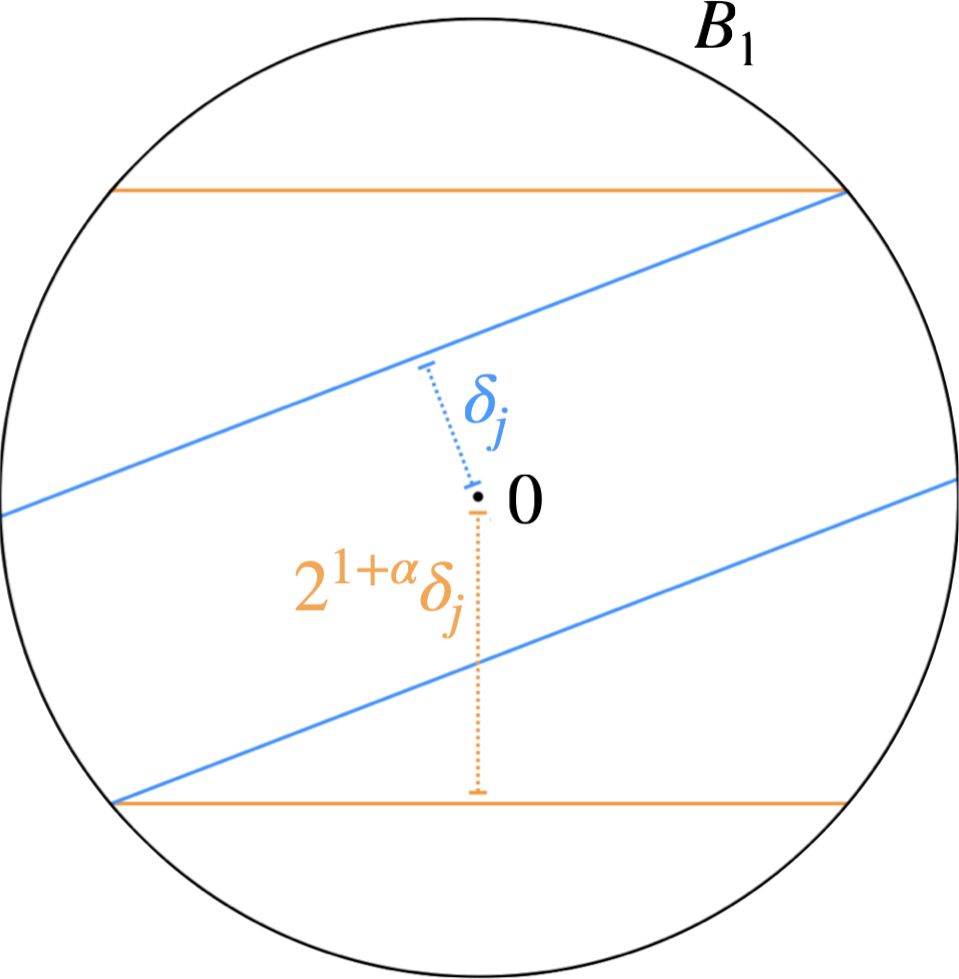}
				\caption{The rescaled setting.}\label{fig::geom2}
			\end{minipage}
		\end{figure}
		\begin{figure}[!h]
			\begin{minipage}[t]{0.48\textwidth}
				\centering
				\includegraphics[width=.8\linewidth]{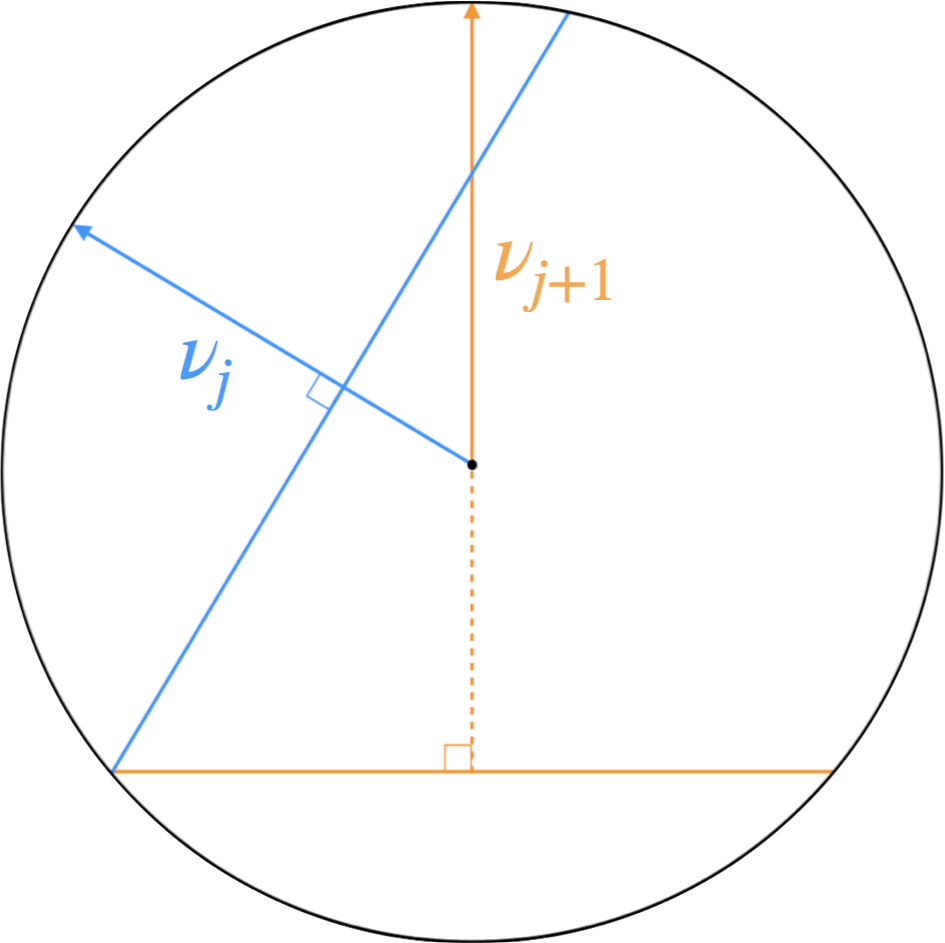}
				\caption{A case that is worse than the worst case scenario.}\label{fig::geom3}
			\end{minipage}\hfill
			\begin{minipage}[t]{0.48\textwidth}
				\centering
				\includegraphics[width=.8\linewidth]{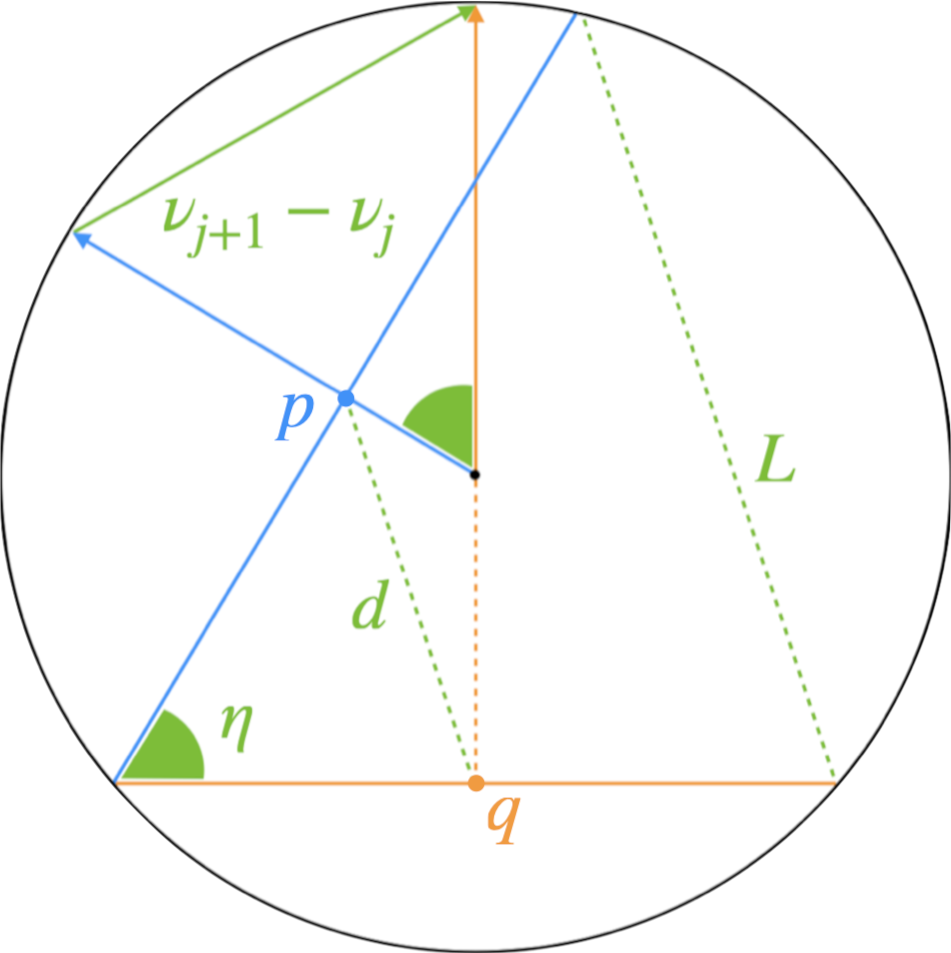}
				\caption{The geometric construction.}\label{fig::geom4}
			\end{minipage}
		\end{figure}
		
		\section{Proof of Proposition~\ref{prop::phi_continuous_in_E}} \label{sec::prop_phi_continuous_in_E}
		
		Notice that, by construction, the sequence~$\{\widetilde{u}_k\}_k$ is uniformly Lipschitz continuous on every compact set of~$\{z>0\}$. Therefore, thanks to
		the Ascoli-Arzel\`a's Theorem, there exists a subsequence~$k_j$ such that~$\widetilde{u}_{k_j}$ converges to some function~$\widetilde{v}$ uniformly on every compact set of~$\R^{n+1}_+$.
		
		Moreover, by uniform convergence, 
		\begin{equation*}
			\divergence(z^{1-s}\nabla\widetilde{v})=0 \text{ in }\R^{n+1}_+.
		\end{equation*}
		By the uniqueness of the solution of the Dirichlet problem, to prove that~$\widetilde{v}=\widetilde{u}$ (and thus obtain the claim in~\eqref{item::phi_continuous_in_E_1}), since both~$\widetilde{u}$ and~$\widetilde{v}$ are bounded, it is sufficient to show that
		\begin{equation}\label{rtyeuigfhdbvcnxvbcty4ui343}
			\trace(\widetilde{v})=u.\end{equation} 
		
		To this purpose, let~$r>0$. By Fatou's Lemma and Theorem~\ref{th::Phi_bound} (recall in particular formulas~\eqref{eq::def_Xi} and~\eqref{56789rtyuifghjxcvb}), we have that
		\begin{equation*}
			\int_{B_r^+} z^{1-s}|\nabla\widetilde{v}|^2\,dx\,dz \leq \liminf_{j\to+\infty} \int_{B_r^+} z^{1-s}|\nabla\widetilde{u}_{k_j}|^2\,dx\,dz \leq Cr^{n-s}(1+r^{s}).
		\end{equation*}
		Therefore, using the H\"older Inequality, we see that, for every~$\delta>0$,
		\begin{equation*}\label{eq::phi_cont2}
			\begin{split}
				\int_{B_r^{+}\cap\{0<z<\delta\}} |\nabla(\widetilde{u}_{k_j}-\widetilde{v})| \,dx\,dz
				&=\int_{B_r^{+}\cap\{0<z<\delta\}} z^{(s-1)/2}z^{(1-s)/2}|\nabla(\widetilde{u}_{k_j}-\widetilde{v})| \,dx\,dz\\
				&\leq C\delta^{s/2}\left(\int_{B_r^+} z^{1-s}|\nabla(\widetilde{u}_{k_j}-\widetilde{v})|^2 \,dx\,dz\right)^{1/2}\\
				&\leq C\delta^{s/2},
			\end{split}
		\end{equation*}
		for some positive constant~$C$, depending only~$n$, $s$, and~$r$, and possibly changing from line to line.
		{F}rom this, it follows that
		$$ \int_{B_r^+} |\nabla(\widetilde{u}_{k_j}-\widetilde{v})| \,dx\,dz \leq C\delta^{s/2}+ \int_{B_r^+\cap\{z\geq\delta\}} |\nabla(\widetilde{u}_{k_j}-\widetilde{v})| \,dx\,dz.$$
		
		Now, since~$\nabla\widetilde{u}_{k_j}$ converges uniformly to~$\nabla\widetilde{v}$ on every compact set of~$\{z>0\}$, taking the limit as~$j\to+\infty$, we obtain that, for every~$\delta>0$,
		\begin{equation*}
			\limsup_{j\to+\infty}\int_{B_r^+} |\nabla(\widetilde{u}_{k_j}-\widetilde{v})| \,dx\,dz \leq C\delta^{s/2}.
		\end{equation*}
		This gives that~$\widetilde{u}_{k_j}\to\widetilde{v}$ in~$W^{1,1}(B_r^+)$. Hence, thanks to the Sobolev embeddings, we have that~$u_{k_j}=\trace(\widetilde{u}_{k_j})\to\trace(\widetilde{v})$ in~$L^1(B_r^+)$. 
		
		Since~$r$ is arbitrary, we infer that~$u_{k_j}\to \trace(\widetilde{v})$ in~$L^1_{\loc}$. By the uniqueness of the limit,
		we thus obtain the claim in~\eqref{rtyeuigfhdbvcnxvbcty4ui343}.
		
		In order to prove~\eqref{item::phi_continuous_in_E_2}, recall that, by~\cite[Proposition~7.1]{caffarelli_roquejoffre_savin_nonlocal}, we know that
		$$ \limsup_{k\to+\infty} \int_{B_r^+} z^{1-s}|\nabla(\widetilde{u}_k-\widetilde{u})|^2\,dx\,dz \leq c\limsup_{k\to+\infty} \mathrm{J}_{2r}(u_k-u),$$
		where the functional~$J_r$ has been defined in~\eqref{defjeir}.
		In light of this fact, it is sufficient to show that~$\mathrm{J}_{2r}(u_k-u)\to0$ as~$k\to+\infty$, for every~$r>0$. 
		
		To this end, consider a subsequence~$\{u_{k_j}\}_j$, and define
		\begin{equation*}
			\begin{split}
				&f_{k_j}(x,y) := \frac{u_{k_j}(x)-u_{k_j}(y)}{|x-y|^{(n+s)/2}}\chi_{B_{2r}}(x)\left(\chi_{B_{2r}}(y)+\sqrt{2}\chi_{B_{2r}^c}(y)\right),\; {\mbox{for all~$j\in\N$}},\\
				{\mbox{and }}& f(x,y):=\frac{u(x)-u(y)}{|x-y|^{(n+s)/2}}\chi_{B_{2r}}(x)\left(\chi_{B_{2r}}(y)+\sqrt{2}\chi_{B_{2r}^c}(y)\right).
			\end{split}
		\end{equation*}
		In this way,
		\begin{equation*}
			\begin{split}
				& \norm{f_{k_j}}_{L^2(\R^{2n})} = \mathrm{J}_{2r}(u_{k_j}) = 8\Per_s(E_{k_j},B_{2r})\\ 
				{\mbox{and }}& \norm{f}_{L^2(\R^{2n})} = \mathrm{J}_{2r}(u) = 8\Per_s(E,B_{2r}).
			\end{split}
		\end{equation*}
		Notice that, since~$u_{k_j}\to u$ in~$L^1_{\loc}$, we have that~$u_{k_j}\to u$ pointwise up to a subsequence, which yields that~$f_{k_j}\to f$ pointwise. 
		
		Moreover, thanks to Proposition~\ref{prop::conv_almost_min}, 
		$$ \norm{f}_{L^2(\R^{2n})} = 8\Per_s(E,B_{2r}) = \lim_{k\to+\infty} 8\Per_s(E_k,B_{2r})=\norm{f_{k_j}}_{L^2(\R^{2n})}.$$
		Since pointwise convergence together with the converge of the~$L^2$-norms returns the~$L^2$-convergence, we infer that 
		$$\lim_{k\to+\infty}\mathrm{J}_{2r}(u_{k_j}-u) =\lim_{k\to+\infty} \norm{f_{k_j}-f}_{L^2(\R^{2n})}=0.$$
		This completes the proof of~\eqref{item::phi_continuous_in_E_2}.
		
		Finally, we observe that the~$L^2$-convergence with respect to the measure~$z^{1-s}dxdz$ of~$\nabla\widetilde{u}_k$ to~$\nabla\widetilde{u}$ entails that~$\Phi_{E_k}(r)\to\Phi_E(r)$ as~$k\to+\infty$.
		
		\section{Proof of Theorem~\ref{th::energy_gap}} \label{sec::energy_gap}
		
		Thanks to the clean ball condition in Corollary~\ref{cor::clean_ball}, there exists a small ball~$B\subseteq C\cap B_1$. Sliding vertically~$B$ until it touches~$\partial C$ at some point~$x_0$, we find an interior tangent ball to~$\partial C$ at~$x_0$. 
		
		As a consequence, $\partial C$ is an~$s$-minimal surface with an interior tangent ball at~$x_0$, hence it is~$\cont^{1,\alpha}$-regular in a neighborhood of~$x_0$ (see~\cite[Corollary~6.2]{caffarelli_roquejoffre_savin_nonlocal}). 
		
		Thus, by definition, the tangent cone to~$C$ at~$x_0$ is a half-space, so
		$$ \lim_{r\to0}\Xi_{C-x_0}(r) = \Xi_\Pi (0).$$
		
		Since~$C$ is~$0$-homogeneous, we have that
		$$ C^k:=\frac{1}{k}(C-x_0) = C-\frac{1}{k}x_0.$$
		By construction,we have that~$ C^k\to C$ in~$L^1_{\loc} $.
		Therefore, taking the limit as~$k\to+\infty$ in the monotonicity formula~\eqref{eq::monotinicty_sminimal}, we obtain that
		$$ \lim_{k\to+\infty}\Xi_{C-x_0}(kr) =\lim_{k\to+\infty} \Xi_{C^k}(r)= \Xi_C(r).$$
		Also, since~$\Xi_{C-x_0}$ is monotone, we have that~$\Xi_\Pi(r)=\Xi_{C-x_0}(0)\leq \Xi_C(r)$, thus proving~\eqref{eq::PileqC}.
		
		If equality in~\eqref{eq::PileqC} holds true, then~$\Xi_{C-x_0}$ is constant, hence~$C-x_0$ is a cone regular at the
		origin. {F}rom this, it follows that~$C-x_0$ is a half-space, hence~$C$ must be a half-space as well. 
		
		To prove~\eqref{eq::gap}, arguing by contradiction suppose that there exists a sequence of minimal cones~$C_k$ such that
		$$ \Xi_{C_k}\leq \Xi_\Pi + \frac{1}{k},$$
		but~$C_k$ is not a half-space, for any~$k$. 
		
		Then, thanks to Theorem~\ref{th::blowup_existence}, we have that, up to a subsequence, $ C_k\to C_\infty $ in~$L^1_{\loc}$,
		for some cone~$C_\infty$. 
		Therefore, $\Xi_{C_\infty}=\Xi_\Pi$, hence~$C_\infty$ must be a hyperplane according to the first part of the theorem. 
		
		In addition, by Corollary~\ref{cor::boundary_conv}, we have that~$\partial C_k\cap B_1$ lies in a small neighborhood of~$\partial C_\infty$ as long as~$k$ is large enough. Thus, up to rotations,
		$$ \partial C_k\cap B_1 \subseteq \{|x_n|<\epsilon_0\}.$$
		Then, from~\cite[Theorem~6.1]{caffarelli_roquejoffre_savin_nonlocal} (which is the analog of Theorem~\ref{th::holder_reg_almost_min} for~$s$-minimal surfaces), we deduce that~$\partial C_k$ is a~$\cont^{1,\alpha}$ surface in a neighborhood of the origin, hence~$C_k$ is a half-space. 
		
		However, this leads to a contradiction, concluding the proof of~\eqref{eq::gap}.
		
	\end{appendices}
	
	\section*{Acknowledgments} \label{sec::acknowledgments}	
	\addcontentsline{toc}{section}{\nameref{sec::acknowledgments}}
	We thank the referee for their very useful comments and suggestions.
	
	This work has been supported by
	the Australian Future Fellowship FT230100333 ``New perspectives on nonlocal equations'' and
	by the Australian Laureate Fellowship FL190100081 ``Minimal surfaces, free boundaries and partial differential equations.''
	
	\addcontentsline{toc}{section}{References}
	\nocite{*}
	\printbibliography
	
\end{document}